\numberwithin{equation}{section}
\newtheorem{thm}{Theorem}[section]
\newtheorem{lmm}[thm]{Lemma}
\newtheorem{prp}[thm]{Proposition}
\newtheorem{crl}[thm]{Corollary}
\newtheorem{ques}[thm]{Question}
\theoremstyle{definition}
\newtheorem{dfn}[thm]{Definition}
\newtheorem{eg}[thm]{Example}
\newtheorem{rmk}[thm]{Remark}
\def\BE#1{\begin{equation}\label{#1}}
\def\EE{\end{equation}}
\def\eref#1{(\ref{#1})}
\def\lra{\longrightarrow}
\def\lhra{\ensuremath{\lhook\joinrel\relbar\joinrel\rightarrow}}
\def\xlra#1{\xrightarrow{{#1}}}
\def\ov#1{\overline{#1}}
\def\wt#1{\widetilde{#1}}
\def\sf#1{\textsf{#1}}
\def\tn#1{\textnormal{#1}}
\def\wh#1{\widehat{#1}}
\def\al{\alpha}
\def\be{\beta}
\def\de{\delta}
\def\ep{\epsilon}
\def\ga{\gamma}
\def\io{\iota}
\def\na{\nabla}
\def\om{\omega}
\def\si{\sigma}
\def\th{\theta}
\def\ve{\varepsilon}
\def\vph{\varphi}
\def\vp{\varpi}
\def\vt{\vartheta}
\def\ze{\zeta}
\def\Ga{\Gamma}
\def\De{\Delta}
\def\La{\Lambda}
\def\Om{\Omega}
\def\Si{\Sigma}
\def\cA{\mathcal A}
\def\C{\mathbb C}
\def\cC{\mathcal C}
\def\fD{\mathfrak D}
\def\nD{\tn{D}}
\def\bE{\mathbb E}
\def\cN{\mathcal N}
\def\cO{\mathcal O}
\def\P{\mathbb P}
\def\cP{\mathcal P}
\def\cR{\mathfrak R}
\def\R{\mathbb R}
\def\cR{\mathcal R}
\def\Q{\mathbb Q}
\def\bS{\mathbb S}
\def\cT{\mathcal T}
\def\cZ{\mathcal Z}
\def\Z{\mathbb Z}
\def\nh{\tn{h}}
\def\ne{\textnormal{e}}
\def\fI{\mathfrak i}
\def\AK{\textnormal{AK}}
\def\Aux{\textnormal{Aux}}
\def\Aut{\textnormal{Aut}}
\def\codim{\textnormal{codim}}
\def\cok{\textnormal{cok}}
\def\deR{\textnormal{deR}}
\def\Dom{\textnormal{Dom}}
\def\End{\textnormal{End}}
\def\id{\textnormal{id}}
\def\Im{\textnormal{Im}}
\def\nd{\textnormal{d}}
\def\hor{\textnormal{hor}}
\def\PD{\textnormal{PD}}
\def\cPD{\mathcal{PD}}
\def\supp{\textnormal{supp}}
\def\Symp{\textnormal{Symp}}
\def\ver{\textnormal{ver}}
\def\i{\infty}
\def\w{\wedge}
\def\eset{\emptyset}
\def\prt{\partial}
\def\1{\mathbf 1}
\newcommand{\tpitchfork}{%
  \vbox{
    \baselineskip\z@skip
    \lineskip-.52ex
    \lineskiplimit\maxdimen
    \m@th
    \ialign{##\crcr\hidewidth\smash{$-$}\hidewidth\crcr$\pitchfork$\crcr}
  }%
}
\begin{document}

\title{Normal crossings singularities for \\
symplectic topology: structures}
\author{Mohammad F.~Tehrani\thanks{Partially supported by NSF grant DMS-2003340}, 
Mark McLean\thanks{Partially supported by NSF grant DMS-1811861}, and 
Aleksey Zinger\thanks{Partially supported by NSF grant DMS-1901979}}

\date{\today}

\maketitle

\begin{abstract}
\noindent
Our previous papers introduce topological notions of normal crossings symplectic divisor and variety, 
show that they are equivalent, in a suitable sense, to the corresponding geometric notions, 
and establish a topological smoothability criterion for normal crossings symplectic varieties. 
The present paper constructs a blowup, a complex line bundle, and 
a logarithmic tangent bundle naturally 
associated with a normal crossings symplectic divisor 
and determines the Chern class of the last bundle. 
These structures have applications in constructions and analysis of various moduli spaces.
As a corollary of the Chern class formula for the logarithmic tangent bundle, 
we refine Aluffi's formula for the Chern class of the tangent bundle of the blowup
at a complete intersection to account for the torsion and
extend it to the blowup at the deepest stratum
of an arbitrary normal crossings divisor.
\end{abstract}

\tableofcontents

\section{Introduction}
\label{intro_sec}
Divisors, i.e.~subvarieties of codimension~1 over the ground field, 
and related structures, are among the central objects of study in algebraic geometry. 
They appear in the study of curves (as dual objects), 
singularities (particularly in the Minimal Model Program), 
and semistable degenerations of smooth varieties (as the singular locus). 
The complex line bundle $\cO_X(V)$ corresponding to a Cartier divisor  $V\!\subset\!X$ and 
the log tangent bundle~$TX(-\log V)$ (or dually the sheaf of log 1-forms $\Om^1_X(\log V)$) 
corresponding to a normal crossings (or NC) divisor 
are among such useful and well-studied structures. 
They play important roles in the relative/log Gromov-Witten theories of Li~\cite{Jun1, Jun2}, 
Gross-Siebert~\cite{GS}, and Abramovich-Chen~\cite{AC,QChen}.
Divisors also play an important role in symplectic topology,
including as representatives of the Poincare duals of symplectic forms~\cite{Donaldson},
in symplectic sum constructions~\cite{Gf,MW}, 
in relative Gromov-Witten theory and symplectic sum formulas 
\cite{Tian,SympSum,Brett15,Brett19-2,Frel},
in affine symplectic geometry \cite{MAff,McLean}, 
in homological mirror symmetry \cite{Au, Sheridan},
and in relative Fukaya category \cite{DF1,DF2}.\\

\noindent
A {\it smooth} symplectic divisor is simply a symplectic submanifold of real codimension~2. 
Applications involving a smooth symplectic divisor $V\!\subset X\!$ typically rely on
the Symplectic Neighborhood Theorem \cite[Theorem~3.4.10]{MS1}. 
It provides an identification~$\Psi$ (which we call a \textsf{symplectic regularization}) 
of a neighborhood of~$V$ in~$X$ with a neighborhood 
of~$V$ in its normal bundle~$\cN_XV$.
Such an identification can then be used to construct an auxiliary $V$-compatible 
``geometric" data/structure, such as a $V$-compatible almost complex structure $J$ 
or a complex line bundle~$\cO_X(V)$ over~$X$ with first Chern class~$[V]_X$. 
One then shows that an invariant defined using~$J$ or the deformation equivalence class 
of an object constructed using~$\Psi$ depends only on~$(X,V,\om)$. 
This approach is relatively straightforward to carry out in the case of smooth divisors.\\

\noindent
Singular symplectic divisors/varieties and structures associated with them are 
generally hard to define and work with because there is no direct analogue of 
the Symplectic Neighborhood or Darboux Theorem in this setting.
Following an alternative approach,
\cite{SympDivConf,SympDivConf2} introduce topological notions of NC symplectic divisor 
and variety and geometric notions of regularization for NC symplectic divisors and varieties.
The latter is basically a ``nice" neighborhood identification of the divisor/singular locus,
analogous to that provided by the Symplectic Neighborhood Theorem in the smooth case.
Every NC symplectic divisor/variety is deformation equivalent to one admitting a regularization. 
For this reason, in our approach, we work with the entire deformation equivalence classes of 
NC symplectic divisors/varieties, as opposed to a fixed NC symplectic divisor/variety;
see the end of Section~\ref{NCLreg_subs}.\\

\noindent
As stated in~\cite{SympNCSumm}, a regularization of an NC symplectic variety $V\!\!\subset\!X$
can be used to construct an associated complex line bundle~$\cO_X(V)$
and a log tangent bundle~$TX(-\log V)$.
The present paper carries out these constructions in detail.
While the constructions of~$\cO_X(V)$ and~$TX(-\log V)$
involve the auxiliary data of regularizations and other choices, 
their deformation equivalence classes 
depend on the deformation equivalence class of the symplectic structure only.\\

\noindent
We denote the smooth locus of an NC symplectic divisor $V\!\subset\!X$ by~$V^*$. 
For $r\!\in\!\Z^{\ge0}$, let $V^{(r)}\!\subset\!V$ be $r$-fold locus of~$V$,
i.e.~the locus that locally is the intersection of at least $r$ branches of~$V$;
see~\eref{Vrdfn_e}.
For example, $V^{(0)}\!=\!X$, $V^{(1)}\!=\!V$, $V^{(2)}$ is the singular locus of~$V$,
and $V^*\!=\!V^{(1)}\!-\!V^{(2)}$.
The subspace $V^{(r)}\!-\!V^{(r+1)}$ is a smooth submanifold of~$X$ of codimension~$2r$.
We denote its inclusion into~$X$ by~$\io_{V^{(r)}-V^{(r+1)}}$.
If $X$ is compact and of (real) dimension~$2n$, $\io_{V^{(r)}-V^{(r+1)}}$
is a pseudocycle in~$X$ of dimension $2(n\!-\!r)$; see~\cite{pseudo}.
Thus, $V^*$ and $V^{(r)}\!-\!V^{(r+1)}$ determine homology classes
$$[V]_X\in H_{2n-2}(X;\Z) \qquad\hbox{and}\qquad 
\big[V^{(r)}\big]_X\in H_{2(n-r)}(X;\Z),$$
respectively.

\begin{prp}\label{OXV_prp}
Let $(X,\om)$ be a symplectic manifold  and $V\!\subset\! X$ be an NC symplectic divisor. 
\begin{enumerate}[label=(\arabic*),leftmargin=*]

\item\label{cOdfn_it} An $\om$-regularization~$\cR$ for $V\!\subset\!X$ 
determines a complex line bundle~$(\cO_{\cR;X}(V),\fI_{\cR})$  
over~$X$ with a smooth section~$s_{\cR}$ so that $s_{\cR}^{-1}(0)\!=\!V$ and
$$\nD s_{\cR}\!: \cN_X(V^*)\lra \cO_{\cR;X}(V)\big|_{V^*}$$
is an orientation-preserving isomorphism along the smooth locus $V^*$ of~$V$.

\item\label{cOinv_it} The deformation equivalence class~$(\cO_{X}(V),\fI)$ 
of~$(\cO_{X}(V),\fI_{\cR})$  depends only on the deformation equivalence class of $(X,V,\om)$.

\item\label{cOsplit_it} If $V'\!\subset\!X$ is another NC symplectic divisor so that 
$V\!\cup\!V'\!\subset\!X$ is also an NC symplectic divisor
and $V\!\cap\!V'$ contains no open subspace of~$V$, then
\BE{cOsplit_e} 
\big(\cO_{X}(V\!\cup\!V'),\fI\big)\approx 
\big(\cO_{X}(V),\fI\big)\!\otimes\!\big(\cO_{X}(V'),\fI\big).\EE

\end{enumerate}
\end{prp}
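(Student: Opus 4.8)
The plan is to build the line bundle by gluing local models coming from a regularization, following the same circle of ideas that produces $\cO_X(V)$ for a smooth symplectic divisor, but now keeping track of the stratification by the $r$-fold loci. Recall that a regularization $\cR$ for $V\!\subset\!X$ consists of a compatible system of symplectic identifications of neighborhoods of the strata $V^{(r)}\!-\!V^{(r+1)}$ with neighborhoods in the appropriate normal (crossings) model; in particular it restricts on the smooth locus $V^*$ to a symplectic regularization in the sense of the Symplectic Neighborhood Theorem. First I would set up, over each such neighborhood, the tautological line bundle: on the model near $V^*$ this is the pullback of $\cN_X(V^*)$ (equivalently $\cO$ of the zero section), and near a deeper stratum where $V$ locally looks like $\{z_1\cdots z_k\!=\!0\}$ in a rank-$k$ normal model, the local line bundle is the tensor product $L_1\!\otimes\!\cdots\!\otimes\!L_k$ of the line bundles associated with the $k$ local branches, with tautological section $s\!=\!z_1\cdots z_k$. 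The compatibility conditions built into the definition of a regularization are exactly what is needed to patch these local bundles and sections into a global $(\cO_{\cR;X}(V),\fI_\cR)$ with global section $s_\cR$; the condition $s_\cR^{-1}(0)\!=\!V$ and the assertion that $\nD s_\cR$ is an orientation-preserving isomorphism on $V^*$ are then local verifications in the smooth model, where they reduce to the corresponding statement for a smooth divisor. This proves \ref{cOdfn_it}.

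For \ref{cOinv_it}, I would argue as in the smooth case. Two regularizations $\cR_0,\cR_1$ for a fixed $(X,V,\om)$ can be joined by a path of regularizations (after possibly an isotopy of $V$, using the homotopy-uniqueness of regularizations established in the companion papers \cite{SympDivConf,SympDivConf2}); running the construction of \ref{cOdfn_it} over $X\!\times\![0,1]$ with this path produces a line bundle over $X\!\times\![0,1]$ restricting to $(\cO_{\cR_i;X}(V),\fI_{\cR_i})$ at the endpoints, hence a deformation equivalence. The same device handles a deformation of the triple $(X,V,\om)$: work over the total space of the deformation, choosing a regularization there (again invoking the existence/uniqueness results quoted in the excerpt), and restrict to the two ends. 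One must check that "deformation equivalence of complex line bundles with the extra data $\fI$" is the right equivalence relation for this to be transitive and well-defined, but that is formal once the parametrized construction is in place.

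For \ref{cOsplit_it}, the key point is again local. Where $V$ and $V'$ are both smooth and meet transversally, $V\!\cup\!V'$ near a point of $V\!\cap\!V'$ looks like $\{z_1 z_2\!=\!0\}$, and a regularization for $V\!\cup\!V'$ restricts to regularizations for $V$ and for $V'$; the local model for $\cO_{\cR;X}(V\!\cup\!V')$ is $L_1\!\otimes\!L_2$, which is literally the tensor product of the local models for $\cO_{\cR;X}(V)$ and $\cO_{\cR;X}(V')$, with $s_{\cR}\!=\!z_1 z_2\!=\!s_{\cR}\cdot s'_{\cR}$. More generally near a stratum where $V$ has $j$ branches and $V'$ has $j'$ branches, the $(j\!+\!j')$-branch model for the union factors as the tensor product of the $j$-branch model and the $j'$-branch model. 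The hypothesis that $V\!\cap\!V'$ contains no open subset of $V$ guarantees that $V\!\cup\!V'$ is genuinely NC (no branch is repeated) so that the branch count adds correctly. Choosing a regularization for $V\!\cup\!V'$ and using its restrictions, the local tensor-product isomorphisms are compatible with the patching data, hence glue to a global isomorphism $\cO_{X}(V\!\cup\!V')\!\cong\!\cO_{X}(V)\!\otimes\!\cO_{X}(V')$ intertwining the $\fI$'s, and then invariance \ref{cOinv_it} upgrades this to the deformation-equivalence statement \eqref{cOsplit_e}.

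The main obstacle I anticipate is not any single local computation but the bookkeeping of the compatibility/cocycle conditions among the neighborhoods of the various strata: one has to make sure the locally defined line bundles, sections, and tensor-product isomorphisms are mutually compatible on overlaps in a way that survives passing to a global object, and that the auxiliary choices (bump functions used to interpolate between the model and the honest neighborhood, trivializations of the normal models, the identification data $\fI$) can be made consistently across strata of different depths. This is precisely the place where the full strength of the definition of a regularization from \cite{SympDivConf,SympDivConf2} — as a \emph{compatible system} rather than a collection of unrelated neighborhood identifications — is used, and where most of the technical work of the proof will reside.
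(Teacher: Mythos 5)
Your proposal follows essentially the same route as the paper: the local model over a depth-$k$ stratum is the tensor product of the normal line bundles of the branches with the tautological section $\bigotimes_i v_i$ (the analogue of $z_1\cdots z_k$), the compatibility conditions of the regularization (the product Hermitian isomorphisms $\fD\Psi_{I;I'}$) supply the cocycle data, deformation invariance comes from running the construction in compact families via the weak homotopy equivalence $\Aux(X,V)\!\lra\!\Symp^+(X,V)$, and~\eref{cOsplit_e} comes from restricting a regularization for $V\!\cup\!V'$ to ones for $V$ and~$V'$ and tensoring the local models. The bookkeeping you flag as the main obstacle is exactly what the paper carries out in Sections~\ref{ConSC_subs} and~\ref{ConLNC_subs}, so your outline is correct.
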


\vspace{.1in}

\noindent
The complex line bundle $\cO_X(V)$ appears in the smoothability criterion for 
SC symplectic varieties in~\cite{SympSumMulti} and for general NC symplectic varieties 
in~\cite{SympSumMulti2}.
For a simple (normal) crossings (or SC) symplectic divisor $V\!=\!\bigcup_{i\in S}\!V_i$
as in Definition~\ref{SCD_dfn},  
\eref{cOsplit_e} gives
$$\cO_X(V)\cong \bigotimes_{i\in S}\cO_X(V_i)\lra X.$$
If $X$ is compact and of (real) dimension~$2n$, the stated properties of~$s_{\cR}$ imply~that
\BE{ccOV_e}c_1\big(\cO_X(V)\!\big)=\PD_X\big([V]_X\big)\in H^2(X;\Z).\EE
If $X$ is not compact, this identity holds with $[V]_X$ denoting the element 
of the Borel-Moore homology of~$X$ determined by~$V^*$; see~\cite{BM}.

\begin{thm}\label{TXV_thm}
Let $(X,\om)$ be a symplectic manifold  and $V\!\subset\!X$ be an NC symplectic divisor.  
\begin{enumerate}[label=(\arabic*),leftmargin=*]

\item\label{loddfn_it} 
An $\om$-regularization~$\cR$ for $V\!\subset\!X$ determines a vector bundle~$T_{\cR}X(-\log V)$ 
over~$X$ with a smooth vector bundle homomorphism 
\BE{Ninc_e}\io_{\cR}\!: T_{\cR}X(-\log V)\lra TX\EE 
so that for every $r\!\in\!\Z^{\ge0}$
\begin{gather*}
T\big(V^{(r)}\!-\!V^{(r+1)}\big)\subset T_{\cR}X(-\log V)\big|_{V^{(r)}-V^{(r+1)}},
\quad  \io_{\cR}|_{T(V^{(r)}-V^{(r+1)})}=\nd\io_{V^{(r)}-V^{(r+1)}},\\
\hbox{and}\qquad  
\io_{\cR}\big(T_{\cR}X(-\log V)\big|_{V^{(r)}-V^{(r+1)}}\big)=T(V^{(r)}\!-\!V^{(r+1)}).
\end{gather*}

\item\label{logJ_it}
An $\cR$-compatible almost complex structure~$J$ on~$X$ determines a complex structure~$\fI_{\cR,J}$ 
on the vector bundle $T_{\cR}X(-\log V)$ so that the bundle homomorphism~\eref{Ninc_e} is $\C$-linear.

\item\label{loginv_it}
The deformation equivalence class~$(TX(-\log V),\fI)$ of~$(T_{\cR}X(-\log V),\fI_{\cR,J})$ 
depends only on the deformation equivalence class of $(X,V,\om)$.

\item\label{logsplit_it} If $V'\!\subset\!X$ is a smooth submanifold so that 
$V\!\cup\!V'\!\subset\!X$ is also an NC symplectic divisor and 
$V\!\cap\!V'$ contains no open subspace of~$V$, then
\BE{logsplit_e} 
\big(TX(-\log(V\!\cup\!V')\!),\fI\big)\!\oplus\!\big(\cO_X(V'),\fI\big)
\approx \big(TX(-\log V),\fI\big)\!\oplus\!(X\!\times\!\C).\EE

\item\label{cTXV_it} We have 
\BE{cTXV_e} c\big(TX(-\!\log V),\fI\big)=
\frac{c(TX,\om)}{1\!+\!\PD_X([V^{(1)}]_X)\!+\!\PD_X([V^{(2)}]_X)
\!+\!\ldots}\in H^*(X;\Q).\EE
The above equality holds in $H^*(X;\Z)$ if $V\!\subset\!X$ is an SC divisor.
\end{enumerate}
\end{thm}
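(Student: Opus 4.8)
The plan is to prove parts \ref{loddfn_it}--\ref{cTXV_it} essentially in the order stated, with the Chern class formula in~\ref{cTXV_it} following from the earlier parts together with a computation on each stratum. First I would construct $T_{\cR}X(-\log V)$ locally and glue. In a local chart in which $V$ is the union of coordinate hyperplanes $\{z_i\!=\!0\}$, $i\!\in\!I$, the log tangent bundle is the free module on $z_1\prt_{z_1},\ldots,z_k\prt_{z_k},\prt_{z_{k+1}},\ldots$; the regularization $\cR$ supplies exactly the data (the identifications of neighborhoods of the strata with neighborhoods in the relevant normal bundles, compatible along overlaps) needed to make these local models patch into a global vector bundle $T_{\cR}X(-\log V)$ together with the natural map $\io_{\cR}$ to $TX$ that on the local model sends $z_i\prt_{z_i}\!\mapsto\!z_i\prt_{z_i}$. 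Restricting to the stratum $V^{(r)}\!-\!V^{(r+1)}$, the generators $z_i\prt_{z_i}$ with $i\!\in\!I$ lie in the kernel of $\io_{\cR}$ up to the tangent directions along the stratum, which is precisely the three displayed compatibility statements in~\ref{loddfn_it}; one checks these are chart-independent using the transition data of a regularization from~\cite{SympDivConf,SympDivConf2}. Part~\ref{logJ_it} is then a local observation: an $\cR$-compatible $J$ makes each local model a complex vector bundle (the $z_i\prt_{z_i}$ span a complex subbundle isomorphic to the trivial one) and $\io_{\cR}$ intertwines the complex structures; compatibility of $J$ with $\cR$ forces this to be consistent across charts.

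For part~\ref{loginv_it}, the plan is the standard deformation-equivalence argument already used for $\cO_X(V)$ in Proposition~\ref{OXV_prp}\ref{cOinv_it}: given a path of symplectic data and regularizations, the construction of $T_{\cR}X(-\log V)$ is natural enough that it produces a vector bundle over $X\!\times\![0,1]$, hence a deformation equivalence between the two ends; one also invokes that any two $\cR$-compatible almost complex structures are homotopic through such. The splitting formula~\ref{logsplit_it} I would prove by exhibiting, over a regularization adapted to both $V$ and $V\!\cup\!V'$, an explicit short exact sequence of vector bundles
\[
0\lra T_{\cR}X\big(-\!\log(V\!\cup\!V')\big)\lra T_{\cR}X(-\log V)\lra \cO_{\cR;X}(V')\big|\lra 0
\]
along $V'$ and an isomorphism off $V'$, matching the residue map $z'\prt_{z'}\!\mapsto\! s_{\cR}$-component; because all bundles involved carry compatible complex structures the sequence splits smoothly, giving~\eref{logsplit_e}.

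Part~\ref{cTXV_it} is where the real content lies, and it is the step I expect to be the main obstacle. The idea is to compute $c(TX(-\log V))$ stratum by stratum and assemble. Over $X$ there is a natural morphism $\io\!:\!TX(-\log V)\!\to\!TX$ which is an isomorphism over $X\!-\!V$ and drops rank along $V$; the relevant relation is that the "defect'' of $\io$ along the deepest strata is governed by the normal bundles $\cN_X(V^{(r)}\!-\!V^{(r+1)})$, which split as sums of the line bundles $\cO_X(V_i)$ of the local branches. The cleanest route is to prove the SC case first, where $V\!=\!\bigcup_{i\in S}V_i$ and one has an honest short exact sequence relating $TX(-\log V)$, $TX(-\log(V\!-\!V_i))$ and $\cO_X(V_i)|_{V_i}$; iterating~\ref{logsplit_it}/the exact sequence over the $V_i$ and using $c(\cO_X(V_i))\!=\!1\!+\!\PD_X[V_i]_X$ gives, after the standard inclusion-exclusion bookkeeping that turns $\prod_i(1\!+\![V_i])^{-1}$-type expressions into $\big(1\!+\!\sum_r\PD_X[V^{(r)}]_X\big)^{-1}$, the integral identity~\eref{cTXV_e}; here one must carefully identify $\sum_{|T|=r}\PD_X\big[\!\bigcap_{i\in T}V_i\big]_X$ with $\PD_X[V^{(r)}]_X$, which is where the combinatorics of the $r$-fold locus enters. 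For a general NC divisor $V$ is not globally a union of smooth divisors, so instead I would pass to the connected "normalization'' type auxiliary space or, equivalently, work with the immersion resolving the branches: the classes $[V^{(r)}]_X$ are defined via pseudocycles as in the excerpt, the construction of $T_{\cR}X(-\log V)$ is manifestly local and so its total Chern class is computed by the same local SC formula on each chart, and a Mayer--Vietoris / pseudocycle-intersection argument upgrades the rational identity to one valid over $\Q$ globally (torsion obstructions to integrality appearing exactly because $V$ need not be a sum of Cartier divisors). The delicate points will be (i) checking that the local short exact sequences glue to global ones with $\Q$-coefficients, and (ii) matching the pseudocycle classes $[V^{(r)}]_X$ with the symmetric-function expressions in the local branch classes; once those are in place, the formula~\eref{cTXV_e} is a formal manipulation of total Chern classes and the corollary about Aluffi's formula is the special case in which $V$ is a complete intersection and one blows up its deepest stratum.
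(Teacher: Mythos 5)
Parts~\ref{loddfn_it}--\ref{loginv_it} of your outline match the paper: the bundle is built from the local models supplied by the regularization, glued by the transition maps it induces, an $\cR$-compatible $J$ gives the fiberwise complex structure, and invariance follows from running the construction in families together with the weak homotopy equivalences \eref{Aux2Symp_e} and \eref{AK2Aux_e}. Your argument for~\ref{logsplit_it}, however, has a gap. There is no short exact sequence of vector bundles over $X$ of the form you write: the quotient term only makes sense along $V'$, and splitting a sequence defined over $V'$ (which is automatic for smooth bundles and has nothing to do with the complex structures) says nothing about how the two log tangent bundles compare over all of~$X$, which is what \eref{logsplit_e} asserts. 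What the paper actually uses is the homomorphism $\io_{\cR\wt\cR}\!:T_{\wt\cR}X(-\log(V\!\cup\!V'))\!\lra\!T_\cR X(-\log V)$ of Lemma~\ref{ioRRpr_lmm}, which is an isomorphism off~$V'$ and degenerates transversally along~$V'$, combined with the genuinely topological Lemma~\ref{bundleisom_lmm}: a bump-function interpolation near~$V'$, with an injectivity estimate, converts such a degenerating homomorphism into a global isomorphism $E\!\oplus\!\cO_X(V')\!\otimes_{\C}\!L\!\cong\!F\!\oplus\!L$. That interpolation lemma is the missing ingredient in your sketch; without it the passage from ``isomorphism off $V'$ plus controlled degeneration along $V'$'' to the stable isomorphism \eref{logsplit_e} is unjustified.

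The larger gap is in~\ref{cTXV_it}. Your SC argument (iterating the splitting and using $c(\cO_X(V_i))\!=\!1\!+\!\PD_X[V_i]_X$) is fine and is how the paper obtains \eref{cTXV_eSC} integrally. But your route to the general NC case --- ``the same local SC formula on each chart'' plus a Mayer--Vietoris/pseudocycle argument --- does not go through. Knowing \eref{cTXV_e} on each chart of an open cover carries no content, since every class involved restricts trivially to small charts, and there is no Mayer--Vietoris principle upgrading chart-wise Chern class identities to a global one; moreover the local splittings cannot be glued, because (as the example in Section~\ref{ChernQvsZ_sec} shows) the direct sum bundle on the left of \eref{TXlogSC_e} does not even exist as a global bundle when the branches cannot be separated, and this is precisely the source of the rational-versus-integral discrepancy, not mere bookkeeping. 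The paper's actual mechanism, carried out in the global immersion perspective of Section~\ref{ConGNC_subs}, is Chern--Weil: from the regularization one builds $\bS_k$-equivariant connections yielding a connection~$\na'$ on $T_\cR X(-\log\io)$ and a connection~$\na$ on~$TX$, together with Thom-type forms~$\tau_k$ supported near~$V^{(k)}$ and representing $\PD_X([V^{(k)}]_X)$ (Lemma~\ref{PDVk_lmm}), such that $c(\na)=c(\na')(1\!+\!\tau_1\!+\!\tau_2\!+\!\cdots)$ at the level of differential forms (Proposition~\ref{DFChern_prp}). The curvature identity is checked locally, but the forms are globally defined, which is exactly what replaces your local-to-global step; without some device of this kind your outline for the general NC case does not close.
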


\noindent 
As shown in~\cite{Frt}, 
the complex vector bundle $(TX(-\log V),\fI)$ plays the same role
in the deformation-obstruction theory of pseudoholomorphic curves relative to 
an NC symplectic divisor as the complex vector bundle $(TX,J)$ 
in the standard deformation-obstruction theory of pseudoholomorphic curves;
see \cite[Chapter~3]{MS2}, for example.
This provides a symplectic topology perspective on the constructions of log stable maps
in \cite{Brett15,GS,AC}.
In~\cite{FS}, the vector bundle $TX(-\log V)$ is used to define Seiberg-Witten invariants
of a closed oriented 4-manifold~$X$ relative to a smooth oriented Riemann surface~$V$.
This perspective on the standard constructions of relative Seiberg-Witten invariants
reveals additional structures.\\

\noindent
For an SC symplectic divisor $V\!=\!\bigcup_{i\in S}\!V_i$ as in Definition~\ref{SCD_dfn}, 
\eref{logsplit_e} gives
\BE{TXlogSC_e} \big(TX(-\log V),\fI\big)\oplus
\bigoplus_{i\in S}\big(\cO_X(V_i),\fI\big)\approx 
(TX,J)\!\oplus\!(X\!\times\!\C^S,\fI).\EE
This immediately implies that
\BE{cTXV_eSC}\begin{split}
c\big(TX(-\!\log V)\!\big)
&=\frac{c(TX,\om)}{\prod_{i\in S}\big(1\!+\!\PD_X([V_i]_X)\big)}\\
&=\frac{c(TX,\om)}{1\!+\!\PD_X([V^{(1)}]_X)\!+\!\PD_X([V^{(2)}]_X)
\!+\!\ldots}\in H^*(X;\Z).
\end{split}\EE
The direct sum vector bundle on the left-hand side of~\eref{TXlogSC_e} does not even
exist as a vector bundle in the general NC case;
see the example in the second half of Section~\ref{ChernQvsZ_sec}.
In Section~\ref{ChNC_sec}, we instead establish the de Rham cohomology analogue of~\eref{cTXV_e}  
by expressing the Chern classes on the two sides of~\eref{cTXV_e} 
in terms of the curvatures of connections in the vector bundles~$TX$ and~$TX(-\!\log V)$.
We construct a $2k$-form~$\tau_k$ on~$X$, supported in a neighborhood of~$V^{(k)}$ and
representing $\PD_X([V^{(k)}]_X)$ in $H^{2k}_{\deR}(X)$, and $\C$-linear connections~$\na$ in~$TX$ 
and~$\na'$ in~$TX(-\log V)$ so that the curvature of~$\na$ is the correct combination of 
the curvature of~$\na'$ and~$\tau_1,\ldots,\tau_r$ to yield~\eref{cTXV_e}; 
see~\eref{FtoC_e}, Lemma~\ref{PDVk_lmm}, and Proposition~\ref{DFChern_prp}.
Our proofs of~\eref{logsplit_e} and~\eref{cTXV_e} are carried out in the almost complex category.\\

\noindent
When $V\!\subset\!X$ is either an NC complex divisor in a complex manifold 
or an NC almost complex divisor in an almost complex manifold compatible with a regularization for~$V$,
in the sense defined in Section~\ref{NCLreg_subs}, 
we can pass to the blowup~$\wt{X}$ of~$X$ along the deepest stratum~$V^{(r)}$ of~$V$
and to the proper transform~$\ov{V}$ of~$V$.
We can then compare the log tangent bundles for~$(X,V)$ and~$(\wt{X},\ov{V})$
and their Chern classes via~\eref{logsplit_e}, \eref{cTXV_e}, and 
Lemma~\ref{blowuplog_lmm} below.
This immediately yields Corollary~\ref{blowupchern_crl} below,
except for the refinement in the vanishing torsion case.
This corollary refines  \cite[Lemma~1.3]{Aluffi} in the SC case
and extends it to the general NC case.

\begin{lmm}\label{blowuplog_lmm}
Let $(X,J)$ be an almost complex manifold, $V\!\subset\!X$ be an NC almost complex divisor
with a regularization~$\cR$, and $r\!\in\!\Z^+$ be such that $V^{(r+1)}\!=\!\eset$.
If \hbox{$\pi\!:(\wt{X},\wt{J})\lra\!(X,J)$} is the blowup of~$(X,J)$ along~$V^{(r)}$ 
determined by~$\cR$, $\bE$ is the exceptional divisor,
and $\ov{V}\!\subset\!\wt{X}$ is the proper transform of~$X$, then
$\wt{V}\!\equiv\!\ov{V}\!\cup\!\bE$ is an NC almost complex divisor in~$(\wt{X},\wt{J})$.
In this case,
there are a regularization~$\wt\cR$ of~$\wt{V}$ in~$(\wt{X},\wt{J})$ 
and an isomorphism
\BE{blowuplog_e}\nd^{\log}_{\cR\wt\cR}\pi\!:  T_{\wt{\cR}}\wt{X}(-\log\wt{V})
\xlra{~\cong~}\pi^*\big(T_\cR X(-\log V)\!\big)\EE
so that the diagram
$$\xymatrix{T_{\wt{\cR}}\wt{X}(-\log\wt{V})
\ar[d]^{\nd_{\cR\wt\cR}^{\log}\pi}\ar[rr]^>>>>>>>>>>>>{\io_{\wt\cR}}
&&T\wt{X}\ar[d]^{\nd \pi}\\
\pi^*(T_\cR X(-\log V)\!)\ar[rr]^>>>>>>>>>>{\pi^*\io_{\cR}} 
&&\pi^*TX}$$
commutes.   
The first and third claims above also hold in the category of 
complex manifolds with NC divisors.
\end{lmm}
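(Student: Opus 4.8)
The plan is to reduce the statement to the flat local model near the deepest stratum: there the regularization~$\cR$ presents~$V$ as a standard NC divisor and ``the blowup determined by~$\cR$'' presents~$\pi$ as the standard monomial blowup, so that $\nd^{\log}_{\cR\wt\cR}\pi$ is nothing but~$\nd\pi$ restricted to the log tangent bundle, which is computed in coordinates; away from~$\bE$ the map~$\pi$ is a diffeomorphism and there is nothing to do.

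First I would set up the local models and construct~$\wt\cR$. Since $V^{(r+1)}\!=\!\eset$, every point of~$V^{(r)}$ lies on exactly~$r$ local branches of~$V$, and $\cR$ identifies a neighborhood of such a point with a neighborhood of the origin in~$\C^n$ in which $V\!=\!\{z_1\cdots z_r\!=\!0\}$ and $V^{(r)}\!=\!\{z_1\!=\!\ldots\!=\!z_r\!=\!0\}$. In the standard charts of the blowup of~$\C^n$ along this linear subspace (for instance $z_1\!=\!w_1$, $z_i\!=\!w_1w_i$ for $2\!\le\!i\!\le\!r$, $z_j\!=\!w_j$ for $j\!>\!r$, together with the charts obtained by permuting the roles of $z_1,\ldots,z_r$) one finds that $\bE\!=\!\{w_1\!=\!0\}$ and that $\wt V\!=\!\ov V\!\cup\!\bE$ is cut out by $w_1\cdots w_r\!=\!0$, i.e.\ it is again a transverse union of~$r$ smooth hypersurfaces (the exceptional divisor together with the proper transforms of the branches of~$V$) and $\wt V^{(r+1)}\!=\!\eset$; being in standard NC form in these charts, $\wt V$ carries the standard regularization near~$\bE$. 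Away from~$\bE$ the diffeomorphism $\pi|_{\wt X\setminus\bE}$ carries $\wt V\!=\!\pi^{-1}(V)$ to~$V$, so $(\pi|_{\wt X\setminus\bE})^*\cR$ is a regularization there. Since $\pi$ is $(\wt J,J)$-holomorphic and $\wt J$ is the blown-up standard structure in the model charts, this shows $\wt V$ is an NC almost complex divisor in~$(\wt X,\wt J)$; patching the two prescriptions, which agree on the overlap off~$\bE$ (both being~$\cR$ pulled back by the common diffeomorphism), within the allowed deformations of regularizations produces a single regularization~$\wt\cR$ of~$\wt V$.

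Next I would produce the isomorphism. Off $\pi^{-1}(V)$ the maps $\io_{\wt\cR}$ and $\pi^*\io_{\cR}$ are isomorphisms onto $T\wt X$ and $\pi^*TX$ and $\pi$ is a local diffeomorphism, so $\nd^{\log}_{\cR\wt\cR}\pi\!:=\!(\pi^*\io_{\cR})^{-1}\!\circ\!\nd\pi\!\circ\!\io_{\wt\cR}$ is a well-defined bundle isomorphism there which makes the square commute tautologically; it remains to extend it smoothly and isomorphically over $\pi^{-1}(V)$. Away from~$\bE$ this is immediate, since $\pi$ is a diffeomorphism carrying $(\wt X,\wt V)$ to $(X,V)$ and hence the log tangent bundles to one another. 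Near~$\bE$ one computes in the model charts that $\nd\pi$ sends the frame $w_1\partial_{w_1},\ldots,w_r\partial_{w_r},\partial_{w_{r+1}},\ldots,\partial_{w_n}$ of $T_{\wt\cR}\wt X(-\log\wt V)$ to $z_1\partial_{z_1}\!+\!\ldots\!+\!z_r\partial_{z_r},\ z_2\partial_{z_2},\ \ldots,\ z_r\partial_{z_r},\ \partial_{z_{r+1}},\ \ldots,\ \partial_{z_n}$, i.e.\ to the frame of $\pi^*T_{\cR}X(-\log V)$ up to a unipotent lower-triangular (in particular invertible) change of basis. Hence $\nd\pi\!\circ\!\io_{\wt\cR}$ factors through $\pi^*\io_{\cR}$ with invertible induced map; this factorization agrees by continuity with the map already defined off $\pi^{-1}(V)$, yielding a smooth bundle isomorphism $\nd^{\log}_{\cR\wt\cR}\pi$ over all of~$\wt X$, and the square commutes everywhere because $\nd^{\log}_{\cR\wt\cR}\pi$ is by construction the factorization of $\nd\pi\!\circ\!\io_{\wt\cR}$ through $\pi^*\io_{\cR}$. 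As $\nd\pi$ is $\C$-linear, so is $\nd^{\log}_{\cR\wt\cR}\pi$ by the characterization in part~\ref{logJ_it} of Theorem~\ref{TXV_thm}. In the holomorphic category one takes~$\cR$, and hence~$\wt\cR$, holomorphic, $\pi$ the usual blowup, and $TX(-\log V)$ the usual sheaf of log vector fields; the model computations are unchanged, so $\nd^{\log}_{\cR\wt\cR}\pi$ is a holomorphic bundle isomorphism — this recovers the classical fact that a log blowup along a stratum of an NC divisor is log-\'etale.

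The main obstacle is the first step: checking that the blowup ``determined by~$\cR$'' genuinely produces the flat NC model for~$\wt V$ near~$\bE$ and assembling a bona fide regularization~$\wt\cR$ interpolating between the blown-up model near~$\bE$ and the pulled-back~$\cR$ elsewhere. Once this is in place, the isomorphism claim reduces to the one-line unipotent change of frame above.
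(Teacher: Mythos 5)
There is a genuine gap: your reduction ``to the flat local model'' is only available in the integrable case, which is the part of the lemma that is classical and is recalled in Section~\ref{AG_subs}; it does not apply to the main content of the statement, which is the almost complex category with regularizations. An $\cR$-compatible almost complex structure~$J$ is in general non-integrable, so there are no coordinates near a point of~$V^{(r)}$ in which $V=\{z_1\cdots z_r=0\}$ and $J$ (hence~$\wt J$) is the standard structure. What the regularization provides is an identification of a neighborhood of a stratum with a neighborhood in its normal bundle, on which~$J$ corresponds to~$J_{\cR;I}$, an almost complex structure built from $J|_{TV_I}$, the fiberwise complex structures, and the Hermitian connections; moreover $T_\cR X(-\log V)$ and $T_{\wt\cR}\wt X(-\log\wt V)$ are not subsheaves of vector fields but bundles glued from the pieces~\eref{cOlofdfnloc_e} via the connection-dependent transition maps~\eref{varTII_e}. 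Consequently your frame computation $w_i\partial_{w_i}\mapsto z_i\partial_{z_i}$, while correct holomorphically, cannot even be formulated here; the actual extension of the identification~\eref{blowlogid_e} across~$\bE$ must be carried out with horizontal lifts for the connections $\wt\na^{((y,i);I;0)}$ and~$\na^{(r)}$ and the splitting~\eref{TbEsplit_e}, and the resulting ``unipotent'' change of frame acquires a connection term (the 1-form~$\th$ in Section~\ref{blowuplog_subs}) that your argument has no way of producing or controlling.

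The second, related gap is the construction of~$\wt\cR$ itself. The lemma asserts the existence of an honest regularization of~$\wt V$ in~$(\wt X,\wt J)$, i.e.\ a system of maps and Hermitian structures satisfying~\eref{overlap_e}, the product-Hermitian condition, and the compatibility~\eref{UxcRcond_e} across charts, not merely a deformation class; ``the standard regularization near~$\bE$'' patched with $(\pi|_{\wt X-\bE})^*\cR$ ``within the allowed deformations'' does not meet this. In the paper this is where most of the work lies: the natural Hermitian structures on the normal bundles of the strata of~$\wt V$ fail to be compatible on overlaps, and one must rescale the metrics as in~\eref{wtrhodfn_e}--\eref{wtnadfm_e} by functions built from the $\bS_r$- and $(S^1)^r$-invariant bump function of Lemma~\ref{bumpfun_lmm1} (interpolated as in~\eref{rhoscaldfn2_e}) and then shrink domains to restore the regularization conditions. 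Without this step there is no~$\wt\cR$ for which $T_{\wt\cR}\wt X(-\log\wt V)$ is even defined, so the isomorphism~\eref{blowuplog_e} cannot be addressed. Your outline is fine as a proof of the last sentence of the lemma (the complex category), but the almost complex case requires the connection-level construction of Sections~\ref{AlCBl_sub}--\ref{blowuplog_subs}.
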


\begin{crl}\label{blowupchern_crl}
With the assumptions as in Lemma~\ref{blowuplog_lmm},
\BE{blowupchern_e}\begin{split}
&\frac{c(T\wt{X})}{(1\!+\!\PD_{\wt{X}}([\ov{V}^{(1)}]_{\wt{X}})
\!+\!\PD_{\wt{X}}([\ov{V}^{(2)}]_{\wt{X}})\!+\!\ldots)(1\!+\!\PD_{\wt{X}}([\bE]_{\wt{X}}))}\\
&\hspace{1.5in}
=\pi^*\bigg(\frac{c(TX)}{1\!+\!\PD_X([V^{(1)}]_X)\!+\!\PD_X([V^{(2)}]_X)\!+\!\ldots}
\bigg)
\in H^*(\wt{X};\Q)\,.
\end{split}\EE
The above equality holds in $H^*(\wt{X};\Z)$ if $V\!\subset\!X$ is an SC divisor
or the torsion in $H_*(\bE;\Z)$ lies in the kernel of the homomorphism~$\wt\io_*$
induced by the inclusion \hbox{$\bE\!\lra\!\wt{X}$}.
\end{crl}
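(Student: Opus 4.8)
The plan is to identify the left-hand side of~\eref{blowupchern_e} with the total Chern class $c(T\wt X(-\!\log\wt V))$, to identify the class inside $\pi^*(\,\cdot\,)$ on the right-hand side with $c(TX(-\!\log V))$, and then to invoke the isomorphism~\eref{blowuplog_e} of Lemma~\ref{blowuplog_lmm}. The first of these requires a bookkeeping identity for the strata of $\wt V\!=\!\ov V\!\cup\!\bE$. Because $\bE$ contributes a single smooth branch, for each $k$ one has the exact equality $\wt V^{(k)}\!=\!\ov V^{(k)}\!\cup\!\big(\bE\!\cap\!\ov V^{(k-1)}\big)$, and these two pieces are disjoint in top dimension; moreover $\bE$ meets each stratum $\ov V^{(k-1)}\!-\!\ov V^{(k)}$ transversally in $\wt X$ — as one checks in the standard local blowup model $V\!=\!\{z_1\!\cdots\!z_r\!=\!0\}$, $V^{(r)}\!=\!\{z_1\!=\!\ldots\!=\!z_r\!=\!0\}$, in which $\bE\!=\!\{w_1\!=\!0\}$ and $\ov V\!=\!\{w_2\!\cdots\!w_r\!=\!0\}$ — so $\PD_{\wt X}([\bE\!\cap\!\ov V^{(k-1)}]_{\wt X})=\PD_{\wt X}([\bE]_{\wt X})\!\cup\!\PD_{\wt X}([\ov V^{(k-1)}]_{\wt X})$. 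Summing the resulting identities $\PD_{\wt X}([\wt V^{(k)}]_{\wt X})=\PD_{\wt X}([\ov V^{(k)}]_{\wt X})+\PD_{\wt X}([\bE]_{\wt X})\!\cup\!\PD_{\wt X}([\ov V^{(k-1)}]_{\wt X})$ over $k$ (with $[\ov V^{(0)}]_{\wt X}\!=\![\wt X]_{\wt X}$) gives
$$1+\sum_{k\ge1}\PD_{\wt X}\big([\wt V^{(k)}]_{\wt X}\big)
=\bigg(1+\sum_{k\ge1}\PD_{\wt X}\big([\ov V^{(k)}]_{\wt X}\big)\bigg)\big(1\!+\!\PD_{\wt X}([\bE]_{\wt X})\big),$$
so the left-hand side of~\eref{blowupchern_e} equals $c(T\wt X)\big/\big(1\!+\!\PD_{\wt X}([\wt V^{(1)}]_{\wt X})\!+\!\PD_{\wt X}([\wt V^{(2)}]_{\wt X})\!+\!\ldots\big)$.

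Next I would apply formula~\eref{cTXV_e} of Theorem~\ref{TXV_thm}\ref{cTXV_it} twice: to the NC almost complex divisor $\wt V\!\subset\!\wt X$ — which is NC by Lemma~\ref{blowuplog_lmm} — it shows the left-hand side of~\eref{blowupchern_e} equals $c(T\wt X(-\!\log\wt V))$ in $H^*(\wt X;\Q)$, and to $V\!\subset\!X$ it shows the class inside $\pi^*(\,\cdot\,)$ equals $c(TX(-\!\log V))$ in $H^*(X;\Q)$. After fixing an $\cR$-compatible almost complex structure $J$ on $X$ and taking $\wt J$ to be its blowup, the isomorphism~\eref{blowuplog_e} intertwines the complex structures furnished by Theorem~\ref{TXV_thm}\ref{logJ_it} — this follows from the commuting square in Lemma~\ref{blowuplog_lmm}, since $\nd\pi$ and $\io_{\cR}$ are $\C$-linear and $\io_{\cR}$ is an isomorphism away from~$V$ — so taking total Chern classes in~\eref{blowuplog_e} gives $c(T\wt X(-\!\log\wt V))=\pi^*c(TX(-\!\log V))$ in $H^*(\wt X;\Z)$. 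Chaining the three identifications proves~\eref{blowupchern_e} over~$\Q$. If $V\!\subset\!X$ is an SC divisor, then its proper transform $\ov V$, and hence $\wt V\!=\!\ov V\!\cup\!\bE$, is again SC; the last sentence of Theorem~\ref{TXV_thm}\ref{cTXV_it} then upgrades both applications of~\eref{cTXV_e} to $H^*(\,\cdot\,;\Z)$, and the same argument proves~\eref{blowupchern_e} over~$\Z$.

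It remains to handle the torsion case. Both sides of~\eref{blowupchern_e} are in any case well-defined integral classes (one inverts $1$ plus a positive-degree, hence nilpotent, class), and by the preceding paragraph their difference $\de\!\in\!H^*(\wt X;\Z)$ is torsion. Over $\wt X\!\setminus\!\bE\!\cong\!X\!\setminus\!V^{(r)}$ the proper transform, the strata, and the tangent bundles all restrict to those of $(X,V)$, so both sides of~\eref{blowupchern_e} agree there; hence $\de$ restricts to $0$ on $\wt X\!\setminus\!\bE$, which by the Thom isomorphism for the smooth divisor~$\bE$ means $\de\!=\!\wt\io_!(\be)$ for some $\be\!\in\!H^{*-2}(\bE;\Z)$, where $\wt\io_!$ is the Gysin homomorphism. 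Poincar\'e-dually, $\de$ lies in the image of $\wt\io_*\colon H_*(\bE;\Z)\!\to\!H_*(\wt X;\Z)$, which the hypothesis assumes to vanish on torsion; so it suffices to realize $\de$ as $\wt\io_*$ of a \emph{torsion} class. This last point is the main obstacle. One cannot simply divide, since $\wt\io_!$ need not be injective — a non-torsion class on $\bE$ can map to a torsion class of $\wt X$ — so one must trace the integral correction to~\eref{cTXV_e} produced in Section~\ref{ChNC_sec} through the blowup and the stratum identity of the first paragraph, and verify that, for $\wt V$, this correction is $\wt\io_!$ of classes pulled back to $\bE$ from the torsion of $H_*(V^{(r)};\Z)$, which are torsion and hence killed by $\wt\io_*$. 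With that in hand the hypothesis forces $\de\!=\!0$, completing~\eref{blowupchern_e} over~$\Z$.
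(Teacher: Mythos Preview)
Your treatment of the $\Q$-statement and the SC case is correct and essentially matches the paper's. The only cosmetic difference is in the factorization step: you prove the strata identity $[\wt V^{(k)}]=[\ov V^{(k)}]+[\bE]\!\cdot\![\ov V^{(k-1)}]$ directly, whereas the paper obtains the same factorization via~\eref{logsplit_e} applied with $V'=\bE$ (so that $c(T\wt X(-\log\wt V))(1+\PD_{\wt X}[\bE])=c(T\wt X(-\log\ov V))$) together with~\eref{cTXV_e} for~$\ov V$. Either route feeds into Lemma~\ref{blowuplog_lmm} the same way.

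The torsion case, however, has a genuine gap. You correctly isolate the issue: the difference~$\de$ is torsion and lies in the image of~$\wt\io_*$, but you need a \emph{torsion} preimage, and you only gesture at ``tracing the integral correction to~\eref{cTXV_e} produced in Section~\ref{ChNC_sec}.'' That section is a de~Rham computation and carries no integral information, so there is no such correction to trace. The paper's argument (Section~\ref{ChernQvsZ_sec}) is structurally different. It first proves Lemma~\ref{blowuphom_lmm}, which identifies $\ker(\pi_*\!:H_*(\wt X)\!\to\!H_*(X))$ with $\wt\io_*$ of the Leray--Hirsch complement in $H_*(\bE)$, parametrized by classes in $H_*(V^{(r)})$. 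Using this, \eref{blowupchern_e} is rewritten as~\eref{blowupchern_e2}, whose left side depends only on the blowup along the smooth submanifold $V^{(r)}$ and is therefore a \emph{universal} expression in $c_1(\cN_XV^{(r)}),\ldots,c_r(\cN_XV^{(r)})$ and~$c_1(\ga)$. The SC case (where~\eref{blowupchern_e} holds over~$\Z$) pins down these universal polynomials. Comparing with the right side of~\eref{blowupchern_e2} and invoking the $\Q$-statement shows that the discrepancy is $\wt\io_*$ of classes of the form $c_1(\ga)^{i-1}\cap(\bE|_{\mu_i})$ with each $\mu_i\in H_*(V^{(r)};\Z)$ \emph{torsion}; the hypothesis then kills these. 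The universality step is the idea you are missing, and without it there is no mechanism to force the preimage in $H_*(\bE;\Z)$ to be torsion.
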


\noindent
The statement of Lemma~\ref{blowuplog_lmm} in the complex category is well-known;
its proof is recalled at the end of Section~\ref{AG_subs}.
We establish this lemma in the almost complex category with regularizations in 
Sections~\ref{AlCBl_sub}-\ref{blowuplog_subs}.
The regularizations are used to construct the bundles $TX(-\log V)$ and
$T\wt{X}(\!-\log(\ov{V}\!\cup\!\bE)\!)$ and the blowup~$\wt{X}$.\\

\noindent
We can also pass to {\it a} blowup~$\wt{X}$ of~$X$ along the deepest stratum~$V^{(r)}$ of~$V$
and the proper transform $\ov{V}\!\subset\!\wt{X}$ of~$V$ 
if $V\!\subset\!X$ is an NC symplectic divisor
with~$V^{(r)}$ admitting a tubular symplectic neighborhood 
that contains the disk subbundle of~$\cN_XV^{(r)}$ of a fixed radius; see Section~\ref{SymplBl_subs}.
This is automatically the case if~$V^{(r)}$ is the compact.
If so, each deformation equivalence class~$[\om]$ of symplectic forms on~$(X,V)$ 
determines a deformation equivalence class~$[\wt\om]$ 
of symplectic forms on the blowup~$(\wt{X},\wt{V})$ of~$(X,V)$ along~$V^{(r)}$, 
a homotopy class of blowdown maps
$$\pi\!:(\wt{X},\wt{V})\lra(X,V),$$
and a homotopy class of isomorphisms
\BE{blowuplog_e2}\nd^{\log}\pi\!:  T\wt{X}(-\log\wt{V})
\xlra{~\cong~}\pi^*\big(TX(-\log V)\!\big)\EE
between the log tangent bundles associated with~$[\om]$ and~$[\wt\om]$.
If $V^{(r)}$ is not compact, 
the existence of  a tubular symplectic neighborhood that contains the disk subbundle of~$\cN_XV^{(r)}$
of a fixed radius is unclear, even after deforming the symplectic form.
We suspect that the answers to the following closely related questions are negative in general;
the affirmative answer to Question~\ref{SymplBl_ques2} would imply 
the affirmative answer to Question~\ref{SymplBl_ques}.

\begin{ques}\label{SymplBl_ques}
Let $(V,\om)$ be a symplectic manifold, $\cN\!\lra\!V$ be a direct sum of Hermitian line
bundles $(L_i,\rho_i,\na^{(i)})$ determining fiberwise symplectic forms~$\Om_i$ on~$L_i$,
and $\cN'\!\subset\!\cN$ be a neighborhood of~$V$.
Is there a deformation of~$\om$ through symplectic forms and of $(\rho_i,\na^{(i)})$ through
Hermitian structures so that the induced 2-form~$\wt\om$ as in~\eref{ombund_e}
on the total space of~$\cN$ is symplectic on the unit ball subbundle of~$\cN$,
with respect to the deformed metric, and this subbundle is contained in~$\cN'$?
\end{ques}

\begin{ques}\label{SymplBl_ques2}
Let $(V,\om)$ be a symplectic manifold, $J$ be an $\om$-compatible almost complex structure, 
and $C\!:V\!\lra\!\R^+$ be a smooth function.
Are there a symplectic form~$\om'$ on~$V$ deformation equivalent to~$\om$ 
and an $\om'$--compatible almost complex structure~$J'$ so~that
$$\om'(v,J'v)\ge C(x)\om(v,Jv) \qquad\forall~x\!\in\!V,~v\!\in\!T_xV.$$
\end{ques}

\vspace{.15in}

\noindent
We review the complex geometric constructions of the complex line bundle $(\cO_X(V),\fI)$ and
the complex vector bundle $(TX(-\log V),\fI)$ associated with an NC divisor~$V$ 
in a complex manifold~$V$ and some of their properties in Section~\ref{AG_subs}.
As a warmup to the general case, we construct these bundles for a smooth symplectic divisor
in Section~\ref{smooth_subs}.
For the reader's convenience,
Sections~\ref{SCdfn_subs} and~\ref{SCreg_subs} recall the notions of SC symplectic divisor and 
regularization, respectively, introduced in~\cite{SympDivConf}.
Section~\ref{ConSC_subs} contains the constructions of the vector bundles~$\cO_X(V)$
and $TX(-\log V)$ for an SC symplectic divisor $V\!\subset\!X$ and establishes
Proposition~\ref{OXV_prp} and the first three statements of Theorem~\ref{TXV_thm} 
in this setting.
Section~\ref{logsplit_subs} establishes Theorem~\ref{TXV_thm}\ref{logsplit_it}
for an SC symplectic divisor~$V$. 
The constructions and proofs in the SC case illustrate the arguments
in the general NC case, which are more notionally involved.
Sections~\ref{NCLdfn_subs}-\ref{ConLNC_subs} and \ref{NCGdfn_subs}-\ref{ConGNC_subs}
are the analogues of Sections~\ref{SCdfn_subs}-\ref{ConSC_subs} in
the local and global perspectives, respectively, on the NC symplectic divisors introduced
in~\cite{SympDivConf2}.  
Section~\ref{ConLNC_subs} also shows why the proof of Theorem~\ref{TXV_thm}\ref{logsplit_it}
for SC symplectic divisors in Section~\ref{logsplit_subs} immediately extends to
the general NC case.
Sections~\ref{blowup_sec} and~\ref{ChNC_sec} establish 
Lemma~\ref{blowuplog_lmm} and Theorem~\ref{TXV_thm}\ref{cTXV_it}, respectively.
In Section~\ref{ChernQvsZ_sec}, we establish the remaining statement of 
Corollary~\ref{blowupchern_crl} and show 
that~\eref{cTXV_e} and \eref{blowupchern_e} do not need 
to hold with $\Z$-coefficients for arbitrary NC divisors.

\section{Standard settings}
\label{stand_sec}

\subsection{NC complex divisors}
\label{AG_subs}

\noindent
Let $X$ be a complex manifold of (complex) dimension~$n$
with structure sheaf~$\cO_X$ (the sheaf of local holomorphic functions).
An \sf{NC divisor} in~$X$ is a subvariety $V\!\subset\!X$ locally defined by an equation of the~form
\BE{CNCdfn_e}z_1\!\ldots\!z_k = 0 \EE
in a holomorphic coordinate chart $(z_1,\ldots,z_n)$ on~$X$.
The sheaf of local meromorphic functions with simple poles along the smooth locus of~$V$
is freely generated in such a coordinate chart by the meromorphic function $1/z_1\!\ldots\!z_k$
as a module over~$\cO_X$.
Since this sheaf is locally free of rank~1, 
it is the sheaf of local holomorphic sections of a holomorphic line bundle~$\cO_X(V)$. 
The constant function~1 on~$X$ determines a holomorphic section~$s$ of this sheaf
satisfying the properties of~$s_{\cR}$ in Proposition~\ref{OXV_prp}\ref{cOdfn_it}.
It is immediate that~\eref{cOsplit_e} holds as~well.
The dual of~$\cO_X(V)$ is the holomorphic line bundle~$\cO_X(-V)$;
the sheaf of its local holomorphic sections is freely generated in a coordinate chart
as above by the holomorphic function $z_1\!\ldots\!z_k$.\\

\noindent
In a local chart as in~\eref{CNCdfn_e},
the sheaf $\cT X\!\equiv\!\cO(TX)$ of local holomorphic sections of the tangent bundle~$TX$ is generated by 
the coordinate vector fields $\prt_{z_1},\ldots,\prt_{z_n}$.
The \sf{logarithmic tangent sheaf} $\cT X(-\log V)$ is the subsheaf of~$\cT X$ generated~by
the vector fields 
$$\prt^{\log}_{z_1}\equiv z_1\prt_{z_1},\quad \ldots, \quad \prt^{\log}_{z_k}\equiv z_k\prt_{z_k},
\quad  \prt_{z_{k+1}},\quad \ldots,\quad \prt_{z_n}.$$
The dual of this subsheaf is the sheaf of logarithmic 1-forms $\Om^1_X(\log V)$ is the sheaf generated by 
$$\frac{\nd z_1}{z_1},\quad\ldots,\quad \frac{\nd z_k}{z_k}, 
\quad\nd z_{k+1},\quad\ldots,\quad \nd z_n.$$
Since $\cT X(-\log V)$ is locally free, 
it is the sheaf of local holomorphic sections of a holomorphic vector bundle $TX(-\log V)$. 
The inclusion of~$\cT X(-\log V)$ into~$\cT X$ gives rise to a holomorphic homomorphism 
$$\io\!: TX(-\log V) \lra TX$$
that realizes every section of $TX(-\log V)$ as a section of $TX$ with values in~$TV$ along~$V$.\\

\noindent
The normalization $\io\!:\wt{V}\!\lra\!V\!\subset\!X$ of~$V$ is an immersion.
The inclusion of the sheaf~$\Om^1_X$ of 1-forms on~$X$ into~$\Om^1_X(\log V)$
and the Poincare residue~map induce an exact sequence
$$0\lra \Om_X^1 \lra  \Om_X^1(\log V) \lra \io_*\cO_{\wt{V}}\lra 0$$
of sheaves on~$X$,
where $\io_*\cO_{\wt{V}}$ is the direct image (or push-forward) sheaf of
the structure sheaf $\cO_{\wt{V}}$ of~$\wt{V}$.
Therefore,
\BE{OmXc_e}c\big(\Om_X^1(\log V)\!\big)=c\big(\Om_X^1\big)\,c\big(\io_*\cO_{\wt{V}}\big).\EE
If $V\!=\!\bigcup_{i\in S}V_i$ is an SC divisor, then 
$\io_*\cO_{\wt{V}}\!=\!\bigoplus_{i\in S}\cO_{V_i}$ as sheaves on~$X$.
Furthermore, there is an exact sequence
$$0\lra \cO_X(-V_i) \lra \cO_X \lra \cO_{V_i}\lra0$$
of sheaves on $X$ for each $i\!\in\!S$.
Thus, 
$$c\big(\io_*\cO_{\wt{V}})\prod_{i\in S}\!\!\big(1\!-\!\PD_X\big([V_i]_X\big)\!\big)
=1\in H^2(X;\Z)$$
in this case. We thus obtain~\eref{cTXV_eSC} in the complex setting.\\

\noindent
For an arbitrary NC divisor $V\!\subset\!X$,
the derived direct image sheaf $\io_!\cO_{\wt{V}}$ of~$\cO_{\wt{V}}$
coincides with the direct image sheaf~$\io_*\cO_{\wt{V}}$ 
because the higher derived functors  for an immersion vanish.
Along with the Grothendieck-Riemann-Roch theorem, this gives
$$\tn{ch}\big(\io_*\cO_{\wt{V}}\big)\,\tn{td}(X)
=\tn{ch}\big(\io_!\cO_{\wt{V}}\big)\,\tn{td}(X)
=\io_*\big(\tn{ch}(\cO_{\wt{V}})\,\tn{td}(\wt{V})\!\big)
=\io_*\big(\tn{td}(\wt{V})\!\big),$$
where $\tn{ch}$ is the Chern character and $\tn{td}$ is the Todd class;
see \cite[Theorem~1.3]{Ryan}.
Thus,
\BE{chFormula_e}
\tn{ch}\big(\iota_*\cO_{\wt{V}}\big)
=\frac{\iota_*(\tn{td}(\wt{V}))}{\tn{td}(X)}\in H^*(X,\Q).\EE
This formula holds only with $\Q$-coefficients because the Chern character is 
a map from the K-theory of~$X$ to the rational Chow group of~$X$.
The proof of~\eref{chFormula_e} in \cite[Corollary~5.22]{Ryan} uses blowups 
to reduce the problem to SC divisors;
we are not aware of a direct way for obtaining~\eref{chFormula_e}.
For the purposes of computing $c(TX(-\log V)\!)$ via~\eref{OmXc_e}, 
it is still necessary to translate $\tn{ch}(\io_*\cO_{\wt{V}})$ into $c(\io_*\cO_{\wt{V}})$.
Nevertheless, it is feasible to directly study the change in $c(TX(-\log V)\!)$ 
under blowups, as is done in the proofs of Lemma~\ref{blowuplog_lmm}
in Section~\ref{blowuplog_subs}, and 
to relate~$c(TX(-\log V)\!)$ to~$c(TX)$ in the spirit of~\eref{OmXc_e},
as is done in the proof of~\eref{cTXV_e} in Section~\ref{ChNC_sec}.\\

\noindent
Let $r\!\in\!\Z^+$, $\pi\!:\wt{X}\lra\!X$, 
and $\bE,\ov{V}\!\subset\!\wt{X}$ be as in Lemma~\ref{blowuplog_lmm}
and $\wt{U}\!\equiv\!\pi^{-1}(U)$ be the preimage of a coordinate chart~$U$ as 
in the sentence~\eref{CNCdfn_e}.
For each $i\!=\!1,\ldots,k$,
$$\wt{U}_i\equiv\big\{\!\big(z_i,(u_j)_{j\in[k]-i},(z_j)_{j\in[n]-[k]}\big)\!:
\big(u_1z_1,\ldots,u_{i-1}z_i,z_i,u_{i+1}z_i,\ldots,u_kz_i,(z_j)_{j\in[n]-[k]}\big)\!\in\!U\big\},$$
where $[k]\!\equiv\!\{1,\ldots,k\}$, is a coordinate chart on~$\wt{U}\!\subset\!\wt{X}$;
these charts cover~$\wt{U}$.
Since
\begin{gather*}
\ov{V}\!\cap\!\wt{U}_i=\big(u_1\ldots u_{i-1}u_{i+1}\ldots u_k\!=\!0\big),\quad
\bE\!\cap\!\wt{U}_i=(z_i\!=\!0),\\
\hbox{and}\qquad
\frac{\nd z_j}{z_j}=\frac{\nd u_j}{u_j}+\frac{\nd z_i}{z_i}\qquad \forall~j\in [k]\!-\!i,
\end{gather*}
we obtain that
$$\pi^*\Om^1_X(\log V)= \Om^1_{\wt{X}}\big(\log(\ov{V}\!\cup\!\bE)\!\big).$$
This establishes Lemma~\ref{blowuplog_lmm} in the complex setting.

\subsection{Smooth symplectic divisors}
\label{smooth_subs}

\noindent
It is fairly straightforward to adapt the constructions of $\cO_X(V)$ and $TX(-\log V)$ 
in Section~\ref{AG_subs} via the Symplectic Neighborhood Theorem.
Before doing so below, we carefully formulate the relevant notions.\\ 

\noindent
Let $V$ be a smooth manifold.
For a vector bundle $\pi\!:\cN\!\lra\!V$, we denote by $\ze_{\cN}$ 
\sf{the radial vector field} on the total space of~$\cN$; it is given~by
$$\ze_{\cN}(v)=(v,v)\in\pi^*\cN =T\cN^{\ver} \lhra T\cN\,.$$
Let $\Om$ be a fiberwise 2-form on~$\cN\!\lra\!V$.
A connection~$\na$ on~$\cN$ induces a projection $T\cN\!\lra\!\pi^*\cN$ and thus
determines an extension~$\Om_{\na}$ of~$\Om$ to a 2-form on 
the total space of~$\cN$.
If $\om$ is a closed 2-form on~$V$, the 2-form
\BE{ombund_e2}
\wt\om \equiv \pi^*\om+\frac12\nd\io_{\ze_{\cN}}\Om_{\na}
\equiv \pi^*\om+\frac12\nd\big(\Om_{\na}(\ze_{\cN},\cdot)\big)\EE
on the total space of $\cN$ is also closed and
restricts to~$\Om$ on $\pi^*\cN\!=\!T\cN^{\ver}$.
If $\om$ is a symplectic form on~$V$ and $\Om$ is a fiberwise symplectic form on~$\cN$,
then~$\wt\om$ is a symplectic form on a neighborhood of~$V$ in~$\cN$.\\

\noindent
We call $\pi\!:(L,\rho,\na)\!\lra\!V$ a \sf{Hermitian line bundle} if
$L\!\lra\!V$ is a smooth complex line bundle,
$\rho$ is a Hermitian metric on~$L$, 
and $\na$ is a $\rho$-compatible connection on~$L$.
We use the same notation~$\rho$ to denote the square of the norm function on~$L$
and the Hermitian form on~$L$ which is $\C$-antilinear in the second input.
Thus,
$$\rho(v)\equiv\rho(v,v), \quad 
\rho(\fI v,w)=\fI\rho(v,w)=-\rho(v,\fI w) 
\qquad\forall~(v,w)\!\in\!L\!\times_V\!L.$$
Let $\rho^{\R}$  denote the real part of the form~$\rho$.\\

\noindent
A Riemannian metric on an oriented  real vector bundle \hbox{$L\!\lra\!V$} of rank~2
determines a complex structure on the fibers of~$V$.
A \sf{Hermitian structure} on an oriented  real vector bundle \hbox{$L\!\lra\!V$} of rank~2
is a pair $(\rho,\na)$ such that $(L,\rho,\na)$ is a Hermitian line bundle
with the complex structure~$\fI_{\rho}$ determined by the Riemannian metric~$\rho^{\R}$.
If $\Om$ is a fiberwise symplectic form on an oriented vector bundle \hbox{$L\!\lra\!V$} of rank~2,
an \sf{$\Om$-compatible Hermitian structure} on~$L$ is a Hermitian structure $(\rho,\na)$ on~$L$ 
such that $\Om(\cdot,\fI_{\rho}\cdot)=\rho^{\R}(\cdot,\cdot)$.

\begin{dfn}\label{smreg_dfn}
Let $X$ be a manifold and $V\!\subset\!X$ be a submanifold
with normal bundle $\cN_XV\!\lra\!V$. 
A (smooth) \sf{regularization for~$V$ in~$X$} is a diffeomorphism $\Psi\!:\cN'\!\lra\!X$
from a neighborhood of~$V$ in~$\cN_XV$ onto a neighborhood of~$V$ in~$X$ such
that $\Psi(x)\!=\!x$ and the isomorphism
$$ \cN_XV|_x=T_x^{\ver}\cN_XV \lhra T_x\cN_XV
\stackrel{\nd_x\Psi}{\lra} T_xX\lra \frac{T_xX}{T_xV}\equiv\cN_XV|_x$$
is the identity for every $x\!\in\!V$.
\end{dfn}

\noindent
Let  $V$ be a closed symplectic submanifold of a  symplectic submanifold in~$(X,\om)$. 
The normal bundle~$\cN_XV$ of~$V$ in~$X$ then inherits a fiberwise symplectic form~$\om|_{\cN_XV}$
from~$\om$ via the isomorphism
$$\pi_{\cN_XV}\!:
TV^{\om}\!\equiv\! \big\{v\!\in\!T_xX\!:\,x\!\in\!V,\,\om(v,w)\!=\!0~\forall\,w\!\in\!T_xV\big\}
\stackrel{\approx}{\lra}\frac{TX|_V}{TV}\!\equiv\!\cN_XV\,.$$
The symplectic form~$\om|_V$ on~$V$,
the fiberwise 2-form $\Om\!\equiv\!\om|_{\cN_XV}$ on~$\cN_XV$,
and a connection~$\na$ on~$\cN_X V$ thus determine a 2-form~$\wt\om_{\na}$ 
on the total space of~$\cN_X V$ via~\eref{ombund_e2}.
By the Symplectic Neighborhood Theorem, there exists 
a regularization $\Psi\!:\cN'\!\lra\!X$ for~$V$ in~$X$ so that $\Psi^*\om\!=\!\wt\om_{\na}|_{\cN'}$.\\

\noindent
Suppose in addition that $V$ is of codimension~2, i.e.~$V$ is a smooth symplectic divisor
in~$(X,\om)$. 
If $(\rho,\na)$ is an $\om|_{\cN_XV}$-compatible Hermitian structure on~$\cN_XV$, 
the triple $\cR\!=\!(\!(\rho,\nabla),\Psi)$ is an $\om$-regularization for~$V$ in~$X$ 
in the sense of Definition~\ref{TransCollregul_dfn} 
and determines a fiberwise complex structure~$\fI_{\rho}$ on~$\cN_XV$. 
Let
\BE{OXVSmooth_e}\begin{split}
&\cO_{\cR;X}(V)= \big(\{\Psi^{-1}\}^{\!*}\pi^*\cN_XV\!\sqcup\! 
(X\!-\!V)\!\times\!\C\big)\big/\!\!\sim\,\lra 
\Psi(\cN')\!\cup\!(X\!-\!V)\!=\!X,\\
&\hspace{.3in}\{\Psi^{-1}\}^{\!*}\pi^*\cN_XV
\ni\big(\Psi(v),v,cv\big)\sim\big(\Psi(v),c\big)\in (X\!-\!V)\!\times\!\C,
\end{split}\EE
where $\pi\!:\cN'\!\lra\!V$ is the bundle projection map.
This defines a smooth complex line bundle over~$X$.
The smooth section~$s_{\cR}$ of this bundle given~by
$$s_{\cR}(x)=\begin{cases}[x,v,v],&\hbox{if}~x\!=\!\Psi(v),~v\!\in\!\cN';\\
[x,1],&\hbox{if}~x\!\in\!X\!-\!V;
\end{cases}$$
satisfies the properties stated in Proposition~\ref{OXV_prp}\ref{cOdfn_it}.\\

\noindent
For each $v\!\in\!\cN_XV$, the connection~$\na$ determines an injective homomorphism
\BE{nhndfn_e}\nh_{\na;v}\!:T_{\pi(v)}V\lra T_v(\cN_XV)\EE
with the image complementary to the image of $\cN_XV$.
Let
\begin{equation*}\begin{split}
&T_\cR X(-\log V)=\Big(\!\!
\big(\!(\{\Psi^{-1}\}^{\!*}\pi^*TV)\!\oplus\!\Psi(\cN')\!\times\!\C\big)  
\!\sqcup\!T(X\!-\!V)\!\!\Big)\!\Big/\!\!\!\sim\,\lra \Psi(\cN')\!\cup\!(X\!-\!V)\!=\!X,\\
&~(\{\Psi^{-1}\}^{\!*}\pi^*TV)\!\oplus\!\Psi(\cN')\!\times\!\C
\ni\big(\Psi(v),v,w\big)\!\oplus\!\big(\Psi(v),c\big)
\sim\nd_v\Psi\big(\nh_{\na;v}(w)\!+\!cv)\in T(X\!-\!V).
\end{split}\end{equation*}
This defines a smooth vector bundle over~$X$.
The smooth bundle homomorphism~\eref{Ninc_e} defined~by
$$\io_{\cR}(\dot{x})=\begin{cases}
\nd_v\Psi\big(\nh_{\na;v}(w)\!+\!cv),&\hbox{if}~
\dot{x}\!=\!\big[(\Psi(v),v,w)\!\oplus\!(\Psi(v),c)\big],~v\!\in\!\cN';\\
\dot{x},&\hbox{if}~\dot{x}\!\in\!T(X\!-\!V);
\end{cases}$$
satisfies the properties stated in Theorem~\ref{TXV_thm}\ref{loddfn_it}.\\

\noindent
An almost complex structure $J$ on $V$ and the fiberwise complex structure $\fI_{\rho}$
on~$\cN_XV$ determine an almost complex structure~$J_{\cR}$ on the total space of~$\cN_XV$
via the connection~$\na$.
We call an almost complex structure~$J$ on~$X$ \sf{$\cR$-compatible} if
$J$~preserves $TV\!\subset\!TX|_V$ and $\Psi$ intertwines~$J$ and 
$J_{\cR}\!\equiv\!(J|_{TV})_{\cR}$, i.e.
$$J(TV)\subset TV \qquad\hbox{and}\qquad 
J\!\circ\!\nd\Psi=\nd\Psi\!\circ\!J_{\cR}\big|_{\cN'}\,.$$
Such an almost complex structure~$J$ induces a fiberwise complex structure $\fI_{\cR,J}$
on~$T_\cR X(-\log V)$ satisfying Theorem~\ref{TXV_thm}\ref{logJ_it}.
It can be constructed by pasting together $J_{\cR}\!\circ\!\{\nd\Psi\}^{-1}$
and an almost complex structure on $X\!-\!V$.\\

\noindent
We denote by $\Symp^+(X,V)$ the space of symplectic forms on~$X$ that restrict to
symplectic forms on~$V$,
by $\Aux(X,V)$ the space of pairs $(\om,\cR)$ consisting of \hbox{$\om\!\in\!\Symp^+(X,V)$}
and an $\om$-compatible regularization~$\cR$ for~$V$ in~$X$,
and by $\AK(X,V)$ the space of triples $(\om,\cR,J)$ consisting of \hbox{$(\om,\cR)\!\in\!\Aux(X,V)$}
and an almost complex structure~$J$ on~$X$ compatible with~$\om$ and~$\cR$.
Since the Symplectic Neighborhood Theorem can be applied with families of symplectic forms
parametrized by compact manifolds, the projection
\BE{Aux2Symp_e} \Aux(X,V)\lra\Symp^+(X,V), \qquad  (\om,\cR)\lra\om,\EE 
is a weak homotopy equivalence.
It is straightforward, by adapting the proof of \cite[Prp~4.1]{MS1}, for example,
to show that the projection
\BE{AK2Aux_e} \AK(X,V)\lra \Aux(X,V), \qquad  (\om,\cR,J)\lra(\om,\cR),\EE 
is also a weak homotopy equivalence.\\

\noindent
The above constructions of the complex line bundle $(\cO_X(V),\fI)$ and 
the vector bundle $TX(-\log V)$ can be applied with compact families in~$\Aux(X,V)$.
The construction of the complex vector bundle $(TX(-\log V),\fI_{\cR,J})$
can be applied with compact families in~$\AK(X,V)$.
Along with the previous paragraph, this confirms the statements of
 Proposition~\ref{OXV_prp}\ref{cOinv_it}	and Theorem~\ref{TXV_thm}\ref{loginv_it} 
for smooth symplectic divisors~$V$.\\
	
\noindent
The constructions of the  complex line bundle $(\cO_X(V),\fI)$ and 
the vector bundle $TX(-\log V)$ do not involve the symplectic form~$\om$ directly.
The first construction can be carried out for any closed codimension~2 submanifold~$V$ 
of a smooth manifold~$X$ endowed with a complex structure on the normal bundle~$\cN_XV$
and a smooth regularization~$\Psi$.
The constructions of the vector bundle~$TX(-\log V)$ and of the complex structure~$\fI_{\cR,J}$
on~it require in addition a connection on~$\cN_XV$ in the first case and 
also an $\cR$-compatible almost complex structure~$J$ on~$X$ in the second~case.\\

\noindent
Corollary~\ref{RadVecFld_crl} below is used later in this paper.
We deduce it from the following observation.

\begin{lmm}\label{RadVecFld_lmm}
Suppose $V$ is a smooth manifold, $\pi\!:\cN\!\lra\!V$ is a vector bundle, and 
$\na$ is a connection in~$\cN$.
Let $T\cN^{\hor}\!\subset\!T\cN$ be the horizontal tangent subbundle determined by~$\na$. 
If $\wt\na\!\equiv\!\pi^*\na$ is the connection in $\pi^*\cN\!\lra\!\cN$ determined by~$\na$, then
\BE{RadVecFld_e0a}
\wt\na\ze_{\cN}\big|_{T\cN^{\hor}}\!=\!0\!: T\cN^{\hor}\lra\pi^*\cN.\EE
If in addition $\cN$ is a complex vector bundle (and $\na$ is a complex linear connection), then
\BE{RadVecFld_e0b}
\wt\na\ze_{\cN}\big|_{T\cN^{\ver}}\!\circ\!\fI\!=\!\fI\wt\na\ze_{\cN}\big|_{T\cN^{\ver}}
\!:T\cN^{\ver}\lra\pi^*\cN.\EE
\end{lmm}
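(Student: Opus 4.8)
The plan is to prove both identities by a direct computation in a local trivialization of $\cN$, using the standard coordinate description of the connection $\na$ and the induced connection $\wt\na=\pi^*\na$ on the pullback bundle $\pi^*\cN\lra\cN$. Fix a point of $V$ and a local frame $e_1,\ldots,e_m$ of $\cN$, with fiber coordinates $(w^a)$ on the total space and base coordinates $(x^j)$ on $V$; write the connection 1-forms $\th^a_b$ of $\na$ relative to this frame, so that $\na e_b=\th^a_b\otimes e_a$. In these coordinates the radial vector field is $\ze_{\cN}=w^a\,(\pi^*e_a)$, viewed as a section of $\pi^*\cN$. The pullback connection $\wt\na$ acts on $\pi^*e_a$ by $\wt\na(\pi^*e_a)=\pi^*(\na e_a)=(\pi^*\th^b_a)\otimes\pi^*e_b$, where $\pi^*\th^b_a$ involves only $\nd x^j$ (no $\nd w^c$ component). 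Hence
\BE{RadVecFld_comp_e}
\wt\na\ze_{\cN}=\nd w^a\otimes\pi^*e_a+w^a\,(\pi^*\th^b_a)\otimes\pi^*e_b.
\EE

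First I would establish~\eref{RadVecFld_e0a}. The horizontal subbundle $T\cN^{\hor}$ is spanned at $(x,w)$ by the lifts $\wt{\prt}_{x^j}\equiv\prt_{x^j}-w^c\th^a_{cj}\prt_{w^a}$, where $\th^b_a=\th^b_{aj}\,\nd x^j$. Plugging $\wt{\prt}_{x^j}$ into~\eref{RadVecFld_comp_e}: the first term contributes $\nd w^a(\wt{\prt}_{x^j})\,\pi^*e_a=-w^c\th^a_{cj}\,\pi^*e_a$, while the second term contributes $w^a\th^b_{aj}\,\pi^*e_b$; after relabelling indices these cancel, giving $\wt\na\ze_{\cN}|_{T\cN^{\hor}}=0$. (Conceptually: along a horizontal direction $\ze_{\cN}$ is parallel-transported, since parallel transport fixes the radial section fiberwise — this is just the statement that $\wt\na(\pi^*\si)|_{T\cN^{\hor}}$ is the pullback of $\na\si$ evaluated horizontally, and $\ze_{\cN}$ restricted to a horizontal curve is the pullback of a parallel section.)

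Next I would establish~\eref{RadVecFld_e0b}. Along vertical directions only the first term of~\eref{RadVecFld_comp_e} survives, since $\pi^*\th^b_a$ annihilates $T\cN^{\ver}$; thus $\wt\na\ze_{\cN}|_{T\cN^{\ver}}$ is simply the canonical identification $T\cN^{\ver}=\pi^*\cN$, i.e. $\wt\na\ze_{\cN}(\dot w)=\dot w$ under $T\cN^{\ver}\cong\pi^*\cN$. When $\cN$ is complex and $\na$ is $\C$-linear, the fiberwise complex structure $\fI$ on $\pi^*\cN$ agrees, under this canonical identification, with the vertical complex structure on $T\cN^{\ver}$; hence $\wt\na\ze_{\cN}|_{T\cN^{\ver}}$ is $\C$-linear, which is exactly~\eref{RadVecFld_e0b}.

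I do not anticipate a genuine obstacle here: the statement is essentially a bookkeeping identity for the pullback connection, and the only care needed is to keep straight the two natural maps $T\cN^{\hor}\lra\pi^*\cN$ and $T\cN^{\ver}\xlra{\cong}\pi^*\cN$ and to verify that the horizontal lift formula for $\wt\na$ is the one induced by $\na$. The mildly delicate point — the one worth spelling out carefully rather than asserting — is the compatibility of $\fI$ on $\pi^*\cN$ with the vertical complex structure on $T\cN^{\ver}$ in~\eref{RadVecFld_e0b}; this is immediate once one recalls that $T\cN^{\ver}\lhra T\cN$ is by definition $\pi^*\cN$ with its given (complex) structure, so that the $\C$-linearity of $\wt\na\ze_{\cN}|_{T\cN^{\ver}}$ reduces to the $\C$-linearity of the identity map.
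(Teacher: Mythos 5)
Your proposal is correct and follows essentially the same route as the paper's proof: a local frame with connection 1-forms, the explicit expression $\wt\na\ze_{\cN}=\nd w^a\!\otimes\!\pi^*e_a+w^a(\pi^*\th^b_a)\!\otimes\!\pi^*e_b$, cancellation against the horizontal-lift formula for \eref{RadVecFld_e0a}, and the observation that on $T\cN^{\ver}$ only the $\nd w^a$ term survives and is $\C$-linear for \eref{RadVecFld_e0b}. The only cosmetic difference is that the paper quotes the horizontal-subspace formula from an external reference rather than writing the lift $\wt{\prt}_{x^j}$ directly.
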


\begin{crl}\label{RadVecFld_crl}
Suppose $(V,J)$ is an almost complex manifold, $\pi\!:\cN\!\lra\!V$ is a complex line bundle,  
$\na$ is a connection in~$\cN$, and $\wt\na\!\equiv\!\pi^*\na$.
If 
$$\Phi\!: (\cN\!-\!V)\!\times\!\C \stackrel{\approx}{\lra}\pi^*\cN\big|_{\cN-V},  
\qquad \Phi(v,c)=(v,cv),$$
then $\Phi^*\wt\na\!-\!\nd$ is a $(1,0)$-form on~$\cN\!-\!V$
with respect to the almost complex structure~$J_{\na}$ on~$\cN\!-\!V$
determined by~$J$ and~$\na$.
\end{crl}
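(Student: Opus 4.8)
The plan is to reduce the corollary to Lemma \ref{RadVecFld_lmm} by a direct computation, splitting the tangent space of $\cN\!-\!V$ into its horizontal and vertical parts with respect to~$\na$. First I would note that $\Phi^*\wt\na-\nd$ is an $\End(\C)$-valued, i.e.~$\C$-valued, 1-form on $\cN\!-\!V$; call it $\al$. Since the trivialization $\Phi$ identifies a nowhere-vanishing section of $\pi^*\cN|_{\cN-V}$ with the radial section~$\ze_{\cN}$, the 1-form $\al$ is precisely the connection 1-form of $\wt\na$ relative to the frame~$\ze_{\cN}$; concretely, for a tangent vector $\dot v\in T_v(\cN\!-\!V)$ one has $\al(\dot v)\,v=\wt\na_{\dot v}\ze_{\cN}$ after using $\Phi$ to pass between $(v,c)$ and $(v,cv)$. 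Thus it suffices to show that $\dot v\mapsto\wt\na_{\dot v}\ze_{\cN}$ is $J_{\na}$-complex linear into $\pi^*\cN$, i.e.~commutes with multiplication by $\fI$ in the appropriate sense, since a $\C$-valued 1-form $\al$ on an almost complex manifold is of type $(1,0)$ exactly when $\al\circ J=\fI\al$.

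Next I would use the fact that the almost complex structure $J_{\na}$ on $\cN\!-\!V$ is, by construction, the one for which the splitting $T\cN=T\cN^{\hor}\!\oplus\!T\cN^{\ver}$ is $J_{\na}$-invariant, with $J_{\na}$ acting on $T\cN^{\hor}$ via the horizontal lift of $J$ on~$V$ and on $T\cN^{\ver}\!=\!\pi^*\cN$ via the fiberwise complex structure~$\fI$. On the horizontal summand, \eref{RadVecFld_e0a} of Lemma \ref{RadVecFld_lmm} gives $\wt\na\ze_{\cN}|_{T\cN^{\hor}}=0$, so the complex-linearity condition holds trivially there (both sides vanish). On the vertical summand, \eref{RadVecFld_e0b} of Lemma \ref{RadVecFld_lmm} is exactly the statement that $\wt\na_{\dot v}\ze_{\cN}$ commutes with~$\fI$, which is the required complex-linearity. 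Combining the two summands, $\dot v\mapsto\wt\na_{\dot v}\ze_{\cN}$ is $J_{\na}$-complex linear, hence $\al=\Phi^*\wt\na-\nd$ is of type~$(1,0)$.

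The one genuinely fiddly point — and the step I expect to be the main obstacle — is the bookkeeping that turns ``$\wt\na\ze_{\cN}$ is complex linear into $\pi^*\cN$'' into ``$\al$ is a $(1,0)$-form'', because the frame $\ze_{\cN}$ is the \emph{radial} section: its value at $v$ is $v$ itself, so the identification of $\pi^*\cN|_{\cN-V}$ with the trivial bundle via~$\Phi$ is not parallel, and one must be careful that the defining relation $\nd(cv)=(\nd c)v+c\,\wt\na v$ is used consistently. Explicitly, pulling $\wt\na$ back by $\Phi$ and subtracting $\nd$ isolates the term $c\,\wt\na_{\dot v}\ze_{\cN}$ divided by the frame, i.e.~$\al(\dot v)$ is read off from $\wt\na_{\dot v}\ze_{\cN}=\al(\dot v)\ze_{\cN}$ along the fiber through~$v$; this is legitimate because $\ze_{\cN}(v)=v\neq 0$ on $\cN\!-\!V$ and because $\cN$ has rank~one, so every element of the fiber is a complex multiple of~$v$. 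Once this identification is set up cleanly, the corollary is immediate from the two displays of Lemma \ref{RadVecFld_lmm}, and I would not belabor the remaining routine verification that $\al\circ J_{\na}=\fI\al$ is equivalent to $\al$ being of type~$(1,0)$ in the rank-one case.
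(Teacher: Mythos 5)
Your proposal is correct and follows essentially the same route as the paper: identify $\Phi^*\wt\na\!-\!\nd$ with $\Phi^{-1}\!\circ\!\wt\na\ze_{\cN}$ (the connection form in the radial frame), use that $J_{\na}$ restricts to $\{\nd\pi\}^*J$ on $T\cN^{\hor}$ and to $\pi^*\fI$ on $T\cN^{\ver}$, and then apply the two identities of Lemma~\ref{RadVecFld_lmm} on the two summands. The only difference is that you spell out the frame/bookkeeping step and the $(1,0)$ criterion explicitly, which the paper leaves implicit.
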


\begin{proof} The 1-form $\Phi^*\wt\na\!-\!\nd$ is given by
\BE{RadVecFld_e2}\big\{\!\Phi^*\wt\na\!-\!\nd\big\}1
=\Phi^{-1}\!\circ\!\wt\na(\Phi\!\circ\!1)
=\Phi^{-1}\!\circ\!\wt\na\ze_{\cN}.\EE
The almost complex structure~$J_{\na}$ restricts to $\{\nd\pi\}^*J$ on~$T\cN^{\hor}$
and to $\pi^*\fI$ on~$T\cN^{\ver}$.
The claim thus follows from Lemma~\ref{RadVecFld_lmm}.
\end{proof}

\begin{proof}[{\bf{\emph{Proof of Lemma~\ref{RadVecFld_lmm}}}}] 
Suppose $U$ is an open subset of~$V$ and $\xi_1,\ldots,\xi_n\!\in\!\Ga(U;\cN)$ is
a frame for~$\cN$ over~$U$. 
Let $\th^k_l\!\in\!\Ga(U;T^*U)$ be such that 
\BE{RadVecFld_e1}\na\xi_l=\sum_{k=1}^{k=n}\xi_k\th^k_l 
\equiv \sum_{k=1}^{k=n}\th^k_l\!\otimes\!\xi_k\in \Ga\big(U;T^*U\!\otimes\!\cN\big)
\quad\forall~l\!=\!1,\ldots,n.\EE
The frame $\xi_1,\ldots,\xi_n$ determines an identification $\cN|_U\!=\!U\!\times\!\R^n$
so~that 
\BE{RadVecFld_e3}\begin{split}
\ze_{\cN}(v)&=\sum_{l=1}^{l=n}c_l\xi_l(x),\\
T_v\cN^{\hor}&=\big\{\!
\big(\dot{x},-\sum_{l=1}^{l=n}c_l\th^1_l(\dot{x}),\ldots
-\sum_{l=1}^{l=n}c_l\th^n_l(\dot{x})\!\big)\!: \dot{x}\!\in\!T_xV\big\}
\end{split}
\quad\forall\,v\!\equiv\!(x,c_1,\ldots,c_n)\!\in\!\cN|_U;\EE
see the proof of \cite[Lemma~1.1]{anal}.\\

\noindent
For each $l\!=\!1,\ldots,n$, let $\wt\xi_l\!=\!\pi^*\xi_l\!\in\!\Ga(\cN|_U;\pi^*\cN)$.
By the definition of~$\wt\na$ and~\eref{RadVecFld_e1},
$$\wt\na\wt\xi_l=\sum_{k=1}^{k=n}(\th^k_l\!\circ\!\nd\pi)\!\otimes\!\wt\xi_k
\quad\forall~l\!=\!1,\ldots,n.$$
Thus,
\BE{RadVecFld_e5}
\wt\na\ze_{\cN}\big|_{(x,c_1,\ldots,c_n)}=
\sum_{l=1}^{l=n}
\sum_{k=1}^{k=n}c_l(\th^k_l\!\circ\!\nd\pi)\!\otimes\!\wt\xi_k
+\sum_{l=1}^{l=n}(\nd c_l)\!\otimes\!\wt\xi_l.\EE
Along with the second statement in~\eref{RadVecFld_e3}, this gives~\eref{RadVecFld_e0a}.\\

\noindent
The first summand on the right-hand side of~\eref{RadVecFld_e5} vanishes on~$T\cN^{\ver}$.
If $\cN$ is a complex vector bundle, the above applies with~$\R$ replaced by~$\C$.  
The second summand  on the right-hand side of~\eref{RadVecFld_e5} is $\C$-linear on~$T\cN^{\ver}$
in this case. 
This gives~\eref{RadVecFld_e0b}.
\end{proof}

\begin{rmk}\label{RadVecFld_rmk}
By the proof of Lemma~\ref{RadVecFld_lmm}, the 1-form in~\eref{RadVecFld_e2} is given~by
$$(\Phi^*\wt\na\!-\!\nd)\big|_{(x,z)}=\th_1^1|_x\!+\!\frac{\nd z}{z}\,.$$
Thus, the curvature $F^{\Phi^*\wt\na}$ of the connection $\Phi^*\wt\na$ 
on $(\cN\!-\!V)\!\times\!\C$ is given~by
$$F^{\Phi^*\wt\na}=\nd(\pi^*\th_1^1)=\pi^*F^{\na}\,.$$
\end{rmk}

\section{SC symplectic divisors}
\label{SC_sec}

\noindent
For $N\!\in\!\Z^{\ge0}$, let 
$$[N]=\{1,\ldots,N\}\,.$$
If $\cN\!\lra\!V$ is a vector bundle, $\cN'\!\subset\!\cN$, and $V'\!\subset\!V$, we define
\hbox{$\cN'|_{V'}\!=\!\cN|_{V'}\cap\cN'$}.

\subsection{Definitions}
\label{SCdfn_subs}

\noindent
Let $X$ be a (smooth) manifold. 
For a collection $\{V_i\}_{i\in S}$ of submanifolds of~$X$ and $I\!\subset\!S$, let
$$V_I\equiv \bigcap_{i\in I}\!V_i\subset X, \quad
V_{I,\partial}=\bigcup_{I'\supsetneq I}\!\! V_{I'}, \quad
V_I^{\circ}=V_{I}\!-\!V_{I,\partial}\,.$$
Such a collection 
is called \sf{transverse} if any subcollection $\{V_i\}_{i\in I}$ of these submanifolds
intersects transversely, i.e.~the homomorphism
\BE{TransVerHom_e}
T_xX\oplus\bigoplus_{i\in I}T_xV_i\lra \bigoplus_{i\in I}T_xX, \qquad
\big(v,(v_i)_{i\in I}\big)\lra (v\!+\!v_i)_{i\in I}\,,\EE
is surjective for all $x\!\in\!V_I$. 
By the Inverse Function Theorem,
each subspace $V_I\!\subset\!X$ is then a submanifold of~$X$ 
of codimension
$$\codim_XV_I=\sum_{i\in I}\codim_XV_i$$
and the homomorphisms 
\BE{cNorient_e2}
\begin{split}
\cN_XV_I\lra \bigoplus_{i\in I}\cN_XV_i\big|_{V_I}\quad&\forall~I\!\subset\!S,\qquad
\cN_{V_{I-i}}V_I\lra \cN_XV_i\big|_{V_I} \quad\forall~i\!\in\!I\!\subset\!S,\\
&\bigoplus_{i\in I-I'}\!\!\!\cN_{V_{I-i}}V_I\lra 
\cN_{V_{I'}}V_I \quad\forall~I'\!\subset\!I\!\subset\!S
\end{split}\EE
induced by inclusions of the tangent bundles are isomorphisms.\\

\noindent
As detailed in \cite[Section~2.1]{SympDivConf}, 
a transverse collection $\{V_i\}_{i\in S}$ of oriented submanifolds of
an oriented manifold~$X$
of even codimensions  induces an orientation on each submanifold $V_I\!\subset\!X$
with $|I|\!\ge\!2$; we call it \sf{the intersection orientation of~$V_I$}.
If $V_I$ is zero-dimensional, it is a discrete collection of points in~$X$
and the homomorphism~\eref{TransVerHom_e} is an isomorphism at each point $x\!\in\!V_I$;
the intersection orientation of~$V_I$ at $x\!\in\!V_I$
then corresponds to a plus or minus sign, depending on whether this isomorphism
is orientation-preserving or orientation-reversing.
We call the original orientations of 
$X\!=\!V_{\eset}$ and $V_i\!=\!V_{\{i\}}$ \sf{the intersection orientations}
of these submanifolds~$V_I$ of~$X$ with $|I|\!<\!2$.\\

\noindent
Suppose $(X,\om)$ is a symplectic manifold and $\{V_i\}_{i\in S}$ is a transverse collection 
of submanifolds of~$X$ such that each $V_I$ is a symplectic submanifold of~$(X,\om)$.
Each $V_I$ then carries an orientation induced by $\om|_{V_{I}}$,
which we call the \sf{$\om$-orientation}.
If $V_I$ is zero-dimensional, it is automatically a symplectic submanifold of~$(X,\om)$;
the $\om$-orientation of~$V_I$ at each point $x\!\in\!V_I$ corresponds to the plus sign 
by definition.
By the previous paragraph, the $\om$-orientations of~$X$ and~$V_i$ with $i\!\in\!I$
also induce intersection orientations on all~$V_I$.

\begin{dfn}\label{SCD_dfn}
Let $(X,\om)$ be a symplectic manifold.
A \sf{simple crossings} (or \sf{SC}) \sf{symplectic divisor} 
in~$(X,\om)$ is a finite transverse union 
$V\!=\!\bigcup_{i\in S}V_i$ of closed submanifolds of~$X$ of codimension~2 such that 
$V_I$ is a symplectic submanifold of~$(X,\om)$ for every $I\!\subset\!S$
and the intersection and $\om$-orientations of~$V_I$ agree.
\end{dfn}

\noindent
The singular locus $V_{\prt}\!\subset\!V$ of an SC symplectic divisor 
$V\!\subset\! X$ is the union
$$V_{\prt}\equiv \bigcup_{I\subset S, |I|\ge2}\!\!\!\!\!\! V_I~.$$
An SC symplectic divisor $V$ with $|S|\!=\!1$ is a smooth symplectic divisor in the usual sense. 
If $(X,\om)$ is a 4-dimensional symplectic manifold, 
a finite transverse union $V\!=\!\bigcup_{i\in S}V_i$ of closed symplectic submanifolds of~$X$ 
of codimension~2 is an SC symplectic divisor if all points of the pairwise intersections
$V_{i_1}\!\cap\!V_{i_2}$ with $i_1\!\neq\!i_2$ are positive. 
By \cite[Example~2.7]{SympDivConf}, it is not sufficient to consider 
the deepest (non-empty) intersections in higher~dimensions.

\begin{dfn}\label{SCdivstr_dfn}
Let $X$ be a manifold and $V\!=\!\bigcup_{i\in S}V_i$ be a finite transverse union of 
closed submanifolds of~$X$ of codimension~2.
A \sf{symplectic structure on $V$ in~$X$} is a symplectic form~$\om$ 
on~$X$ such that $V_I$ is a symplectic submanifold of $(X,\om)$ for all $I\!\subset\!S$.
\end{dfn}

\noindent
For $X$ and $\{V_i\}_{i\in S}$ as in Definition~\ref{SCdivstr_dfn}, 
we denote by $\Symp(X,\{V_i\}_{i\in S})$ the space of all symplectic structures 
on $\{V_i\}_{i\in S}$ in~$X$ and by 
$$\Symp^+\big(X,\{V_i\}_{i\in S}\big)\subset \Symp\big(X,\{V_i\}_{i\in S}\big)$$
the subspace of the symplectic forms~$\om$ such that $\{V_i\}_{i\in S}$
is an SC symplectic divisor in~$(X,\om)$.

\subsection{Regularizations}
\label{SCreg_subs}

\noindent
Let $V$ be a smooth manifold with a 2-form $\om$ and
$(L_i,\rho_i,\na^{(i)})_{i\in I}$ be a finite collection of Hermitian line bundles over~$V$.
If each $(\rho_i,\na^{(i)})$ is compatible with a fiberwise symplectic form~$\Om_i$ on~$L_i$
and
$$(\cN,\Om,\na)\equiv\bigoplus_{i\in I}\big(L_i,\Om_i,\na^{(i)}\big),$$
then the 2-form~\eref{ombund_e2} is given~by 
\BE{ombund_e}
\wt\om=\om_{(\rho_i,\na^{(i)})_{i\in I}}
\equiv  \pi^*\om+\frac12
\bigoplus_{i\in I} \pi_{I;i}^*\nd\big((\Om_i)_{\na^{(i)}}(\ze_{L_i},\cdot)\big),\EE
where $\pi_{I;i}\!:\cN\!\lra\!L_i$ is the component projection map.\\

\noindent
If in addition $\Psi\!:V'\!\lra\!V$ is a smooth map and 
$(L_i',\rho_i',\na'^{(i)})_{i\in I}$ is a finite collection of Hermitian line
bundles over~$V'$, we call a (fiberwise) vector bundle isomorphism
$$\wt\Psi\!: \bigoplus_{i\in I}L'_i\lra \bigoplus_{i\in I}L_i$$
covering~$\Psi$ a \sf{product Hermitian isomorphism} if
$$\wt\Psi\!: (L_i',\rho_i',\na'^{(i)}) \lra 
\Psi^*(L_i,\rho_i,\na^{(i)})$$
is an isomorphism of Hermitian line bundles over~$V'$ for every $i\!\in\!I$.\\

\noindent
If $V$ is a symplectic submanifold of a symplectic manifold $(X,\om)$,
we denote the restriction of~$\om|_{\cN_XV}$ to a subbundle $L\!\subset\!\cN_XV$
by~$\om|_L$.

\begin{dfn}\label{sympreg1_dfn}
Let $X$ be a  manifold, $V\!\subset\!X$ be a  submanifold, and
$$\cN_XV=\bigoplus_{i\in I}L_i$$
be a fixed splitting into oriented rank~2 subbundles. 
If $\om$ is a symplectic form on~$X$ such that $V$ is a symplectic submanifold
and $\om|_{L_i}$ is nondegenerate for every $i\!\in\!I$, then
an \sf{$\om$-regularization for~$V$ in~$X$} is a tuple $((\rho_i,\na^{(i)})_{i\in I},\Psi)$, 
where $(\rho_i,\na^{(i)})$ is an $\om|_{L_i}$-compatible Hermitian structure on~$L_i$
for each $i\!\in\!I$ and $\Psi$ is a regularization for~$V$ in~$X$
in the sense of Definition~\ref{smreg_dfn}, such that 
$$\Psi^*\om=\om_{(\rho_i,\na^{(i)})_{i\in I}}\big|_{\Dom(\Psi)}.$$
\end{dfn}

\vspace{.15in}

\noindent
Suppose $\{V_i\}_{i\in S}$ is a transverse collection of codimension~2 submanifolds of~$X$.
For each $I\!\subset\!S$, the last isomorphism in~\eref{cNorient_e2} with $I'\!=\!\eset$
provides 
a natural decomposition 
\BE{cNIIprdfn_e0}
\pi_I\!:\cN_XV_I\!=\!\bigoplus_{i\in I}\cN_{V_{I-i}}V_I  \lra V_I
\EE
of the normal bundle of~$V_I$ in~$X$ into oriented rank~2 subbundles. 
We take this decomposition as given for the purposes of applying Definition~\ref{sympreg1_dfn}.
If in addition $I'\!\subset\!I$, let
\BE{cNIIprdfn_e}\pi_{I;I'}\!:\cN_{I;I'}\equiv 
\bigoplus_{i\in I-I'}\!\!\!\cN_{V_{I-i}}V_I=\cN_{V_{I'}}V_I\lra V_I\EE
be the bundle projection.
There are canonical identifications
\BE{cNtot_e}
\cN_{I;I-I'}=\cN_XV_{I'}|_{V_I}, \quad
\cN_XV_I=\pi_{I;I'}^*\cN_{I;I-I'}=\pi_{I;I'}^*\cN_XV_{I'}
\qquad\forall~I'\!\subset\!I\!\subset\![N].
\EE
The first equality in the second statement above
is used in particular in~\eref{overlap_e}. 

\begin{dfn}\label{TransCollReg_dfn}
Let $X$ be a manifold and $\{V_i\}_{i\in S}$ be a transverse collection 
of submanifolds of~$X$.
A \sf{system of regularizations for}  $\{V_i\}_{i\in S}$ in~$X$ is a~tuple 
$(\Psi_I)_{I\subset S}$, where $\Psi_I$ is a regularization for~$V_I$ in~$X$
in the sense of Definition~\ref{smreg_dfn}, such~that
\BE{Psikk_e}
\Psi_I\big(\cN_{I;I'}\!\cap\!\Dom(\Psi_I)\big)=V_{I'}\!\cap\!\Im(\Psi_I)
\quad\hbox{and}\quad \Im(\Psi_I)\!\cap\!\Im(\Psi_J)=\Im(\Psi_{I\cup J})\EE
for all $I'\!\subset\!I\!\subset\!S$ and $J\!\subset\!S$.
\end{dfn}

\noindent
Given a system of regularizations as in Definition~\ref{TransCollReg_dfn}
and $I'\!\subset\!I\!\subset\!S$, let
\BE{wtPsiIIdfn_e0}
\cN_{I;I'}' = \cN_{I;I'}\!\cap\!\Dom(\Psi_I), \qquad
\Psi_{I;I'}\equiv \Psi_I\big|_{\cN_{I;I'}'}\!: \cN_{I;I'}'\lra V_{I'}\,.\EE
The map $\Psi_{I;I'}$ is a regularization for $V_I$ in~$V_{I'}$.
As explained in \cite[Section~2.2]{SympDivConf}, $\Psi_I$ determines
an isomorphism
\BE{wtPsiIIdfn_e}
 \fD\Psi_{I;I'}\!:  \pi_{I;I'}^*\cN_{I;I-I'}\big|_{\cN_{I;I'}'}
\lra \cN_XV_{I'}\big|_{V_{I'}\cap\Im(\Psi_I)}\EE
of vector bundles covering~$\Psi_{I;I'}$ and
respecting the natural decompositions of 
$\cN_{I;I-I'}\!=\!\cN_XV_{I'}|_{V_I}$ and $\cN_XV_{I'}$.
By the last assumption in Definition~\ref{smreg_dfn}, 
$$\fD\Psi_{I;I'}\big|_{\pi_{I;I'}^*\cN_{I;I-I'}|_{V_I}}\!=\!\id\!:\,
\cN_{I;I-I'}\lra \cN_XV_{I'}|_{V_I}$$
under the canonical identification of $\cN_{I;I-I'}$ with $\cN_XV_{I'}|_{V_I}$.

\begin{dfn}\label{TransCollregul_dfn}
Let $X$ be a manifold and  $\{V_i\}_{i\in S}$ be a transverse 
collection of submanifolds of~$X$. 
\begin{enumerate}[label=(\arabic*),leftmargin=*]

\item\label{SCregul_it}  A \sf{regularization for $\{V_i\}_{i\in S}$ in~$X$} 
is a system of regularizations $(\Psi_I)_{I\subset S}$ 
for $\{V_i\}_{i\in S}$ in~$X$ such~that
\BE{overlap_e}\begin{split}
&\Dom(\Psi_I)\subset \pi_{I;I'}^*\cN_{I;I-I'}\big|_{\cN_{I;I'}'}, \quad 
\fD\Psi_{I;I'}\big(\Dom(\Psi_I)\big)
=\Dom(\Psi_{I'})\big|_{V_{I'}\cap\Im(\Psi_I)}, \\
&\hspace{1.8in} \Psi_I=\Psi_{I'}\circ\fD\Psi_{I;I'}|_{\Dom(\Psi_I)}
\end{split}\EE
for all $I'\!\subset\!I\!\subset\!S$.

\item\label{sympregul_it} 
Suppose in addition that $V=\bigcup_{i\in S} V_i$ is an SC symplectic divisor in $(X,\om)$. 
An \sf{$\om$-regularization for $V$ in~$X$}  is a~tuple
$$(\cR_I)_{I\subset S} \equiv  
\big((\rho_{I;i},\na^{(I;i)})_{i\in I},\Psi_I\big)_{I\subset S}$$
such that $\cR_I$ is an $\om$-regularization for~$V_I$ in~$X$ for each $I\!\subset\!S$,
$(\Psi_I)_{I\subset S}$ is a regularization for $\{V_i\}_{i\in S}$ in~$X$,
and the induced vector bundle isomorphisms~\eref{wtPsiIIdfn_e}
are product Hermitian isomorphisms for all $I'\!\subset\!I\!\subset\!S$.

\end{enumerate}
\end{dfn}

\vspace{.15in}

\noindent
If $(\Psi_I)_{I\subset S}$ is a regularization for $\{V_i\}_{i\in S}$ in~$X$, 
then 
\BE{SCDcons_e2}\begin{split}
&\hspace{1in}
\cN_{I;I''}',\pi_{I;I''}^*\cN_{I;I-I''}|_{\cN_{I;I''}'}\subset
 \pi_{I;I'}^*\cN_{I;I-I'}\big|_{\cN_{I;I'}'},\\ 
&\Psi_{I;I''}=\Psi_{I';I''}\circ\fD\Psi_{I;I'}\big|_{\cN_{I;I''}'}\,, 
\quad
\fD\Psi_{I;I''}=\fD\Psi_{I';I''}\circ
\fD\Psi_{I;I'}\big|_{\pi_{I;I''}^*\cN_{I;I-I''}|_{\cN_{I;I''}'}}
\end{split}\EE
for all $I''\!\subset\!I'\!\subset\!I\!\subset\!S$.\\

\noindent
An almost complex structure $J$ on $X$ preserving $TV_I\!\subset\!TX|_{V_I}$ and 
an $\om$-regularization $\cR_I$ for~$V_I$ in~$X$ as in Definition~\ref{sympreg1_dfn}
determine an almost complex structure~$J_{\cR;I}$ on the total space of~$\cN_XV_I$ 
via the connection $\na^{(I)}\!\equiv\!\bigoplus_{i\in I}\na^{(I;i)}$.
We call an almost complex structure~$J$ on~$X$ \sf{compatible} with 
an $\om$-regularization $(\cR_I)_{I\subset S}$ 
as in Definition~\ref{TransCollregul_dfn}\ref{sympregul_it} 
if  
$$J(TV_I)\subset TV_I \quad\hbox{and}\quad 
J\!\circ\!\nd\Psi_I=\nd\Psi_I\!\circ\!J_{\cR;I}\big|_{\Dom(\Psi_I)}
\qquad\forall\,I\!\subset\!S\,.$$
The notion of regularization of Definition~\ref{TransCollregul_dfn}\ref{sympregul_it}
readily extends to families of symplectic forms;
see \cite[Definition~2.12(2)]{SympDivConf}.
We define the spaces $\Aux(X,V)$ of pairs $(\om,\cR)$ and $\AK(X,V)$ of triples $(\om,\cR,J)$
as in Section~\ref{smooth_subs}.
By \cite[Theorem~2.17]{SympDivConf}, the map~\eref{Aux2Symp_e} is a weak homotopy equivalence
in the present setting as well.
On the other hand, it is still straightforward to show that 
the map~\eref{AK2Aux_e} is also a weak homotopy equivalence in the present setting.

\subsection{Constructions}
\label{ConSC_subs}

\noindent
We now construct the bundles $\cO_X(V)$ and $TX(-\log V)$ for an SC symplectic divisor~$V$ 
in a symplectic manifold~$(X,\om)$.
We fix an $\om$-regularization~$\cR$ for $V$ in $X$ as in 
Definition~\ref{TransCollregul_dfn}\ref{sympregul_it}.
For the purposes of constructing a complex structure on $TX(-\log V)$, we also 
fix an almost complex structure~$J$ on~$X$ compatible with~$\om$ and~$\cR$.\\

\noindent
For $I'\!\subset\!I\!\subset\!S$, let $\pi_{I}$, $\pi_{I;I'}$, $\cN_{I;I'}'$, $\Psi_{I;I'}$,
and $\fD\Psi_{I;I'}$
be as in~\eref{cNIIprdfn_e0}, \eref{cNIIprdfn_e}, \eref{wtPsiIIdfn_e0}, and~\eref{wtPsiIIdfn_e}.
In what follows, we write an element $v_I\!\equiv\!(v_i)_{i\in I}$ of~$\cN_XV_{I}$ as
$$v_I=(v_{I;I'},v_{I;I-I'}) \quad\hbox{with}\quad
v_{I;I'}\!\equiv\!(v_i)_{i\in I-I'}\!\in\!\cN_{I;I'}
~~\hbox{and}~~  v_{I;I-I'}\!\equiv\!(v_i)_{i\in I'} \in \cN_{I;I-I'}.$$
We denote by $\na^{(I)}$ and $\na^{(I;I')}$
the connections on~$\cN_XV_{I}$ and~$\cN_{I;I'}$
induced by the connections~$\na^{(I;i)}$ on the direct summands of these vector bundles.
Let
$$\nh_{\na^{(I)};v_I}\!:T_{\pi_I(v_I)}V_I\lra T_{v_I}(\cN_XV_I)
\quad\hbox{and}\quad
\nh_{\na^{(I;I')};v_{I;I'}}\!:T_{\pi_{I;I'}(v_{I;I'})}V_I\lra T_{v_{I;I'}}\cN_{I;I'}$$
be the corresponding injective homomorphisms as in~\eref{nhndfn_e}.
Define
\begin{gather}\notag
\Pi_{I}\!: \cN_{X}V_I\lra\bigotimes_{i\in I}\cN_{V_{I-i}}\!V_{I}, \qquad
\Pi_{I}\big((v_i)_{i\in I}\big)=\bigotimes_{i\in I}v_i\,,\\
\label{UIcirc_e}
U_{I}^{\circ}=\Psi_{I}\big(\Dom(\Psi_{I})|_{V_I^{\circ}}\big)
=\Im\,\Psi_{I}-\!\bigcup_{J\not\subset I}\!\!V_J\,;
\end{gather}
the last equality follows from \eref{Psikk_e} and \eref{overlap_e}.
For every $I\!\subset\!S$, let
\BE{cOlofdfnloc_e}\begin{split}
\cO_{\cR;I}(V)&= \big\{\Psi_{I}^{-1}|_{U_{I}^{\circ}}\big\}^{\!*}
\pi_{I}^{\,*}\bigg(\!\bigotimes_{i\in I}\cN_{V_{I-i}}V_{I}\!\bigg)\lra U_{I}^{\circ},\\
T_{\cR}U_I^\circ (-\log V)&= 
\Big(\!\big\{\Psi_{I}^{-1}|_{U_{I}^{\circ}}\big\}^{\!*}\pi_{I}^{\,*}TV_I\Big) 
\!\oplus\! \big(\!U_{I}^{\circ}\!\times\!\C^{I}\!\big) \lra U_{I}^{\circ}.
\end{split}\EE
The complex structures $\fI_{\rho_{I;i}}$ on
\hbox{$\cN_{V_{I-i}}V_{I}\!=\!\cN_XV_i|_{V_I}$} encoded in~$\cR$ determine 
a complex structure on the complex line bundle~$\cO_{\cR;I}(V)$.
The almost complex structure~$J|_{TV_I}$ on~$V_I$ and the standard complex structure on~$\C^I$
determine a complex structure on the vector bundle~$T_{\cR}U_I^\circ (-\log V)$.\\

\noindent
Let $I'\!\subset\!I\!\subset\!S$.
If $x\!\in\!U_I^{\circ}\!\cap\!U_{I'}^{\circ}$, then 
\begin{gather*}
x=\Psi_I(v_I)=\Psi_{I'}\big(\fD\Psi_{I;I'}(v_{I;I'},v_{I;I-I'})\!\big)
\quad\hbox{with}\quad\\
v_I=\big(v_{I;I'},v_{I;I-I'}\big)\!\equiv\!
\big(\!(v_i)_{i\in I-I'},(v_i)_{i\in I'}\big)\!\in\!\cN_{I;I'}\!\oplus\!\cN_{I;I-I'}
~~\hbox{s.t.}~~v_i\!\neq\!0~\forall\,i\!\in\!I\!-\!I'.
\end{gather*}
Since $\fD\Psi_{I;I'}$ is a product Hermitian isomorphism, it follows that the~map
\begin{gather}\label{thetaII_e}
\th_{I'I}\!: \cO_{\cR;I}(V)\big|_{U_I^{\circ}\cap U_{I'}^{\circ}}
\lra \cO_{\cR;I'}(V)\big|_{U_I^{\circ}\cap U_{I'}^{\circ}},\\
\notag
\th_{I'I}\big(x,v_I,\Pi_I(v_{I;I'},w_{I;I-I'})\!\big)
=\Big(x,\fD\Psi_{I;I'}(v_I),
\Pi_{I'}\big(\fD\Psi_{I;I'}(v_{I;I'},w_{I;I-I'})\!\big)\!\Big),
\end{gather}
is a well-defined isomorphism of complex line bundles.
The~map
\begin{gather}\label{varTII_e}
\vt_{I'I}\!: T_{\cR}U_I^\circ (-\log V)\big|_{U_I^{\circ}\cap U_{I'}^{\circ}}\lra
T_{\cR}U_{I'}^\circ (-\log V)\big|_{U_I^{\circ}\cap U_{I'}^{\circ}},\\
\notag
\begin{split}
\vt_{I'I}\Big(\!(x,v_I,w)\!\oplus\!\big(x,(c_i)_{i\in I}\big)\!\!\Big)
=\Big(\!x,\fD\Psi_{I;I'}(v_I),\nd_{v_{I;I'}}\Psi_{I;I'}\!
\big(\nh_{\na^{(I;I')};v_{I;I'}}(w)\!+\!\!\sum_{i\in I-I'}\!\!\!c_iv_i\!\big)\!\!\Big)&\\
\!\oplus\!\big(x,(c_i)_{i\in I'}\big)&,
\end{split}
\end{gather}
is similarly a well-defined isomorphism of vector bundles.
Since $J$ is an $\cR$-compatible almost complex structure on~$X$, this isomorphism is $\C$-linear.
By~\eref{SCDcons_e2}, 
\BE{overlapcond_e1}\begin{split} 
\th_{I''I}\big|_{U_I^{\circ}\cap U_{I'}^{\circ}\cap U_{I''}^{\circ}}
&=\th_{I''I'}\big|_{U_I^{\circ}\cap U_{I'}^{\circ}\cap U_{I''}^{\circ}}\!\circ\!
\th_{I'I}\big|_{U_I^{\circ}\cap U_{I'}^{\circ}\cap U_{I''}^{\circ}},\\
\vt_{I''I}\big|_{U_I^{\circ}\cap U_{I'}^{\circ}\cap U_{I''}^{\circ}}
&=\vt_{I''I'}\big|_{U_I^{\circ}\cap U_{I'}^{\circ}\cap U_{I''}^{\circ}}\!\circ\!
\vt_{I'I}\big|_{U_I^{\circ}\cap U_{I'}^{\circ}\cap U_{I''}^{\circ}}
\end{split}\EE
for all $I''\!\subset\!I'\!\subset\!I$.\\

\noindent
Let $I,K\!\subset\!S$.
By~\eref{Psikk_e} and~\eref{overlap_e}, $U_I^{\circ}\!\cap\!U_K^{\circ}\!\subset\!U_{I\cup K}^{\circ}$.
If $I\!\not\subset\!K$, the~maps
\begin{equation*}\begin{split}
\th_{IK}\!=\!\th_{I(I\cup K)}|_{U_I^{\circ}\cap U_K^{\circ}}\!\circ\!
\th_{K(I\cup K)}^{~-1}|_{U_I^{\circ}\cap U_K^{\circ}}\!:
\cO_{\cR;K}(V)\big|_{U_I^{\circ}\cap U_K^{\circ}}
&\lra \cO_{\cR;I}(V)\big|_{U_I^{\circ}\cap U_K^{\circ}},\\
\vt_{IK}\!=\!\vt_{I(I\cup K)}|_{U_I^{\circ}\cap U_K^{\circ}}\!\circ\!
\vt_{K(I\cup K)}^{~-1}|_{U_I^{\circ}\cap U_K^{\circ}}\!:
T_{\cR}U_K^\circ (-\log V)\big|_{U_I^{\circ}\cap U_K^{\circ}}&\lra
T_{\cR}U_I^\circ (-\log V)\big|_{U_I^{\circ}\cap U_K^{\circ}}
\end{split}\end{equation*}
are thus well-defined isomorphisms of complex vector bundles.
By~\eref{overlapcond_e1}, the collections $\{\th_{IK}\}_{I,K\subset S}$ 
and $\{\vt_{IK}\}_{I,K\subset S}$
satisfy the cocycle condition.
The first collection thus determines a complex line bundle
\BE{COXVSC_e}\begin{split}
&\pi\!: \cO_{\cR;X}(V)\!=\!\bigg(\bigsqcup_{I\subset S}
\cO_{\cR;I}(V)\!\!\bigg)\!\Big/\!\!\sim\,\,\lra X,\quad
\pi\big([x,v_I,\Pi_I(w_I)]\big)=x,\\
&\hspace{.2in}
\cO_{\cR;I}(V)\big|_{U_I^{\circ}\cap U_K^{\circ}}
\ni \th_{IK}(v)\sim v\in \cO_{\cR;K}(V)\big|_{U_I^{\circ}\cap U_K^{\circ}} 
\quad \forall~I,K\subset\!S.
\end{split}\EE
The second collection similarly determines a complex vector bundle
\BE{LogTXVSCasQ_e}\begin{split}
&\pi\!: T_\cR X(-\log V)\!=\!\bigg(
\bigsqcup_{I\subset S}T_{\cR}U_I^\circ (-\log V)\!\!\bigg)\!\Big/\!\!\sim\,\,\lra X,\quad
\pi\Big(\!\big[\!(x,v_I,w)\!\oplus\!\big(x,(c_i)_{i\in I}\big)\!\big]\!\Big)=x,\\
&\hspace{.4in}
T_{\cR}U_I^\circ (-\log V)\big|_{U_I^{\circ}\cap U_K^{\circ}}
\ni \vt_{IK}(v)\sim v\in
T_{\cR}U_K^\circ (-\log V)\big|_{U_I^{\circ}\cap U_K^{\circ}}
\quad \forall~I,K\subset\!S.
\end{split}\EE

\vspace{.2in}

\noindent
The smooth section~$s_{\cR}$ of the complex line bundle~\eref{COXVSC_e} given~by
\BE{LS_e} s_{\cR}(x)=\big[x,v_I,\Pi_I(v_I)\big]
\quad\forall~x\!=\!\Psi_I(v_I)\!\in\!U_I^{\circ},~I\!\subset\!S,\EE
satisfies the properties stated in Proposition~\ref{OXV_prp}\ref{cOdfn_it}.
The smooth bundle homomorphism~\eref{Ninc_e} defined~by
\BE{ioRSC_e}\io_{\cR}\Big(\!\big[(x,v_I,w)\!\oplus\!\big(x,(c_i)_{i\in I}\big)\!\big]\!\Big)
=\nd_{v_I}\Psi_I\!
\Big(\nh_{\na^{(I)};v_I}(w)\!+\!\!\sum_{i\in I}c_iv_i\!\Big)
\quad\forall~x\!=\!\Psi_I(v_I)\!\in\!U_I^{\circ},~I\!\subset\!S,\EE
satisfies the properties stated in Theorem~\ref{TXV_thm}\ref{loddfn_it}\ref{logJ_it}.
By the same reasoning as at the end of Section~\ref{smooth_subs},
the bundles~\eref{COXVSC_e} and~\eref{LogTXVSCasQ_e} also satisfy
the properties in Proposition~\ref{OXV_prp}\ref{cOinv_it}	and Theorem~\ref{TXV_thm}\ref{loginv_it}.
Along with Proposition~\ref{OXV_prp}\ref{cOinv_it}, 
Lemma~\ref{cOsplitNC_lmm} below implies the claim of Proposition~\ref{OXV_prp}\ref{cOsplit_it} 
for SC symplectic divisors.

\begin{lmm}\label{cOsplitNC_lmm}
Suppose $V$ and $V'$ are SC symplectic divisors in a symplectic manifold $(X,\om)$ so that 
$V\!\cup\!V'\!\subset\!X$ is also an SC symplectic divisor
and $V\!\cap\!V'$ contains no open subspace of~$V$.
An $\om$-regularization
$$\wt\cR \equiv \big((\rho_{I;i},\na^{(I;i)})_{i\in I},\wt\Psi_I\big)_{I\subset S\sqcup S'}$$
for~$V\!\cup\!V'$ in~$X$
as in Definition~\ref{TransCollregul_dfn}\ref{sympregul_it} determines
$\om$-regularizations~$\cR$ for~$V$ in~$X$ and $\cR'$ for~$V'$ in~$X$
and an isomorphism
\BE{cOsplitNC_e}\psi_{\cR\cR'}\!:
\big(\cO_{\wt\cR;X}(V\!\cup\!V'),\fI_{\wt\cR}\big)\lra
\big(\cO_{\cR;X}(V),\fI_{\cR}\big)\!\otimes\!\big(\cO_{\cR';X}(V'),\fI_{\cR'}\big)\EE
natural with respect to the restrictions to the open subsets of~$X$.
\end{lmm}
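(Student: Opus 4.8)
The plan is to obtain $\cR$ and $\cR'$ from~$\wt\cR$ by discarding all the index sets that are not contained in~$S$, respectively in~$S'$, and then to build $\psi_{\cR\cR'}$ out of the tautological factorization of the tensor products in~\eref{cOlofdfnloc_e} along the partition $I\!=\!(I\!\cap\!S)\!\sqcup\!(I\!\cap\!S')$; this is the symplectic counterpart of the identity $1/(z_1\!\ldots\!z_kz_1'\!\ldots\!z_l')\!=\!(1/z_1\!\ldots\!z_k)(1/z_1'\!\ldots\!z_l')$ behind the corresponding statement in Section~\ref{AG_subs}. Concretely, I would set
$$\cR\equiv\big((\rho_{I;i},\na^{(I;i)})_{i\in I},\wt\Psi_I\big)_{I\subset S}\,,\qquad
\cR'\equiv\big((\rho_{J;i},\na^{(J;i)})_{i\in J},\wt\Psi_J\big)_{J\subset S'}\,.$$
For $I\!\subset\!S$ the submanifold~$V_I$ and the splitting~\eref{cNIIprdfn_e0} of~$\cN_XV_I$ are the same whether $I$ is read inside~$S$ or inside~$S\!\sqcup\!S'$, and the axioms defining a regularization and an $\om$-regularization in Definitions~\ref{TransCollReg_dfn} and~\ref{TransCollregul_dfn} are quantified over nested pairs of index sets (with one further index set in the second half of~\eref{Psikk_e}); they are therefore inherited by these sub-tuples, so $\cR$ and~$\cR'$ are $\om$-regularizations for $V$ and~$V'$ in~$X$. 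The domains of type~\eref{UIcirc_e} for~$\cR$, for~$\cR'$, and the $\wt{U}_I^{\circ}$ for~$\wt\cR$ each cover~$X$, since every $x\!\in\!X$ lies in the one indexed by $I_x\!\equiv\!\{i:x\!\in\!V_i\}$ (intersected with $S$, $S'$, or $S\!\sqcup\!S'$). Using the relation $\wt\Psi_I\!=\!\wt\Psi_{I\cap S}\!\circ\!\fD\wt\Psi_{I;I\cap S}$ from~\eref{overlap_e} and the fact that $V_J\!\subset\!V'$ whenever $J\!\cap\!S'\!\neq\!\eset$, one checks that $\wt{U}_I^{\circ}$ is contained in the $\cR$-domain indexed by $I\!\cap\!S$ and in the $\cR'$-domain indexed by $I\!\cap\!S'$, for every $I\!\subset\!S\!\sqcup\!S'$.

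Next, for $I\!\subset\!S\!\sqcup\!S'$ with $I_1\!\equiv\!I\!\cap\!S$ and $I_2\!\equiv\!I\!\cap\!S'$, I would construct a local isomorphism
$$\psi_I\!:\cO_{\wt\cR;I}(V\!\cup\!V')\big|_{\wt{U}_I^{\circ}}\xlra{~\cong~}
\big(\cO_{\cR;I_1}(V)\!\otimes\!\cO_{\cR';I_2}(V')\big)\big|_{\wt{U}_I^{\circ}}\,.$$
Over a point $x\!=\!\wt\Psi_I(v_I)$ the source has fiber $\bigotimes_{i\in I}(\cN_{V_{I-i}}V_I)_{\pi_I(v_I)}$, which splits $\C$-linearly, compatibly with the complex structures $\fI_{\rho_{I;i}}$, into the tensor product over $i\!\in\!I_1$ with the tensor product over $i\!\in\!I_2$. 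The decomposition-respecting isomorphism $\fD\wt\Psi_{I;I_1}$ of~\eref{wtPsiIIdfn_e} identifies, summand by summand, the $i$-th factor with $i\!\in\!I_1$ with the corresponding summand of~$\cN_XV_{I_1}$ at $y\!\equiv\!\pi_{I_1}(\wt\Psi_{I_1}^{-1}(x))$, which under the canonical identifications~\eref{cNtot_e} is precisely the fiber of $\cO_{\cR;I_1}(V)$ at~$x$; the analogous statement for $\fD\wt\Psi_{I;I_2}$ produces the fiber of $\cO_{\cR';I_2}(V')$ at~$x$. I would take $\psi_I$ to be the tensor product of these two identifications on pure tensors, extended $\C$-linearly; this is a well-defined isomorphism of complex line bundles. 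The degenerate cases are consistent with the recipe: when $I_2\!=\!\eset$, $\cO_{\cR';\eset}(V')$ is canonically trivial and $\psi_I$ is the identity of $\cO_{\wt\cR;I}(V\!\cup\!V')\!=\!\cO_{\cR;I}(V)|_{\wt{U}_I^{\circ}}$, and symmetrically for $I_1\!=\!\eset$.

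The remaining --- and main --- step is to show that the $\psi_I$ glue, i.e.\ that, read through~$\psi_I$ and~$\psi_K$, the transition map $\th_{IK}$ of~\eref{COXVSC_e} for $V\!\cup\!V'$ becomes the tensor product of the transition map for $V$ in the indices $I\!\cap\!S,K\!\cap\!S$ and the transition map for $V'$ in the indices $I\!\cap\!S',K\!\cap\!S'$. By the formula~\eref{thetaII_e} for the~$\th$'s and the factorization $\th_{IK}\!=\!\th_{I(I\cup K)}\!\circ\!\th_{K(I\cup K)}^{-1}$, it suffices to treat $\th_{I'I}$ for $I'\!\subset\!I$, where everything reduces to the statement that $\fD\wt\Psi_{I;I'}$ sends the $(I\!\cap\!S)$-block of the normal bundle into the $(I'\!\cap\!S)$-block and, modulo the base-change identifications~\eref{cNtot_e}, agrees there with $\fD\wt\Psi_{(I\cap S);(I'\cap S)}$, and likewise with $S'$ in place of~$S$. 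This is the content of the ``respecting the natural decompositions'' clause in~\eref{wtPsiIIdfn_e} together with the product-Hermitian hypothesis in Definition~\ref{TransCollregul_dfn}\ref{sympregul_it} (which also accounts for $\C$-linearity) and the transitivity relations~\eref{SCDcons_e2}; I expect the bookkeeping of these block decompositions across the various pullback bundles $\pi_{I;I'}^{\,*}\cN_{I;I-I'}$, and the matching of the two ways of recording base points ($\pi_I(v_I)$ versus $\pi_{I_1}(\wt\Psi_{I_1}^{-1}(x))$), to be the principal technical burden. Once the cocycle compatibility is checked, the $\psi_I$ assemble into the isomorphism~$\psi_{\cR\cR'}$ of~\eref{cOsplitNC_e}; its naturality under restriction to open subsets of~$X$ is then immediate, because the whole construction is carried out locally over the cover $\{\wt{U}_I^{\circ}\}$ and commutes with passing to an open subset of~$X$.
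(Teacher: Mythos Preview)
Your proposal is correct and follows essentially the same approach as the paper: restrict~$\wt\cR$ to the sub-index sets $I\!\subset\!S$ and $J\!\subset\!S'$ to obtain~$\cR$ and~$\cR'$, define the local isomorphism over each~$\wt{U}_I^{\circ}$ by splitting the tensor product along $I\!=\!(I\!\cap\!S)\!\sqcup\!(I\!\cap\!S')$ and transporting each factor via~$\fD\wt\Psi_{I;I\cap S}$ and~$\fD\wt\Psi_{I;I\cap S'}$, and verify the cocycle compatibility using~\eref{SCDcons_e2}. The paper writes the index set as $I\!\sqcup\!K$ with $I\!\subset\!S$, $K\!\subset\!S'$ from the outset and gives the explicit formula for the local map, but the content is identical to what you describe.
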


\begin{proof}
Let $V\!=\!\bigcup_{i\in S}\!V_i$ and $V'\!=\!\bigcup_{i\in S'}\!V_i'$.
If $\wt\cR\!=\!(\cR_I)_{I\subset S\sqcup S'}$, then
$\cR\!\equiv\!(\cR_I)_{I\subset S}$ is an $\om$-regularization for~$V$ in~$X$
and $\cR'\!\equiv\!(\cR_I)_{I\subset S'}$ is an $\om$-regularization for~$V'$ in~$X$.
By the assumptions,
$$\wt{V}_{I\sqcup K}\!\equiv\!(V\!\cup\!V')_{I\sqcup K}=V_I\!\cap\!V_K', \qquad
\cN_X\wt{V}_{I\sqcup K}
=\cN_XV_I\big|_{\wt{V}_{I\sqcup K}}\!\oplus\!\cN_XV_K'\big|_{\wt{V}_{I\sqcup K}}
\quad\forall~I\!\subset\!S,~K\!\subset\!S'.$$

\vspace{.15in}

\noindent
For $I'\!\subset\!I\!\subset\!S'$ (resp.~$I'\!\subset\!I\!\subset\!S\!\sqcup\!S'$),
we denote by $U_I'^{\circ}\!\subset\!X$ and~$\th'_{I'I}$
(resp.~$\wt{U}_I^{\circ}\!\subset\!X$ and~$\wt\th_{I'I}$)
the analogues of the open subsets~$U_I^{\circ}$ and  the transition maps~$\th_{I'I}$
in~\eref{thetaII_e} for the trivialization~$\cR'$ (resp.~$\wt\cR$).
Thus,
$$U_I^{\circ}=\bigcup_{K\subset S'}\!\!\!\wt{U}_{I\sqcup K}^{\circ}~~\forall~I\!\subset\!S
\qquad\hbox{and}\qquad
U_K'^{\circ}=\bigcup_{I\subset S}\!\!\wt{U}_{I\sqcup K}^{\circ}~~\forall~K\!\subset\!S'.$$
For  $I\!\subset\!S$ and $K\!\subset\!S'$, define
\begin{gather*}
\psi_{\cR\cR';IK}\!:\cO_{\wt\cR;I\sqcup K}(V\!\cup\!V')
\lra \cO_{\cR;I}(V)\big|_{\wt{U}_{I\sqcup K}^{\circ}}
\!\otimes_{\C}\!\cO_{\cR';K}(V')\big|_{\wt{U}_{I\sqcup K}^{\circ}},\\
\begin{split}
\psi_{\cR\cR';IK}\big(x,v_{I\sqcup K},\Pi_I(w_{I\sqcup K})\!\big)
&=\Big(x,\fD\wt\Psi_{I\sqcup K;I}(v_{I\sqcup K}),
\Pi_I\big(\fD\wt\Psi_{I\sqcup K;I}(w_{I\sqcup K})\!\big)\!\!\Big)\\
&\hspace{.7in}\!\otimes\!\Big(x,\fD\wt\Psi_{I\sqcup K;K}(v_{I\sqcup K}),
\Pi_K\big(\fD\wt\Psi_{I\sqcup K;K}(w_{I\sqcup K})\!\big)\!\!\Big).\\
\end{split}
\end{gather*}  
By~\eref{SCDcons_e2} for the regularization~$\wt\cR$, 
\begin{equation*}\begin{split}
&\big\{\vt_{I'I}\!\otimes\!\vt_{K'K}'\big\}\!\circ\!\psi_{\cR\cR';IK}
\!=\!\psi_{\cR\cR';I'K'}\!\circ\!\wt\vt_{(I'\sqcup K')(I\sqcup K)}\!:\\
&\hspace{.5in}
\cO_{\wt\cR;I\sqcup K}(V\!\cup\!V')
\big|_{\wt{U}_{I\sqcup K}^{\circ}\cap\wt{U}_{I'\sqcup K'}^{\circ}}
\lra 
\cO_{\cR;I'}(V)\big|_{\wt{U}_{I\sqcup K}^{\circ}\cap\wt{U}_{I'\sqcup K'}^{\circ}}
\!\otimes_{\C}\!\cO_{\cR';K'}(V')\big|_{\wt{U}_{I\sqcup K}^{\circ}\cap\wt{U}_{I'\sqcup K'}^{\circ}}
\end{split}\end{equation*}
for all $I'\!\subset\!I\!\subset\!S$ and $K'\!\subset\!K\!\subset\!S'$.
Along with~\eref{COXVSC_e}, this implies that 
the collection $\{\psi_{\cR\cR';IK}\}$ 
induces a well-defined bundle homomorphism~\eref{cOsplitNC_e}.
\end{proof}

\subsection{Proof of Theorem~\ref{TXV_thm}\ref{logsplit_it}}
\label{logsplit_subs}

\noindent
We now establish~\eref{logsplit_e} under the assumption that $V$ is an SC symplectic divisor.
The reasoning in the general case is identical; see the end of Section~\ref{ConLNC_subs}.
As noted after Theorem~\ref{TXV_thm}, \eref{logsplit_e} implies~\eref{cTXV_e}
if $V$ is an SC symplectic divisor.\\

\noindent
Suppose $X$ is a smooth manifold, $V\!\subset\!X$ is a submanifold, and
$\psi\!:E\!\lra\!F$ is a homomorphism of complex vector bundles over~$X$ that vanishes 
on a complex subbundle $L'\!\subset\!E|_V$.
The restriction of~$\psi$ to an extension of~$L'$ to a complex subbundle~$L$ of~$E$ 
over a neighborhood~$U$
of $V\!\subset\!X$ then determines a section~$\psi_{L'}$ of 
the complex vector bundle $L^*\!\otimes_{\C}\!F$ over~$U$ that vanishes on~$V$
and well-defined derivative bundle homomorphisms
\BE{nDpsiLdfn_e}
\nD_x\psi_{L'}\!:\cN_XV|_x\lra L_x'^*\!\otimes_{\C}\!(\cok\,\psi_x)
\quad\hbox{and}\quad \nD_x\psi_{L'}\!:\cN_XV|_x\!\otimes_{\R}\!L_x'\lra \cok\,\psi_x\EE
for each $x\!\in\!V$; these homomorphisms do not depend on the extension of~$L'$.
Lemma~\ref{bundleisom_lmm} below, proved at the end of this section, is the key topological input 
we~use to establish~\eref{logsplit_e}. 
 
\begin{lmm}\label{bundleisom_lmm}
Suppose $X$ is a smooth manifold, $V\!\subset\!X$ is a closed submanifold of codimension~2
with a complex structure on~$\cN_XY$, 
$\psi\!:E\!\lra\!F$ is a homomorphism of complex vector bundles over~$X$,
and $L\!\lra\!X$ is a complex vector bundle so that 
$$
L|_V\!=\!\ker\!\big(\psi\!:E|_V\!\lra\!F|_V\big)\subset E|_V.$$
If $\psi$ is an isomorphism over~$X\!-\!V$, 
the first homomorphism in~\eref{nDpsiLdfn_e} with $L'\!=\!L|_V$ is $\C$-linear
for every $x\!\in\!V$, 
and the second homomorphism \eref{nDpsiLdfn_e} is surjective,
 then there exists an isomorphism
\BE{wtpsidfn_e}\wt\psi\!: E\oplus\cO_X(V)\!\otimes_{\C}\!L \lra F\!\oplus\!L\EE
of complex vector bundles over~$X$.
\end{lmm}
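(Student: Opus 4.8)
The plan is to build $\wt\psi$ by a partition-of-unity interpolation between $\psi$ itself, which works away from $V$, and a second isomorphism that is constructed near $V$ out of the normal derivative data. First I would set up a neighborhood $U$ of $V$ on which $L|_V$ extends to a complex subbundle $L\subset E|_U$ (as in the discussion preceding the lemma) and on which a complex subbundle complement $E'\subset E|_U$ to $L$ exists, so that $E|_U\cong L\oplus E'$; since $\psi$ vanishes on $L|_V$ to first order, on $U$ we get a section $\psi_L\in\Ga(U;L^*\!\otimes_\C F)$ vanishing on $V$ together with the derivative homomorphisms in~\eref{nDpsiLdfn_e}. The hypothesis that $\psi$ is an isomorphism on $X\!-\!V$ forces $F|_U\cong \psi(E')\oplus \cok\,\psi$ (after shrinking $U$), where $\psi(E')$ is a subbundle isomorphic to $E'$ via $\psi|_{E'}$, and $\cok\,\psi|_V$ is identified along $V$ with the cokernel bundle appearing in~\eref{nDpsiLdfn_e}. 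The point of tensoring $L$ with $\cO_X(V)$ is exactly that the first-order vanishing of $\psi_L$ along $V$ means $\psi_L$ factors through the section $s_{\cR}$ (or $s$, in the complex case) of $\cO_X(V)$ whose differential along $V$ is the normal-bundle identification; concretely $\nD\psi_L$ being $\C$-linear on $\cN_XV$ is what allows $\psi_L$ to be written, near $V$, as $s\cdot\wh\psi_L$ for a genuine $\C$-linear bundle map $\wh\psi_L$ from $L$ to $F$, well-defined up to terms vanishing on $V$.

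The key steps, in order: (1) on $U$, define a bundle map $\mu\!:\cO_X(V)\!\otimes_\C L\lra \cok\,\psi$ whose restriction to $V$, under the identifications $\cO_X(V)|_V\otimes_\C L|_V=\cN_XV\otimes_\C(L|_V)$ coming from $\nD s$, is the second homomorphism in~\eref{nDpsiLdfn_e}; by hypothesis that homomorphism is surjective, and $\cN_XV\otimes_\C L|_V$ and $\cok\,\psi|_V$ have the same rank over $V$ (this follows by counting ranks: $\mathrm{rk}\,E=\mathrm{rk}\,F$ off $V$ forces $\mathrm{rk}_\C(\ker\psi|_V)=\mathrm{rk}_\C(\cok\,\psi|_V)$, and $\cN_XV$ is a line bundle), so $\mu$ is an isomorphism along $V$ and hence on a possibly smaller neighborhood of $V$. (2) Over that neighborhood, assemble the isomorphism
\begin{equation*}
\wt\psi_{\mathrm{loc}}=\big(\psi\!\oplus\!\id_L\big)\colon\ E'\oplus L\,\oplus\,\cO_X(V)\!\otimes_\C L\ \lra\ \psi(E')\,\oplus\,L\,\oplus\,\cok\,\psi,
\end{equation*}
where on the first summand it is $\psi|_{E'}$, on the middle $L$ it is the identity, and on $\cO_X(V)\!\otimes_\C L$ it is $\mu$; using $E=E'\oplus L$ and $F=\psi(E')\oplus\cok\,\psi$ this is precisely a local version of~\eref{wtpsidfn_e}. (3) Away from $V$, $\psi\oplus\id_{\cO_X(V)\otimes L}\colon E\oplus\cO_X(V)\!\otimes_\C L\lra F\oplus L$ — reading $F=$ image of $\psi$ and the extra $L$ mapped isomorphically from $\cO_X(V)\!\otimes_\C L$ via $s^{-1}$, which is invertible off $V$ — is also an isomorphism onto $F\oplus L$. (4) Patch the two with a partition of unity subordinate to $\{U,X\!-\!V\}$: the space of bundle isomorphisms between two fixed bundles is open in the space of all bundle homomorphisms, so any convex combination of two isomorphisms that agree in the appropriate sense near the overlap is again an isomorphism after shrinking; one chooses the local model in step (2) to literally restrict to $\psi\oplus(\text{mult.\ by }s)$ on the region where the cutoff transitions, so the two descriptions genuinely agree there and the interpolation stays invertible.

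The main obstacle I expect is step (1)–(2): making the local model \emph{canonically} match $\psi$ on the transition region, rather than merely "be isomorphic to it there." The cleanest route is to not interpolate abstract bundle maps but to build a single bundle map on $E\oplus\cO_X(V)\!\otimes_\C L$ that \emph{equals} $\psi$ on the first factor everywhere, and on the $\cO_X(V)\!\otimes_\C L$ factor equals $s\otimes(\text{inclusion }L\hookrightarrow E)$ plus a correction term $\chi\cdot\nu$ supported near $V$, where $\chi$ is a cutoff and $\nu$ lands in a complement of $\psi(E')$ inside $F\oplus L$; the correction is chosen so that at $V$ the $\cO_X(V)\!\otimes_\C L$ factor maps isomorphically onto $\cok\,\psi$. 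Then invertibility is checked once, along $V$ (where it reduces to the surjectivity hypothesis plus the rank count) and on $X\!-\!V$ (where it reduces to $\psi$ being an isomorphism and $s$ being nonzero), and extends to a neighborhood by openness — no genuine patching of competing isomorphisms is needed. The $\C$-linearity hypothesis on the first map in~\eref{nDpsiLdfn_e} is used precisely to guarantee that the correction term can be taken $\C$-linear so that $\wt\psi$ is a map of \emph{complex} vector bundles.
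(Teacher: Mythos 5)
Your overall strategy --- extend $L$ and a complementary subbundle $E'$ over a tubular neighborhood, use $\nD\psi_L$ to identify $\cN_XV\!\otimes_{\C}\!L|_V$ with a complement $Q|_V$ of $\psi(E'|_V)$, and use a cutoff to mediate between this near-$V$ model and $\psi\oplus\id$ off $V$ (where $s$ trivializes $\cO_X(V)$) --- is the paper's, but both of your concrete recipes for the interpolation have genuine gaps. In steps (2)--(4), the two isomorphisms you propose to patch do not agree on the transition region and cannot be made to: your local model sends the summand $L\!\subset\!E|_U$ identically onto the target $L$ and sends $\cO_X(V)\!\otimes_{\C}\!L$ into $F$ via $\mu$, while your off-$V$ model sends $L\!\subset\!E$ into $F$ via $\psi$ and $\cO_X(V)\!\otimes_{\C}\!L$ onto the target $L$ via $s^{-1}$; arranging agreement on the annulus where the cutoff transitions is precisely the content of the lemma on that annulus, so the suggestion is circular. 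Moreover a straight convex combination genuinely degenerates: on a vector $(0,e'',\si)$ the target-$L$ component vanishes when $s^{-1}\si=-\tfrac{\be}{1-\be}e''$, and then (when $\mu$ agrees with the divided map $\psi\circ s^{-1}$, approximately otherwise) the $F$-component equals $\big((1\!-\!\be)-\tfrac{\be^2}{1-\be}\big)\psi(e'')$, which vanishes at $\be=\tfrac12$. The paper avoids exactly this by using one formula in which the $E$-factor acquires the component $-\be(x)e''$ into the target $L$ (note the sign) while the second factor contributes $(1\!-\!\be(x))w$ there, so the interpolation is a nondegenerate rotation of the two rank-$(\tn{rk}\,L)$ pieces rather than a path through a singular map.

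Your ``cleanest route'' has a structural problem: if $\wt\psi$ equals $\psi$ on the $E$-factor everywhere, with no component into the target $L$, then along $V$ the image of $E|_V$ is $\psi(E'|_V)\subset F\oplus0$, of rank $\tn{rk}\,E-\tn{rk}\,L$, and the factor $\cN_XV\!\otimes_{\C}\!L|_V$ can add at most rank $\tn{rk}\,L$; the total is at most $\tn{rk}\,E<\tn{rk}\,F+\tn{rk}\,L$, so $\wt\psi$ cannot be surjective over $V$ no matter how the correction $\chi\nu$ is chosen (and the alternative reading, in which the second factor maps to the target $L$ by ``division by $s$,'' does not extend over $V$). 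Finally, checking invertibility ``along $V$ and on $X\!-\!V$'' and invoking openness does not close the argument: on the region where the cutoff transitions the map is neither of your two models, and the neighborhood of $V$ produced by openness depends on the cutoff you are still trying to choose. This is exactly where the paper works: it chooses $\supp\be$ small and proves injectivity there by a quantitative comparison of $\psi$ with its linearization, via functions $\cC$ and $\ve$ with $\cC(\pi(v))\ve(v)<1$; some such locally uniform estimate is unavoidable (in particular because $V$ need not be compact), and it is the missing ingredient in your plan.
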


\begin{lmm}\label{ioRRpr_lmm}
Let $V$ and $V'\!\equiv\!\bigcup_{i\in S'}\!V_i'$, $\wt\cR$, and $\cR$ 
be as in Lemma~\ref{cOsplitNC_lmm}.
There exists a unique vector bundle homomorphism
\BE{iocRcRpr_e}\io_{\cR\wt\cR}\!:
T_{\wt\cR}X\big(\!-\log(V\!\cup\!V')\!\big)\lra T_\cR X(-\log V) \EE
so that $\io_{\wt\cR}\!=\!\io_{\cR}\!\circ\!\io_{\cR\wt\cR}$.
For every $K\!\subset\!S'$, 
\begin{enumerate}[label=(\arabic*),leftmargin=*]

\item
the kernel of~$\io_{\cR\wt\cR}$ over $V_K'^{\circ}$ 
is a trivial subbundle $L_K'\!\approx\!V_K'^{\circ}\!\times\!\C^K$;

\item\label{ioRRprcom_it}
the cokernel of~$\io_{\cR\wt\cR}$ over $V_K'^{\circ}$
is canonically isomorphic to $\cN_XV_K'^{\circ}$;

\item the composition of the above isomorphism with 
the first homomorphism in~\eref{nDpsiLdfn_e} with \hbox{$\psi\!=\!\io_{\cR\wt\cR}$} and 
$L'\!=\!L_K'$,
$$\cN_XV_K'^{\circ}|_x \xlra{\nD_x(\io_{\cR\wt\cR})_{L_K'}}
 (L_K')_x^*\!\otimes_{\C}\!\big(\cok\,\io_{\cR\wt\cR}\big)_{\!x} \xlra{~\approx~}
(L_K')_x^*\!\otimes_{\C}\!\cN_XV_K'^{\circ}|_x\,,$$
is $\C$-linear with respect to the complex structures on 
its domain and target determined by the regularization~$\wt\cR$ for every 
$x\!\in\!V_K'^{\circ}$;

\item the second homomorphism in~\eref{nDpsiLdfn_e} with 
\hbox{$\psi\!=\!\io_{\cR\wt\cR}$} and $L'\!=\!L_K'$,
$$\nD_x(\io_{\cR\wt\cR})_{L_K'}\!: 
\cN_XV_K'^{\circ}|_x\!\otimes_{\R}\!L_K'|_x\lra \big(\cok\,\io_{\cR\wt\cR}\big)_{\!x}$$
is surjective for every  \hbox{$x\!\in\!V_K'^{\circ}$}.

\end{enumerate}
\end{lmm}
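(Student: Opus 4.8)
\textbf{Proof proposal for Lemma~\ref{ioRRpr_lmm}.}
The plan is to work locally on the charts $\wt U_{I\sqcup K}^{\circ}$ covering $X$, where the bundles $T_{\wt\cR}X(-\log(V\!\cup\!V'))$ and $T_\cR X(-\log V)$ are given by the explicit models in~\eref{cOlofdfnloc_e}. Over such a chart, with $x=\wt\Psi_{I\sqcup K}(v_{I\sqcup K})$, an element of $T_{\wt\cR}X(-\log(V\!\cup\!V'))$ is represented by a pair $(x,v_{I\sqcup K},w)\oplus(x,(c_i)_{i\in I\sqcup K})$ with $w\!\in\!T_{\pi_{I\sqcup K}(v_{I\sqcup K})}V_{I\sqcup K}$, whereas an element of $T_\cR X(-\log V)$ over the same point is represented (after applying the transition from the $(I\sqcup K)$-model to the $I$-model via $\fD\wt\Psi_{I\sqcup K;I}$) by $(x,\fD\wt\Psi_{I\sqcup K;I}(v_{I\sqcup K}),\cdot)\oplus(x,(c_i)_{i\in I})$. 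The obvious candidate for $\io_{\cR\wt\cR}$ in these coordinates sends $(x,v_{I\sqcup K},w)\oplus(x,(c_i)_{i\in I\sqcup K})$ to the element of $T_\cR X(-\log V)$ obtained by first mapping into $TX$ via $\io_{\wt\cR}$ as in~\eref{ioRSC_e} and then using that the resulting tangent vector at $x\!\in\! U_I^{\circ}$ automatically lies in the image of $\io_{\cR}$; concretely it retains the $TV_I$-component built from $w$ together with the radial contributions $\sum_{i\in I}c_iv_i$, and discards the $\C^{S'}$-directions indexed by $K$. First I would check that this local formula is compatible with the transition maps $\wt\vt$ and $\vt$ of~\eref{varTII_e}, using the cocycle relations~\eref{SCDcons_e2}; this is the same bookkeeping that makes the bundles themselves well-defined, so it is routine. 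Uniqueness is immediate from surjectivity of $\io_{\cR}$ along the strata, which is part of Theorem~\ref{TXV_thm}\ref{loddfn_it}: any two lifts of $\io_{\wt\cR}$ through $\io_{\cR}$ differ by a homomorphism into $\ker\io_{\cR}$, which is supported on $V$ and has the wrong image type, forcing the difference to be zero.

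Next I would analyze $\io_{\cR\wt\cR}$ over a point $x\!\in\! V_K'^{\circ}$, which corresponds to taking $I\!=\!\eset$ in the chart $\wt U_{\eset\sqcup K}^{\circ}$, so $x=\wt\Psi_K(v_K)$ with all $v_i\!\neq\!0$ for $i\!\in\!K$. At such $x$ the fiber of $T_{\wt\cR}X(-\log(V\!\cup\!V'))$ is $T_{\pi_K(v_K)}V_K\oplus\C^K$ while the fiber of $T_\cR X(-\log V)$ is identified (via $\io_{\cR}$, which is an isomorphism onto $T_xX$ away from $V$, and here $x\!\notin\!V$) with $T_xX=T_{\pi_K(v_K)}V_K\oplus\bigoplus_{i\in K}\cN_XV_i'|_x$ — the splitting coming from the regularization $\wt\cR$ and the horizontal/vertical decomposition along the $V'$-directions. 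Under this identification $\io_{\cR\wt\cR}$ is the identity on $T_{\pi_K(v_K)}V_K$ and sends $(c_i)_{i\in K}\!\in\!\C^K$ to $\sum_{i\in K}c_iv_i\!\in\!\bigoplus_{i\in K}\cN_XV_i'|_x$. Since each $v_i\!\neq\!0$ spans the line $\cN_XV_i'|_x$ over $\C$, this is an injection with cokernel $\bigoplus_{i\in K}(\cN_XV_i'|_x/\C v_i)$. The kernel is therefore $0$ at such $x$; the kernel becomes nonzero precisely in the limit $v_i\!\to\!0$, i.e.\ on $V_K'^{\circ}$ itself, where the formula degenerates to: $\io_{\cR\wt\cR}$ is the identity on $T V_K'^{\circ}$ and kills all of $\C^K$, so $\ker\io_{\cR\wt\cR}|_{V_K'^{\circ}}=V_K'^{\circ}\!\times\!\C^K\equiv L_K'$, establishing~(1). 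For~(2), the cokernel over $V_K'^{\circ}$ is, by the model, the quotient of $T_\cR X(-\log V)|_{V_K'^{\circ}}\cong TX|_{V_K'^{\circ}}/\,TV_K'^{\circ}$-worth of directions modulo $TV_K'^{\circ}$; more precisely $\io_{\cR}$ maps $T_\cR X(-\log V)|_{V_K'^{\circ}}$ onto $T(V^{(|K|)}\!-\!V^{(|K|+1)})$ along the relevant stratum but the restriction of $\io_{\cR\wt\cR}$ only hits $TV_K'^{\circ}$, so the cokernel is canonically $TX|_{V_K'^{\circ}}/TV_K'^{\circ}\cong\cN_X V_K'^{\circ}$. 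This uses only the defining properties of $\io_{\cR}$ and $\io_{\wt\cR}$ from Theorem~\ref{TXV_thm}\ref{loddfn_it}.

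For~(3) and~(4) I would compute the normal derivative $\nD_x(\io_{\cR\wt\cR})_{L_K'}$ directly from the local model. Extending $L_K'$ to the subbundle $\C^K\!\subset\!T_{\wt\cR}X(-\log(V\!\cup\!V'))$ over the chart and writing $\io_{\cR\wt\cR}$ restricted to this $\C^K$ as the section $(c_i)_{i\in K}\mapsto\sum_{i\in K}c_iv_i$ of $\Hom(\C^K,\cN_XV_K')$ over $\cN_XV_K'^{\circ}$, its derivative in the normal direction $\dot v=(\dot v_i)_{i\in K}\!\in\!\cN_XV_K'^{\circ}|_x$ sends $(c_i)_{i\in K}$ to $\sum_{i\in K}c_i\dot v_i\in\cN_XV_K'^{\circ}|_x$ — this is exactly the Leibniz-type computation underlying $\nD s_\cR$ in Proposition~\ref{OXV_prp}\ref{cOdfn_it}, applied componentwise. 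Under the canonical identification of~(2), $\nD_x(\io_{\cR\wt\cR})_{L_K'}\!:\cN_XV_K'^{\circ}|_x\otimes_{\C}L_K'|_x\to\cN_XV_K'^{\circ}|_x$ is then the tautological pairing $\dot v\otimes c\mapsto(c_i\dot v_i)_i$, which is manifestly $\C$-linear in $\dot v$ (this is~(3), since the complex structure on all the bundles in sight is the one induced by $\wt\cR$) and manifestly surjective because fixing $c=(1,\ldots,1)$ already gives the identity (this is~(4)). The main obstacle — modest, but where care is needed — is verifying that the candidate $\io_{\cR\wt\cR}$ patches across charts: one must track how $\fD\wt\Psi_{I\sqcup K;I}$ interacts with the horizontal lifts $\nh_{\na^{(\cdot)};v}$ and the radial terms $\sum c_iv_i$ in~\eref{varTII_e}, and confirm that discarding the $K$-indexed $\C$-factors is compatible with~\eref{SCDcons_e2}. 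Since $\wt\cR$ restricts to $\cR$ on the sub-collection indexed by $S$ and the product-Hermitian condition is inherited, this compatibility is forced; the remaining four properties are then read off from the single local model above evaluated on $V_K'^{\circ}$.
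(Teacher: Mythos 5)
Your overall route is the same as the paper's: define $\io_{\cR\wt\cR}$ chart by chart on the models~\eref{cOlofdfnloc_e}, check compatibility with the transition maps~\eref{varTII_e} via~\eref{SCDcons_e2}, and then read off the four properties from the explicit local formula; your computation of the normal derivative and its surjectivity is the paper's computation. Two points need repair, however. First, your opening description of the local formula is not the map you actually use later: on the chart $\wt{U}_{I\sqcup K}^{\circ}$ the correct homomorphism keeps the $\C^I$-factors $(c_i)_{i\in I}$ intact and places $\nd_{v_{I\sqcup K;I}}\wt\Psi_{I\sqcup K;I}\big(\nh_{\na^{(I\sqcup K)};v_{I\sqcup K}}(w)+\sum_{i\in K}c_iv_i\big)$ in the $TV_I$-slot, so the $K$-indexed radial terms are absorbed into the tangent part rather than ``discarded'', and the radial sum is over $i\in K$, not $i\in I$. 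Discarding the $\C^K$-directions outright would violate $\io_{\wt\cR}=\io_{\cR}\circ\io_{\cR\wt\cR}$ at points off $V'$, where $v_i\neq0$ for $i\in K$. Your second-paragraph formula ($(c_i)_{i\in K}\mapsto\sum_{i\in K}c_iv_i$, identity on the tangent part) is the right one, so this is an internal inconsistency rather than a wrong idea, but as literally written the candidate map of the first paragraph does not satisfy the defining identity.

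Second, you verify (1)--(4) only in the charts with $I=\eset$, i.e.\ at points of $V_K'^{\circ}$ not lying on $V$; but $V_K'^{\circ}$ may meet $V$ (only that $V\cap V'$ contains no open subspace of $V$ is assumed), and at such points your justification that ``$\io_{\cR}$ is an isomorphism onto $T_xX$ \dots here $x\notin V$'' fails, since $\io_{\cR}$ is not injective along $V$. The claims do hold there, but the check must be made in the full model: at $x\in V_K'\cap\wt{U}_{I\sqcup K}^{\circ}$ the kernel of $\io_{\cR\wt\cR;IK}$ is the $\C^K$-factor $\{x\}\times\{0\}^I\times\C^K$, the cokernel is identified with $\cN_XV_K'^{\circ}|_x$ by sending $(x,v,w)\oplus(x,(c_i))$ to $\nd_v\Psi_I\big(\nh_{\na^{(I)};v}(w)\big)+T_xV_K'^{\circ}$, and the induced derivative is again $(w_i)_{i\in K}\otimes(c_i)_{i\in K}\mapsto\sum_{i\in K}c_iw_i$. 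Carrying the $TV_{I\sqcup K}$- and $\C^I$-factors through this computation is mechanical, but it is needed for the lemma as stated, and it is exactly what the paper does.
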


\vspace{.15in}

\noindent
Suppose $J$ is an almost complex structure on~$X$ compatible with~$\wt\cR$ (and thus with~$\cR$).
Since \hbox{$\io_{\wt\cR}\!=\!\io_{\cR}\!\circ\!\io_{\cR\wt\cR}$}
and the homomorphisms $\io_{\wt\cR}$ and $\io_{\cR}$ are $\C$-linear isomorphisms over $X\!-\!V\!\cup\!V'$,
the homomorphism~\eref{iocRcRpr_e} is $\C$-linear with respect 
to the complex structures~$\fI_{\wt\cR,J}$ on the domain and~$\fI_{\cR,J}$ on the target. 
The isomorphism in Lemma~\ref{ioRRpr_lmm}\ref{ioRRprcom_it} 
is $\C$-linear with respect to the complex structures
on its target and domain determined by~$\wt\cR$.
If $V'\!\subset\!X$ is a smooth symplectic divisor, the bundle homomorphism 
$\psi\!=\!\io_{\cR\wt\cR}$ thus satisfies the conditions of Lemma~\ref{bundleisom_lmm}
with~$V$ replaced by~$V'$ and $L\!=\!X\!\times\!\C$.
The conclusion of this lemma is the claim of~\eref{logsplit_e}.

\begin{proof}[{\bf{\emph{Proof of Lemma~\ref{ioRRpr_lmm}}}}] 
We continue with the notation in the proof of Lemma~\ref{cOsplitNC_lmm}
and denote by~$\wt\vt_{I'I}$ the analogues of the transition maps~$\vt_{I'I}$
in~\eref{varTII_e} for the regularization~$\wt\cR$. 
For  $I\!\subset\!S$ and $K\!\subset\!S'$, define
\begin{gather*}
\io_{\cR\wt\cR;IK}\!:T_{\wt\cR}\wt{U}_{I\sqcup K}^{\circ}\big(\!-\log(V\!\cup\!V')\!\big)
\lra T_{\cR}U_I^\circ (-\log V)\big|_{\wt{U}_{I\sqcup K}^{\circ}}, \\
\begin{split}
&\io_{\cR\wt\cR;IK}\Big(\!(x,v_{I\sqcup K},w)\!\oplus\!
\big(x,(c_i)_{i\in I\sqcup K}\big)\!\!\Big)\\
&\hspace{.5in}=\Big(\!x,\fD\wt\Psi_{I\sqcup K;I}(v_{I\sqcup K}),
\nd_{v_{I\sqcup K;I}}\wt\Psi_{I\sqcup K;I}
\big(\nh_{\na^{(I\sqcup K)};v_{I\sqcup K}}(w)\!+\!\sum_{i\in K}c_iv_i\!\big)\!\!\Big)
\!\oplus\!\big(x,(c_i)_{i\in I}\big).
\end{split}
\end{gather*}  
By~\eref{SCDcons_e2} for the regularization~$\wt\cR$, 
\begin{equation*}\begin{split}
&\vt_{I'I}\!\circ\!\io_{\cR\wt\cR;IK}\!=\!\io_{\cR\wt\cR;I'K'}
\!\circ\!\wt\vt_{(I'\sqcup K')(I\sqcup K)}\!:
T_{\wt\cR}\wt{U}_{I\sqcup K}^\circ\big(\!-\log(V\!\cup\!V')\!\big)
\big|_{\wt{U}_{I\sqcup K}^{\circ}\cap\wt{U}_{I'\sqcup K'}^{\circ}}\\
&\hspace{3.5in}\lra T_{\cR}U_{I'}^\circ (-\log V)
\big|_{\wt{U}_{I\sqcup K}^{\circ}\cap\wt{U}_{I'\sqcup K'}^{\circ}}
\end{split}\end{equation*}
for all $I'\!\subset\!I\!\subset\!S$ and $K'\!\subset\!K\!\subset\!S'$.
Along with~\eref{LogTXVSCasQ_e}, this implies that
 the collection $\{\io_{\cR\wt\cR;IK}\}$ 
induces a well-defined bundle homomorphism~\eref{iocRcRpr_e}.\\

\noindent
It is immediate from the definitions and \eref{SCDcons_e2} for the regularization~$\wt\cR$
that \hbox{$\io_{\wt\cR}\!=\!\io_{\cR}\!\circ\!\io_{\cR\wt\cR}$}.
If $I\!\subset\!S$, $K\!\subset\!S'$, and $x\!\in\!V_K'\!\cap\!\wt{U}_{I\sqcup K}^{\circ}$
with $x\!=\!\wt\Psi_{I\cup K}(v_{I\sqcup K;K})\!=\!\Psi_I(v_{I\sqcup K;K})$, then
$$\ker(\io_{\cR\wt\cR;IK})_x=
\big\{\!(x,v_{I\sqcup K;K},0)\!\big\}\!\oplus\!\big(\{x\}\!\times\!\{0\}^I\!\times\!\C^K\big).$$ 
The homomorphism
\begin{gather*}
T_{\cR}U_I^{\circ}(-\log V)\big|_{V_K'\cap \wt{U}_{I\sqcup K}^{\circ}}\lra 
\cN_XV_K'^{\circ}\big|_{V_K'\cap \wt{U}_{I\sqcup K}^{\circ}},\\
 (x,v_{I\sqcup K;K},w)\!\oplus\!\big(x,(c_i)_{i\in I\sqcup K}\big)
\lra \nd_{v_{I\sqcup K;K}}\Psi_I\!\big(h_{\na^{(I)};v_{I\sqcup K;K}}(w)\!\big)\!+\!
T_xV_K'^{\circ},
\end{gather*}
induces an isomorphism $\cok(\io_{\cR\wt\cR})_x\!\lra\!\cN_XV_K'^{\circ}|_x$.
The composition of this isomorphism with
the second homomorphism in~\eref{nDpsiLdfn_e} with $\psi\!=\!\io_{\cR\wt\cR}$ and 
$L'\!=\!L_K'$ is given~by
$$\nD_x(\io_{\cR\wt\cR})_{L_K'}\big(\!(w_i)_{i\in K}\!\otimes\!\big(x,(c_i)_{i\in K}\big)\!\big)
=\sum_{i\in K}c_iw_i\,.$$
In particular, this homomorphism is surjective, as claimed.
\end{proof}

\begin{proof}[{\bf{\emph{Proof of Lemma~\ref{bundleisom_lmm}}}}] 
Let $\Psi\!:\cN'\!\lra\!X$ be a regularization for $V$ in~$X$ as in Definition~\ref{smreg_dfn} and
\hbox{$U\!=\!\Psi(\cN')$}.
Let $\cO_X(V)\!\lra\!X$ be the complex line bundle~\eref{OXVSmooth_e}
and $E'\!\subset\!E|_V$ be a complex subbundle complementary to~$L|_V$.
Extend~$E'$ to a subbundle of~$E|_U$, which we still denote by~$E'$.
By the assumptions of the lemma, $\psi$ is injective on $E'$ and $\psi(E')\!\subset\!F|_U$
is a complex subbundle.
Let $Q\!\subset\!F|_U$ be a complex subbundle complementary to~$\psi(E')$.
After shrinking~$\cN'$ if necessary, we can extend the identification of~$L|_V$ with a subbundle
of~$E|_V$ to an identification of $L|_U$ with a subbundle of~$E|_U$ so that 
$\psi(L|_U)\!\subset\!Q$.
Since $\psi$ vanishes on~$L|_V$, the derivative of the associated section of~$L^*|_U\!\otimes_{\C}\!Q$
induces a homomorphism
$$\nD\psi_L\!:\cN_XV\lra L^*|_V\!\otimes_{\C}\!Q|_V\subset (L^*\!\otimes_{\C}\!F)\big|_V$$
of real vector bundles over~$V$.
By the assumptions on~\eref{nDpsiLdfn_e} with $L'\!=\!L|_V$, this homomorphism is $\C$-linear
and induces an isomorphism
$$\nD\psi_L\!:\cN_XV\!\otimes_{\C}\!L|_V\lra Q|_V\subset F|_V$$
of an isomorphism of complex vector bundles over~$V$.
Using parallel transport, we identify $L|_U$ and~$Q$
with the vector bundles $\{\Psi^{-1}\}^*\pi^*L$ and $\{\Psi^{-1}\}^*\pi^*Q$, respectively.\\

\noindent
Choose a smooth function $\be\!:X\!\lra\![0,1]$  such~that
$\be\!=\!1$ on a neighborhood $U'\!\subset\!U$ of~$V$ and \hbox{$\supp\,\be\!\subset\!U$}.
We define~\eref{wtpsidfn_e} over $X\!-\!\supp\,\be\!\subset\!X\!-\!V$ and 
on $E|_U\!=\!E'\!\oplus\!(L|_U)$ by
\begin{gather*}
\wt\psi(e,w)=\big(\psi(e),w\big)
\quad\forall~(e,w)\in
\big(E\!\oplus\!\cO_X(V)\!\otimes_{\C}\!L\big)\big|_{X-\supp\,\be}\!=\!
\big(E\!\oplus\!L\big)\big|_{X-\supp\,\be},\\
\wt\psi(e'\!+\!e'',0)=\big(\psi(e')\!+\!\psi(e''),-\be(x)e''\big)
\quad \forall~e'\!\in\!E'_x,~e''\!\in\!L_x,~x\!\in\!U.
\end{gather*}
We extend this definition to $(\cO_X(V)\!\otimes_{\C}\!L)|_U$ by 
\begin{equation*}\begin{split}
\wt\psi\big(0,(x,v,w)\!\big)&=
\big(x,v,\big(\be(x)D\psi_L(v\!\otimes\!w),(1\!-\!\be(x)\!)w\big)\!\big)
\in \{\Psi^{-1}\}^*\pi^*(Q\!\oplus\!L)\\
&\hspace{1in}
\forall~(x,v,w)\!\in\!\big(\cO_X(V)\!\otimes_{\C}\!L\big)|_{U-V}
=\{\Psi^{-1}\}^*\pi^*L\big|_{U-V},\\
\wt\psi\big(0,(x,v,w)\!\big) &=\big(x,v,\big(\be(x)D\psi_L(w),0\big)\!\big)
\in \{\Psi^{-1}\}^*\pi^*(Q\!\oplus\!L)\\
&\hspace{1in}
\forall~(x,v,w)\!\in\!\big(\cO_X(V)\!\otimes_{\C}\!L\big)\big|_{U'}
\!=\!\big\{\Psi^{-1}\big\}^*\pi^*(\cN_XV\!\otimes\!L)\big|_{U'}.
\end{split}\end{equation*}
The bundle homomorphism~$\wt\psi$ is well-defined and smooth.
It remains to verify that the homomorphism
\BE{wtpsicomp_e}\wt\psi\!:
\big(L\!\oplus\!\cO_X(V)\!\otimes_{\C}\!L\big)|_U
\lra   Q\!\oplus\!(L|_U)\EE
is injective over $\supp\,\be\!\subset\!U$ if $\supp\,\be$ is sufficiently small.\\

\noindent
Over $U'$, the homomorphism~\eref{wtpsicomp_e} is given~by
\begin{equation*}\begin{split}
&\hspace{.3in}
\wt\psi\!:\big\{\Psi^{-1}\big\}^*\pi^*\!\big(L\!\oplus\!\cN_XV\!\otimes\!L\big)\big|_{U'}
\lra \big\{\Psi^{-1}\big\}^*\pi^*\!(Q\!\oplus\!L),\\
&\wt\psi\big(x,v,(e'',v'\!\otimes\!w)\!\big)=
\big(\psi(x,v,e''),0\big)\!+\!
\big(x,v,\big(D\psi_L(v'\!\otimes\!w),-e''\big)\!\big),
\end{split}\end{equation*}
and is thus injective.
Over $\supp\,\be\!-\!V\!\subset\!U\!-\!V$, \eref{wtpsicomp_e} is given~by
\BE{wtpsicomp_e3}\begin{split}
&\hspace{.6in}
\wt\psi\!:\big\{\Psi^{-1}\big\}^*\pi^*\!\big(L\!\oplus\!L\big)
\big|_{\supp\,\be-V}\lra \big\{\Psi^{-1}\big\}^*\pi^*\!(Q\!\oplus\!L),\\
&\wt\psi\big(x,v,(e'',w)\!\big)=
\big(\psi(x,v,e''),0\big)\!+\!
\big(x,v,\big(\be(x)D\psi_L(v\!\otimes\!w),-\be(x)e''\!+\!(1\!-\!\be(x)\!)w\big)\!\big).
\end{split}\EE
Choose norms on $L|_U$ and~$Q$.
Let $\cC\!:V\!\lra\!\R^+$ and
$\ve\!:(U,V)\!\lra\!(\R,0)$ be smooth functions such~that
\begin{equation*}\begin{split}
\cC\big(\pi(v)\!\big)\big|\nD\psi_L(v\!\otimes\!w)\big|&\ge|v||w|, \\
\big|\psi(\Psi(v),v,w)\!-\!(\Psi(v),v,\nD\psi_L(v\!\otimes\!w)\!)\big|&\le \ve(v)||v||w|
\end{split}
\qquad\forall~(v,w)\!\in\!\cN'\!\oplus\!L|_V\,.
\end{equation*}
If $(x,v,(e'',w)\!)$ with $x\!=\!\Psi(v)\!\in\!\supp\,\be\!-\!V$
lies in the kernel of~\eref{wtpsicomp_e3}, then
\begin{gather*}
\be(x)e''=(1\!-\!\be(x)\!)w, \quad 
D\psi_L\big(v\!\otimes\!(\be(x)w\!+\!e'')\!\big)
=D\psi_L\big(v\!\otimes\!e''\big)-\psi(x,v,e''),\\
|v|\big|\be(x)w\!+\!e''\big|\le 
\cC\big(\pi(v)\!\big)\big|\nD\psi_L\big(v\!\otimes\!(\be(x)w\!+\!e'')\!\big)\!\big|
\le \cC\big(\pi(v)\!\big)\ve(v)||v||e''|.
\end{gather*}
If $\cC(\pi(v))\ve(v)\!<\!1$, this implies that $e'',w\!=\!0$.
Thus, the homomorphism~\eref{wtpsicomp_e} is injective everywhere over~$U$
if the support of $\be$ is sufficiently small.
We conclude that~$\wt\psi$ is an isomorphism everywhere over~$X$.
\end{proof}

\section{NC symplectic divisors: local perspective}
\label{NCL_sec}

\noindent
Arbitrary normal crossings (or NC)~divisors are spaces that are locally SC~divisors.
This local perspective, reviewed below, 
makes it straightforward to define NC divisors and  regularizations for~them.
It is also readily applicable to local statements, 
such as Theorem~\ref{TXV_thm}\ref{logsplit_it} and Lemma~\ref{blowuplog_lmm}.\\

\noindent
For a set $S$, denote by $\cP(S)$ the collection of subsets of~$S$.
If in addition $i\!\in\!S$, let 
$$\cP_i(S)=\big\{I\!\in\!\cP(S)\!:\,i\!\in\!S\big\}.$$

\subsection{Definitions}
\label{NCLdfn_subs}

\noindent
We begin by extending the definitions of Section~\ref{SCdfn_subs} to the general NC setting.

\begin{dfn}\label{NCD_dfn}
Let $(X,\om)$ be a symplectic manifold.
A subspace  $V\!\subset\!X$ is an \sf{NC symplectic divisor in~$(X,\om)$} 
if for every $x\!\in\!X$ there exist an open neighborhood~$U$ of~$x$ in~$X$ and 
a finite transverse collection $\{V_i\}_{i\in S}$ of closed submanifolds  of~$U$ 
of codimension~2 such~that 
$$V\!\cap\!U= \bigcup_{i\in S}\!V_i$$
is an SC symplectic divisor in~$(U,\om|_U)$.
\end{dfn}

\noindent
Every NC divisor $V\!\subset\!X$ is a closed subspace; 
its \sf{singular locus} $V_{\prt}\!\subset\!V$ consists of the points $x\!\in\!V$
such that there exists a chart $(U,\{V_i\}_{i\in S})$ as in Definition~\ref{NCD_dfn} 
and  $I\!\subset\!S$ with $|I|\!=\!2$ and $x\!\in\!V_I$. 
Figure~\ref{nbhds_fig} shows an NC divisor~$V$, a chart around a singular point of~$V$,
and a chart around a smooth point of~$V$.\\

\begin{figure}
\begin{pspicture}(-3,-1)(11,1)
\psset{unit=.3cm}
\psline[linewidth=.1](13,-2)(20,-2)
\psline[linewidth=.1](15,-4)(15,3)
\pscircle*(15,-2){.3}
\pscircle*(19,-2){.3}
\psarc(20,3){5}{-90}{180}
\psline[linewidth=.1,linestyle=dashed,dash=3pt](16,-1)(14,-1)(14,-3)(16,-3)(16,-1)
\psline[linewidth=.1,linestyle=dashed,dash=3pt](20,-1)(18,-1)(18,-3)(20,-3)(20,-1)
\psline[linewidth=.1,linestyle=dashed,dash=3pt](10,2)(6,2)(6,-2)(10,-2)(10,2)
\psline[linewidth=.1](10,0)(6,0)
\psline[linewidth=.1](8,-2)(8,2)
\psline[linewidth=.1]{->}(11,0)(13,-1)
\rput(10.5,2.5){\small{U}}\rput(26,2.5){\small{V}}
\rput(8.6,1.2){\tiny{$V_1$}}
\rput(9.3,-.5){\tiny{$V_2$}}
\pscircle*(8,0){.3}
\rput(7.3,-.5){\tiny{$V_{12}$}}
\end{pspicture}
\caption{A non-SC normal crossings divisor.}
\label{nbhds_fig}
\end{figure}
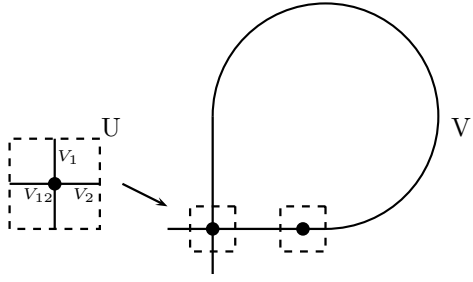

\noindent
For each chart $(U, \{V_i\}_{i\in S})$ as in Definition~\ref{NCD_dfn}
and each $x\!\in\!U$, let
$$S_x=\big\{i\!\in\!S\!:\,x\!\in\!V_i\big\}\,.$$
The cardinality $|x|\!\equiv\!|S_x|$ is independent of the choice of a chart around~$x$.
For each~$r\!\in\!\Z^{\ge0}$, let
\BE{Vrdfn_e}V^{(r)}\equiv\big\{x\!\in\!X\!:|x|\!\ge\!r\big\}.\EE
If $(U',\{V_i'\}_{i\in S'})$ is another chart for $V$ in~$X$ and $x\!\in\!U\!\cap\!U'$, 
there exist a neighborhood~$U_x$ of~$x$ in $U\!\cap\!U'$ and a bijection
\BE{NCDoverlap_e0}h_x\!:S_x\lra S_x'  \qquad\hbox{s.t.}\quad
V_i\!\cap\!U_x=V_{h_x(i)}'\!\cap\!U_x~~\forall\,i\!\in\!S_x\,.\EE
We also denote by $h_x$ the induced bijection $\cP(S_x)\!\lra\!\cP(S_x')$.
By~\eref{NCDoverlap_e0}, 
\BE{cNXVagre_e}
\cN_{V_{I-i}}V_I\big|_{V_I\cap U_x}=\cN_{V_{h_x(I)-h_x(i)}'}V_{h_x(I)}'\big|_{V_{h_x(I)}'\cap U_x}
\qquad\forall\,i\!\in\!I\!\subset\!S_x.\EE

\vspace{.15in}

\noindent
We denote by $\Symp^+(X,V)$ the space of all symplectic structures $\om$ on $X$ 
such that $V$ is an NC symplectic divisor in~$(X,\om)$.

\subsection{Regularizations}
\label{NCLreg_subs}

\noindent
Suppose $V\!\subset\!X$ is an NC divisor, 
$(U, \{V_i\}_{i\in S})$ and $(U',\{V_i'\}_{i\in S'})$ are charts for $V$ in~$(X,\om)$, and
$$\big(\cR_I\big)_{I\subset S} \equiv  
\big((\rho_{I;i},\na^{(I;i)})_{i\in I},\Psi_I\big)_{I\subset S}
\quad\hbox{and}\quad
\big(\cR_I'\big)_{I\subset S'} \equiv  
\big((\rho_{I;i}',\na'^{(I;i)})_{i\in I},\Psi_I'\big)_{I\subset S'}$$
are an $\om|_U$-regularization for $\bigcup_{i\in S} V_i $ in~$U$ 
and an $\om|_{U'}$-regularization for $\bigcup_{i\in S'}V_i'$ in~$U'$, respectively. 
We define 
$$\big(\cR_I\big)_{I\subset S}\cong_X\big(\cR_I'\big)_{I\subset S'}$$
if for every $x\!\in\!U\!\cap\!U'$ there exist $U_x$ and~$h_x$ as in~\eref{NCDoverlap_e0} 
such~that 
\BE{UxcRcond_e}\begin{split}
&\big(\rho_{I;i},\na^{(I;i)}\big)\big|_{V_I\cap U_x}
=\big(\rho_{h(I);h(i)}',\na'^{(h(I);h(i))}\big)\big|_{V_{h(I)}\cap U_x}
\quad \forall~i\!\in\!I\!\subset\!S_x  \qquad\hbox{and} \\
&\hspace{.4in}\Psi_I=\Psi_{I'}' ~~\hbox{on}~~
\Dom(\Psi_I)|_{V_I\cap U_x}\cap \Dom(\Psi_{h(I)}')|_{V_{h(I)}\cap U_x}
\quad \forall~I\!\subset\!S_x.
\end{split}\EE

\begin{dfn}\label{NCDregul_dfn}
Let $(X,\om)$ be a symplectic manifold, $V\!\subset\!X$ be an NC symplectic divisor,
and $(U_y,\{V_{y;i}\}_{i\in S_y})_{y\in\cA}$ be a collection of charts for~$V$ in~$X$
as in Definition~\ref{NCD_dfn}.
An \sf{$\om$-regularization for $V$ in~$X$} 
(with respect to the atlas $\cA$) is a~collection
$$\cR\equiv (\cR_{y;I})_{y\in\cA,I\subset S_y} \equiv  
\big((\rho_{y;I;i},\na^{(y;I;i)})_{i\in I},\Psi_{y;I}\big)_{y\in\cA,I\subset S_y}$$
such that $(\cR_{y;I})_{I\subset S_y}$ is an $\om|_{U_y}$-regularization 
for $V_y$ in~$U_y$ as in Definition~\ref{TransCollregul_dfn}\ref{sympregul_it} 
for each $y\!\in\!\cA$ 
 and 
\BE{NCDregul_e2}\big(\cR_{y;I}\big)_{I\subset S_y}\cong_X\big(\cR_{y';I}\big)_{I\subset S_{y'}}
\qquad \forall\,y,y'\!\in\!\cA\,.\EE
\end{dfn}

\vspace{.15in}

\noindent
We call an almost complex structure $J$ on $X$ compatible with a regularization~$\cR$
as in Definition~\ref{NCDregul_dfn} if $J|_{U_y}$ is compatible with the regularization
$(\cR_{y;I})_{y\in\cA,I\subset S_y}$ for $V_y$ in~$U_y$ for each $y\!\in\!\cA$ 
as defined at the end of Section~\ref{SCreg_subs}.
In particular, every open stratum $V^{(r)}\!-\!V^{(r+1)}$ is an almost complex submanifold
of~$X$ with respect to an $\cR$-compatible almost complex structure on~$X$.\\

\noindent
There are natural notions of equivalence classes of regularizations on the level of germs
and families of such equivalence classes; see \cite[Section~4.1]{SympDivConf}. 
We denote by $\Aux(X,V)$ the space of pairs $(\om,\cR)$ consisting of \hbox{$\om\!\in\!\Symp^+(X,V)$}
and the equivalence class of an $\om$-compatible regularization~$\cR$ for~$V$ in~$X$.
Let $\AK(X,V)$ be the space of triples $(\om,\cR,J)$ consisting of \hbox{$(\om,\cR)\!\in\!\Aux(X,V)$}
and an almost complex structure~$J$ on~$X$ compatible with~$\om$ and~$\cR$.
By \cite[Theorem~4.5]{SympDivConf2}, the map~\eref{Aux2Symp_e} is again a weak homotopy equivalence
in the present setting.
It remains straightforward to show that 
the map~\eref{AK2Aux_e} is also a weak homotopy equivalence in the present setting.

\subsection{Constructions}
\label{ConLNC_subs}

\noindent
In this section, we extend the constructions of Section~\ref{ConSC_subs} 
to arbitrary NC divisors in the local perspective. 
We then note that the proof of Theorem~\ref{TXV_thm}\ref{logsplit_it} in Section~\ref{logsplit_subs}
readily extends to the general NC case.\\

\noindent
Suppose $V$ is an NC symplectic divisor in a symplectic manifold~$(X,\om)$,
$(U_y,\{V_{y;i}\}_{i\in S_y})_{y\in\cA}$ is a collection of charts for~$V$ in~$X$ 
as in Definition~\ref{NCD_dfn}, and 
$\cR\!\equiv\!(\cR_y)_{y\in\cA}$ is an $\om$-regularization with respect to the atlas~$\cA$ 
in the sense of Definition~\ref{NCDregul_dfn}.
For $y\!\in\!\cA$ and \hbox{$I'\!\subset\!I\!\subset\!S_y$}, 
let $V_y\!\equiv\!V\!\cap\!U_y$, \hbox{$U_{y;I}^\circ\!\subset\!U_y$} be as in \eref{UIcirc_e},
$\th_{y';I'I}\!=\!\th_{I'I}$ be as in~\eref{thetaII_e}, and
$\vt_{y';I'I}\!=\!\vt_{I'I}$ be as in~\eref{varTII_e}.
Let
$$\pi_y\!:\cO_{\cR_y;U_y}(V_y)\lra U_y \qquad \tn{and}\qquad 
\pi_y\!: T_{\cR_y}U_y(-\log V_y)\lra U_y$$ 
be the complex line bundle \eref{COXVSC_e} and the logarithmic tangent bundle \eref{LogTXVSCasQ_e} 
determined by the $\om|_{U_y}$-regularization~$\cR_y$ for
the NC divisor~$V_y$ for~$(U_y,\om|_{U_y})$.
Let~$s_{\cR;y}$ and
$$\io_{\cR;y}\!: T_{\cR_y}U_y(-\log V_y)\lra TU_y$$
be the associated section of $\cO_{\cR_y;U_y}(V_y)$
and the vector bundle homomorphism~\eref{Ninc_e}, respectively.\\

\noindent
Suppose $y,y'\!\in\!\cA$.
For $x\!\in\!U_y\!\cap\!U_{y'}$, let $U_{yy';x}\!\equiv\!U_x\!\subset\!U_y\!\cap\!U_{y'}$ and
$$h_{y'y;x}\!\equiv\!h_x\!:S_{y;x}\!\equiv\!S_x\lra S_{y';x}\!\equiv\!S_x$$
be as in~\eref{NCDoverlap_e0}. 
By~\eref{cNXVagre_e},
\BE{NCDreg_e0}
\cN_{V_{y;S_{y;x}-i}}V_{y;S_{y;x}}\big|_{V_{y;S_{y;x}}\cap U_{yy';x}}
=\cN_{V_{y';S_{y';x}-h_{y'y;x}(i)}}V_{y';S_{y';x}}\big|_{V_{y';S_{y';x}}\cap U_{yy';x}}\\
\quad\forall~i\!\in\!S_{y;x}.\EE
We can choose $U_{yy';x}$ sufficiently small so that 
\BE{Ux-more_e}
U_{yy';x}\subset U_{y;S_{y;x}}^\circ\!\cap\! U_{y';S_{y';x}}^\circ\EE
and $U_x\!\equiv\!U_{yy';x}$ satisfies~\eref{UxcRcond_e}.
By \eref{cOlofdfnloc_e}, \eref{Ux-more_e}, and \eref{NCDreg_e0},
there are canonical identifications
\BE{compareO_e}\begin{split}
\th_{y'y;x}&:\cO_{\cR_y;S_{y;x}}(V_y)|_{U_{yy';x}}
\stackrel{=}{\lra}
\cO_{\cR_y|_{U_{yy';x}};S_{y;x}}\big(V_y\!\cap\!U_{yy';x}\big)
\stackrel{\approx}{\lra} 
\cO_{\cR_{y'};S_{y';x}}(V_{y'})|_{U_{yy';x}},\\
\vt_{y'y;x}&:T_{\cR_y}U_{S_{y;x}}^{\circ}(-\log V_y)|_{U_{yy';x}}
\stackrel{=}{\lra} T_{\cR_y|_{U_{yy';x}}}U_{yy';x}\big(\!-\log(V_y\!\cap\!U_{yy';x})\!\big)\\
&\hspace{3.2in} 
\stackrel{\approx}{\lra} T_{\cR_{y'}}U_{S_{y';x}}^{\circ}(-\log V_{y'})|_{U_{yy';x}}\,.
\end{split}\EE

\vspace{.2in}

\noindent
Suppose $x'\!\in\!U_{yy';x}$. 
By~\eref{Ux-more_e} and  the uniqueness of~$h_{y'y;x'}$,
$$S_{y;x'}\subset S_{y;x},\qquad S_{y';x'}\subset S_{y';x}, 
\qquad\hbox{and}\qquad h_{y'y;x'}=h_{y'y;x}|_{S_{y;x'}}.$$
Combining these statements with~\eref{thetaII_e}, \eref{varTII_e}, and~\eref{UxcRcond_e},
we obtain
\begin{equation*}\begin{split}
&\th_{y';S_{y';x'}S_{y';x}}\!\circ\!\th_{y'y;x}\!=\!
\th_{y'y;x'}\!\circ\!\th_{y;S_{y;x'}S_{y;x}}\!:\\
&\hspace{1in}
\cO_{\cR_y;S_{y;x}}(V_y)|_{U_{yy';x}\cap U_{yy';x'}}\lra
\cO_{\cR_{y'};S_{y';x'}}(V_{y'})|_{U_{yy';x}\cap U_{yy';x'}},\\
&\vt_{y';S_{y';x'}S_{y';x}}\!\circ\!\vt_{y'y;x}\!=\!
\vt_{y'y;x'}\!\circ\!\vt_{y;S_{y;x'}S_{y;x}}\!:\\
&\hspace{1in}
T_{\cR_y}U_{S_{y;x}}^{\circ}(-\log V_y)|_{U_{yy';x}\cap U_{yy';x'}}\lra
T_{\cR_{y'}}U_{S_{y';x'}}^{\circ}(-\log V_{y'})|_{U_{yy';x}\cap U_{yy';x'}}.
\end{split}\end{equation*}
Along with~\eref{COXVSC_e} and~\eref{LogTXVSCasQ_e}, 
this implies that the collections $\{\th_{y'y;x}\}_{x\in U_y\cap U_{y'}}$ and
$\{\vt_{y'y;x}\}_{x\in U_y\cap U_{y'}}$ determine bundle isomorphisms
\BE{compareO_e5}\begin{split}
\th_{y'y}:\cO_{\cR_y;U_y}(V_y)\big|_{U_y\cap U_{y'}}
&\lra\cO_{\cR_{y'};U_{y'}}(V_{y'})\big|_{U_y\cap U_{y'}},\\
\vt_{y'y}:T_{\cR_y}U_y(-\log V_y)\big|_{U_y\cap U_{y'}}
&\lra T_{\cR_{y'}}U_{y'}(-\log V_{y'})\big|_{U_y\cap U_{y'}},
\end{split}\EE
respectively.\\

\noindent
Suppose $y''\!\in\!\cA$ is another element and $x\!\in\!U_y\!\cap\!U_{y'}\!\cap\!U_{y''}$.
Let
$$U_{yy'y'';x}=U_{yy';x}\!\cap\!U_{yy';x}\!\cap\!U_{y'y'';x}.$$
By the uniqueness of~$h_{y''y;x}$,
$$h_{y''y;x}\!=\!h_{y''y';x}\!\circ\!h_{y'y;x}\!: S_{y;x}\lra S_{y'';x}\,.$$
This implies that the identifications~\eref{compareO_e} satisfy
\begin{equation*}\begin{split}
\th_{y''y;x}\!=\!\th_{y''y';x}\!\circ\!\th_{y'y;x}\!:
\cO_{\cR_y;S_{y;x}}(V_y)|_{U_{yy'y'';x}}&\lra
\cO_{\cR_{y''};S_{y'';x}}(V_{y''})|_{U_{yy'y'';x}},\\
\vt_{y''y;x}\!=\!\vt_{y''y';x}\!\circ\!\vt_{y'y;x}\!:
T_{\cR_y}U_{S_{y;x}}^{\circ}(-\log V_y)|_{U_{yy'y'';x}}&\lra
T_{\cR_y}U_{S_{y'';x}}^{\circ}(-\log V_{y''})|_{U_{yy'y'';x}}.
\end{split}\end{equation*}
Thus, the collections $\{\th_{y'y}\}_{y,y'\in\cA}$ 
and $\{\vt_{y'y}\}_{y,y'\in\cA}$  satisfy the cocycle condition.
The first collection thus determines a complex line bundle
\BE{COXVNC_e}\begin{split}
&\pi\!: \cO_{\cR;X}(V)\!=\!\bigg(\bigsqcup_{y\in\cA}\cO_{\cR_y;U_y}(V_y)
\!\!\bigg)\!\Big/\!\!\sim\,\,\lra X,\quad \pi|_{\cO_{\cR_y;U_y}(V_y)}=\pi_y,\\
&\hspace{.2in}
\cO_{\cR_y;U_y}(V_y)\big|_{U_y\cap U_{y'}}
\ni v\sim \th_{y'y}(v)\in \cO_{\cR_{y'};U_{y'}}(V_{y'})\big|_{U_y\cap U_{y'}} 
\quad \forall~~y,y'\!\in\!\cA.
\end{split}\EE
The second collection similarly determines a vector bundle
\BE{LogTXVNCasQ_e}\begin{split}
&\pi\!: T_\cR X(-\log V)\!=\!\bigg(
\bigsqcup_{y\in\cA}T_{\cR_y}U_y(-\log V_y)\!\!\bigg)\!\Big/\!\!\sim\,\,\lra X,\quad
\pi|_{T_{\cR_y}U_y(-\log V_y)}=\pi_y,\\
&
T_{\cR_y}U_y(-\log V_y)\big|_{U_y\cap U_{y'}}
\ni v\sim \vt_{y'y}(v)\in
T_{\cR_{y'}}U_{y'}(-\log V_{y'})\big|_{U_y\cap U_{y'}}
\quad \forall~y,y'\!\in\!\cA.
\end{split}\EE

\vspace{.15in}

\noindent
By \eref{LS_e}, \eref{ioRSC_e}, and~\eref{UxcRcond_e},
\begin{equation*}\begin{split}
&~~\th_{y'y}\!\circ\!s_{\cR;y}\!=\!s_{\cR;y'}\!:U_y\!\cap\!U_{y'}\lra 
\cO_{\cR_{y'};U_{y'}}(V_{y'})\big|_{U_y \cap U_{y'}}, \\
&\io_{\cR;y}\!=\!\io_{\cR;y'}\!\circ\!\vt_{y'y}\!:
T_{\cR_y}U_y(-\log V_y)\big|_{U_y\cap U_{y'}}\lra TX\big|_{U_y \cap U_{y'}}.
\end{split}\end{equation*}
The collections $\{s_{\cR;y}\}_{y\in\cA}$ and $\{\io_{\cR;y}\}_{y\in\cA}$ thus
determine a section~$s_{\cR}$ of the line bundle~\eref{COXVNC_e} and 
a bundle homomorphism~$\io_{\cR}$ as in~\eref{Ninc_e}.
Since the sections~$s_{\cR;y}$ and the homomorphisms~$\io_{\cR;y}$ satisfy 
the properties of~$s_{\cR}$ and~$\io_{\cR}$ stated in 
Proposition~\ref{OXV_prp}\ref{cOdfn_it} and Theorem~\ref{TXV_thm}\ref{loddfn_it}, so do 
the just constructed sections~$s_{\cR}$ and~$\io_{\cR}$.\\

\noindent
Suppose $J$ is an $\cR$-compatible almost complex structure on~$X$.
For each $y\!\in\!\cA$, 
$J_y\!\equiv\!J|_{U_y}$ is then an $\cR_y$-compatible almost complex structure on~$U_y$
and determines a complex structure on the vector bundle 
$$T_{\cR_y}U_I^{\circ}(-\log V_y)\lra U_I^{\circ}$$
for every $I\!\subset\!S_y$.
Since $J_y\!=\!J_{y'}$ on $U_{yy';x}$, \eref{UxcRcond_e} implies that 
the bundle identification~$\vt_{y'y;x}$ in~\eref{compareO_e} is $\C$-linear.
Thus, $J$ determines a complex structure~$\fI_{\cR;J}$ on the vector bundle~\eref{LogTXVNCasQ_e}
which restricts to the complex structure~$\fI_{\cR_y;J_y}$ on the vector bundle
$$T_{\cR_y}U_y(-\log V_y)\lra U_y$$
for every $y\!\in\!\cA$.
Since the bundle homomorphism~$\io_{\cR_y}$ is $\C$-linear with respect to
the complex structure~$\fI_{\cR_y;J_y}$ on its domain and the complex structure~$J_y$
on its target, we obtain Theorem~\ref{TXV_thm}\ref{logJ_it}.\\

\noindent
By the same reasoning as at the end of Section~\ref{smooth_subs},
the bundles~\eref{COXVNC_e} and~\eref{LogTXVNCasQ_e} also satisfy
the properties in Proposition~\ref{OXV_prp}\ref{cOinv_it}	and Theorem~\ref{TXV_thm}\ref{loginv_it}.\\

\noindent
Suppose $V'$ is another NC symplectic divisor in $(X,\om)$ so that 
$V\!\cup\!V'\!\subset\!X$ is also an NC symplectic divisor
and $V\!\cap\!V'$ contains no open subspace of~$V$.
An atlas for $V\!\cup\!V'$ in~$X$ is a collection of the form
$(U_y,\{V_{y;i}\}_{i\in S_y\sqcup S_y'})_{y\in\cA}$  so that 
$(U_y,\{V_{y;i}\}_{i\in S_y})_{y\in\cA}$ and 
$(U_y,\{V_{y;i}\}_{i\in S_y'})_{y\in\cA}$ are atlases for~$V$ and~$V'$,
respectively.
An $\om$-regularization $\wt\cR\!\equiv\!(\wt\cR_y)_{y\in\cA}$ for such an atlas for 
$V\!\cup\!V'$ restricts to $\om$-regularizations 
$$\cR\equiv(\cR_y)_{y\in\cA} \qquad\hbox{and}\qquad
\cR'\equiv(\cR_y')_{y\in\cA}$$
for the associated atlases for~$V$ and~$V'$ so that 
the $\om|_{U_y}$-regularization~$\wt\cR_y$ for $(V\!\cup\!V')\!\cap\!U_y$ in~$U_y$
restricts to
the $\om|_{U_y}$-regularizations~$\cR_y$ for $V\!\cap\!U_y$ in~$U_y$
and $\cR_y'$ for $V'\!\cap\!U_y$.
We denote by~$\th_{y'y}'$ and~$\vt_{y'y}'$ (resp.~$\wt\th_{y'y}$ and~$\wt\vt_{y'y}$)
the analogues of the transition maps~$\th_{y'y}$ and~$\vt_{y'y}$ in~\eref{compareO_e5}
for the regularization~$\cR'$ (resp.~$\wt\cR$).
For each $y\!\in\!\cA$, let 
\begin{gather*}
\psi_{\cR_y\cR_y'}\!:
\big(\cO_{\wt\cR_y;U_y}(V_y\!\cup\!V_y'),\fI_{\wt\cR_y}\big)\lra
\big(\cO_{\cR_y;U_y}(V_y),\fI_{\cR_y}\big)
\!\otimes\!\big(\cO_{\cR_y';U_y}(V_y'),\fI_{\cR_y'}\big),\\
\io_{\cR_y\wt\cR_y}\!:
T_{\wt\cR_y}U_y\big(\!-\log(V_y\!\cup\!V_y')\!\big)\lra T_{\cR_y}X(-\log V_y)
\end{gather*}
be the bundle isomorphism~\eref{cOsplitNC_e} and 
the bundle homomorphism~\eref{iocRcRpr_e} determined by the regularization~$\wt\cR_y$.\\

\noindent
Since the maps~\eref{cOsplitNC_e} and~\eref{iocRcRpr_e}
 are natural with respect to the restrictions to open subsets,
\begin{equation*}\begin{split}
&\big\{\th_{y'y}\!\otimes\!\th_{y'y}'\big\}\!\circ\!\psi_{\cR_y\cR_y'}
\!=\!\psi_{\cR_{y'}\cR_{y'}'}\!\circ\!\wt\th_{y'y}\!:\\
&\hspace{1in}
\cO_{\wt\cR_y;U_y}(V_y\!\cup\!V_y')\big|_{U_y\cap U_{y'}}\lra
\cO_{\cR_{y'};U_{y'}}(V_{y'})\big|_{U_y\cap U_{y'}}\!\otimes\!
\cO_{\cR_{y'}';U_{y'}}(V_{y'}')\big|_{U_y\cap U_{y'}},\\
&\vt_{y'y}\!\circ\!\io_{\cR_y\wt\cR_y}
\!=\!\io_{\cR_{y'}\wt\cR_{y'}}\!\circ\!\wt\vt_{y'y}\!:
T_{\wt\cR_y}U_y\big(\!-\log(V_y\!\cup\!V_y')\!\big)
\big|_{U_y\cap U_{y'}}\lra T_{\cR_{y'}}U_{y'}(-\log V_{y'})\big|_{U_y\cap U_{y'}}.
\end{split}\end{equation*} 
The collection $\{\psi_{\cR_y\cR_y'}\}_{y\in\cA}$ thus determines an isomorphism
$$\psi_{\cR\cR'}\!:
\big(\cO_{\wt\cR;X}(V\!\cup\!V'),\fI_{\wt\cR}\big)\lra
\big(\cO_{\cR;X}(V),\fI_{\cR}\big)\!\otimes\!\big(\cO_{\cR';X}(V'),\fI_{\cR'}\big)$$
of complex line bundles over~$X$.
In light of  Proposition~\ref{OXV_prp}\ref{cOinv_it}, 
this establishes Proposition~\ref{OXV_prp}\ref{cOsplit_it}.\\

\noindent
The collection $\{\io_{\cR_y\wt\cR_y}\}_{y\in\cA}$ similarly determines 
a homomorphism
$$\io_{\cR\wt\cR}\!:
T_{\wt\cR}X\big(\!-\log(V\!\cup\!V')\!\big)\lra T_\cR X(-\log V) $$
of vector bundles over $X$.
By the properties of the homomorphisms $\io_{\cR\wt\cR}\!\equiv\!\io_{\cR_y\wt\cR_y}$ 
stated in Lemma~\ref{ioRRpr_lmm},
this homomorphism satisfies the same properties with 
$K\!\subset\!S'$, $V_K'^{\circ}$, and \hbox{$V_K'^{\circ}\!\times\!\C^K$} 
replaced by $r\!\in\!\Z^{\ge0}$, $V'^{(r)}\!-\!V'^{(r+1)}$, and 
some rank~$r$ complex vector bundle over~$V'^{(r)}\!-\!V'^{(r+1)}$, respectively.
By the same reasoning as in the paragraph after Lemma~\ref{ioRRpr_lmm},
$\io_{\cR\wt\cR}$ is $\C$-linear with respect to the complex structures~$\fI_{\wt\cR,J}$
on its domain and~$\fI_{\cR,J}$ on its target determined by an $\wt\cR$-compatible almost 
complex structure~$J$ on~$X$.
Furthermore, the $\C$-linear homomorphism $\psi\!=\!\io_{\cR\wt\cR}$ satisfies the conditions 
of Lemma~\ref{bundleisom_lmm} if $V'$ is smooth
(in which case $V'^{(1)}\!-\!V'^{(2)}\!=\!V'$).
In light of Theorem~\ref{TXV_thm}\ref{loginv_it},
this establishes Theorem~\ref{TXV_thm}\ref{logsplit_it}.

\section{Almost complex and symplectic blowups}
\label{blowup_sec}

\noindent
Let $X$ be a smooth manifold and $V\!\subset\!X$ be an \sf{NC smooth divisor}, 
i.e.~a subspace admitting a collection $(U_y,\{V_{y;i}\}_{i\in S_y})_{y\in\cA}$ of charts
as in Definition~\ref{NCD_dfn} with each \hbox{$V_{y;i}\!\subset\!V\!\cap\!\!U_y$}
being a smooth submanifold of~$U_y$ of real codimension~2.
We fix such a collection.
Let $r\!\in\!\Z^+$ be such that \hbox{$V^{(r+1)}\!=\!\eset$}.
We also fix a \sf{Hermitian regularization} 
\BE{blowupcRdfn_e}\cR\equiv (\cR_y)_{y\in\cA} \equiv
\big(\!(\rho_{y;I;i},\na^{(y;I;i)})_{i\in I},\Psi_{y;I}\big)_{y\in\cA,I\subset S_y}\EE
for \sf{$V$ in~$X$}, i.e.~a tuple satisfying the conditions of 
Definitions~\ref{TransCollregul_dfn}\ref{sympregul_it} and~\ref{NCDregul_dfn} 
that do not involve the symplectic form~$\om$.\\

\noindent
By shrinking the open sets $U_y$, we may assume that $|S_y|\!\le\!r$ for all~$y\!\in\!\cA$, 
\hbox{$\Im(\Psi_{y;S_y})\!=\!U_y$} if $y\!\in\!\cA$ with $|S_y|\!=\!r$, 
$$\Im(\Psi_{y;S_y})\!\cap\!\Im(\Psi_{y';S_{y'}})\subset
\Psi_{y;S_y}\big(\Dom(\Psi_{y;S_y})|_{V_{y;S_y}\cap V_{y';S_{y'}}}\big)
\!\cup\!\Psi_{y';S_{y'}}\big(\Dom(\Psi_{y';S_{y'}})|_{V_{y;S_y}\cap V_{y';S_{y'}}}\big),$$
and there exists an open neighborhood~$U_r'$ of $V^{(r)}\!\subset\!X$ so that 
$U_r'\!\cap\!U_{y'}\!=\!\eset$ for all $y'\!\in\!\cA$ with $|S_{y'}|\!<\!r$.
Let 
$$\cA_r=\big\{y\!\in\!\cA\!:|S_y|\!=\!r\big\}, \qquad
\wt\cA=(\cA\!-\!\cA_r)\!\sqcup\!
\big\{\!(y,i)\!:y\!\in\!\cA_r,~i\!\in\!S_y\big\}.$$

\subsection{Smooth complex blowup}
\label{SmothCblowup_subs}

\noindent 
Since $V^{(r+1)}\!=\!\eset$, $V^{(r)}\!\subset\!X$ is a smooth submanifold.
Let 
\BE{pirdfn_e}\pi_r\!:\cN_XV^{(r)}\lra V^{(r)}\EE 
be its normal bundle.
By~\eref{NCDregul_e2}, the Hermitian metrics~$\rho_{y;S_y;i}$ and 
the connections~$\na^{(y;S_y;i)}$ in the complex line bundles~$\cN_{V_{y;S_y-i}}V_{y;S_y}$
with $y\!\in\!\cA_r$ and $i\!\in\!S_y$ determine
a complex structure, a Hermitian metric~$\rho_r$, and 
a compatible connection~$\na^{(r)}$ on~$\cN_XV^{(r)}$.
Furthermore, the~map
\BE{Psirdfn_e}\Psi_r\!:\cN_r'\!\equiv\!
\bigcup_{y\in\cA_r}\!\!\!\Dom(\Psi_{y;S_y})\lra X, \qquad
\Psi_r(v)=\Psi_{y;S_y}(v)~~\forall\,v\!\in\!\Dom(\Psi_{y;S_y}),\,y\!\in\!\cA_r,\EE
is a well-defined regularization for $V^{(r)}$ in~$X$ in the sense of
Definition~\ref{smreg_dfn}.
We note that the complex vector bundle~$\cN_XV^{(r)}$ does not necessarily split 
as a sum of complex line bundles.\\

\noindent
We denote by $\bE\!\equiv\!\P(\cN_XV^{(r)})$ the complex projectivization of~$\cN_XV^{(r)}$ 
and~by 
\BE{gadfn_e}
\wt\pi_0\!:\ga\equiv\big\{\!(\ell,v)\!\in\!\bE\!\times\!\cN_XV^{(r)}\!:v\!\in\!\ell\big\}\lra\bE\EE
the complex tautological line bundle.
Let
\BE{pibEdfn_e}\pi\!:\bE\lra V^{(r)}, \qquad 
\pi(\ell)=\pi_r(v)~~\hbox{if}~(\ell,v)\!\in\!\ga,\EE
be the bundle projection.
The connection~$\na^{(r)}$ and the Hermitian metric~$\rho_r$ on~$\cN_XV^{(r)}$ determine 
a splitting
\BE{TcNXVsplit_e}\begin{split}
T(\cN_XV^{(r)})\big|_{\cN_XV^{(r)}-V^{(r)}}
\approx\pi_r^*TV^{(r)}\big|_{\cN_XV^{(r)}-V^{(r)}}\!\oplus\!
\big\{\!(v,w)\!\in\!\pi_r^*\cN_XV^{(r)}\big|_{\cN_XV^{(r)}-V^{(r)}}\!:w\!\in\!\C v\big\}&\\
\!\oplus\!
\big\{\!(v,w)\!\in\!\pi_r^*\cN_XV^{(r)}\big|_{\cN_XV^{(r)}-V^{(r)}}\!:
\rho_r(v,w)\!=\!0\big\}&
\end{split}\EE
of the vector bundle $T(\cN_XV^{(r)})|_{\cN_XV^{(r)}-V^{(r)}}$
so that the middle summand above is identified with 
the tangent bundle to the orbits of the $\C^*$-action on~$\cN_XV^{(r)}$
and the last summand is its complement in the vertical tangent bundle of~$\pi_r$
restricted to~$\cN_XV^{(r)}\!-\!V^{(r)}$.
By \cite[Lemma~1.1]{anal}, the above splitting is $\C^*$-invariant.
It thus induces a splitting
\BE{TbEsplit_e}T\bE\approx\pi^*TV^{(r)}\!\oplus\!\big(\!\ker\nd\pi\big)\EE
so that the last summand above corresponds to the vertical tangent subbundle of~$\pi$
and a complex structure on the last summand.\\

\noindent
Since $\ga\!\subset\!\pi^*\cN_XV^{(r)}$, the Hermitian metric~$\rho_r$ and 
the compatible connection~$\na^{(r)}$ on~$\cN_XV^{(r)}$ determine 
a Hermitian structure~$(\wt\rho_0,\wt\na^{(0)})$ on the complex line bundle~$\ga$ and
a splitting
\BE{Tgasplit_e}T\ga\approx \wt\pi_0^*T\bE\!\oplus\!\wt\pi_0^*\ga\EE
so that the last summand above corresponds to the vertical tangent bundle of~$\wt\pi_0$.
The composition of this splitting with the splitting~\eref{TbEsplit_e} restricts to
the splitting~\eref{TcNXVsplit_e} under the identification
\BE{TcNident_e}\big(\ga\!-\!\bE,\pi\!\circ\!\wt\pi_0|_{\ga-\bE}\big)
=\big(\cN_XV^{(r)}\!-\!V^{(r)},\pi_r|_{\cN_XV^{(r)}-V^{(r)}}\big),
\qquad (\ell,v)\lra v.\EE

\vspace{.15in}

\noindent
We define the smooth complex blowup $\pi\!:\wt{X}\!\lra\!X$ 
of~$X$ along~$V^{(r)}$ with respect to~$\Psi_r$ by
\begin{gather*}
\wt\cN_0'=\big\{\!(\ell,v)\!\in\!\ga\!:v\!\in\!\cN_r'\big\}, ~~
\wt{X}\equiv\big(\!(X\!-\!V^{(r)})\!\sqcup\!\wt\cN_0'\big)\!\big/\!\!\sim,
~~ \wt\cN_0'\!-\!\bE\ni(\ell,v)\sim\Psi_r(v)\in X\!-\!V^{(r)},\\
\pi([\wt{x}])=\begin{cases}\wt{x},&\hbox{if}~\wt{x}\!\in\!X\!-\!V^{(r)};\\
\Psi_r(v),&\hbox{if}~\wt{x}\!\equiv\!(\ell,v)\!\in\!\wt\cN_0'.
\end{cases}
\end{gather*}
The exceptional divisor $\bE$ is a codimension~2 submanifold of~$\wt{X}$
with a smooth regularization 
$$\wt\Psi_0\!:\wt\cN_0'\lra \wt{X}, \qquad \wt\Psi_0(v)=[v].$$

\subsection{Almost complex blowup}
\label{AlCBl_sub}

\noindent
Suppose now that $J$ is an almost complex structure on~$X$,
$V\!\subset\!X$ is an NC almost complex divisor, 
each \hbox{$V_{y;i}\!\subset\!V\!\cap\!\!U_y$} is 
an almost complex submanifold of $(U_y,J|_{U_y})$ of real codimension~2, 
and the almost complex structure $J$ on $X$ is $\cR$-compatible in 
the sense defined at the end of Section~\ref{NCLreg_subs}. 
The smooth submanifold $V^{(r)}\!\subset\!X$ is then almost complex.
The induced complex structure on its normal bundle agrees with the fiberwise complex structure
determined by the complex line bundles~$\cN_{V_{y;S_y-i}}V_{y;S_y}$
with $y\!\in\!\cA_r$ and $i\!\in\!S_y$.\\

\noindent
Along with the fiberwise complex structure and the connection~$\na^{(r)}$ on~$\cN_XV^{(r)}$, 
$J|_{V^{(r)}}$ determines a complex structure~$J_{\cR;r}$ on the total space 
of~$\cN_XV^{(r)}$ such~that
\BE{PsirJ} J\!\circ\!\nd\Psi_r=\nd\Psi_r\!\circ\!J_{\cR;r}\big|_{\cN_r'}\,.\EE
Along with the splitting~\eref{TbEsplit_e},
$J|_{V^{(r)}}$ determines an almost complex structure $\wt{J}_{\bE}$ on~$\bE$ so that 
the bundle projection~\eref{pibEdfn_e} is $(J,\wt{J}_{\bE})$-holomorphic.
Along with the splitting~\eref{Tgasplit_e}, $\wt{J}_{\bE}$ in turn determines
an almost complex structure~$\wt{J}_{\cR;0}$ on the total space of~$\ga$. 
By the sentence containing~\eref{TcNident_e},
the restrictions of the almost complex structures~$\wt{J}_{\cR;0}$ to $\ga\!-\!\bE$
and~$J_{\cR;r}$ to~$\cN_XV^{(r)}\!-\!V^{(r)}$ agree under the identification~\eref{TcNident_e}.
Furthermore, the projection
$$\pi_2\!:\ga\lra \cN_XV^{(r)}$$
to the second component in~\eref{gadfn_e} is $(J_{\cR;r},\wt{J}_{\cR;0})$-holomorphic.\\

\noindent
We define an almost complex structure~$\wt{J}$ on the blowup~$\wt{X}$ of~$X$ constructed
in Section~\ref{SmothCblowup_subs}~by
$$\wt{J}_{[\wt{x}]}=\begin{cases}J_{\wt{x}},&\hbox{if}~\wt{x}\!\in\!X\!-\!V^{(r)};\\
\wt{J}_{\cR;0}|_{\wt{x}},&\hbox{if}~\wt{x}\!\in\!\wt\cN_0'.
\end{cases}$$
By the conclusion of the previous paragraph and~\eref{PsirJ}, 
the definitions of $\wt{J}$ agree on $\wt\cN_0'\!-\!\bE$.
The exceptional divisor $\bE$ is an almost complex submanifold of $(\wt{X},\wt{J})$.
The almost complex structure~$\wt{J}$ 
 is compatible with the Hermitian regularization 
$(\wt\rho_0,\wt\na^{(0)},\wt\Psi_0)$ for~$\bE$ in~$\wt{X}$.\\

\noindent
Let $\ov{V}\!\subset\!\wt{X}$ be the proper transform of~$V$, 
i.e.~the closure of~$V\!-\!V^{(r)}$, and
$$\wt{V}=\bE\!\cup\!\ov{V}.$$
We show below that~$\wt{V}$ 
is an NC almost complex divisor in $(\wt{X},\wt{J})$ with a collection
$(\wt{U}_y,\{\wt{V}_{y;i}\}_{i\in\wt{S}_y})_{y\in\wt\cA}$  of charts and a regularization
\BE{cRblowup_e}\wt\cR\equiv (\wt\cR_{y;I})_{y\in\wt\cA,I\subset\wt{S}_y} \equiv  
\big(\!(\wt\rho_{y;I;i},\wt\na^{(y;I;i)})_{i\in I},\wt\Psi_{y;I}
\big)_{y\in\wt\cA,I\subset\wt{S}_y}\EE
obtained from the atlas~$(U_y,\{V_{y;i}\}_{i\in S_y})_{y\in\cA}$ and the regularization~$\cR$
for~$V$ in~$X$.
If $y\!\in\!\cA\!-\!\cA_r$, then
$$U_y\subset X\!-\!V^{(r)}=\wt{X}\!-\!\bE.$$
In this case, we simply take 
$$\wt{S}_y=S_y, \quad 
\big(\wt{U}_y,\{\wt{V}_{y;i}\}_{i\in\wt{S}_y}\big)=(U_y,\{V_{y;i}\}_{i\in S_y}\big), \quad
\big(\wt\cR_{y;I}\big)_{I\subset\wt{S}_y}=\big(\cR_{y;I}\big)_{I\subset S_y}.$$

\vspace{.15in}

\noindent
Suppose $y\!\in\!\cA_r$.
Let $\wt{U}_{y}\!\equiv\!\pi^{-1}(U_y)$ be the blowup of~$U_y$ along $V^{(r)}\!\cap\!U_y$.
Since \hbox{$\Im(\Psi_{y;S_y})\!=\!U_y$}, we can identify~$U_y$ with $\Dom(\Psi_{y;S_y})$ via~$\Psi_r$ 
and~$\wt{U}_y$ with $\pi^{-1}(U_y)\!\subset\!\wt\cN_0'$ via~$\wt\Psi_0$.
For  each $i\!\in\!S_y$, let $\ov{V}_{y;i}\!\subset\!\wt{U}_y$ be the proper transform of~$V_{y;i}$
and define
$$\wt{S}_{(y,i)}=\{0\}\!\sqcup\!\big(S_y\!-\!\{i\}\!\big), ~~
\cN_{y;i}=\bigoplus_{j\in S_y-i}\!\!\!\cN_{V_{y;S_y-i}}V_{y;S_y}, ~~
\wt{U}_{(y,i)}=\wt{U}_y\!-\!\ga|_{\P\cN_{y;i}}, ~~
\wt{V}_{(y,i);0}=\bE\!-\!\P\cN_{y;i}.$$
For $j\!\in\!S_y\!-\!i$, let $\wt{V}_{(y,i);j}=\ov{V}_{y;j}\!\cap\!\wt{U}_{(y,i)}$.
We note~that 
\BE{BlDecomp_e}V_{y;i}=\cN_{y;i}\!\cap\!\Dom(\Psi_{y;S_y}), \qquad 
\ov{V}_{y;i}=\ga\big|_{\P\cN_{y;i}}\!\cap\!\wt{U}_y,
\quad\hbox{and}\quad 
\ov{V}\!\cap\!\wt{U}_{(y,i)}=\bigcup_{j\in S_y-i}\!\!\!\!\!\wt{V}_{(y,i);j}\EE
under the above identifications.
Since $\{\wt{V}_{(y,i);j}\}_{j\in\wt{S}_{(y,i)}}$ is a transverse collection of
codimension~2 almost complex submanifolds of~$\wt{U}_{(y,i)}$, 
$\wt{V}\!\cap\!\wt{U}_{(y,i)}$ is an SC almost complex divisor in~$\wt{U}_{(y,i)}$.
Thus, $\wt{V}$ is an NC almost complex divisor in~$(\wt{X},\wt{J})$ and
$(\wt{U}_y,\{\wt{V}_{y;i}\}_{i\in\wt{S}_y})_{y\in\wt\cA}$  is an atlas of local charts
for~$\wt{V}$.

\subsection{Regularizations}
\label{CblowupReg_subs}

\noindent
For  $y\!\in\!\cA_r$ and \hbox{$I'\!\subset\!I\!\subset\!S_y$}, let
$$\cN_{y;I;I'}'\subset \cN_{y;I;I'}\subset 
\cN_{U_y}V_{y;I}\!=\!\bigoplus_{i\in I}\cN_{V_{y;I-i}}V_{y;I}$$
be the analogues of the subspaces $\cN_{I;I'}'\!\subset\!\cN_{I;I'}\!\subset\!\cN_XV_I$
for the regularization $(\Psi_{y;I})_{I\subset S_y}$ 
of $\{V_{y;i}\}_{i\in S_y}$ in~$U_y$ as defined in Section~\ref{SCreg_subs}.
Suppose $i\!\in\!S_y$ and $I\!\subset\!\wt{S}_{(y,i)}$.
If $0\!\in\!I$ (and so $\wt{V}_{(y,i);I}\!\subset\!\bE$), then 
\begin{equation*}\begin{split}
\cN_{\wt{U}_{(y,i)}}\!\wt{V}_{(y,i);I}=&\ga\big|_{\wt{V}_{(y,i);I}}\!\oplus\!
\big(\ga^*\big|_{\wt{V}_{(y,i);I}}\!\otimes_{\C}\!
\pi^*\cN_{U_y}V_{y;I-0}\big|_{\wt{V}_{(y,i);I}}\big)\\
\subset& 
\big\{\!\pi|_{\wt{V}_{(y,i);I}}\!\big\}^{\!*}\cN_{y;S_y;I-0}
\!\oplus\!
\big(\ga^*\big|_{\wt{V}_{(y,i);I}}\!\otimes_{\C}\!
\big\{\!\pi|_{\wt{V}_{(y,i);I}}\!\big\}^{\!*}\cN_{y;S_y;S_y-I}\big)\,.
\end{split}\end{equation*}
In this case, we define
\begin{gather*}
\wt\Psi_{(y,i);I}\!:\big\{\!
\big(\C v',v,u\!\otimes\!w\big)\!\in\!\cN_{\wt{U}_{(y,i)}}\!\wt{V}_{(y,i);I}\!:
v\!+\!u(v)w\!\in\!\cN_{y;S_y}'\!\big\}\lra \wt{U}_{(y,i)}\subset 
\big\{\!\pi|_{\bE}\!\big\}^{\!*}\cN_{U_y}V_{y;S_y}, \\
\wt\Psi_{(y,i);I}\big(\C v',v,u\!\otimes\!w\big)
=\big(\C(v'\!+\!u(v')w),v\!+\!u(v)w\big).
\end{gather*}

\vspace{.15in}

\noindent
Suppose $0\!\not\in\!I$.
Thus,
$\wt{V}_{(y,i);0\sqcup I}\!\equiv\!\wt{V}_{(y,i);I}\!\cap\!\bE$ is a smooth submanifold 
of~$\wt{V}_{(y,i);I}$ with normal bundle~$\ga|_{\wt{V}_{(y,i);0\sqcup I}}$. 
The smooth regularization
$$\wt\Psi_{(y,i);0\sqcup I;I}\!:
\wt\cN_{(y,i);0\sqcup I;I}'\!\equiv\!
\big\{\!(\ell,v)\!\in\!\ga\!:v\!\in\!\cN_{y;S_y;I}'\big\}\lra\wt{V}_{(y,i);I},
\quad \wt\Psi_{(y,i);0\sqcup I;I}(\ell,v)=(\ell,v),$$
of~$\wt{V}_{(y,i);0\sqcup I}$ in~$\wt{V}_{(y,i);I}$ is surjective. 
Let
$$\cO_{\wt\cR;(y,i);I}(\bE)\equiv 
\{\wt\Psi_{(y,i);0\sqcup I;I}^{~-1}\}^{\!*}
\big\{\!\wt\pi_0|_{\wt\cN_{(y,i);0\sqcup I;I}'}\big\}^{\!*}\ga\lra\wt{V}_{(y,i);I}$$
be the analogue of the complex line bundle~\eref{OXVSmooth_e} 
determined by~$\wt\Psi_{(y,i);0\sqcup I;I}$ and the fiberwise complex structure of~$\ga$.
In this case,
$$\cN_{\wt{U}_{(y,i)}}\!\wt{V}_{(y,i);I}=
\cO_{\wt\cR;(y,i);I}(\bE)^*\!\otimes_{\C}\!
\pi^*\cN_{U_y}V_{y;I}\big|_{\wt{V}_{(y,i);I}}
=\cO_{\wt\cR;(y,i);I}(\bE)^*\!\otimes_{\C}\!
\pi^*\cN_{y;S_y;S_y-I}\big|_{\wt{V}_{(y,i);I}}.$$
We define
\begin{gather*}
\wt\Psi_{(y,i);I}\!:
\big\{\!\big(x,(\C v',v),u\big)\!\otimes\!w\!\in\!\cN_{\wt{U}_{(y,i)}}\!\wt{V}_{(y,i);I}\!:
v\!+\!u(v)w\!\in\!\cN_{y;S_y}'\!\big\}\lra \wt{U}_{(y,i)}, \\
\wt\Psi_{(y,i);I}\big(\!(x,(\C v',v),u)\!\otimes\!w\big)
=\big(\C(v'\!+\!u(v')w),v\!+\!u(v)w\big).
\end{gather*}
Since  $(\Psi_{y;I})_{I\subset S_y}$ is a regularization
for $\{V_{y;i}\}_{i\in S_y}$ in~$U_y$, 
$(\wt\Psi_{(y,i);I})_{I\subset\wt{S}_{(y,i)}}$ is a regularization
for $\{\wt{V}_{(y,i);j}\}_{j\in\wt{S}_{(y,i)}}$ in~$\wt{U}_{(y,i)}$.
Since the collection $(\Psi_{y;I})_{y\in\cA,I\subset S_y}$ satisfies the last condition
in~\eref{UxcRcond_e}, 
so does the collection $(\wt\Psi_{y;I})_{y\in\wt\cA,I\subset\wt{S}_y}$.\\

\noindent
Let $y\!\in\!\cA_r$ as before.
The Hermitian structures $(\wt\rho_0,\wt\na^{(0)})$ on~$\ga$ and 
$(\rho_{y;I;j},\na^{(y;I;j)})$ on~$\cN_{V_y;I-j}V_{y;I}$ with $j\!\in\!I\!\subset\!S_y$
determine Hermitian structures $(\wh\rho_{(y,i);I;j},\wh\na^{((y,i);I;j)})$ 
on the complex line bundles
$$\cN_{\wt{V}_{(y,i);I-j}}\wt{V}_{(y,i);I}=
\begin{cases}\ga|_{\wt{V}_{(y,i);I}},&\hbox{if}~j\!=\!0\!\in\!I\!\subset\!\wt{S}_{(y,i)};\\
\ga^*\big|_{\wt{V}_{(y,i);I}}\!\otimes_{\C}\!
\pi^*\cN_{V_{y;S_y-j}}V_{y;S_y}\big|_{\wt{V}_{(y,i);I}},&
\hbox{if}~0,j\!\in\!I\!\subset\!\wt{S}_{(y,i)},\,j\!\neq\!0;\\
\cO_{\wt\cR;(y,i);I}(\bE)^*\!\otimes_{\C}\!
\pi^*\cN_{V_y;I-j}V_{y;I}\big|_{\wt{V}_{(y,i);I}},
&\hbox{if}~j\!\in\!I\!\subset\!\wt{S}_{(y,i)},\,0\!\not\in\!I.
\end{cases}$$
The almost complex structure $\wt{J}|_{\wt{V}_{(y,i);I}}$ and 
the connection~$\wh\na^{((y,i);I;j)}$
determine an almost complex structure~$\wt{J}_{\cR_y;I}$
on the total space of the normal bundle $\cN_{\wt{U}_{(y,i)}}\wt{V}_{(y,i);I}$
of~$\wt{V}_{(y,i);I}$ in~$\wt{U}_{(y,i)}$.
If \hbox{$0\!\not\in\!I\!\subset\!\wt{S}_{(y,i)}$}, Corollary~\ref{RadVecFld_crl} implies~that 
the isomorphism
\BE{cORisom_e}
\cN_{\wt{U}_{(y,i)}}\wt{V}_{(y,i);I}\big|_{\wt{V}_{(y,i);I}-\wt{V}_{(y,i);0\sqcup I}}
\lra \cN_{U_y}V_{y;I}\big|_{V_{y;I}-V_{y;S_y}},\quad
u\!\otimes\!(v,w)\lra u(v)w,\EE
intertwines~$\wt{J}_{\cR_y;I}$ with the almost complex structure~$J_{\cR_y;I}$
determined by the almost complex structure~$J|_{V_{y;I}}$ and 
the connections~$\na^{(y;I;j)}$ with $j\!\in\!I$.
Since the regularization~$\Psi_{y;I}$ intertwines~$J_{\cR_y;I}$ and~$J$,
it follows that the regularization~$\wt\Psi_{(y,i);I}$ intertwines~$\wt{J}_{\cR_y;I}$ and~$\wt{J}$
if $0\!\not\in\!I\!\subset\!\wt{S}_{(y,i)}$.
The same is the case for $I\!=\!\{0\}$ by the definition of~$\wt{J}|_{\wt\cN_0'}$.
Since the differentials $\fD\wt\Psi_{(y,i);I;I-0}$ with 
\hbox{$0\!\in\!I\!\subset\!\wt{S}_{(y,i)}$} are product Hermitian isomorphisms,
it follows that they intertwine the almost complex structures~$\wt{J}_{\cR_y;I}$ 
and~$\wt{J}_{\cR_y;I-0}$.
Thus, the regularization~$\wt\Psi_{(y,i);I}$ intertwines~$\wt{J}_{\cR_y;I}$ and~$\wt{J}$
for all $I\!\subset\!\wt{S}_{(y,i)}$, while $\fD\wt\Psi_{(y,i);I;I'}$ 
intertwines~$\wt{J}_{\cR_y;I}$ and~$\wt{J}_{\cR_y;I'}$ for all 
$I'\!\subset\!I\!\subset\!\wt{S}_{(y,i)}$.\\

\noindent
Below we define smooth functions $f_{(y,i);I}\!:\wt{V}_{(y,i);I}\!\lra\!\R^+$ 
so that the metrics 
\BE{wtrhodfn_e} \wt\rho_{(y,i);I;j}\equiv 
\begin{cases}f_{(y,i);I}\wh\rho_{(y,i);I},&\hbox{if}~j\!\in\!I,~j\!\neq\!0;\\
(1/f_{(y,i);I})\wh\rho_{(y,i);I},&\hbox{if}~j\!=\!0\!\in\!I;
\end{cases}\EE
on the complex line bundles $\cN_{\wt{V}_{(y,i);I-j}}\wt{V}_{(y,i);I}$ 
are preserved by the isomorphisms $\fD\wt\Psi_{(y,i);I;I'}$ with 
\hbox{$j\!\in\!I'\!\subset\!I\!\subset\!\wt{S}_{(y,i)}$}, after shrinking the domain 
of~$\wt\Psi_{(y,i);I}$.
The connection
\BE{wtnadfm_e}\wt\na^{((y,i);I;j)}\equiv 
\begin{cases}
\wh\na^{((y,i);I;j)}\!+\!f_{(y,i);I}^{-1}\partial_{\wt{J}}f_{(y,i);I},
&\hbox{if}~j\!\in\!I,~j\!\neq\!0;\\
\wh\na^{((y,i);I;j)}\!-\!f_{(y,i);I}^{-1}\partial_{\wt{J}}f_{(y,i);I},
&\hbox{if}~j\!=\!0\!\in\!I;
\end{cases}\EE
on $\cN_{\wt{V}_{(y,i);I-j}}\wt{V}_{(y,i);I}$ is compatible with~$\wt\rho_{(y,i);I;j}$.
Along with $\wt{J}|_{\wt{V}_{(y,i);I}}$, it determines 
the same almost complex structure~$\wt{J}_{\cR_y;I}$ on~$\cN_{\wt{V}_{(y,i);I-j}}\wt{V}_{(y,i);I}$
as~$\wh\na^{((y,i);I;j)}$
(because the two connections differ by a $(1,0)$-form).
Since the isomorphisms $\fD\wt\Psi_{(y,i);I;I'}$ preserve the metrics~$\wt\rho_{(y,i);I;j}$
and the almost complex structures~$\wt{J}_{\cR_y;I}$, it follows that they preserve
the connections~$\wt\na^{((y,i);I;j)}$ as~well.\\

\noindent
We choose the functions~$f_{(y,i);I}$ so~that
\begin{gather}\label{fcond_e1a}
f_{(y,i);I}(x)=f_{(y,i');I}(x)
\quad\forall~x\!\in\!\wt{V}_{(y,i);I}\!\cap\!\wt{V}_{(y,i');I},\,
I\!\subset\!\wt{S}_{(y,i)},\wt{S}_{(y,i')},\,i,i'\!\in\!S_y,\,y\!\in\!\cA_r,\\
\label{fcond_e1b}
\begin{split}
&f_{(y,i);I}(x)=f_{(y',h_{y'y;x}(i));h_{y'y;x}(I)}(x)
~~\forall~x\!\in\!\wt{V}_{(y,i);I}\!\cap\!\wt{V}_{(y',h_{y'y;x}(i));h_{y'y;x}(I)},
\,i\!\in\!S_y,\,y,y'\!\in\!\cA_r,\,
\end{split}\end{gather}
with $h_{y'y;x}$ as above~\eref{NCDreg_e0}.
For $I\!\not\ni\!0$, we choose~$f_{(y,i);I}$ so that the isomorphism~\eref{cORisom_e}
identifies the restriction of~$\wt\rho_{(y,i);I;j}$ to 
$\cN_{\wt{V}_{(y,i);I-j}}\wt{V}_{(y,i);I}|_{\wt{V}_{(y,i);I}-U_r'}$
with the restriction of the metric~$\rho_{y;I;j}$ to~$\cN_{V_{y;I-j}}V_{y;I}|_{V_{y;I}-U_r'}$.
Along with the assumption on~$U_r'$,
this implies that the resulting collection~$\wt\cR$ in~\eref{cRblowup_e}
satisfies the first condition in~\eref{UxcRcond_e}.

\begin{lmm}\label{bumpfun_lmm1}
Let $r\!\in\!\Z^+$. 
There exist a smooth function $h\!:\P^{r-1}\!\lra\!\R^+$ and $\de\!\in\!\R^+$
such~that
\begin{enumerate}[label=(\arabic*),leftmargin=*] 

\item $h$ is invariant under the permutations of the homogeneous coordinates on~$\P^{r-1}$
and under the standard $(S^1)^r$-action on~$\P^{r-1}$;

\item  for all $s\!\in\![r]$, $[Z_1,\ldots,Z_s]\!\in\!\P^{s-1}$,
and $[Z_1,\ldots,Z_r]\!\in\!\P^{r-1}$
with $\sum\limits_{i=s+1}^r\!\!\!\!|Z_i|^2\!\le\!\de\sum\limits_{i=1}^s\!|Z_i|^2$,
\BE{bumpfun_e0}\frac{h(Z_1,\ldots,Z_r)}{h(Z_1,\ldots,Z_s,0,\ldots,0)}=
\frac{|Z_1|^2\!+\!\ldots\!+\!|Z_r|^2}{|Z_1|^2\!+\!\ldots\!+\!|Z_s|^2}.\EE

\end{enumerate}
\end{lmm}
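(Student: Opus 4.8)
\noindent
The plan is to reformulate the statement as a construction on the standard simplex and prove it by induction on~$r$. For $[Z]\!\in\!\P^{r-1}$ and $i\!\in\![r]$ put $q_i([Z])\!=\!|Z_i|^2/(|Z_1|^2\!+\!\ldots\!+\!|Z_r|^2)$; these are well-defined smooth $(S^1)^r$-invariant functions on~$\P^{r-1}$, and $q\!=\!(q_1,\ldots,q_r)$ takes values in the simplex $\De^{r-1}\!=\!\{q\!\in\!\R^r_{\ge0}:q_1\!+\!\ldots\!+\!q_r\!=\!1\}$, the coordinate subspace $\P^{|I|-1}\!\subset\!\P^{r-1}$ indexed by a nonempty $I\!\subset\![r]$ mapping onto the face $\De^{r-1}_I\!=\!\{q\!\in\!\De^{r-1}:q_i\!=\!0~\forall\,i\!\notin\!I\}$. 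For such~$I$ set $\si_I(q)\!=\!\sum_{i\in I}q_i$ and, when $\si_I(q)\!>\!0$, let $q^I\!\in\!\De^{r-1}_I$ be the normalized truncation $(q^I)_i\!=\!q_i/\si_I(q)$ for $i\!\in\!I$ and $(q^I)_i\!=\!0$ otherwise; then $q^{[s]}$ is the image of $[Z_1,\ldots,Z_s,0,\ldots,0]$ under~$q$, and $1/\si_{[s]}(q)$ is the right-hand side of~\eref{bumpfun_e0}. A smooth $\Sym(r)$-invariant $f\!:\De^{r-1}\!\lra\!\R^+$ (i.e.\ the restriction of such a function defined on a neighborhood of $\De^{r-1}$ in the hyperplane $\{\sum_iq_i\!=\!1\}$) pulls back to $h\!=\!f(q_1,\ldots,q_r)$ satisfying part~(1), and part~(2) then holds for~$h$ with any $\de\!<\!\de_r$ provided
\BE{bumpfun_pf_e}
f(q)\,\si_I(q)=f\big(q^I\big)\qquad
\hbox{whenever}\quad\eset\!\neq\!I\!\subsetneq\![r]~~\hbox{and}~~\si_I(q)\!>\!\tfrac1{1+\de_r}\EE
(apply~\eref{bumpfun_pf_e} with $I\!=\![s]$ for $s\!<\!r$; the case $s\!=\!r$ of~(2) reads $1\!=\!1$). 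Hence it suffices to produce, for each $r\!\ge\!1$, a smooth $\Sym(r)$-invariant $f_r\!:\De^{r-1}\!\lra\!\R^+$ obeying~\eref{bumpfun_pf_e} for some~$\de_r\!>\!0$.

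I would establish this by induction on~$r$, the case $r\!=\!1$ being trivial ($f_1\!\equiv\!1$). In the inductive step fix $(f_s,\de_s)$ for $s\!<\!r$, set $\de_0\!=\!\min_{s<r}\de_s$, and let $\de_r\!>\!0$ be small (see below). For each nonempty $I\!\subsetneq\![r]$, identify $\De^{r-1}_I$ with $\De^{|I|-1}$ and define the smooth positive function $g_I(q)\!=\!f_{|I|}(q^I)/\si_I(q)$ on the open neighborhood $\{q:\si_I(q)\!>\!0\}$ of~$\De^{r-1}_I$; put $R_I\!=\!\{\si_I\!>\!1/(1\!+\!2\de_r)\}$, so that $W\!=\!\bigcup_IR_I$ is a neighborhood of~$\prt\De^{r-1}$. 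The key point is that $g_I\!=\!g_J$ on $R_I\!\cap\!R_J$ whenever the latter is nonempty, so that the $g_I$ patch to a well-defined smooth $\Sym(r)$-invariant positive function~$g$ on~$W$: for $q\!\in\!R_I\!\cap\!R_J$ one has $I\!\cap\!J\!\neq\!\eset$ and, from $\si_I\!+\!\si_J\!-\!\si_{I\cup J}\!=\!\si_{I\cap J}$ and $\si_{I\cup J}(q)\!\le\!1$,
$$\si_{I\cap J}(q^I)=\frac{\si_{I\cap J}(q)}{\si_I(q)}\ge\si_{I\cap J}(q)>\frac{1-2\de_r}{1+2\de_r}>\frac1{1+\de_{|I|}}$$
once $\de_r$ is chosen small enough in terms of~$\de_0$; so the inductive relation~\eref{bumpfun_pf_e} for~$f_{|I|}$ is applicable at~$q^I$ with $I\!\cap\!J$ in place of~$I$, and combining it with $\si_{I\cap J}(q^I)\!=\!\si_{I\cap J}(q)/\si_I(q)$ and $(q^I)^{I\cap J}\!=\!q^{I\cap J}$ yields $g_I(q)\!=\!f_{|I\cap J|}(q^{I\cap J})/\si_{I\cap J}(q)$; the same computation, or --- when $I\!\subseteq\!J$ --- the analogous one at~$q^J$, gives the identical value for~$g_J(q)$. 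Finally I would take a smooth $\Sym(r)$-invariant cutoff $\be\!:\De^{r-1}\!\lra\![0,1]$ (average an ordinary one over~$\Sym(r)$) equal to~$1$ on $W'\!=\!\bigcup_I\{\si_I\!>\!1/(1\!+\!\de_r)\}$, whose closure lies in~$W$, and with $\supp\,\be\!\subset\!W$, and set $f_r\!=\!\be\,g\!+\!(1\!-\!\be)$. This is positive, $\Sym(r)$-invariant, and smooth on~$\De^{r-1}$ (extending smoothly to a neighborhood), and on $\{\si_I\!>\!1/(1\!+\!\de_r)\}\!\subset\!W'$ it equals $g\!=\!g_I$, which is~\eref{bumpfun_pf_e}; this closes the induction, and $h\!=\!f_r(q_1,\ldots,q_r)$ with $\de$ any positive number less than~$\de_r$ proves the lemma.

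The one substantive step is the compatibility $g_I\!=\!g_J$ on overlaps: it asserts that the normalizations forced near distinct coordinate subspaces agree along the deeper strata where those subspaces meet, which amounts to the multiplicativity of the weights~$\si_I$ and the associativity of the truncations $q\!\mapsto\!q^I$ --- the simplex incarnation of the cocycle relations~\eref{overlapcond_e1} behind the constructions of Section~\ref{ConSC_subs}. This is also what forces the nested choice $\de_1\!\gg\!\de_2\!\gg\!\cdots\!\gg\!\de_r$, so that the inductive hypothesis can be invoked in the overlap regions. Everything else --- the smoothness and $(S^1)^r$-invariance of $h\!=\!f_r(q_1,\ldots,q_r)$, the construction of the $\Sym(r)$-invariant cutoff, and the elementary simplex inequalities above --- is routine.
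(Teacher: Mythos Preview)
Your proof is correct and follows the same inductive strategy as the paper's: extend the lower-dimensional function to a neighborhood of each boundary stratum by the forced formula, verify compatibility on pairwise overlaps by invoking the inductive property one level down, and then interpolate with a symmetric cutoff against the constant~$1$. Your passage to the simplex via $q_i=|Z_i|^2/\sum_j|Z_j|^2$ is a convenient repackaging that makes the $(S^1)^r$-invariance automatic and replaces the paper's $\bS_r$-orbit of a single codimension-one neighborhood by the collection of all $R_I$ (harmless redundancy, since $R_I\subset R_J$ when $I\subset J$); the one step you leave implicit in writing $g_I(q)=f_{|I\cap J|}(q^{I\cap J})/\sigma_{I\cap J}(q)$ is the face compatibility $f_s|_{\Delta^{s-1}_{I'}}=f_{|I'|}$, which does follow from your construction (on $\Delta^{r-1}_I$ one has $\sigma_I=1$, so $\beta=1$ and $f_r=g_I=f_{|I|}$ there) but should be carried explicitly as part of the inductive hypothesis.
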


\vspace{.2in}

\noindent
This lemma is established in Section~\ref{BlowPf_subs}.
Let $h\!:\P^{r-1}\!\lra\!\R^+$ and $\de\!\in\!\R^+$ be 
as in Lemma~\ref{bumpfun_lmm1}.
Define
$$W_{y;I}=\Big\{\!\big[(v_i)_{i\in S_y}\big]\!\in\!\P\cN_XV^{(r)}|_{V_{y;S_y}}\!:
\sum_{i\in S_y-I}\!\!\!\rho_{y;S_y;i}(v_i)\!<\!\de\sum_{i\in I}\rho_{y;S_y;i}(v_i)\Big\}
~~\forall~I\!\subset\!S_y,\,y\!\in\!\cA_r.$$
We can identify each fiber of~\eref{pirdfn_e}  with $\C^r$ 
respecting the splittings and the metrics.
This induces an identification of each fiber of $\pi\!:\bE\!\lra\!V^{(r)}$ 
with~$\P^{r-1}$.
By the invariance properties of~$h$, the composition of this identification with~$h$
is independent of the choice of the former.
Thus, we obtain a smooth function \hbox{$h_{\bE}\!:\bE\!\lra\!\R^+$} so~that
\BE{hbEprp_e}
\frac{h_{\bE}\big([(v_i)_{i\in S_y}]\big)}{h_{\bE}\big([(v_i)_{i\in I}]\big)}
=\frac{\rho_r\big([(v_i)_{i\in S_y}]\big)}{\rho_r\big([(v_i)_{i\in I}]\big)}
\quad\forall~[(v_i)_{i\in S_y}]\!\in\!W_{y;I},\,I\!\subset\!S_y,\,y\!\in\!\cA_r.\EE

\vspace{.15in}

\noindent
For $y\!\in\!\cA_r$, $i\!\in\!S_y$, and $I\!\subset\!\wt{S}_{(y,i)}$ so that $0\!\in\!I$,
let $U_{y;I}\!\subset\!\wt{U}_y$ be an open neighborhood of~$\wt{V}_{y;I-0}$
so that $U_{y;I}\!\cap\!\bE\!=\!W_{y;I-0}$.
We define
\BE{rhoscaldfn1_e}f_{(y,i);I}=h_{\bE}|_{\wt{V}_{(y,i);I}}.\EE
By~\eref{hbEprp_e},
the restrictions of the differentials $\fD\wt\Psi_{(y,i);I;I'}$ with 
$0\!\in\!I'\!\subset\!I\!\subset\!\wt{S}_{(y,i)}$ to $\wt\Psi_{(y,i);I}^{-1}(U_{y;I})$
preserve the metrics~\eref{wtrhodfn_e}.
By~\eref{rhoscaldfn1_e}, the functions~$f_{(y,i);I}$ 
satisfy~\eref{fcond_e1a} and~\eref{fcond_e1b} whenever \hbox{$0\!\in\!I$}.\\

\noindent
Let $\be\!:\R\!\lra\!\R^{\ge0}$ and $\ve\!:V^{(r)}\!\lra\!\R^+$ be smooth functions so~that
\begin{gather}\label{becond_e}
\be(t)=\begin{cases}1,&\hbox{if}~t\!\le\!1;\\
0,&\hbox{if}~t\!\ge\!2;\end{cases} \qquad\hbox{and}\\
\notag
\bigcup_{y\in\cA_r}\!\!\!
\big\{\!(v_i)_{i\in S_y}\!\in\!\cN_r'|_{V_{y;S_y}}\!:
\rho_{y;S_y;i}(v)\!<\!2\ve\big(\pi_r(\!(v_i)_{i\in S_y})\!\big)~\forall\,i\!\in\!S_y\big\}
\subset U_r'\subset U_r\!=\!\cN_r'.
\end{gather}
Define
\begin{gather*}
U_r''=\bigcup_{y\in\cA_r}\!\!\!
\big\{\!(v_i)_{i\in S_y}\!\in\!\cN_r'|_{V_{y;S_y}}\!:
\rho_{y;S_y;i}(v_i)\!<\!\ve\big(\pi_r(\!(v_i)_{i\in S_y})\!\big)~\forall\,i\!\in\!S_y\big\}
\subset U_r'\subset\cN_r',\\
U_{y;I}=\big\{\!(v_i)_{i\in S_y}\!\in\!\cN_r'|_{V_{y;S_y}}\!:
\rho_{y;S_y;i}(v_i)\!<\!\ve\big(\pi_r(\!(v_i)_{i\in S_y})\!\big)~\forall\,i\!\in\!I\big\}
~~\forall~I\!\subset\!S_y,\,y\!\in\!\cA_r;\\
\be_y\!: \cN_r'|_{V_{y;S_y}}\lra\R^{\ge0}, \quad
\be_y\big(\!(v_i)_{i\in S_y}\big)=
\prod_{j\in S_y}\!\!\be\big(\rho_{y;S_y;j}(v_j)/\ve\big(\pi_r(\!(v_i)_{i\in S_y})\!\big)\!\big)
\quad\forall\,y\!\in\!\cA_r.
\end{gather*}
By the first condition in~\eref{UxcRcond_e}, $\be_y\!=\!\be_{y'}$ on 
$\cN_r'|_{V_{y;S_y}}\!\cap\!\cN_r'|_{V_{y';S_{y'}}}$ for all $y,y'\!\in\!\cA_r$.
Thus, the function
$$\be_r\!:\cN_r'\lra\R^{\ge0}, \qquad \be_r(v)=\be_y(v)
\quad\forall~v\!\in\!\cN_r'|_{V_{y;S_y}},\,y\!\in\!\cA_r,$$
is well-defined and smooth.
It satisfies
\BE{hrprp_e}\be_r|_{U_r''}=1, \quad \be_r|_{\cN_r'-U_r'}=0, \quad
\be_r\big(\!(v_i)_{i\in S_y}\big)\!=\!\be_r\big(\!(v_i)_{i\in I}\big)
~~\forall~[(v_i)_{i\in S_y}]\!\in\!U_{y;I},\,I\!\subset\!S_y,\,y\!\in\!\cA_r.\EE

\vspace{.15in}

\noindent
For $y\!\in\!\cA_r$, $i\!\in\!S_y$, and $I\!\subset\!\wt{S}_{(y,i)}$ so that $0\!\not\in\!I$, 
define
\BE{rhoscaldfn2_e}\begin{split}
f_{(y,i);I}\big(\wt\Psi_{(y,i);0\sqcup I;I}(v)\!\big)&
=\be_r\big(\pi_2(v)\!\big)h_{\bE}(\wt\pi_0(v))
\!+\!\big(1\!-\!\be_r\big(\pi_2(v)\!\big)\!\big)\wt\rho_0(v)\\
&\hspace{2in}
\forall\,v\!\in\!\cN_{(y,i);0\sqcup I;I}'\!
\subset\!\ga\big|_{\wt{V}_{(y,i);0\sqcup I}}.
\end{split}\EE
By~\eref{hbEprp_e} and the last property in~\eref{hrprp_e},
the restrictions of the differentials $\fD\wt\Psi_{(y,i);I;I'}$ with 
\hbox{$I'\!\subset\!I\!\subset\!\wt{S}_{(y,i)}$} such that $0\!\not\in\!I$
to $\wt\Psi_{(y,i);I}^{-1}(U_{y;I}\!\cap\!U_{y;0\sqcup I})$
preserve the metrics~\eref{wtrhodfn_e}.
By~\eref{hbEprp_e} and the first property in~\eref{hrprp_e}, 
the restrictions of the differentials $\fD\wt\Psi_{(y,i);I;I'}$ with 
$I'\!\subset\!I\!\subset\!\wt{S}_{(y,i)}$ such that $0\!\in\!I$
to $\wt\Psi_{(y,i);I}^{-1}(U_{y;I}\!\cap\!U_r'')$
preserve the metrics~\eref{wtrhodfn_e}.
By~\eref{rhoscaldfn2_e}, 
the functions~$f_{(y,i);I}$ satisfy~\eref{fcond_e1a} and~\eref{fcond_e1b} whenever $0\!\not\in\!I$.
By the middle property in~\eref{hrprp_e}, the isomorphism~\eref{cORisom_e}
identifies the restriction of~$\wt\rho_{(y,i);I;j}$ to 
$\cN_{\wt{V}_{(y,i);I-j}}\wt{V}_{(y,i);I}|_{\wt{V}_{(y,i);I}-U_r'}$
with the restriction of the metric~$\rho_{y;I;j}$ to~$\cN_{V_{y;I-j}}V_{y;I}|_{V_{y;I}-U_r'}$
whenever
\hbox{$j\!\neq\!0\!\not\in\!I$}.\\

\noindent
By \cite[Lemma~5.8]{SympDivConf}, we can shrink the domains~$\Dom(\wt\Psi_{(y,i);I})$ 
of~$\wt\Psi_{(y,i);I}$ with $i\!\in\!S_y$ and $I\!\subset\!\wt{S}_{(y,i)}$ 
to open neighborhoods~$\cN_{(y,i);I}''$ of $\wt{V}_{(y,i);I}\!\subset\!\Dom(\wt\Psi_{(y,i);I})$ 
so~that 
$$\wt\Psi_{(y,i);I}(\cN_{(y,i);I}'')\subset
\begin{cases}
U_{y;I}\!\cap\!U_r'',&\hbox{if}~0\!\in\!I;\\
U_{y;I},&\hbox{if}~0\!\not\in\!I;
\end{cases}$$
and the collection $(\wt\Psi_{(y,i);I}|_{\cN_{(y,i);I}''})_{I\subset\wt{S}_{(y,i)}}$ 
is still a regularization for $\{\wt{V}_{(y,i);j}\}_{j\in\wt{S}_{(y,i)}}$ 
in~$\wt{U}_{y,i}$.
Replacing~$\wt\Psi_{y;I}$ with~$\wt\Psi_{y;I}|_{\cN_{y;I}''}$ in~\eref{cRblowup_e}
whenever $y\!\in\!\wt\cA\!-\!\cA$, 
we obtain an almost complex regularization for~$\wt{V}$ in $(\wt{X},\wt{J})$.

\subsection{Proof of Lemma~\ref{blowuplog_lmm}}
\label{blowuplog_subs}

\noindent
The substance of the last claim of Lemma~\ref{blowuplog_lmm} is that the canonical identification
\BE{blowlogid_e}T_{\wt{\cR}}\wt{X}(-\log\wt{V})\big|_{\wt{X}-\bE}
=T_\cR X(-\log V)\big|_{X-V^{(r)}}\EE
extends smoothly to a bundle homomorphism as in~\eref{blowuplog_e} and that 
this bundle homomorphism is an isomorphism.
For each $y\!\in\!\cA_r$, $i\!\in\!S_y$, and $I\!\subset\!\wt{S}_{(y,i)}$ with $0\!\in\!I$,
we verify this over the open subspace $U_{(y,i);I}^{\circ}\!\subset\!\wt\cN_0'$ defined
as in~\eref{UIcirc_e} via the regularization~$\wt\cR$ 
constructed in Section~\ref{CblowupReg_subs}.\\

\noindent
Let $y\!\in\!\cA_r$, $i\!\in\!S_y$, and $I\!\subset\!\wt{S}_{(y,i)}$ be as above,
\hbox{$[v]\!\equiv\![(v_i)_{i\in S_y-I}]\!\in\!\wt{V}_{(y,i);I}$} with $v_i\!\neq\!0$
for all $i\!\in\!S_y\!-\!I$, and $u\!\in\!(\C v)^*\!-\!\{0\}$.
Let $a\!\in\!\C^*$ and $v'\!\in\!\cN_{y;S_y;S_y-I}$ be sufficiently small.
Let 
$$h_{\wt\na^{((y,i);I;0)};av}\!:T_{[v]}\wt{V}_{(y,i);I}\lra T_{a v}\ga
\quad\hbox{and}\quad
h_{\na^{(r)};av}\!:T_{[v]}\wt{V}_{(y,i);I}\lra T_{a v}(\cN_XV^{(r)})$$
be the injective homomorphisms determined by the connections~$\wt\na^{((y,i);I;0)}$
and~$\na^{(r)}$ as in~\eref{nhndfn_e}.
The isomorphism~\eref{TbEsplit_e} gives
$$T_{[v]}\wt{V}_{(y,i);I}=T_{\pi(v)}V^{(r)}\!\oplus\!(\C v)^*\!\otimes_{\C}\!(\C v)^{\perp},$$
where $(\C v)^{\perp}\!\subset\!\cN_{y;S_y;I}|_{\pi(v)}$ is the $\rho_r$-orthogonal complement
of~$\C v$.
By~\eref{wtnadfm_e} and the sentence containing~\eref{TcNident_e}, 
\begin{equation*}\begin{split}
h_{\wt\na^{((y,i);I;0)};av}\big(w,u\!\otimes\!v^{\perp}\big)
&=h_{\na^{(r)};av}(w)\!+\!u(av)v^{\perp}\!+\!
\th(w,u\!\otimes\!v^{\perp})(av)\\
&=h_{\na^{(r)};av}(w)\!+\!a\big(u(v)v^{\perp}\!+\!\th(w,u\!\otimes\!v^{\perp})v\big)
\end{split}\end{equation*}
for some 1-form $\th$ on~$\bE$.
With the notation as in~\eref{varTII_e}, we thus obtain
\begin{equation*}\begin{split}
&\vt_{(y,i);(I-0)I}\big(\!\big(\wt\Psi_{(y,i);I}(av,u\!\otimes\!v'),
(av,u\!\otimes\!v'),(w,u\!\otimes\!v^{\perp})\!\big),(c_i)_{i\in I}\!\big)\\
&\hspace{1in}=
\vt_{y;(I-0)S_y}\big(\!\big(\Psi_{y;S_y}(av\!+\!u(v)v'),av\!+\!u(v)v',w\big),(c_i)_{i\in S_y}\big),
\end{split}\end{equation*}
with $c_i\!\equiv\!c_i([v];w,u\!\otimes\!v^{\perp},c_0)\in\!\C$ for $i\!\in\!S_y\!-\!I$ defined by 
$$\sum_{i\in S_y-I}\!\!\!c_iv_i
=u(v)v^{\perp}\!+\!\big(\th(w,u\!\otimes\!v^{\perp})\!+\!c_0\big)v.$$
The identification~\eref{blowlogid_e} thus extends smoothly over $[v]$ as 
the vector space isomorphism
\begin{gather*}
T_{[v]}\bE\!\oplus\!\C^I\lra T_{\pi([v])}V^{(r)}\!\oplus\!\C^{S_y-I}\!\oplus\!\C^{I-0},\\
\big(w,(c_i)_{i\in I}\big)\lra 
\big(\nd_{[v]}\pi(w),\big(c_i([v];w,c_0)\!\big)_{i\in S_y-I},(c_i)_{i\in I-0}\big).
\end{gather*}

\subsection{Symplectic setting}
\label{SymplBl_subs}

\noindent
Suppose now that $\om$ is a symplectic form on~$(X,V)$
and $\cR$ is an $\om$-regularization for~$V$ in~$X$ in the sense of
Definition~\ref{NCDregul_dfn}.
The smooth submanifold $V^{(r)}\!\subset\!X$ is then symplectic.
Let $\om_r\!\equiv\!\wt\om$ be the closed 2-form on~$\cN_XV^{(r)}$ determined by 
$\om|_{V^{(r)}}$, $\Om\!\equiv\!\om|_{\cN_r}$, 
and~$\na^{(r)}$ as in~\eref{ombund_e2}.
Let $J$ be an $\cR$-compatible almost complex structure on~$X$,
$$\pi\!:\big(\wt{X}\!\equiv\!\big(\!(X\!-\!V^{(r)})\!\sqcup\!\wt\cN_0'\big)\!\big/\!\!\sim,
\wt{J}\big)\lra (X,J)$$
be the corresponding almost complex blowup with the exceptional divisor $\bE\!\equiv\!\P(\cN_XV^{(r)})$
as in Section~\ref{AlCBl_sub}, and $\ov{V}\!\subset\!\wt{X}$ be the proper transform of~$V$.
Suppose also that there exists \hbox{$\ep\!\in\!(0,1)$} such~that
\BE{cNrepcond_e}\cN_r(2\ep)\!\equiv\!\big\{v\!\in\!\cN_XV^{(r)}\!: \rho_r(v)\!<\!2\ep\big\}
\subset\Psi_r^{-1}(U_r')\subset\cN_r'.\EE
This is automatically the case if $V^{(r)}$ is compact.\\

\noindent
The subgroup $S^1\!\subset\!\C^*$ acts on $\cN_r(2\ep)\!\times\!\C$~by
\BE{S1actdfn_e}u\!:\cN_r(2\ep)\!\times\!\C\lra\cN_r(2\ep)\!\times\!\C, \qquad
u\!\cdot\!(v,c)=(uv,c/u).\EE
This $S^1$-action preserves the submanifolds
\begin{equation*}\begin{split}
\wt\cZ(\ep)&\equiv
\big\{\!(v,c)\!\in\!\cN_r(2\ep)\!\times\!\C\!:\rho_r(v)\!-\!|c|^2\!=\!\ep\big\} 
\quad\hbox{and}\quad
\wt\bE_{\ep}\equiv
\big\{\!(v,0)\!\in\!\cN_r(2\ep)\!\times\!\C:\rho_r(v)\!=\!\ep\big\}.
\end{split}\end{equation*}
We identify $\wt\bE_{\ep}$ with the $\sqrt\ep$-sphere bundle of~$\cN_XV^{(r)}$ 
in the obvious way. 
Let
\begin{gather}\notag
\cZ(\ep)=\wt\cZ(\ep)/S^1, \quad \bE_{\ep}\!=\!\wt\bE_{\ep}/S^1, \quad
X_{\ep}=\big(\!\big(X\!-\!\Psi_r(\ov{\cN_r(\ep)})\!\big)
\!\sqcup\!\cZ(\ep)\!\big)\!\big)\!\big/\!\!\sim,\\
\label{SympBlid_e}
\cZ(\ep)\!-\!\bE_{\ep}\ni\big[v,\sqrt{\rho_r(v)\!-\!\ep}\big]\sim\Psi_r(v)\in 
X\!-\!\Psi_r(\ov{\cN_r(\ep)})
\quad\forall~v\!\in\!\cN_r(2\ep)\!-\!\ov{\cN_r(\ep)}.
\end{gather}
By the Symplectic Reduction Theorem \cite[Theorem~23.1]{daSilva},
there is a unique symplectic form~$\om_{r;\ep}$ on the smooth manifold $\cZ(\ep)$ 
so~that
\BE{SympRed_e4} q_{\ep}^*\om_{r;\ep}=
\big(\pi_1^*\om_r\!+\!\pi_2^*\om_{\C}\big)\!\big|_{\wt\cZ(\ep)},\EE
where $q_{\ep}\!:\wt\cZ(\ep)\!\lra\!\cZ(\ep)$ is the quotient projection,
$$\pi_1,\pi_2\!:\cN_r(2\ep)\!\times\!\C\lra \cN_r(2\ep),\C$$
are the component projections, and $\om_{\C}$ is the standard symplectic form on~$\C$.
Since $\Psi_r^*\om\!=\!\om_r$ on~$\cN_r(2\ep)$ and the $S^1$-action~\eref{S1actdfn_e} preserves 
the 2-form $\pi_1^*\om_r\!+\!\pi_2^*\om_{\C}$,
\eref{SympRed_e4} implies that the identification~\eref{SympBlid_e} intertwines~$\om_{r;\ep}$
and~$\om$.
We thus obtain a symplectic form~$\om_{\ep}$ on~$X_{\ep}$ such~that 
$$\om_{\ep}\big|_{X-\Psi_r(\ov{\cN_r(\ep)})}=\om\big|_{X-\Psi_r(\ov{\cN_r(\ep)})}
\qquad\hbox{and}\qquad
\om_{\ep}\big|_{\cZ(\ep)}=\om_{r;\ep}\,.$$
It restricts to a symplectic form on $\bE_{\ep}\!\subset\!X_{\ep}$.\\

\noindent
The $\P^{r-1}$-fiber bundle $\pi_{\ep}\!:\bE_{\ep}\!\lra\!V^{(r)}$ is canonically identified 
with \hbox{$\pi\!:\bE\!\lra\!V^{(r)}$}.
This identification canonically lifts to an identification~of the complex line bundle
$$\wt\pi_0\!:\cN_{X_{\ep}}\bE_{\ep}=\wt\bE_{\ep}\!\times_{S^1}\!\C
\lra \wt\bE_{\ep}/S^1\!\equiv\!\bE_{\ep},
\qquad (v,c)\sim\big(uv,c/u)~~\forall\,(v,c)\!\in\!\wt\bE_{\ep}\!\times\!\C,\,u\!\in\!S^1,$$
with the tautological line bundle $\ga\!\subset\!\pi^*\cN_XV^{(r)}$ as in~\eref{gadfn_e}.
The differential
$$\nd q_{\ep}\!: \wt\bE_{\ep}\!\times\!\C\!=\!\cN_{\wt\cZ(\ep)}\wt\bE_{\ep}
\lra q_{\ep}^*\cN_{X_{\ep}}\bE_{\ep}$$
is an isomorphism of complex line bundles.
It intertwines the fiberwise symplectic form~$\om_{\C}$ with the fiberwise symplectic~form 
$$\om_{\ep}\big|_{\cN_{X_{\ep}}\bE_{\ep}}=\om_{r;\ep}\big|_{\cN_{X_{\ep}}\bE_{\ep}}$$
on~$\cN_{X_{\ep}}\bE_{\ep}$ induced by~$\om_{\ep}$ as below Definition~\ref{smreg_dfn}.
Thus, the complex orientation on~$\cN_{X_{\ep}}\bE_{\ep}$ agrees with the orientation
induced by the symplectic form~$\om_{\ep}$.
It is straightforward to see that the~map
$$\wt\Psi_{\ep;0}\!:\wt\cN_{\ep;0}'\!\equiv\!
\big\{[v,c]\!\in\!\cN_{X_{\ep}}\bE_{\ep}\!:|c|^2\!<\!\ep\big\}\lra 
\cZ(\ep)\subset X_{\ep}, \quad  
\wt\Psi_{\ep;0}\big([v,c]\big)=\big[\sqrt{1\!+\!|c|^2/\ep}\,v,c\big],$$
is a well-defined smooth regularization for~$\bE_{\ep}$ in~$X_{\ep}$.

\begin{rmk}\label{SymplEreg_rmk}
Via the above identification of the complex line bundles~$\cN_{\wt{X}_{\ep}}\bE$ and~$\ga$, 
the Hermitian metric~$\rho_r$ and connection~$\na^{(r)}$ on~$\cN_XV^{(r)}$ 
determine a connection~$\wt\na^{(0)}$ on~$\cN_{\wt{X}_{\ep}}\bE$,
as in Section~\ref{SmothCblowup_subs}.
The standard Hermitian metric on~$\C$ determines a Hermitian metric~$\wt\rho_0$ 
on~$\cN_{\wt{X}_{\ep}}\bE$ compatible with~$\wt\na^{(0)}$ and the fiberwise symplectic 
form~$\wt\om_{\ep}|_{\cN_{\wt{X}_{\ep}}\bE}$.
The metric~$\wt\rho_0$ corresponds to the Hermitian metric $\ep^{-1}\pi^*\rho_r|_{\ga}$ via 
the above identification of the complex line bundles~$\cN_{\wt{X}_{\ep}}\bE$ and~$\ga$. 
For the record, we show in Section~\ref{BlowPf_subs} that 
$(\!(\wt\rho_0,\wt\na^{(0)}),\wt\Psi_{\ep;0})$ is an $\om_{\ep}$-regularization for~$\bE_{\ep}$
in~$X_{\ep}$ in the sense of Definition~\ref{sympreg1_dfn}.
\end{rmk}

\noindent
The open subspaces $X\!-\!\Psi_r(\ov{\cN_r(\ep)})$ of~$X$ and 
$X_{\ep}\!-\!\bE_{\ep}$ of~$X_{\ep}$ are canonically identified.
Let $\ov{V}_{\ep}\!\subset\!X_{\ep}$ be~the closure of~$V\!-\!\Psi_r(\ov{\cN_r(\ep)})$
and $\wt{V}_{\ep}\!=\!\bE_{\ep}\!\cup\!\ov{V}_{\ep}$.
We now show that~$\wt{V}_{\ep}$ is an NC symplectic divisor in~$(X_{\ep},\om_{\ep})$
with a collection $(\wt{U}_y,\{\wt{V}_{y;i}\}_{i\in\wt{S}_y})_{y\in\wt\cA}$ of charts.
If $y\!\in\!\cA\!-\!\cA_r$, then
$$U_y\subset X\!-\!V^{(r)}=X_{\ep}\!-\!\bE_{\ep}.$$
In this case, we again take 
$$\wt{S}_y=S_y  \qquad\hbox{and}\qquad 
\big(\wt{U}_y,\{\wt{V}_{y;i}\}_{i\in\wt{S}_y}\big)=(U_y,\{V_{y;i}\}_{i\in S_y}\big).$$

\vspace{.15in}

\noindent
As before, we identify $\cN_r'\!\subset\!\cN_XV^{(r)}$ with $\Psi_r(\cN_r')\!\subset\!X$ via~$\Psi_r$. 
Let 
$$\wt\cN_{r;\ep}'=\big(\cN_r'\!-\!\ov{\cN_r(\ep)}\big)\!\cup\!\cZ(\ep)\subset X_{\ep}$$
and $\wt\pi_{r;\ep}\!:\wt\cN_{r;\ep}'\!\lra\!V^{(r)}$ be the projection induced by~$\pi_r$. 
Suppose $y\!\in\!\cA_r$.
For  each $i\!\in\!S_y$, 
define $\wt{S}_{(y,i)}$, $\cN_{y;i}$, and $\wt{V}_{(y,i);0}$ as at the end
of Section~\ref{AlCBl_sub},
with~$\bE$ replaced by~$\bE_{\ep}$, and~set
$$\wt{U}_y=\wt\cN_{r;\ep}'|_{V_{y;S_y}}\equiv\wt\pi_{r;\ep}^{-1}(V_{y;S_y}) , \qquad
\ov{V}_{y;i}=\ga|_{\P\cN_{y;i}}\!\cap\!\wt{U}_y\,.$$
For $j\!\in\!S_y\!-\!i$, let $\wt{V}_{(y,i);j}=\ov{V}_{y;j}\!\cap\!\wt{U}_{(y,i)}$
as before.
We again have~\eref{BlDecomp_e}, with~$\ov{V}$ replaced by~$\ov{V}_{\ep}$ in the last statement.
In this case,
$\{\wt{V}_{(y,i);j}\}_{j\in\wt{S}_{(y,i)}}$ is a transverse collection of codimension~2 
symplectic submanifolds of $(\wt{U}_{(y,i)},\om_{r;\ep})$ 
so that their intersection and symplectic orientations agree.
Thus, $\wt{V}_{\ep}\!\cap\!\wt{U}_{(y,i)}$ is an SC symplectic divisor 
in $(\wt{U}_{(y,i)},\om_{r;\ep})$
in the sense of Definition~\ref{SCD_dfn}, 
$\wt{V}_{\ep}$ is an NC symplectic divisor in~$(X_{\ep},\om_{\ep})$, and
$(\wt{U}_y,\{\wt{V}_{y;i}\}_{i\in\wt{S}_y})_{y\in\wt\cA}$  is an atlas of local charts
for~$\wt{V}_{\ep}$.\\

\noindent
Let $f_{\ep}\!:\R\!\lra\!\R$ be a smooth function so that 
$$f_{\ep}'(t)>0~~\forall\,t\!\in\!\R, \qquad
f_{\ep}(t)=\begin{cases}\sqrt{\ep\!+\!t^2/\ep},&\hbox{if}~t\!\le\!\ep/2;\\
t,&\hbox{if}~t\!\ge\!5\sqrt\ep/4.
\end{cases}$$
The map
$$\wt\pi_{\ep}\!:\wt{X}\lra X_{\ep}, \quad
\wt\pi_{\ep}(\wt{x})=\begin{cases}\wt\Psi_{\ep;0}(\wt{x})\!\in\!\cZ(\ep),
&\hbox{if}~\wt{x}\!\in\!\wt\cN_0',~\rho_r(\wt{x})\!<\!\ep^2/4;\\
\Psi_r\big(f_{\ep}(\sqrt{\rho_r(\wt{x})})\frac{\wt{x}}{\sqrt{\rho_r(\wt{x})}}\big),
&\hbox{if}~\wt{x}\!\in\!\wt\cN_0',~\rho_r(\wt{x})\!>\!0;\\
\wt{x}\!\in\!X\!-\!\Psi_r(\ov{\cN_r(25\ep/16)})),
&\hbox{if}~\wt{x}\!\in\!X\!-\!\Psi_r(\ov{\cN_r(25\ep/16)});
\end{cases}$$
is then an orientation-preserving diffeomorphism.
It identifies the NC almost complex divisor $\wt{V}\!\subset\!\wt{X}$ 
with the NC symplectic divisor $\wt{V}_{\ep}\!\subset\!X_{\ep}$.
Thus,
$$\wt\pi_{\ep}^*\om_{\ep}\in \Symp^+(\wt{X},\wt{V}).$$ 

\vspace{.15in}

\noindent
Let $\wt\cR$ be a regularization for $\ov{V}$ in $(\wt{X},\wt{J})$ 
obtained as in Section~\ref{CblowupReg_subs}
from the regularization~$\cR$ for~$V$ in~$(X,\om)$ and thus in~$(X,J)$.
Repeated applications of \cite[Theorem~3.1]{SympDivConf},
starting from the deepest strata of $\ov{V}\!\cap\!\bE$, provide 
a smooth family $(\mu_{\tau})_{\tau\in[0,1]}$ of 1-forms on~$\wt{X}$ so~that
\begin{enumerate}[label=$\bullet$,leftmargin=*]

\item $\wt\om_{\ep;\tau}\!\equiv\!\wt\pi_{\ep}^*\om_{\ep}\!+\!\nd\mu_{\tau}\in \Symp^+(\wt{X},\wt{V})$
for all $\tau\!\in\![0,1]$;

\item $\mu_0\!=\!0$ and $\supp\,\mu_{\tau}\!\subset\!\cN_0'$ for all $\tau\!\in\![0,1]$;

\item a tuple $\wh\cR$ obtained from $\wt\cR$ by restricting the domains of the maps $\wt\Psi_{y;I}$
with $y\!\in\!\wt\cA\!-\!(\cA\!-\!\cA_r)$ is an $\wt\om_{\ep;1}$-regularization for~$\wt{V}$
in~$\wt{X}$.

\end{enumerate}
The bundle isomorphism~\eref{blowuplog_e} thus determines a homotopy class of 
isomorphisms~\eref{blowuplog_e2}
between the log tangent bundles associated with the deformation equivalence classes of~$\om$
in $\Symp^+(X,V)$ and of~$\wt\pi_{\ep}^*\om_{\ep}$ in~$\Symp^+(\wt{X},\wt{V})$.\\

\noindent
The (deformation equivalence class of the) NC symplectic divisor $\wt{V}\!\subset\!\wt{X}$ 
constructed above does not depend on the choices of~$J$, $f_{\ep}$, and~$\ep$.
Since the projection~\eref{Aux2Symp_e} is a weak homotopy equivalence, 
it does not depend on the choices of the regularization~$\cR$ and 
$\om\!\in\!\Symp^+(X,V)$ in the given equivalence class if~$V^{(r)}$ is compact.
Thus, if $V^{(r)}$ is compact, an NC symplectic divisor structure~$[\om]$ on $V\!\subset\!X$
determines an NC symplectic divisor structure~$[\wt\om]$ on $\wt{V}\!\subset\!\wt{X}$.

\subsection{Proofs of technical statements}
\label{BlowPf_subs}

\noindent
We conclude our discussion of blowups with proofs of the claims of
Lemma~\ref{bumpfun_lmm1} and Remark~\ref{SymplEreg_rmk}.

\begin{proof}[{\bf{\emph{Proof of Lemma~\ref{bumpfun_lmm1}}}}] 
Suppose $r\!\ge\!2$ and the claim is true with~$r$ replaced by $r\!-\!1$.
We denote by~$\bS_r$ the group of permutations of the homogeneous coordinates of~$\P^{r-1}$
and by~$\tau_r\!\in\!\bS_r$ the transposition of the last two coordinates.
We identify~$\P^{r-2}$ with the subspace $(Z_r\!=\!0)$ of~$\P^{r-1}$.
It is preserved by the subgroup $\bS_{r-1}\!\subset\!\bS_r$ and
by the $(S^1)^r$-action on~$\P^{r-1}$.\\

\noindent
Let $h\!:\P^{r-2}\!\lra\!\R^+$ and $\de\!\in\!\R^+$ be a smooth function and a positive number
satisfying the conditions in the lemma with~$r$ replaced by~$r\!-\!1$.
Let $U\!\subset\!\P^{r-1}$ be an open neighborhood of~$\P^{r-2}$ preserved by 
the subgroup $\bS_{r-1}\!\subset\!\bS_r$ and by the $(S^1)^r$-action
so~that 
\BE{bumpfun_e3}
U\!\cap\!\tau_r(U) \subset \big\{[Z_1,\ldots,Z_r]\!\in\!\P^{r-1}\!:
|Z_{r-1}|^2\!+\!|Z_r|^2\!\le\!\de\big(|Z_1|^2\!+\!\ldots\!+\!|Z_{r-2}|^2\big)\!\big\}.\EE
We extend $h$ over $U$ by
$$\wt{h}\!:U\lra\R^+, \qquad 
\wt{h}\big([Z_1,\ldots,Z_r]\big)=h\big([Z_1,\ldots,Z_{r-1},0]\big)
\frac{|Z_1|^2\!+\!\ldots\!+\!|Z_r|^2}{|Z_1|^2\!+\!\ldots\!+\!|Z_{r-1}|^2}\,.$$
For each permutation $g\!\in\!\bS_r$ of the homogeneous coordinates of~$\P^{r-1}$, define
$$\wt{h}_g\!: g(U)\lra\R^+, \qquad \wt{h}_g\big([Z]\big)=\wt{h}\big([g^{-1}Z]\big).$$
By the invariance assumptions on~$U$ and~$h$, $\wt{h}_{g_1}\!=\!\wt{h}_{g_2}$ 
if $g_1^{-1}g_2\!\in\!\bS_{r-1}$.\\

\noindent
Suppose $[Z_1,\ldots,Z_r]\!\in\!U\!\cap\!\tau_r(U)$.
By~\eref{bumpfun_e3} and~\eref{bumpfun_e0} with~$r$ replaced by~$r\!-\!1$,
\begin{equation*}\begin{split}
&\wt{h}_{\tau_r}\big([Z_1,\ldots,Z_r]\big)=\wt{h}\big([Z_1,\ldots,Z_{r-2},Z_r,Z_{r-1}]\big)\\
&\qquad=\wt{h}\big([Z_1,\ldots,Z_{r-2},0,0]\big)
\frac{|Z_1|^2\!+\!\ldots\!+\!|Z_{r-2}|^2\!+\!|Z_r|^2}{|Z_1|^2\!+\!\ldots\!+\!|Z_{r-2}|^2}
\frac{|Z_1|^2\!+\!\ldots\!+\!|Z_r|^2}{|Z_1|^2\!+\!\ldots\!+\!|Z_{r-2}|^2\!+\!|Z_r|^2}\\
&\qquad=\wt{h}\big([Z_1,\ldots,Z_{r-2},0,0]\big)
\frac{|Z_1|^2\!+\!\ldots\!+\!|Z_{r-1}|^2}{|Z_1|^2\!+\!\ldots\!+\!|Z_{r-2}|^2}
\frac{|Z_1|^2\!+\!\ldots\!+\!|Z_r|^2}{|Z_1|^2\!+\!\ldots\!+\!|Z_{r-1}|^2}
=\wt{h}\big([Z_1,\ldots,Z_r]\big).
\end{split}\end{equation*}
Thus, $\wt{h}_{\tau_r}\!=\!\wt{h}$ on $U\!\cap\!\tau_r(U)$.
Along with the invariance assumptions on~$U$ and~$h$, 
this implies that $\wt{h}_{g_1}\!=\!\wt{h}_{g_2}$ on $g_1(U)\!\cap\!g_2(U)$
for all $g_1,g_2\!\in\!\bS_r$.\\

\noindent
We thus obtain a well-defined smooth function
$$H\!: W\!\equiv\!\bigcup_{g\in\bS_r}\!\!g(U)\lra\R^+, \qquad
H([Z])=\wt{h}_g([Z])~~\forall\,g\!\in\!\bS_r,~[Z]\!\in\!g(U),$$
which is invariant under the $\bS_r$- and $(S^1)^r$-actions on~$\P^{r-1}$ and 
satisfies the last property in the lemma for some $\de\!\in\!\R^+$.
Let $\be\!:\P^{r-1}\!\lra\![1,0]$ be a smooth function which is invariant under these
two actions, restricts to~1 on a neighborhood of~$\P^{r-2}$, and is supported in~$W$.
The smooth function
$$\be H\!+\!1\!-\!\be\!:\P^{r-1}\lra \R^+$$
then has the desired properties for some $\de\!\in\!\R^+$.
\end{proof}

\begin{proof}[{\bf{\emph{Proof of Remark~\ref{SymplEreg_rmk}}}}] 
Let
\begin{gather*}
\wh\om_{\ep}=\wt\pi_0^*\big(\om_{\ep}|_{\bE_{\ep}}\big)\!+\!
\frac12\nd\io_{\ze_{\cN_{X_{\ep}}\bE_{\ep}}}
\big(\om_{\ep}|_{\cN_{X_{\ep}}\bE_{\ep}} \big)_{\wt\na^{(0)}},\\
\phi,\vph\!:\cN_XV^{(r)}\!-\!V^{(r)}\lra \cN_XV^{(r)}, 
\quad \phi(v)=\frac{v}{\sqrt{\rho_r(v)}}, ~~
\vph(v)=\sqrt{1+\rho_r(v)}\frac{v}{\sqrt{\rho_r(v)}},\\
m_c\!:\cN_XV^{(r)}\lra\cN_XV^{(r)}, \quad m_c(v)=cv, \qquad\forall\,c\!\in\!\C.
\end{gather*}
The composition of the restriction of $\wt\Psi_{\ep;0}$ to $\wt\cN_{\ep;0}'\!-\!\bE_{\ep}$ with
the identification in~\eref{SympBlid_e} is given~by
$$\wt\cN_{\ep;0}'\!-\!\bE_{\ep}\xlra{m_{\sqrt\ep}\circ\vph\circ m_{1/\ep}} 
\cN_r'\!-\!V^{(r)} \xlra{~\Psi_r~} X\!-\!V^{(r)}\,.$$
It thus remains to show that $\vph^*(m_{\sqrt\ep}^*\om_r)\!=\!m_{\ep}^*\wh\om_{\ep}$ on
$m_{1/\ep}(\wt\cN_{\ep;0}'\!-\!\bE_{\ep})\!\subset\!\cN_XV^{(r)}\!-\!V^{(r)}$.\\

\noindent
For each $v\!\in\!\cN_XV^{(r)}\!-\!V^{(r)}$, let 
$$\pi_v,\pi_v^{\perp}\!:
\cN_XV^{(r)}\big|_{\wt\pi_0(v)}\!=\!(\C v)^{\perp}\!\oplus\!(\C v)\lra \C v,(\C v)^{\perp}$$
be the $\rho_r$-orthogonal projections.
Let 
$$\pi_{\na}\!:T_v(\cN_XV^{(r)})\lra \cN_XV^{(r)}\big|_{\pi_r(v)}$$
be the projection corresponding to the decomposition~\eref{TcNXVsplit_e}
determined by the connection~$\na^{(r)}$.
As noted below~\eref{TcNXVsplit_e}, this splitting also encodes 
the decomposition associated with the connection~$\wt\na^{(0)}$. 
By the properties of a connection, the map~$m_c$ preserves the decomposition~\eref{TcNXVsplit_e}
for any $c\!\in\!\C^*$;
see \cite[Lemma~1.1]{anal}. 
Since the connection~$\na^{(r)}$ is compatible with the Hermitian metric~$\rho_r$,
its connection 1-form is purely imaginary in any Hermitian trivialization. 
It follows that the maps~$\phi$ and~$\vph$ also preserve the decomposition~\eref{TcNXVsplit_e};
see the proof of \cite[Lemma~1.1]{anal}.
Thus,
\BE{SymplEreg_e5} \pi_{\na}\!\circ\!m_c=m_c\!\circ\!\pi_{\na}~~\forall\,c\!\in\!\C,
\qquad \pi_{\na}\!\circ\!\phi=\phi\!\circ\!\pi_{\na},
\qquad \pi_{\na}\!\circ\!\vph=\vph\!\circ\!\pi_{\na}\,.\EE
We also note that 
\BE{SymplEreg_e7} 
m_{c*}\ze_{\cN}\!=\!\ze_{\cN}\!\circ\!m_c \quad\hbox{and}\quad
m_c^*\Om=c^2\Om \qquad\forall\,c\!\in\!\R\EE
for any vector bundle $\cN$ and a fiberwise 2-form $\Om$ on~$\cN$.\\

\noindent
Since $\wt\pi_0\!=\!q_{\ep}\!\circ\!m_{\sqrt\ep}\!\circ\!\phi\!\circ\!m_{1/\ep}$,
$\om_{\ep}|_{\C v}\!=\!\ep^{-1}\om_r|_{\C v}$,
and $\ze_{\cN_{X_{\ep}}\bE_{\ep}}\!=\!\ze_{\cN_XV^{(r)}}$ on 
\hbox{$\cN_{X_{\ep}}\bE_{\ep}\!-\!\bE_{\ep}\!=\!\cN_XV^{(r)}\!-\!V^{(r)}$},
the first identity in~\eref{SymplEreg_e5} and~\eref{SymplEreg_e7} give
\begin{equation*}\begin{split}
m_{\ep}^*\wh\om_{\ep}
&=\phi^*m_{\sqrt\ep}^*q_{\ep}^*\big(\om_{\ep}|_{\bE_{\ep}}\big)\!+\!
\frac12\nd\io_{\ze_{\cN_{X_{\ep}}\bE_{\ep}}}
m_{\ep}^*\big(\ep^{-1}\om_r|_{\cN_XV^{(r)}}\!\circ\!\pi_v\big)_{\na^{(r)}}\\
&=\phi^*(m_{\sqrt\ep}^*\om_r)+\!
\frac12\nd\io_{\ze_{\cN_XV^{(r)}}}
\big((m_{\sqrt\ep}^*\om)|_{\cN_XV^{(r)}}\!\circ\!\pi_v\!\circ\!\pi_{\na}\big).
\end{split}\end{equation*}
Thus, it is sufficient to assume that $\ep\!=\!1$ and show that 
\BE{SymplEreg_e9} 
\vph^*\om_r=\phi^*\om_r\!+\!
\frac12\nd\io_{\ze_{\cN_XV^{(r)}}}
\big(\om|_{\cN_XV^{(r)}}\!\circ\!\pi_v\!\circ\!\pi_{\na}\big)\EE
on $\cN_XV^{(r)}\!-\!V^{(r)}$.\\
 
\noindent
By the compatibility of $\om|_{\cN_XV^{(r)}}$ with the Hermitian metric~$\rho_r$
determining the projection~$\pi_v$,
$$\io_{\ze_{\cN_XV^{(r)}}}(\om|_{\cN_XV^{(r)}})
=\io_{\ze_{\cN_XV^{(r)}}}(\om|_{\cN_XV^{(r)}}\!\circ\!\pi_v).$$
We also note that
\begin{gather*}
\big(\io_{\ze_{\cN_XV^{(r)}}}(\om|_{\cN_XV^{(r)}})\!\big)\!\circ\!\phi_*
=\frac{1}{\rho_r(v)}\big(\io_{\ze_{\cN_XV^{(r)}}}(\om|_{\cN_XV^{(r)}})\!\big),\\
\vph_*\bigg(\frac{1\!+\!\rho_r(v)}{\rho_r(v)}\ze_{\cN_XV^{(r)}}\!\!\bigg)
=\ze_{\cN_XV^{(r)}}\!\circ\!\vph,
\quad  
\vph^*\big(\om|_{\cN_XV^{(r)}}\big)= \om|_{\cN_XV^{(r)}}\!+\!
\frac{1}{\rho_r(v)}\om|_{\cN_XV^{(r)}}\!\circ\!\pi_v^{\perp}.
\end{gather*}
Along with the last two identities in~\eref{SymplEreg_e5} and 
$$\om_r=\pi_r^*\big(\om|_{V^{(r)}}\big)\!+\!
\frac12\nd\io_{\ze_{\cN_XV^{(r)}}}\big(\om|_{\cN_XV^{(r)}}\!\circ\!\pi_{\na}\big),$$
the above statements give
\begin{equation*}\begin{split}
\phi^*\om_r&=\pi_r^*\big(\om|_{V^{(r)}}\big)\!+\!
\frac12\nd\bigg(\frac{1}{\rho_r(v)}
\big(\io_{\ze_{\cN_XV^{(r)}}}\om|_{\cN_XV^{(r)}}\big)\!\circ\!\pi_{\na}\!\!\bigg),\\
\vph^*\om_r&=\pi_r^*\big(\om|_{V^{(r)}}\big)\!+\!
\frac12\nd\bigg(\frac{1\!+\!\rho_r(v)}{\rho_r(v)}
\big(\io_{\ze_{\cN_XV^{(r)}}}\om|_{\cN_XV^{(r)}}\big)\!\circ\!\pi_{\na}\!\!\bigg).
\end{split}\end{equation*}
This establishes~\eref{SymplEreg_e9}.
\end{proof}

\section{NC symplectic divisors: global perspective}
\label{NCG_sec}

\noindent
An NC~divisor can also be viewed as the image of a transverse immersion~$\io$ with certain properties.
Following~\cite{SympDivConf2}, we review the global analogues of the notions of 
Sections~\ref{NCLdfn_subs} and~\ref{NCLreg_subs} in 
Sections~\ref{NCGdfn_subs} and~\ref{NCGreg_subs} below.
This global perspective leads to a more succinct notion of regularizations for 
NC~divisors and fits better with global statements, such as Theorem~\ref{TXV_thm}\ref{cTXV_it}.
The local and global perspectives are shown to be equivalent in \cite[Lemma~3.5]{SympDivConf2}.

\subsection{Definition}
\label{NCGdfn_subs}

\noindent
For a finite set $I$, denote by $\bS_I$ the symmetric group of permutations of the elements of~$I$.  
For $k\!\in\!\Z^{\ge0}$, denote by $\bS_k\!\equiv\!\bS_{[k]}$ the $k$-th symmetric group. 
For $k'\!\in\![k]$, there is a natural subgroup
$$\bS_{k'}\times\!\bS_{[k]-[k']}\subset \bS_k\,.$$
We denote its first factor by $\bS_{k;k'}$ and the second by $\bS_{k;k'}^c$.
For each $\si\!\in\!\bS_k$ and $i\!\in\![k]$, let $\si_i\!\in\!\bS_{k-1}$
be the permutation obtained from the bijection
\BE{SkSk-1_e} [k]\!-\!\{i\}\lra [k]\!-\!\{\si(i)\}, \qquad j\lra \si(j),\EE
by identifying its domain and target with $[k\!-\!1]$ in the order-preserving fashions.\\ 

\noindent
For any map $\io\!:\wt{V}\!\lra\!X$ and $k\!\in\!\Z^{\geq 0}$, let
\BE{tViotak_e}
\wt{V}_{\io}^{(k)}=\big\{(x,\wt{v}_1,\ldots,\wt{v}_k)\!\in\!X\!
\times\!(\wt{V}^k\!-\!\De_{\wt{V}}^{(k)})\!:\,\io(\wt{v}_i)\!=\!x~\forall\,i\!\in\![k]\big\},\EE
where $\De_{\wt{V}}^{(k)}\!\subset\!\wt{V}^k$ is the big diagonal
(at least two of the coordinates are the same).
Define
\begin{gather}
\label{iotak_e}  
\io_k\!:\wt{V}_{\io}^{(k)}\lra  X, \qquad \io_k(x,\wt{v}_1,\ldots,\wt{v}_k)=x,\\
\label{Xiotak_e}
V_{\io}^{(k)}=\io_k(\wt{V}_{\io}^{(k)})
=\big\{x\!\in\!X\!:\,\big|\io^{-1}(x)\big|\!\ge\!k\big\}.
\end{gather}
For example,
$$\wt{V}_{\io}^{(0)},V_{\io}^{(0)}=X, \qquad
\wt{V}_{\io}^{(1)}\approx\wt{V}, \qquad  V_{\io}^{(1)}=\io(\wt{V}).$$

\vspace{.2in}

\noindent
For $k',k\!\in\!\Z^{\ge0}$ and $i\!\in\!\Z^+$ with $i,k'\!\le\!k$, define
\begin{alignat}{2}
\label{whiokk_e}
\wt\io_{k;k'}\!:\wt{V}_{\io}^{(k)}&\lra\wt{V}_{\io}^{(k')}, &\quad 
\wt\io_{k;k'}(x,\wt{v}_1,\ldots,\wt{v}_k)&=(x,\wt{v}_1,\ldots,\wt{v}_{k'}),\\
\label{iokcjdfn_e}
\wt\io_{k;k-1}^{(i)}\!: \wt{V}_{\io}^{(k)}&\lra\wt{V}_{\io}^{(k-1)}, &\quad
\wt\io_{k;k-1}^{(i)}(x,\wt{v}_1,\ldots,\wt{v}_k)&=(x,\wt{v}_1,\ldots,\wt{v}_{i-1},\wt{v}_{i+1},\ldots,\wt{v}_k),\\
\label{iokjdfn_e}
\wt\io_k^{(i)}\!:  \wt{V}_{\io}^{(k)} &\lra \wt{V}, &\quad 
\wt\io_k^{(i)}(x,\wt{v}_1,\ldots,\wt{v}_k)&=\wt{v}_i.
\end{alignat}
For example, 
\begin{alignat*}{2}
\wt\io_{k;k'}\!=\!\wt\io_{k'+1;k'}^{(k'+1)}\!\circ\!\ldots\!\circ\!\wt\io_{k;k-1}^{(k)}
\!&:\wt{V}_{\io}^{(k)}\lra\wt{V}_{\io}^{(k')}, &\qquad
\wt\io_{k;1}\!\approx\!\wt\io_k^{(1)}\!&:
\wt{V}_{\io}^{(k)}\lra\wt{V}_{\io}^{(1)}\!\approx\!\wt{V}, \\
\wt\io_{k;0}\!=\!\io_k\!&:\wt{V}_{\io}^{(k)}\lra\wt{V}_{\io}^{(0)}\!=\!X, &\qquad
\wt\io_{1;0}\!\approx\!\io\!&:\wt{V}_{\io}^{(1)}\!\approx\!\wt{V}\lra X.
\end{alignat*}
We define an $\bS_k$-action on $\wt{V}_{\io}^{(k)}$ by requiring that 
\BE{SkVk_e} 
\wt\io_k^{(i)}=\wt\io_k^{(\si(i))}\!\circ\!\si\!: 
\wt{V}_{\io}^{(k)} \lra \wt{V}\EE
for all $\si\!\in\!\bS_k$ and $i\!\in\![k]$.
The diagrams
\BE{Vkdiag_e}
\begin{split}
\xymatrix{\wt{V}_{\io}^{(k)~{}} \ar[rr]^{\wt\io_k^{(i)}} \ar[d]^{\wt\io_{k;k-1}^{(i)}}  
\ar@/_2pc/[dd]_{\wt\io_{k;k'}}  \ar@/^1pc/[rrdd]^{\io_k}
&& \wt{V} \ar[dd]^{\io}  \\
\wt{V}_{\io}^{(k-1)} \ar[rrd]^<<<<<<{\!\io_{k-1}}  
\ar@{-->}[d]^{\wt\io_{k-1;k'}} &&  \\
\wt{V}_{\io}^{(k')}  \ar[rr]^{\io_{k'}}& &X}
\end{split} \hspace{.5in}
\begin{split}
\xymatrix{\wt{V}_{\io}^{(k)}\ar[rr]^{\si} \ar[d]_{\wt\io_{k;k-1}^{(i)}}&&
\wt{V}_{\io}^{(k)}\ar[d]^{\wt\io_{k;k-1}^{(\si(i))}} \\
\wt{V}_{\io}^{(k-1)} \ar[rr]^{\si_i}&&  \wt{V}_{\io}^{(k-1)}}
\end{split}\EE
of solid arrows then commute; the entire first diagram commutes if $i\!>\!k'$.

\begin{eg}\label{NCdiv_eg}
If $V\!=\!\bigcup_{i\in S}\!V_i$ is an SC symplectic divisor, then 
$$\io\!:\wt{V}\!\equiv\!\bigsqcup_{i\in S}\!V_i\lra X$$
restricts by the inclusion on each $V_i$, and 
$$\wt{V}^{(k)}_{\io}\approx \bigsqcup_{I\subset S,\,|I|=k}
\hspace{-.2in}\big(V_I\!\times\!\vec{I}\big),$$
where $\vec{I}\!\subset\!I^k$ is the subcollection of tuples with all entries distinct. 
The action of $\si\!\in\!\bS_k$ on $\wt{V}^{(k)}_{\io}$ is by reordering 
the element of each tuple in~$\vec{I}$:
$$(i_1,\ldots,i_k)\lra (i_{\si^{-1}(1)},\ldots,i_{\si^{-1}(k)}).$$
The maps $\wt\io_{k;k'}\!:\wt{V}_{\io}^{(k)}\lra\wt{V}_{\io}^{(k')}$
in~\eref{whiokk_e} are given~by 
$$V_I\!\times\! \vec{I}\!\ni\!\big(x,(i_1,\ldots,i_k)\!\big) 
\lra \big(x,(i_1,\ldots,i_{k'})\!\big)\!\in\!V_J\!\times\!\vec{J}, 
\qquad \tn{where}~\vec{J}=\vec{I}\!-\!\big\{i_{k'+1},\ldots,i_k\big\}.$$
\end{eg}

\noindent
A smooth map $\io\!:\wt{V}\lra\!X$ is an \sf{immersion} if
the differential $\nd_x\io$ of~$\io$ at~$x$ is injective for all $x\!\in\!\wt{V}$.
This implies~that 
$$\codim\,\io\equiv\dim X-\dim V\ge0.$$
Such a map has a well-defined normal bundle,
\BE{Niota_e}\cN\io\equiv \io^*TX\big/\Im(\nd\io)\lra \wt{V}\,.\EE
If $\io$ is a closed immersion, then the subspace $V_{\io}^{(k)}\!\subset\!X$
and $\wt{V}_{\io}^{(k)}\!\subset\!X\!\times\!\wt{V}^k$ are closed.\\

\noindent
An immersion $\io\!:\wt{V}\lra\!X$  is \sf{transverse} if the homomorphism
$$T_xX\oplus\bigoplus_{i=1}^k T_{\wt{v}_i}\wt{V}\lra \bigoplus_{i=1}^kT_xX, \quad
\big(w,(w_i)_{i\in[k]}\big)\lra \big(w\!+\!\nd_{\wt{v}_i}\io(w_i)\big)_{i\in[k]}\,,$$
is surjective for all $(x,\wt{v}_1,\ldots,\wt{v}_k)\!\in\!\wt{V}_{\io}^{(k)}$ and $k\!\in\!\Z^+$.
By the Inverse Function Theorem, in such a case
\begin{enumerate}[label=$\bullet$,leftmargin=*]

\item each $\wt{V}_{\io}^{(k)}$ is a smooth submanifold of $X\!\times\!\wt{V}^k$,

\item\label{hatimerssion_it} the maps $\wt\io_{k;k-1}$ in \eref{whiokk_e} and
the maps~\eref{iokcjdfn_e} are transverse immersions,

\item the homeomorphisms~$\si$ of $\wt{V}_{\io}^{(k)}$ determined by the elements of~$\bS_k$ 
as in~\eref{SkVk_e} are diffeomorphisms.

\end{enumerate}

\vspace{.15in}

\noindent
By the commutativity of the upper and middle triangles in the first diagram in~\eref{Vkdiag_e}, 
the inclusion of $\Im(\nd\io_k)$ into $\wt\io_k^{(i)*}\Im(\nd\io)$ and
the homomorphism~$\nd\io_{k-1}$  induce homomorphisms
$$
\cN\io_k\lra \io_k^{(i)*}\cN\io, \quad
\cN\wt\io_{k;k-1}^{(i)}\lra \cN\io_k \qquad\forall~i\!\in\![k].
$$
By the Inverse Function Theorem, the resulting homomorphisms
\BE{ImmcNorient_e2}
\cN\io_k\lra \bigoplus_{i\in[k]}\!\wt\io_k^{(i)*}\cN\io \qquad\hbox{and}\qquad
\cN\wt\io_{k;k-1}^{(i)}\lra \wt\io_k^{(i)*}\cN\io~~~\forall\,i\!\in\![k]\EE
are isomorphisms.
If $\wt{V}$ is the disjoint union of submanifolds $V_i\!\subset\!X$,
they correspond to the first two isomorphisms in~\eref{cNorient_e2}. 
For $\si\!\in\!\bS_k$ and $i\!\in\![k]$,
the homomorphisms~$\nd\si$ and~$\nd\si_i$ of the second diagram in~\eref{Vkdiag_e}
induces an isomorphism
\BE{Djsi_e}D_i\si\!:\cN\wt\io_{k;k-1}^{(i)}\lra \cN\wt\io_{k;k-1}^{(\si(i))}\EE
covering~$\si$.\\

\noindent
If $\io\!:\wt{V}\!\lra\!X$ is any immersion between oriented manifolds of even dimensions,
the short exact sequence of vector bundles
\BE{ImmcNorient_e1} 0\lra T\wt{V}\stackrel{\nd\io}\lra \io^*TX\lra \cN\io\lra 0\EE
over $\wt{V}$ induces an orientation on~$\cN\io$. 
If in addition $\io$ is a transverse immersion,
the orientation on~$\cN\io$ induced by the orientations of~$X$ and~$\wt{V}$ induces 
an orientation on~$\cN\io_k$ via the first isomorphism in~\eref{ImmcNorient_e2}.
The orientations of~$X$ and~$\cN\io_k$ then induce an orientation on~$\wt{V}_{\io}^{(k)}$
via the short exact sequence~\eref{ImmcNorient_e1} with $\io\!=\!\io_k$ for all $k\!\in\!\Z^+$,
which we call \sf{the intersection orientation of~$\wt{V}_{\io}^{(k)}$}.
For $k\!=\!1$, it agrees with the original orientation of~$\wt{V}$ under the canonical identification 
$\wt{V}_{\io}^{(1)}\!\approx\!\wt{V}$.\\

\noindent
Suppose $(X,\om)$ is a symplectic manifold.
If $\io\!:\wt{V}\!\lra\!X$ is a transverse immersion such that $\io_k^*\om$ 
is a symplectic form on $\wt{V}_{\io}^{(k)}$ for all $k\!\in\!\Z^+$, then
each $\wt{V}_{\io}^{(k)}$ carries an orientation induced by $\io_k^*\om$,
which we  call the $\om$-orientation.
By the previous paragraph, the $\om$-orientations of~$X$ and $\wt{V}$ 
also induce intersection orientations on all~$\wt{V}_{\io}^{(k)}$.
By definition, the intersection and $\om$-orientations of~$\wt{V}_{\io}^{(1)}$
are the same.

\begin{prp}(\cite[Proposition~3.6]{SympDivConf2})\label{NCD_prp}
Suppose $(X,\om)$ is a symplectic manifold and $\io\!:\wt{V}\!\lra\!X$ is a transverse immersion of codimension~2. Then,  $V=\iota(\wt{V})$ is an NC symplectic divisor in~$(X,\om)$
in the sense of Definition~\ref{NCD_dfn}
if and only if  $\io_k^*\om$ 
is a symplectic form on $\wt{V}_{\io}^{(k)}$ for all $k\!\in\!\Z^+$
and the intersection and $\om$-orientations of~$\wt{V}_{\io}^{(k)}$ are the same.
\end{prp}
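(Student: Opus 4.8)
The plan is to prove both implications by reducing, around an arbitrary point $x\!\in\!X$, to the local model of Definition~\ref{SCD_dfn}, and transporting the symplectic form and the orientations back and forth between the immersion picture and the collection-of-branches picture.

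First I would fix the local model. Since $\io$ has codimension~$2$ and is transverse, $\wt V_\io^{(k)}\!=\!\eset$ for $k\!>\!n$, so $\io^{-1}(x)\!=\!\{\wt v_1,\ldots,\wt v_k\}$ is finite with $k\!\le\!n$; choose pairwise disjoint neighbourhoods $\wt U_i\!\ni\!\wt v_i$ on which $\io$ is an embedding and set $V_i\!=\!\io(\wt U_i)$. Using that $\io$ is a closed immersion (part of the standing global setup of~\cite{SympDivConf2}), one may pass to a neighbourhood $U$ of $x$ such that $V\!\cap\!U\!=\!\bigcup_{i\in S}V_i$ with $S\!=\![k]$ and each $V_i$ a closed codimension-$2$ submanifold of $U$; transversality of $\io$ gives at once that $\{V_i\}_{i\in S}$ is a transverse collection. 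The one elementary fact I would establish is this: for every $I\!\subset\!S$ with $V_I\!\equiv\!\bigcap_{i\in I}V_i\!\neq\!\eset$ and every $y\!\in\!V_I\!\cap\!U$, letting $\wt w_i\!\in\!\wt U_i$ be the point over $y$, the Inverse Function Theorem shows that $\io_{|I|}$ restricts to a diffeomorphism from a neighbourhood of $(y,(\wt w_i)_{i\in I})$ in $\wt V_\io^{(|I|)}$ onto a neighbourhood of $y$ in $V_I$, and under this diffeomorphism the exact sequence~\eref{ImmcNorient_e1} and the isomorphisms~\eref{ImmcNorient_e2} for $\io$ go over to the sequence~\eref{ImmcNorient_e1} for the inclusion $V_I\!\hookrightarrow\!X$ and the first two isomorphisms in~\eref{cNorient_e2} for $\{V_i\}_{i\in S}$, exactly as in the disjoint-union remark following~\eref{ImmcNorient_e2}. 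Consequently $\io_{|I|}^*\om$ near $(y,(\wt w_i)_{i\in I})$ is the pullback of $\om|_{V_I}$, and the intersection (resp.\ $\om$-)orientation of $\wt V_\io^{(|I|)}$ there corresponds to the intersection (resp.\ $\om$-)orientation of $V_I$ defined from $\{V_i\}_{i\in S}$ as in~\cite[Section~2.1]{SympDivConf}.

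Granting this correspondence, the ($\Leftarrow$) direction is immediate: if $\io_k^*\om$ is symplectic on $\wt V_\io^{(k)}$ for all $k$ and its intersection and $\om$-orientations agree, then for each $I\!\subset\!S$ the form $\om|_{V_I}$ is nondegenerate, so $V_I$ is a symplectic submanifold of $(U,\om|_U)$, and its intersection and $\om$-orientations agree; hence $V\!\cap\!U\!=\!\bigcup_{i\in S}V_i$ is an SC symplectic divisor in $(U,\om|_U)$ by Definition~\ref{SCD_dfn}, and, $x$ being arbitrary, $V$ is an NC symplectic divisor in the sense of Definition~\ref{NCD_dfn}. For the ($\Rightarrow$) direction, if $V$ is an NC symplectic divisor then, by the local/global equivalence~\cite[Lemma~3.5]{SympDivConf2}, the chart $(U,\{V_i\}_{i\in S})$ just constructed is one of the charts in Definition~\ref{NCD_dfn}, so each $V_I$ is a symplectic submanifold of $(U,\om|_U)$ with matching intersection and $\om$-orientations; pulling back through the same local diffeomorphisms $\io_k$ shows that $\io_k^*\om$ is symplectic on $\wt V_\io^{(k)}$ and that its intersection and $\om$-orientations coincide.

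The routine but genuinely delicate step — and the one I expect to be the main obstacle — is the orientation bookkeeping: checking that the chain of (co)normal-bundle isomorphisms defining the intersection orientation of $\wt V_\io^{(k)}$ via~\eref{ImmcNorient_e1} and~\eref{ImmcNorient_e2} is carried by the local embedding $\io_k$, with matching signs, onto the chain defining the intersection orientation of $V_I$ for the collection $\{V_i\}_{i\in S}$, and that the $\bS_k$-reorderings of the factors $\wt v_i$ act compatibly on the two sides via~\eref{Djsi_e}. Once this is pinned down at the level of short exact sequences of vector bundles — all of whose maps are the tautological ones — the statements about $\io_k^*\om$ follow formally, and the proposition is proved.
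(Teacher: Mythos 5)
Note first that the paper you were given does not prove this statement at all: it is quoted verbatim from \cite[Proposition~3.6]{SympDivConf2}, so there is no in-paper proof to compare against. Your localization argument is the natural one and, in outline, it is correct: finiteness of $\io^{-1}(x)$ from transversality and the codimension count, local branches $V_i=\io(\wt U_i)$, and the observation that near a tuple $(y,(\wt w_i)_{i\in I})$ the map $\io_{|I|}$ is a diffeomorphism onto $V_I$ carrying \eref{ImmcNorient_e1}--\eref{ImmcNorient_e2} to \eref{cNorient_e2}, so that both the nondegeneracy of $\om$ and the intersection/$\om$-orientation comparisons transfer verbatim; the $\bS_k$-compatibility is harmless because the summands being permuted are oriented of even rank. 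Two points deserve to be stated more carefully. First, as you note, the chart construction genuinely needs $\io$ to be a closed (proper) immersion; without it the ``only if'' data can hold while $V\cap U$ fails to be a finite union of closed submanifolds, so this hypothesis must be part of the setup rather than an afterthought. Second, in the ($\Rightarrow$) direction your appeal to \cite[Lemma~3.5]{SympDivConf2} is doing a specific job that should be named: what you actually need is germ-uniqueness of the local branch decomposition of an NC divisor (the fact underlying \eref{NCDoverlap_e0}), so that the chart $(U,\{V_i\}_{i\in S})$ you built from $\io$ agrees, after shrinking, with the chart furnished by Definition~\ref{NCD_dfn}; this is standard (a connectedness argument on the smooth locus, using that distinct branches meet in codimension~4), but it is a real step and should either be proved or cited in that precise form rather than via a generic reference to the local/global equivalence.
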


\noindent
In the global description of an NC divisor $V\!\subset\! X$, 
the singular locus  $V_{\prt}\!\subset\!X$ is~$V_{\io}^{(2)}$.

\subsection{Regularizations}
\label{NCGreg_subs}

\noindent
Suppose $\io\!:\wt{V}\lra\!X$ is a transverse immersion and $k,k'\!\in\!\Z^{\ge0}$
with $k'\!\le\!k$. 
With the notation as in~\eref{tViotak_e}-\eref{iokjdfn_e}, define
\BE{kk'bundles}
\pi_{k;k'}\!:\cN_{k;k'}\io=\!\bigoplus_{i\in [k]-[k']}\!\!\!\!\!\cN\wt\io_{k;k-1}^{(i)}
\lra \wt{V}_{\io}^{(k)}  \quad\hbox{and}\quad
\pi_{k;k'}^c\!:\cN_{k;k'}^c\io=\!\bigoplus_{i\in [k']}\!\cN\wt\io_{k;k-1}^{(i)}
\lra \wt{V}_{\io}^{(k)}\,.\EE
By the commutativity of the first diagram in~\eref{Vkdiag_e}, 
the homomorphisms~$\nd\wt\io_{k-1;k'}$ and~$\nd\io_{k-1}$ induce homomorphisms
$$\cN_{k;k'}\io\lra \cN\wt\io_{k;k'} \qquad\hbox{and}\qquad 
\cN_{k;k'}^c\io\lra\wt\io_{k;k'}^{\,*}\cN\io_{k'}.$$
By the Inverse Function Theorem, these homomorphisms are isomorphisms.
If $\wt{V}$ is the disjoint union of submanifolds $V_i\!\subset\!X$,
they correspond to the last isomorphism in~\eref{cNorient_e2}
and the first identification in~\eref{cNtot_e}. 
For each $\si\!\in\!\bS_k$, the isomorphisms~\eref{ImmcNorient_e2} and~\eref{Djsi_e} 
induce an isomorphism
\BE{cNioksplit_e} D\si=(D_i\si)_{i\in [k]}\!: 
\cN\io_k\approx\cN_{k;0}\io\equiv\bigoplus_{i\in[k]}\!\cN\wt\io_{k;k-1}^{(i)}
\lra \bigoplus_{i\in[k]}\!\cN\wt\io_{k;k-1}^{(\si(i))}\equiv \cN_{k;0}\io\approx \cN\io_k\EE
lifting the action of $\si$ on~$\wt{V}_{\io}^{(k)}$.
The isomorphism~\eref{cNioksplit_e} permutes the components of the direct sum
so that the subbundles 
$$\cN_{k;k'}\io,\cN_{k;k'}^c\io\subset \cN\io_k$$
are invariant under the action of the subgroup  
\hbox{$\bS_{k'}\!\times\!\bS_{[k]-[k']}$} of~$\bS_k$,
but not under the action of the full group~$\bS_k$. 

\begin{dfn}\label{NCsmreg_dfn}
A \sf{regularization} for an immersion $\io\!:\wt{V}\!\lra\!X$ is a smooth map 
\hbox{$\Psi\!:\cN'\!\lra\!X$} from a neighborhood of~$\wt{V}$ in~$\cN\io$ 
such that for every $\wt{v}\!\in\!\wt{V}$,
there exist a neighborhood $U_{\wt{v}}$ of $\wt{v}$ in~$\wt{V}$ so that 
the restriction of $\Psi$ to $\cN'|_{U_{\wt{v}}}$ is a diffeomorphism onto its~image,
$\Psi(\wt{v})\!=\!\io(\wt{v})$, and the homomorphism
$$ \cN\io|_{\wt{v}}=T_{\wt{v}}^{\ver}\cN\io \lhra T_{\wt{v}}\cN\io
\stackrel{\nd_{\wt{v}}\Psi}{\lra} 
T_{\wt{v}}X\lra \frac{T_{\wt{v}}X}{\Im(\nd_{\wt{v}}\io)}\equiv\cN\io|_{\wt{v}}$$
is the identity.
\end{dfn}

\begin{dfn}\label{NCTransCollReg_dfn}
A \sf{system of regularizations for} a transverse immersion $\io\!:\!\wt{V}\!\lra\!X$
is a tuple $(\Psi_k)_{k\in\Z^{\ge0}}$, where each $\Psi_k$ 
is a regularization for the immersion~$\io_k$, such~that 
\begin{alignat}{2}\label{NCPsikk_e}
&\Psi_k\big(\cN_{k;k'}\io\!\cap\!\Dom(\Psi_k)\big)
=V_{\io}^{(k')}\!\cap\!\Im(\Psi_k) &\qquad 
&\forall~k\!\in\!\Z^{\ge0},~k'\!\in\![k],\\
\label{NCPsikk_e2}
&\hspace{.6in}\Psi_k=\Psi_k\!\circ\!D\si\big|_{\Dom(\Psi_k)} &\qquad 
&\forall~k\!\in\!\Z^{\ge0},~\si\!\in\!\bS_k;\\
\label{NCPsikk_e3}
&\quad\big\{x\!\in\!X\!:|\Psi_k^{-1}(x)|\!\ge\!k\big\}\subset\Im(\Psi_{k+1})
&\qquad &\forall~k\!\in\!\Z^{\ge0}.
\end{alignat}
\end{dfn}

\vspace{.15in}

\noindent
The stratification condition~\eref{NCPsikk_e} replaces the first condition in~\eref{Psikk_e} 
and implies~that there exists a smooth~map
\BE{Psikkprdfn_e}\begin{split}
&\Psi_{k;k'}\!: \cN_{k;k'}'\io\!\equiv\!\cN_{k;k'}\io\!\cap\!\Dom(\Psi_k)\lra\wt{V}_{\io}^{(k')}
\qquad\hbox{s.t.}\\ 
&\quad\Psi_{k;k'}\big|_{\wt{V}_{\io}^{(k)}}=\wt\io_{k;k'}\,,\quad
\Psi_k\big|_{\cN_{k;k'}'\io}=\io_{k'}\!\circ\!\Psi_{k;k'}\,;
\end{split}\EE
see Proposition~1.35 and Theorem~1.32 in~\cite{Warner}.
Similarly to~\eref{wtPsiIIdfn_e}, $\Psi_{k;k'}$ lifts to a (fiberwise) vector bundle isomorphism
\BE{fDPsikk_e}\fD\Psi_{k;k'}\!:  \pi_{k;k'}^*\cN_{k;k'}^c\io|_{\cN_{k;k'}'\io}
\lra\cN\io_{k'}\big|_{\Im(\Psi_{k;k'})}.\EE
This bundle isomorphism preserves the second splitting below in~\eref{kk'bundles} and
is $\bS_{k;k'}$-equivariant and $\bS_{k;k'}^c$-invariant. 
The condition~\eref{NCoverlap_e} below replaces~\eref{overlap_e} in the present setting.

\begin{dfn}\label{NCTransCollregul_dfn}
A \sf{refined regularization} for a transverse immersion $\io\!:\wt{V}\!\lra\!X$
is a system  $(\Psi_k)_{k\in\Z^{\ge0}}$ of regularizations for~$\io$ 
such~that 
\BE{NCoverlap_e}\begin{split}
&\Dom(\Psi_k)\subset\pi_{k;k'}^*\cN_{k;k'}^c\io|_{\cN_{k;k'}'\io}, \quad
\fD\Psi_{k;k'}\big(\Dom(\Psi_k)\big)
=\Dom(\Psi_{k'})\big|_{\Im(\Psi_{k;k'})}, \\
&\hspace{1.5in}\Psi_k=\Psi_{k'}\circ\fD\Psi_{k;k'}|_{\Dom(\Psi_k)}
\end{split}\EE
whenever $0\!\le\!k'\!\le\!k$.
\end{dfn}

\noindent
If $(\Psi_k)_{k\in\Z^{\ge0}}$ is a refined regularization 
for a transverse immersion \hbox{$\io\!:\wt{V}\!\lra\!X$},  then 
\BE{NCDcons_e2}\begin{split}
&\hspace{1in}
\cN_{k;k''}'\io,\pi_{k;k''}^*\cN_{k;k''}^c\io|_{\cN_{k;k''}'\io}\subset
 \pi_{k;k'}^*\cN_{k;k'}^c\io\big|_{\cN_{k;k'}'\io},\\ 
&\Psi_{k;k''}=\Psi_{k';k''}\circ\fD\Psi_{k;k'}\big|_{\cN_{k;k''}'\io}\,, 
\quad
\fD\Psi_{k;k''}=\fD\Psi_{k';k''}\circ
\fD\Psi_{k;k'}\big|_{\pi_{k;k''}^*\cN_{k;k''}^c\io|_{\cN_{k;k''}'\io}}
\end{split}\EE
whenever $0\!\le\!k''\!\le\!k'\!\le\!k$.\\

\noindent
Suppose $(X,\om)$ is a symplectic manifold and
$\io\!:\wt{V}\!\lra\!X$ is an immersion so that $\io^*\om$
is a symplectic form on~$V$.
The normal bundle 
$$\cN\io\equiv \frac{\io^*TX}{\Im(\nd\io)}\approx \big(\Im(\nd\io)\big)^{\om}
\equiv \big\{w\!\in\!T_{\io(\wt{v})}X\!:\,\wt{v}\!\in\!\wt{V},\,
\om\big(w,\nd_x\io(w')\big)\!=\!0~
\forall\,w'\!\in\!T_{\wt{v}}V\big\}$$
of~$\io$ then inherits a fiberwise symplectic form~$\om|_{\cN\io}$ from~$\om$.
We denote the restriction of~$\om|_{\cN\io}$ to a subbundle $L\!\subset\!\cN\io$
by~$\om|_L$.

\begin{dfn}\label{NCsympreg1_dfn}
Suppose $(X,\om)$ is a symplectic manifold,
$\io\!:\wt{V}\!\lra\!X$ is an immersion so that $\io^*\om$ is a symplectic form on~$V$,
and
$$\cN\io=\bigoplus_{i\in I}L_i$$
is a fixed splitting into oriented rank~2 subbundles.
If $\om|_{L_i}$ is nondegenerate for every $i\!\in\!I$, then
an \sf{$\om$-regularization for~$\io$} is a tuple $((\rho_i,\na^{(i)})_{i\in I},\Psi)$, 
where $(\rho_i,\na^{(i)})$ is an $\om|_{L_i}$-compatible Hermitian structure on~$L_i$
for each $i\!\in\!I$ and $\Psi$ is a regularization for~$\io$, such that 
$$\Psi^*\om=\big(\io^*\om\big)_{(\rho_i,\na^{(i)})_{i\in I}}\big|_{\Dom(\Psi)}.$$
\end{dfn}

\begin{dfn}\label{NCSCDregul_dfn}
Suppose $(X,\om)$ is a symplectic manifold and
$\io\!:\wt{V}\!\lra\!X$ is a transverse immersion of codimension~2
so that $\io_k^*\om$ is a symplectic form on $\wt{V}_{\io}^{(k)}$ for each $k\!\in\!\Z^+$.
A \sf{refined $\om$-regularization for~$\io$} is a tuple 
\BE{NCSCDregul_e} \cR\equiv\big(\cR_k\big)_{k\in\Z^{\ge0}}\equiv
\big((\rho_{k;i},\na^{(k;i)})_{i\in[k]},\Psi_k\big)_{k\in\Z^{\ge0}}\EE
such that $(\Psi_k)_{k\in\Z^{\ge0}}$ is a refined regularization for~$\io$,
$\cR_k$ is an $\om$-regularization for~$\io_k$
with respect to the splitting~\eref{cNioksplit_e} for every $k\!\in\!\Z^{\ge0}$, 
\BE{NCSCDregul_e2}\big(\rho_{k;i},\na^{(k;i)}\big) = 
\big\{D_i{\si}\big\}^{\!*}\big(\rho_{k;\si(i)},\na^{(k;\si(i))}\big)
\quad\forall\,k\!\in\!\Z^{\ge0},\,\si\!\in\!\bS_k,\,i\!\in\![k],\EE
and the induced vector bundle isomorphisms~\eref{fDPsikk_e}
are product Hermitian isomorphisms for all $k'\!\le\!k$. 
\end{dfn}

\noindent
An almost complex structure $J$ on $X$ that preserves $\Im\,\nd\io$ determines
an almost complex structure~$J_{\io;k}$ on~$\wt{V}_{\io}^{(k)}$ for every $k\!\in\!\Z^{\ge0}$,
with $J_0\!=\!J$.
The maps~$\io$, $\io_k$, $\wt\io_{k;k'}$, $\wt\io_{k;k-1}^{(i)}$, $\wt\io_k^{(i)}$,
and~$\si$ from~$\wt{V}_{\io}^{(k)}$ respect these almost complex structures.
A refined $\om$-regularization~$\cR$ for~$\io$ 
determines a fiberwise complex structure~$\fI_k$ on~$\cN\io_k$ and a splitting
$$T(\cN\io_k)\approx \pi_{k;0}^*T\wt{V}^{(k)}_\io\!\oplus\! \pi_{k;0}^*\cN\io_k\,,$$
which are preserved by the action of $\bS_k$.
Along with~$J_{\io;k}$, they determine an almost complex structure~$J_{\cR;k}$ on
the total space of~$\cN\io_k$.
We call an almost complex structure~$J$ on~$X$ compatible with an $\om$-regularization~$\cR$ 
as in~\eref{NCSCDregul_e} if
\BE{stdJ_e2} J\big(\Im\,\nd\io)\subset\Im\,\nd\io \qquad\hbox{and}\qquad
J\!\circ\!\nd\Psi_k=\nd\Psi_k\!\circ\!J_{\cR;k}\big|_{\Dom(\Psi_k)}~~\forall\,k\!\in\!\Z^{\ge0}.\EE
Under the correspondence between the local and global perspectives provided by 
Proposition~\ref{NCD_prp},
this notion is the global version of the $\cR$-compatibility defined in Section~\ref{NCLreg_subs}.

\begin{rmk}\label{NCSCDregul_rmk}
Let $(\Psi_k)_{k\in\Z^{\ge0}}$ be a refined regularization
for a transverse immersion \hbox{$\io\!:\wt{V}\!\lra\!X$} as in Definition~\ref{NCTransCollregul_dfn}.
For $k\!\in\!\Z^+$, the limit set $\ov{\Im\,\Psi_k}\!-\!\Im\,\Psi_k$ of~$\Psi_k$
is closed and disjoint from the closed subspace~$V_{\io}^{(k)}$ of~$X$.
There are thus disjoint open neighborhoods~$W_k$ of~$V_{\io}^{(k)}$ and~$W_k'$ of the limit set. 
By \cite[Lemma~5.8]{SympDivConf}, we can shrink the domains of~$\Psi_k$ so~that
$\Im\,\Psi_k\!\subset\!W_k$ for every $k\!\in\!\Z^+$ and the new collection 
$(\Psi_k)_{k\in\Z^{\ge0}}$ is still a refined regularization for~$\io$.
Each map~$\Psi_k$ is then closed (in addition to being open).
\end{rmk}

\subsection{Constructions}
\label{ConGNC_subs}

\noindent
We now describe the vector bundles
$$\cO_{\cR;X}(V)\!=\!\cO_{\cR;X}(\io)\lra X \qquad\hbox{and}\qquad
T_{\cR}X(-\log V)\!=\!T_{\cR}X(-\log\io)\lra X,$$
the section~$s_{\cR}$ of $\cO_{\cR;X}(V)$, and 
the vector bundle homomorphism~$\io_{\cR}$ on~$T_{\cR}X(-\log V)$
constructed in Section~\ref{ConLNC_subs} from the global perspective
on NC divisors and regularizations presented in Sections~\ref{NCGdfn_subs} and~\ref{NCGreg_subs}.
We fix a symplectic manifold~$(X,\om)$, a normalization \hbox{$\io\!:\wt{V}\!\lra\!X$}
of an NC symplectic divisor~$V$ as in Proposition~\ref{NCD_prp}, 
and a refined $\om$-regularization~$\cR$ for~$\io$ in $X$ as in~\eref{NCSCDregul_e}.
For the purposes of constructing a complex structure on $T_{\cR}X(-\log\io)$, we also 
fix an almost complex structure~$J$ on~$X$ compatible with~$\om$ and~$\cR$.\\

\noindent
With the notation as in \eref{tViotak_e} and~\eref{Xiotak_e}, the restrictions
$$\Psi_k\!: \Dom(\Psi_{k})\big|_{\wt{V}^{(k)}_{\io}-\Im(\io_{k+1;k})}
\lra W_k\!\equiv\!\Psi_k\Big(\Dom(\Psi_{k})\big|_{\wt{V}^{(k)}_{\io}-\Im(\io_{k+1;k})}\Big)$$
are not even $\bS_k$-covering maps. 
In other words, there could~exist 
$$v,v'\in\Dom(\Psi_k)\big|_{\wt{V}^{(k)}_{\io}-\Im(\io_{k+1;k})}
\qquad\hbox{s.t.}\qquad
\Psi_k(v)\!=\!\Psi_k(v')\in X
~~\hbox{and}~~ D\si(v)\neq v'~\forall\,\si\!\in\!\bS_k\,;$$
see the left diagram in Figure~\ref{Shrink_fig}.
This makes it difficult to describe a vector bundle on~$W_k$ 
as a pushdown of a vector bundle on $\Dom(\Psi_{k})\big|_{\wt{V}^{(k)}_{\io}-\Im(\io_{k+1;k})}$. 
For this reason, we shrink the last space to
an $\bS_k$-invariant open subspace~$\cN^{\circ}_{k;0}\io$ so~that
\BE{Psikrestr_e}\Psi_k\!: \cN^{\circ}_{k;0}\io
\lra U_k^{\circ}\!\equiv\!\Psi_k\big(\cN^{\circ}_{k;0}\io\big)\EE
is an $\bS_k$-covering map and
the collection $\{U_k^{\circ}\}_{k\in \Z^{\ge0}}$ 
is an open covering of~$X$.
Figure~\ref{Shrink_fig} illustrates this shrinking procedure.\\

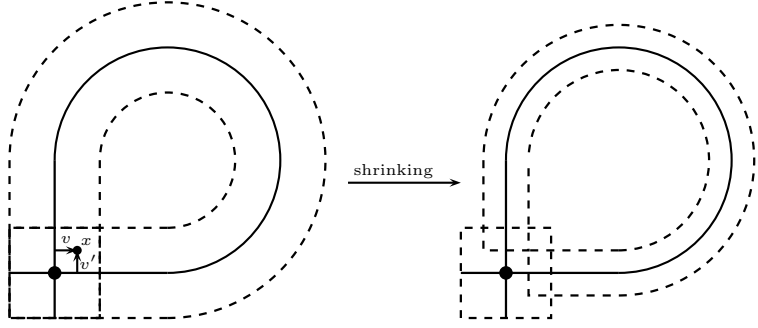
\begin{figure}
\begin{pspicture}(-5,-1)(11,2.4)
\psset{unit=.3cm}
\psline[linewidth=.1](13,-2)(20,-2)
\psline[linewidth=.1](15,-4)(15,3)
\pscircle*(15,-2){.3}
\psarc(20,3){5}{-90}{180}
\psline[linewidth=.1,linestyle=dashed,dash=3pt](17,0)(13,0)(13,-4)(17,-4)(17,0)
\psline[linewidth=.1,linestyle=dashed,dash=3pt](20,-1)(16,-1)(16,-3)(20,-3)
\psline[linewidth=.1,linestyle=dashed,dash=3pt](14,3)(14,-1)(16,-1)(16,3)
\psarc[linewidth=.1,linestyle=dashed,dash=3pt](20,3){6}{-90}{180}
\psarc[linewidth=.1,linestyle=dashed,dash=3pt](20,3){4}{-90}{180}
\psline[linewidth=.1](-7,-2)(0,-2)
\psline[linewidth=.1](-5,-4)(-5,3)
\pscircle*(-5,-2){.3}
\psline{->}(-5,-1)(-4,-1)\rput(-4.5,-.5){\tiny{$v$}}
\psline{->}(-4,-2)(-4,-1)\rput(-3.5,-1.5){\tiny{$v'$}}
\pscircle*(-4,-1){.2}\rput(-3.6,-.6){\tiny{$x$}}
\psarc(0,3){5}{-90}{180}
\psline[linewidth=.1,linestyle=dashed,dash=3pt](-3,0)(-7,0)(-7,-4)(-3,-4)(-3,0)
\psline[linewidth=.1,linestyle=dashed,dash=3pt](0,0)(-7,0)(-7,-4)(0,-4)
\psline[linewidth=.1,linestyle=dashed,dash=3pt](-7,3)(-7,-4)(-3,-4)(-3,3)
\psarc[linewidth=.1,linestyle=dashed,dash=3pt](0,3){7}{-90}{180}
\psarc[linewidth=.1,linestyle=dashed,dash=3pt](0,3){3}{-90}{180}
\psline{->}(8,2)(13,2)
\rput(10,2.5){\tiny{\tn{shrinking}}}
\end{pspicture}
\caption{Shrinking regularizations.}\label{Shrink_fig}
\end{figure}

\noindent
For $k',k\!\in\!\Z^{\ge0}$ with $k'\!\leq\!k$, let 
\hbox{$\pi_{k;k'}\!:\cN_{k;k'}\io\!\lra\!\wt{V}_{\io}^{(k)}$} be as in~\eref{kk'bundles}.
For a continuous function \hbox{$\ve\!:\wt{V}_{\io}^{(k)}\!\lra\!\R^{+}$}, define
$$\cN_{k;k'}\io(\ve)=
\big\{\!(v_i)_{i\in[k]-[k']}\!\in\!\cN_{k;k'}\io\!:
\rho_{k;i}(v_i)\!<\!\ve^2(\pi_{k;k'}(v)\!)~\forall i\!\in\![k]\!-\![k']\big\}.$$
By Remark~\ref{NCSCDregul_rmk} and the proof of \cite[Lemma 5.8]{SympDivConf}, 
there exists a smooth function $\ve\!:X\!\lra\!\R^+$ such~that  
\BE{veGlobal_e}
\cN_{k;0}\io(2^k\ve\!\circ\!\io_k)\subset\Dom(\Psi_k), \quad
\ve\big(\Psi_k(v)\!\big)= \ve\big(\io_k(\pi_{k;0}(v)\!)\!\big)
 ~~\forall\,v\!\in\!\cN_{k;0}\iota(2^k\ve\!\circ\!\io_k),\EE
and $\Psi_k(\ov{2^{k-1}\ve\!\circ\!\io_k})\!\subset\!X$ is closed for every $k\!\in\!\Z^{\ge0}$.
If $V$ is compact, $\ve$ can taken to be a constant.
Define
\BE{NCcomplementVN_e}\begin{split}
&\hspace{.15in}\wt{V}^{(k),\circ}_{\io}
= \wt{V}^{(k)}_{\io}\!-\!\bigcup_{\ell>k}\!\Psi_{\ell;k}
\big(\ov{\cN_{\ell;k}\io(2^{\ell-1}\ve\!\circ\!\io_{\ell})}\big),\\
& \cN^{\circ}_{k;0}\io=\cN_{k;0}\io\big(2^k\ve\!\circ\!\io_k\big)|_{\wt{V}^{(k),\circ}_{\io}},
~~ U_k^\circ=\Psi_k(\cN^{\circ}_{k;0}\io).
\end{split}\EE
By~\eref{NCPsikk_e2} and~\eref{NCPsikk_e3}, the restriction~\eref{Psikrestr_e}
is an $\bS_k$-covering map.\\
 
\noindent
For $k,k'\!\in\!\Z^{\geq 0}$ with $k'\!\le\!k$, let
$$\cN^{k'}_{k;0}\io= \cN^{\circ}_{k;0}\io\!\cap\!\fD\Psi_{k;k'}^{-1}(\cN^{\circ}_{k';0}\io)
\quad\hbox{and}\quad 
\cN^k_{k';0}\io=\fD\Psi_{k,k'}(\cN^{k'}_{k;0}\io)=
\fD\Psi_{k;k'}(\cN^{\circ}_{k;0}\io)\!\cap\!\cN^{\circ}_{k';0}\io.$$
Since the map $\fD\Psi_{k;k'}$ is $\bS_{k;k'}$-equivariant and $\bS_{k;k'}^c$-invariant,
the subspace  $\cN^{k'}_{k;0}\io\!\subset\!\cN^k_{k;0}\io$ 
is \hbox{$\bS_{k;k'}\!\times\!\bS_{k;k'}^c$}-invariant.
The restriction 
\BE{NDcovmap_e}\fD\Psi_{k;k'}\!:\cN^{k'}_{k;0}\io \lra \cN^k_{k';0}\io\EE
is an $\bS_{k;k'}$-equivariant $\bS_{k;k'}^c$-covering map.
By the last equality in~\eref{NCDcons_e2},
\BE{cnk0k'_e}\begin{split}
&\fD\Psi_{k;k'}\big(\cN^{k'}_{k;0}\io\!\cap\!\cN^{k''}_{k;0}\io\big)
=\cN^k_{k';0}\io\!\cap\!\cN^{k''}_{k';0}\io, \quad
\fD\Psi_{k;k''}\big(\cN^{k'}_{k;0}\io\!\cap\!\cN^{k''}_{k;0}\io\big)
=\cN^k_{k'';0}\io\!\cap\!\cN^{k'}_{k'';0}\io,\\
&\hspace{1.2in}
\fD\Psi_{k';k''}\big(\cN^k_{k';0}\io\!\cap\!\cN^{k''}_{k';0}\io\big)
=\cN^k_{k'';0}\io\!\cap\!\cN^{k'}_{k'';0}\io,
\end{split}\EE
whenever $0\!\le\!k''\!\le\!k'\!\le\!k$.\\

\noindent
Suppose 
$$\big\{\wt\pi_k\!:\wt{E}_k\!\lra\!\cN_{k;0}^{\circ}\io\big\}_{k\in\Z^{\ge0}}
\qquad\hbox{and}\qquad
\big\{\wt{F}_{k;k'}\!:\wt{E}_k|_{\cN^{k'}_{k;0}\io}\!\lra\!\wt{E}_{k'}|_{\cN^k_{k';0}\io}
\big\}_{k,k'\in\Z^{\ge0},k'\le k}$$
is a collection of $\bS_k$-equivariant (complex) vector bundles and 
a collection of $\bS_{k;k'}$-equivariant \hbox{$\bS_{k;k'}^c$-invariant}
(smooth) vector bundle maps that lift the covering maps~\eref{NDcovmap_e}, 
restrict to an isomorphism on each fiber, and satisfy
\BE{Cocycle_e}
\wt{F}_{k;k''}\big|_{\cN_{k;0}^{k'}\io \cap \cN_{k;0}^{k''}\io}
=\wt{F}_{k';k''}\!\circ\!\wt{F}_{k;k'}\big|_{\cN_{k;0}^{k'}\io \cap \cN_{k;0}^{k''}\io}
\qquad\forall\,k''\!\le\!k'\!\le\!k.\EE
By~\eref{cnk0k'_e}, the composition on the right-hand side above is well-defined.
Since the map~\eref{NDcovmap_e} is an \hbox{$\bS_k$-covering} map, 
$$E_k\!\equiv\!\wt{E}_k/\bS_k \lra \cN_{k;0}^{\circ}\io/\bS_k\!=\!U_k^{\circ}$$ 
is a vector bundle.
The maps~$\wt{F}_{k;k'}$ induce vector bundle isomorphisms
$$F_{k;k'}\!:E_k|_{U_k^{\circ}\cap U_{k'}^{\circ}}\lra  
E_{k'}|_{U_k^{\circ}\cap U_{k'}^{\circ}}$$
covering the identity on~$U_k^{\circ}\cap U_{k'}^{\circ}$.
By~\eref{Cocycle_e},
$$F_{k;k''}\big|_{U_k^{\circ}\cap U_{k'}^{\circ}\cap U_{k''}^{\circ}}
=F_{k';k''}\!\circ\!F_{k;k'}
\big|_{U_k^{\circ}\cap U_{k'}^{\circ}\cap U_{k''}^{\circ}}.$$
We can thus form a vector bundle
\BE{NCvbdfn_e}\begin{split}
&\pi\!:E\!\equiv\!\bigg(\bigsqcup_{k\in\Z^{\ge0}}\!\!\!\!E_k\bigg)\!\!\Big/\!\!\sim\lra X,
\quad \pi\big([v]\big)=\Psi_k\big(\wt\pi_k(v)\!\big)~~\forall~k\!\in\!\Z^{\ge0},~v\!\in\!E_k,\\
&\qquad E_k\big|_{U_k^{\circ}\cap U_{k'}^{\circ}} \ni w\sim F_{k;k'}(w)
\in E_{k'}\big|_{U_{k}^{\circ}\cap U_{k'}^{\circ}}
\quad\forall~k,k'\!\in\!\Z^{\ge0},~k'\!\le\!k\,.
\end{split}\EE

\vspace{.15in}

\noindent
A collection $\{\wt{s}_k\}_{k\in\Z^{\ge0}}$ of $\bS_k$-equivariant sections of
the vector bundles~$\wt{E}_k$ such~that
\BE{NCscond_e} \wt{F}_{k;k'}\!\circ\!\wt{s}_k\big|_{\cN_{k;0}^{k'}\io}
=\wt{s}_{k'}\!\circ\!\Psi_{k;k'}\big|_{\cN_{k;0}^{k'}\io}
\qquad\forall~k,k'\!\in\!\Z^{\ge0},~k'\!\le\!k,\EE
determines a section~$s$ on the corresponding vector bundle~$E$ in~\eref{NCvbdfn_e} so~that
$$s\big([v]\big)=\big[\wt{s}_k(v)\!\big]
\qquad\forall~k\!\in\!\Z^{\ge0},~v\!\in\!\cN_{k;0}^{\circ}\io\,.$$

\vspace{.15in}

\noindent
If $E'\!\lra\!X$ is a (complex) vector bundle, the $\bS_k$-action on~$\cN_{k;0}^{\circ}\io$ 
lifts to an action on the vector bundle
$$\wt\pi_k'\!:\wt{E}_k'\!\equiv\!\Psi_k^*E'\lra  \cN_{k;0}^{\circ}\io$$
so that $\wt{E}_k'/\bS_k\!=\!E'|_{U_k^{\circ}}$.
The covering maps~\eref{NDcovmap_e} lift to
$\bS_{k;k'}$-equivariant \hbox{$\bS_{k;k'}^c$-invariant} vector bundle maps 
$$\wt{F}_{k;k'}'\!:\wt{E}_k'|_{\cN^{k'}_{k;0}\io}\lra\wt{E}_{k'}'|_{\cN^k_{k';0}\io}$$
that restrict to an isomorphism on each fiber and satisfy~\eref{Cocycle_e}
with~$\wt{F}$ replaced by~$\wt{F}'$ so that the corresponding vector bundle~\eref{NCvbdfn_e}
is canonically identified with~$E'$.
A collection 
$$\big\{\wt\Phi_k\!:\wt{E}_k\!\lra\!\wt{E}_k'\big\}_{k\in\Z^{\ge0}}$$ 
of $\bS_k$-equivariant vector bundle homomorphisms covering the identity on~$\cN_{k;0}\io$
such~that
\BE{NChomcond_e} \wt{F}_{k;k'}\!\circ\!\wt\Phi_k\big|_{\cN_{k;0}^{k'}\io}
=\wt\Phi_{k'}\!\circ\!\wt{F}_{k;k'}\big|_{\cN_{k;0}^{k'}\io}
\qquad\forall~k,k'\!\in\!\Z^{\ge0},~k'\!\le\!k,\EE
determines a vector bundle homomorphism $\Phi\!:E\!\lra\!E'$ covering 
the identity on~$X$ so~that
$$\Phi\big([w]\!\big)=\big[\wt\Phi_k(w)\!\big]
\qquad\forall~k\!\in\!\Z^{\ge0},~w\!\in\!\wt{E}_k\,.$$

\vspace{.15in}

\noindent
For every $k\!\in\!\Z^{\ge0}$, let
\BE{cLk-tag}\begin{split}
\wt\pi_k\!:\cO_{\cR;k}(\io)&= \bigg(\!\!\pi_{k;0}^*\bigotimes_{i\in [k]}\cN\wt\io_{k;k-1}^{(i)} 
\!\bigg)\!\bigg|_{\cN^{\circ}_{k;0}\io}\lra \cN_{k;0}^{\circ}\io, \\
\wt\pi_k\!:T_{\cR;k}(-\log\io)&=
\big(\pi_{k;0}^{\,*}T\wt{V}^{(k),{\circ}}_{\io}\big)\big|_{\cN_{k;0}^{\circ}\io}
\!\oplus\! \big(\cN^{\circ}_{k;0}\io\!\times\!\C^{[k]}\big)\lra\cN_{k;0}^{\circ}\io\,.
\end{split}\EE
The complex structures $\fI_{k;i}$ on $\cN\wt\io_{k;k-1}^{(i)}$ encoded in~$\cR$ determine 
a complex structure on the complex line bundle~$\cO_{\cR;k}(\io)$.
The almost complex structure~$J_{\io;k}$ on~$\wt{V}_k$, induced by~$J$, 
and the standard complex structure on~$\C^{[k]}$
determine a complex structure on the vector bundle~$T_{\cR;k}(-\log\io)$.
The $\bS_k$-action on~$\cN_{k;0}^{\circ}\io$ naturally lifts to both bundles.\\

\noindent
Let $k,k'\!\in\!\Z^{\ge0}$ with $k'\!\le\!k$ and
$$\Pi_k\!: \cN\wt\io_{k;0}\lra\bigotimes_{i\in [k]}\cN\wt\io_{k;k-1}^{(i)} , 
\qquad \Pi_k\big(\!(v_i)_{i\in[k]}\big)=\bigotimes_{i\in[k]}\!v_i\,.$$
We denote by~$\na^{(k)}$ and~$\na^{(k;k')}$
the connections on $\cN\io_k\!\approx\!\cN_{k;0}\io$ and
\hbox{$\cN\wt\io_{k;k'}\!\approx\!\cN_{k;k'}\io$}
induced by the connections~$\na^{(k;i)}$ on the direct summands of these vector bundles.
We write an element \hbox{$v\!\equiv\!(v_i)_{i\in[k]}$} of~$\cN_{k;0}\io$ as
$$v=(v_{k;k'},v_{k;k'}^c) \quad\hbox{with}\quad
v_{k;k'}\!\equiv\!(v_i)_{i\in[k]-[k']}\!\in\!\cN_{k;k'}\io
~~\hbox{and}~~ v_{k;k'}^c\!\equiv\!(v_i)_{i\in[k']}\in\cN_{k;k'}^c\io$$
below.
Let
$$\nh_{\na^{(k)};v}\!:T_{\pi_{k;0}(v)}\wt{V}_{\io}^{(k)}\lra T_v(\cN_{k;0}\io)
\quad\hbox{and}\quad
\nh_{\na^{(k;k')};v_{k;k'}}\!:T_{\pi_{k;k'}(v_{k;k'})}\wt{V}_{\io}^{(k)}
\lra T_{v_{k;k'}}(\cN_{k;k'}\io)$$
be the injective homomorphisms as in~\eref{nhndfn_e} corresponding
to the connections~$\na^{(k)}$ and~$\na^{(k;k')}$.\\

\noindent
By the first equation in~\eref{NCcomplementVN_e}, 
$$ v_i\neq0 \qquad \forall~i\!\in\![k]\!-\![k'],~
(v_j)_{j\in[k]}\!\in\!\cN^{k'}_{k;0}\io
\subset\bigoplus_{j\in[k]}\cN\wt\io_{k;k-1}^{(j)}.$$
Since $\fD\Psi_{k;k'}$ is a product Hermitian isomorphism, the~map
\begin{gather}\label{NCthetaII_e}
\wt\th_{k;k'}\!: \cO_{\cR;k}(\io)\big|_{\cN^{k'}_{k;0}\io}
\lra \cO_{\cR;k'}(\io)\big|_{\cN^k_{k';0}\io},\\
\notag
\wt\th_{k;k'}\big(v,\Pi_k(v_{k;k'},w_{k;k'}^c)\!\big)
=\Big(\fD\Psi_{k;k'}(v),
\Pi_{k'}\big(\fD\Psi_{k;k'}(v_{k;k'},w_{k;k'}^c)\!\big)\!\Big),
\end{gather}
is a well-defined homomorphism of complex line bundles that
lifts the covering map~\eref{NDcovmap_e} and 
restricts to an isomorphism on each fiber.
The~map
\begin{gather}\label{NCvarTII_e}
\wt\vt_{k;k'}\!: T_{\cR;k}(-\log\io)\big|_{\cN^{k'}_{k;0}\io}
\lra T_{\cR;k}(-\log\io)\big|_{\cN^k_{k';0}\io},\\
\notag
\begin{split}
\wt\vt_{k;k'}\big(\!(v,w)\!\oplus\!\big(v,(c_i)_{i\in[k]}\big)\!\!\big)
=\Big(\fD\Psi_{k;k'}(v),\nd_{v_{k;k'}}\Psi_{k;k'}\!
\big(\nh_{\na^{(k;k')};v_{k;k'}}(w)\!+\!\!\sum_{i\in[k]-[k']}\!\!\!\!\!\!c_iv_i\!\big)\!\!\Big)&\\
\!\oplus\!\big(\Psi_{k;k'}(v_{k;k'}),(c_i)_{i\in[k']}\big)&,
\end{split}
\end{gather}
is similarly a well-defined homomorphism of vector bundles that
lifts the covering map~\eref{NDcovmap_e} and 
restricts to an isomorphism on each fiber.
Since $J$ is an $\cR$-compatible almost complex structure on~$X$, this homomorphism is $\C$-linear.\\

\noindent
By the commutativity of the diagrams in Figure~\ref{Shrink_fig},
the bundle homomorphisms~\eref{NCthetaII_e} and~\eref{NCvarTII_e}
are $\bS_{k;k'}$-equivariant \hbox{$\bS_{k;k'}^c$-invariant}. 
By~\eref{NCDcons_e2}, the collections $\{\wt\th_{k;k'}\}_{k'\le k}$
and $\{\wt\vt_{k;k'}\}_{k'\le k}$ satisfy~\eref{Cocycle_e}
with~$\wt{F}$ replaced by~$\wt\th$ and~$\wt\vt$.
The first collection thus determines a complex line bundle
\begin{equation*}\begin{split}
&\pi\!: \cO_{\cR;X}(\io)\!=\!\bigg(\bigsqcup_{k\in\Z^{\ge0}}\!\!\!\!
\cO_{\cR;k}(\io)/\bS_k\!\!\bigg)\!\Big/\!\!\!\sim\,\lra X,~~
\pi\big([w]\big)=\Psi_k\big(\wt\pi_k(w)\!\big)
~\forall\,k\!\in\!\Z^{\ge0},\,w\!\in\!\cO_{\cR;k}(\io),\\
&\hspace{1in}[w]\sim\big[\wt\th_{k;k'}(w)\!\big]
\quad\forall~k,k'\!\in\!\Z^{\ge0},~k'\!\le\!k,~
w\!\in\!\cO_{\cR;k}(\io)\big|_{\cN^{k'}_{k;0}\io}.
\end{split}\end{equation*}
The second collection similarly determines a complex vector bundle
\begin{equation*}\begin{split}
&\pi\!: T_\cR X(-\log\io)\!=\!\bigg(\bigsqcup_{k\in\Z^{\ge0}}\!\!\!\!
T_{\cR;k}(-\log\io)/\bS_k\!\!\bigg)\!\Big/\!\!\sim\,\,\lra X,\\
&\hspace{1.5in}\pi\big([w]\big)=\Psi_k\big(\wt\pi_k(w)\!\big)
~\forall\,k\!\in\!\Z^{\ge0},\,w\!\in\!T_\cR X(-\log\io),\\
&\hspace{-.05in}[w]\sim\big[\wt\vt_{k;k'}(w)\!\big]
\quad\forall~k,k'\!\in\!\Z^{\ge0},~k'\!\le\!k,~
w\!\in\!T_\cR X(-\log\io)\big|_{\cN^{k'}_{k;0}\io}.
\end{split}\end{equation*}

\vspace{.15in}

\noindent
The smooth sections
$$\wt{s}_k\!:\cN_{k;0}^{\circ}\io\lra\cO_{\cR;k}(\io),\qquad
\wt{s}_k(v)=\big(v,\Pi_k(v)\!\big),$$
are $\bS_k$-equivariant and satisfy~\eref{NCscond_e} with~$\wt{F}$ replaced by~$\wt\th$.  
They thus determine a section~$s_{\cR}$ of the complex line bundle~$\cO_{\cR;X}(\io)$.
The smooth bundle homomorphisms
$$\wt\Phi_k\!:T_{\cR;k}(-\log\io)\lra\Psi_k^*TX, \quad
\wt\Phi_k\Big(\!(v,w)\!\oplus\!\big(v,(c_i)_{i\in[k]}\big)\!\!\Big)
=\nd_v\Psi_k\!\Big(\nh_{\na^{(k)};v}(w)\!+\!\!\sum_{i\in[k]}c_iv_i\!\Big),$$
are $\bS_k$-equivariant.
By~\eref{NDcovmap_e}, these bundle homomorphisms satisfy~\eref{NChomcond_e}
with~$\wt{F}$ replaced by~$\wt\vt$.
They thus determine a vector bundle homomorphism
$$\io_{\cR}\!: T_\cR X(-\log\io)\lra TX\,.$$

\subsection{Proof of Theorem~\ref{TXV_thm}\ref{cTXV_it}}
\label{ChNC_sec}

\noindent
Let $(X,\om)$,
$V\!\subset\!X$, $\io\!:\wt{V}\!\lra\!X$, $\cR$, and $J$ be as in Section~\ref{ConGNC_subs}.
We denote the curvature 2-form of a connection~$\na$ on 
a complex vector bundle $E\!\lra\!Y$ over a smooth manifold~by
$$F^{\na}\in\Ga\big(Y;\La^2(T^*Y)\!\otimes_{\R}\!\End_{\C}(E)\!\big).$$
For $i\!\in\!\Z^{\ge0}$, we define 
\begin{gather*}
c_i(\na)\in \Ga\big(Y;\La^{2i}(T^*Y)\!\otimes_{\R}\!\End_{\C}(E)\!\big)
\quad\hbox{and}\quad
c(\na)\in\bigoplus_{i=0}^{\infty}\Ga\big(Y;\La^{2i}(T^*Y)\!\otimes_{\R}\!\End_{\C}(E)\!\big)\\
\hbox{by}\qquad
1\!+\!c_1(\na)\!+\!c_2(\na)\!+\!\ldots
\equiv c(\na)\equiv
\det{}\!_{\C}\!\Big(\!I\!+\!\frac{\fI}{2\pi}F^{\na}\!\Big).
\end{gather*}
By \cite[p206]{Morita},
\BE{FtoC_e}
\big[c^{\na}\!(E)\big]=c(E)\in H^{2*}_{\deR}(Y)
\!\equiv\!\bigoplus_{i=0}^{\infty}H^{2i}_{\deR}(Y).\EE
We compare $c(TX)$ and $c(TX(-\log V)\!)\!=\!c(TX(-\log\io)\!)$ 
at the level of differential forms, which can be done locally;
see Proposition~\ref{DFChern_prp}.
This essentially reduces the computation to the SC~case.
The de Rham cohomology version of~\eref{cTXV_e},
which is equivalent to~\eref{cTXV_e} itself,
follows from~\eref{FtoC_e},
Lemma~\ref{PDVk_lmm}, and Proposition~\ref{DFChern_prp}.\\

\noindent
A connection $\na$ on a complex line bundle $\pi\!:L\!\lra\!Y$ determines 
a horizontal tangent subbundle $TL^{\hor}\!\subset\!TL$ and 
an $\R$-valued \sf{angular 1-form} $\al_{\na}$ on $L\!-\!Y$.
The latter is characterized~by
$$\ker\al_{\na}=\big(TL^{\hor}\!\oplus\!\R\ze_L\big)\big|_{L-Y}
\qquad\hbox{and}\qquad
\al_{\na}\bigg(\frac{\nd}{\nd\th}\ne^{\fI\th}v\Big|_{\th=0}\bigg)=1.$$
By the proof of \cite[Lemma~1.1]{anal},
\BE{dalnadfn_e} \nd\al_{\na}=\pi^*\eta_{\na}\big|_{L-Y}\EE
for some 2-form~$\eta_{\na}$ on~$Y$.

\begin{rmk}\label{angform_rmk}
If $\na$ is compatible with a Hermitian metric~$\rho$ on~$L$, 
then the connection 1-form~$\th_1^1$ in the proof of Lemma~\ref{RadVecFld_lmm}
is purely imaginary.
In the local chart of this proof,
the angular 1-form $\al_{\na}$ of~$\na$ is then given~by
$$\al_{\na}\big|_{(y,z)}=-\fI\frac{\nd z}{z}-\fI\pi^*\th_1^1.$$
Thus, $\eta_{\na}\!=\!-\fI F^{\na}$ above.
\end{rmk}

\noindent
Let $\be\!:\R\!\lra\!\R^{\ge0}$ be as in~\eref{becond_e}.
For a Hermitian metric~$\rho$ and a smooth function $\ve\!:Y\!\lra\!\R^+$, define
\BE{berhovedfn_e}\be_{\rho;\ve}\!:L\lra\R^{\ge0}, \qquad
\be_{\rho;\ve}(v)=\be\big(2\ve(\pi(v)\!)^{-2}\rho(v)\!\big).\EE
By~\eref{dalnadfn_e}, the 2-form
$$\tau_{\rho;\na;\ve}\equiv-\frac{1}{2\pi}\nd\big(\be_{\rho;\ve}\al_{\na}\big)
=-\frac{1}{2\pi}\Big(\!\nd\be_{\rho;\ve}\!\w\!\al_{\na}
\!+\!\be_{\rho;\ve}\pi^*\eta_{\na}\!\Big)$$
is well-defined on the entire total space of~$L$.
This closed 2-form is compactly supported in the vertical direction~and
\BE{tauinteg_e}\int_{\pi^{-1}(y)}\tau_{\rho;\na;\ve}
=-\frac{1}{2\pi}\bigg(\int_0^{\infty}\!\!\nd\be_{\rho;\ve}\!\!\bigg)
\bigg(\int_{S^1}\!\!\nd\th\!\!\bigg)=1 \qquad  \forall~y\!\in\!Y.\EE
Thus, $\tau_{\rho;\na;\ve}$ represents the Thom class of the complex line bundle $\pi\!:L\!\lra\!Y$;
see \cite[p64]{BT}.\\

\noindent
For each $k\!\in\!\Z^+$, let $\ve_k\!:\wt{V}^{(k)}\!\lra\!\R^+$ be the composition 
of the function~$\ve$ in~\eref{NCcomplementVN_e} with~$\io_k$.
For each $i\!\in\![k]$, let
$$\pi_{k;i}\!:\cN\io_k\!\approx\!\cN_{k;0}\io\lra \cN\wt\io_{k;k-1}^{(i)}$$
be the component projection.
For $k'\!\in\!\Z^{\ge0}$, we denote~by 
$\wt\tau_{k;k'}\!\in\!\Om^{2k'}(\cN_{k;0})$ the $k'$-th elementary symmetric polynomial
on the set $\{\pi_{k;i}^{\,*}\tau_{\rho_{k;i};\na^{(k;i)};2\ve_k}\}_{i\in[k]}$, i.e.
$$\wt\tau_{k;k'}=
\sum_{\begin{subarray}{c}i_1,\ldots,i_{k'}\in[k]\\
i_1<\ldots<i_{k'}\end{subarray}}\hspace{-.2in}
\big(\pi_{k;i_1}^{\,*}\tau_{\rho_{k;i_1};\na^{(k;i_1)};2\ve_k}\big)
\!\w\!\ldots\!\w\!
\big(\pi_{k;i_{k'}}^{\,*}\tau_{\rho_{k;i_{k'}};\na^{(k;{i_{k'}})};2\ve_k}\big).$$
This $2k'$-form on the total space of~$\cN\io_k$ is $\bS_k$-invariant and closed.
The $2k$-form
$$\wt\tau_{k;k}\equiv \underset{i\in[k]}{\La}\pi_{k;i}^{\,*}\tau_{\rho_{k;i};\na^{(k;i)};2\ve_k}
\equiv\big(\pi_{k;1}^{\,*}\tau_{\rho_{k;1};\na^{(k;1)};2\ve_k}\big)\!\w\!\ldots\!\w
\big(\pi_{k;k}^{\,*}\tau_{\rho_{k;k};\na^{(k;k)};2\ve_k}\big)$$
is in addition compactly supported in the vertical direction.
By~\eref{tauinteg_e},
it represents the Thom class of the complex vector bundle 
$\pi_{k;0}\!:\cN\io_k\!\lra\!\wt{V}_{\io}^{(k)}$.\\
 
\noindent
For an open subset $U\!\subset\!X$ such that
\BE{PsikUdecomp_e}\begin{split}
&\hspace{1in}\Psi_k^{-1}(U)=\wt{U}_0\!\sqcup\!\wt{U}_1\!\sqcup\!\ldots\!\sqcup\!\wt{U}_\ell
\qquad\hbox{with}\\
&\wt{U}_0\subset\cN\io_k\!-\!\ov{\cN\io_k(\ve_k)} \quad\hbox{and}\quad
\Psi_k\!:\wt{U}_i\lra U~~\hbox{a diffeomorphism}~~\forall~i\!\in\![\ell],
\end{split}\EE
we define 
\BE{pushtaudfn_e}
\Psi_{k*}\wt\tau_{k;k}\big|_U= 
\sum_{i=1}^\ell\! \big\{\!\!\{\Psi_k|_{\wt{U}_i}\}^{-1}\!\big\}^{\!*}\wt\tau_{k;k} 
\in \Om^{2k}(U;\R).\EE
Since $\wt\tau_{k;k}$ vanishes on~$\wt{U}_0$, $\Psi_{k*}\wt\tau_{k;k}|_U$ does not depend 
on an admissible decomposition of~$\Psi_k^{-1}(U)$ as above.
Thus, $\Psi_{k*}\wt\tau_{k;k}|_U$ is well-defined and
$$\big(\Psi_{k*}\wt\tau_{k;k}|_U\big)\big|_{U\cap U'}
=\Psi_{k*}\wt\tau_{k;k}\big|_{U\cap U'}
=\big(\Psi_{k*}\wt\tau_{k;k}|_{U'}\big)\big|_{U\cap U'}$$
for all open subsets $U,U'\!\subset\!X$ with admissible decompositions.
By Lemma~\ref{Psikcov_lmm} at the end of this section, such open subsets cover~$X$.
We thus obtain a closed $2k$-form $\Psi_{k*}\wt\tau_{k;k}$ on~$X$.
If  $\mu$ is a closed form on~$X$, then
\BE{pushtau_e}  \int_X (\Psi_{k*}\wt\tau_{k;k})\!\w\!\mu
=\int_{\cN\io_k}\!\wt\tau_{k;k}\!\w\!(\Psi_k^*\mu)
=\int_{\wt{V}^{(k)}}\io_k^*\mu\,;\EE
the first equality above holds for any differential form~$\mu$ on~$X$.
The next straightforward lemma is also proved at the end of this section.

\begin{lmm}\label{PDVk_lmm}
With the notation as above,
\BE{PullBackTau_e}
\tau_k\!\equiv\!\frac1{k!}\Psi_{k*}\wt\tau_{k;k}=\PD_X\big([V^{(k)}_{\io}]_X\big),
\quad\Psi_k^*\tau_{k'}\big|_{\cN_{k;0}^{\circ}}=\wt\tau_{k;k'}\big|_{\cN_{k;0}^{\circ}}
\qquad\forall~k,k'\!\in\!\Z^+.\EE
\end{lmm}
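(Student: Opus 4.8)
The plan is to treat the two identities in \eref{PullBackTau_e} separately: the first by a covering‑degree computation combined with Poincar\'e duality, the second by unwinding the definition of~$\tau_{k'}$ over the chart~$U_k^\circ$ via the compatibility relations of a refined regularization.

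For the first identity I would start from \eref{pushtau_e}: for every closed form~$\mu$ on~$X$ (taken compactly supported when $X$ is noncompact),
\[ \int_X\big(\Psi_{k*}\wt\tau_{k;k}\big)\!\w\!\mu=\int_{\wt{V}^{(k)}_{\io}}\!\!\io_k^*\mu=\big\langle[\mu],(\io_k)_*\big[\wt{V}^{(k)}_{\io}\big]\big\rangle . \]
Next I would note that $\io_k$ restricts to a $k!$‑to‑$1$ covering map $\wt{V}^{(k)}_{\io}\!-\!\wt\io_{k+1;k}(\wt{V}^{(k+1)}_{\io})\!\lra\!V^{(k)}_{\io}\!-\!V^{(k+1)}_{\io}$, on which the $\bS_k$‑action of \eref{SkVk_e} is free and transitive on the sheets; since $\io_k\!\circ\!\si\!=\!\io_k$ for $\si\!\in\!\bS_k$, this covering preserves the $\om$‑orientation of~$\wt{V}^{(k)}_{\io}$ (equivalently its intersection orientation, by Proposition~\ref{NCD_prp}), and the removed subspace $\wt\io_{k+1;k}(\wt{V}^{(k+1)}_{\io})$ has real codimension~$2$ in~$\wt{V}^{(k)}_{\io}$, hence does not affect the class. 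Therefore $(\io_k)_*[\wt{V}^{(k)}_{\io}]\!=\!k!\,[V^{(k)}_{\io}]_X$ and the displayed integral equals $k!\int_X\PD_X([V^{(k)}_{\io}]_X)\!\w\!\mu$. Since the de Rham pairing between $H^{2k}(X;\R)$ and $H^{2n-2k}_c(X;\R)$ is nondegenerate, this forces $[\tau_k]\!=\!\frac1{k!}[\Psi_{k*}\wt\tau_{k;k}]\!=\!\PD_X([V^{(k)}_{\io}]_X)$, which is the first claim.

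For the second identity with $k'\!\le\!k$, the plan is to compute $\tau_{k'}\!=\!\frac1{k'!}\Psi_{k'*}\wt\tau_{k';k'}$ directly over a small $U\!\subset\!U_k^\circ$. Using Lemma~\ref{Psikcov_lmm} and the stratification \eref{NCPsikk_e}--\eref{NCPsikk_e3}, $\Psi_{k'}^{-1}(U)$ decomposes as in \eref{PsikUdecomp_e}, and the sheets on which $\wt\tau_{k';k'}$ is supported are indexed by the ordered $k'$‑element subsequences of the $k$ local branches through the multiplicity‑$k$ locus: each such sheet contributes, after the change of variables in \eref{pushtaudfn_e}, the wedge $\underset{i\in K}{\La}(\text{$i$-th branch Thom form on }U)$ over the underlying $k'$‑subset $K\!\subset\![k]$, and the $k'!$ orderings of a fixed~$K$ give the same contribution by the $\bS_{k'}$‑invariance of~$\wt\tau_{k';k'}$. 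Here the relations \eref{NCoverlap_e} and \eref{NCDcons_e2}, together with $\fD\Psi_{k;k'}$ being a product Hermitian isomorphism carrying $\cN\wt\io_{k';k'-1}^{(i)}$ isometrically and connection‑preservingly onto $\cN\wt\io_{k;k-1}^{(i)}$ (with $\ve_{k'}\!\circ\!\Psi_{k;k'}\!=\!\ve_k$ on the relevant locus by \eref{veGlobal_e} and \eref{Psikkprdfn_e}), guarantee that the ``$i$-th branch Thom form'' is the same whether read off through~$\Psi_{k'}$ or through $\pi_{k;i}\!\circ\!\Psi_k^{-1}$. Summing over all $k'$‑subsets~$K$ and dividing by~$k'!$ yields $\tau_{k'}|_{U_k^\circ}=\sum_{|K|=k'}\underset{i\in K}{\La}(\text{$i$-th branch Thom form on }U_k^\circ)$, which is exactly the descent to~$U_k^\circ$ of the $\bS_k$‑invariant form $\wt\tau_{k;k'}|_{\cN^\circ_{k;0}\io}$; pulling back by~$\Psi_k$ gives $\Psi_k^*\tau_{k'}|_{\cN^\circ_{k;0}\io}\!=\!\wt\tau_{k;k'}|_{\cN^\circ_{k;0}\io}$. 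For $k'\!>\!k$ the right‑hand side is~$0$ (an empty elementary symmetric polynomial), and the left‑hand side vanishes because $U_k^\circ$ is disjoint from $\supp\tau_{k'}$: the support of~$\tau_{k'}$ lies in a controlled neighborhood of $V^{(k')}_{\io}\!\subset\!V^{(k+1)}_{\io}$, while the geometrically decreasing radii in \eref{NCcomplementVN_e} keep $U_k^\circ$ away from a neighborhood of $V^{(k+1)}_{\io}$ of the requisite size.

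I expect the main obstacle to be the bookkeeping in the $k'\!\le\!k$ case — identifying the sheets of $\Psi_{k'}^{-1}(U)$ over~$U_k^\circ$ with ordered $k'$‑subsequences of the $k$ branches and checking that pushforward‑then‑restriction produces precisely the $k'$‑th elementary symmetric polynomial with the right normalization and with the branch Thom forms matched consistently across~$\Psi_{k'}$ and~$\Psi_k$; this is where the refined‑regularization compatibilities \eref{NCoverlap_e}, \eref{NCDcons_e2} and the product Hermitian property are genuinely used. The radius estimate for $k'\!>\!k$ is more elementary but still requires tracking the powers of two in \eref{NCcomplementVN_e} carefully.
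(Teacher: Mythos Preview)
Your proposal is correct and follows essentially the same approach as the paper: the first identity via the $k!$-covering property of~$\io_k$ combined with~\eref{pushtau_e}, the $k'\!>\!k$ case by the support/radius estimate from~\eref{NCcomplementVN_e}, and the $k'\!\le\!k$ case by decomposing $\Psi_{k'}^{-1}(U)$ into sheets and using the product Hermitian property of~$\fD\Psi_{k;k'}$ together with $\ve_k\!=\!\ve_{k'}\!\circ\!\Psi_{k;k'}$ to match the branch Thom forms. The paper organizes the sheet-counting in the $k'\!\le\!k$ case slightly more explicitly via a set~$S_{k;k'}$ of right-coset representatives for $\bS_{k;k'}\!\times\!\bS_{k;k'}^c$ in~$\bS_k$ (which is exactly your ``ordered $k'$-subsequences of the $k$ branches''), but the argument is the same.
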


\begin{prp}\label{DFChern_prp}
There exist connections~$\na$ and~$\na'$ in the complex vector bundles 
$(TX,J)$ and $(T_{\cR}X(-\log\io),\fI_{\cR,J})$ so~that 
\BE{cNabla_e}
c(\na)= c(\na')\big(1\!+\!\tau_1\!+\!\tau_2\!+\!\cdots\big).\EE
\end{prp}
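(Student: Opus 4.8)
```latex
The plan is to build $\na$ and $\na'$ simultaneously over the cover $\{U_k^{\circ}\}_{k\in\Z^{\ge0}}$ of~$X$ and then glue, exploiting that each $U_k^{\circ}$ is of the form $\Psi_k(\cN_{k;0}^{\circ}\io)$ and so carries, after passing to the $\bS_k$-cover $\cN_{k;0}^{\circ}\io$, an explicit SC-type description. First I would choose, for each $k$, the connection~$\na^{(k)}$ on $\cN\io_k\!\approx\!\cN_{k;0}\io$ induced by the Hermitian connections~$\na^{(k;i)}$ of~$\cR$, and a connection~$\na_{V^{(k)}}$ on $T\wt{V}^{(k)}_\io$, say the pullback of a fixed global connection on~$TX$ restricted along~$\io_k$, completed by the trivial connection on the $\C^{[k]}$-factor. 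Using the splitting $T(\cN_{k;0}\io)\!\approx\!\pi_{k;0}^*T\wt{V}^{(k)}_\io\oplus\pi_{k;0}^*\cN\io_k$ determined by~$\na^{(k)}$, together with the canonical identification $\Psi_k^*TX|_{\cN_{k;0}^{\circ}\io}\!\cong\!\nd\Psi_k(T(\cN_{k;0}\io))$ and the bundle homomorphism~$\io_{\cR}$, I would transport these to connections $\wt\na_k$ on $\Psi_k^*TX$ and $\wt\na_k'$ on $\Psi_k^*T_\cR X(-\log\io)$ that are $\bS_k$-equivariant, hence descend to connections $\na_k$ on $TX|_{U_k^{\circ}}$ and $\na_k'$ on $T_\cR X(-\log\io)|_{U_k^{\circ}}$.

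The crucial local identity is that, on the chart level, $TX$ differs from $T_\cR X(-\log\io)$ exactly by the $k$ line bundles $\cN\wt\io^{(i)}_{k;k-1}$, because $\io_{\cR}$ over $U_k^{\circ}$ fits (via the regularization, exactly as in the SC case of Section~\ref{ConSC_subs}) into the splitting where $T_\cR X(-\log\io)$ supplies a $TV^{(k)}_\io$-summand and $k$ trivial $\C$-summands, while $TX$ supplies the same $TV^{(k)}_\io$-summand and the $k$ summands $\cN\wt\io^{(i)}_{k;k-1}$, the trivial summand mapping into $\cN\wt\io^{(i)}_{k;k-1}$ by scaling by the radial coordinate. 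Thus, after choosing the connections above compatibly, the Chern–Weil form identity
$$c(\na_k)=c(\na_k')\,\prod_{i\in[k]}\big(1\!+\!\io_k^*\eta^{(i)}\big)$$
holds on $U_k^{\circ}$ up to an exact correction supported where the radial coordinate is bounded away from~$0$, where $\eta^{(i)}$ is (a representative for) $c_1(\cN\wt\io^{(i)}_{k;k-1})$; the correction term is precisely what the cutoff function $\be_{\rho_{k;i};2\ve_k}$ in~\eref{berhovedfn_e} and the Thom-form construction $\wt\tau_{k;k'}$ are designed to absorb. By Lemma~\ref{PDVk_lmm}, $\Psi_k^*\tau_{k'}|_{\cN_{k;0}^{\circ}}=\wt\tau_{k;k'}|_{\cN_{k;0}^{\circ}}$, and $\sum_{k'}\wt\tau_{k;k'}=\prod_{i\in[k]}(1+\pi_{k;i}^*\tau_{\rho_{k;i};\na^{(k;i)};2\ve_k})$ by definition of the elementary symmetric polynomials; this is exactly the product of Thom/Euler forms of the line bundles $\cN\wt\io^{(i)}_{k;k-1}$, so choosing $\na_k,\na_k'$ so that $F^{\na^{(k;i)}}$ matches the curvature represented in $\tau_{\rho_{k;i};\na^{(k;i)};2\ve_k}$ (via Remark~\ref{angform_rmk}, $\eta_{\na}=-\fI F^{\na}$) gives $c(\na_k)=c(\na_k')(1+\tau_1+\tau_2+\cdots)$ on $U_k^{\circ}$ exactly, not merely up to exact terms.

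The remaining step is globalization: patch the $\na_k$ and $\na_k'$ into honest connections $\na$ on $(TX,J)$ and $\na'$ on $(T_\cR X(-\log\io),\fI_{\cR,J})$ using a partition of unity subordinate to $\{U_k^{\circ}\}$, and check that the patched connections still satisfy~\eref{cNabla_e}. Here I would arrange the local models to agree on overlaps up to the transition isomorphisms $\wt\th_{k;k'}$, $\wt\vt_{k;k'}$ of~\eref{NCthetaII_e}–\eref{NCvarTII_e} and the product-Hermitian-isomorphism property of $\fD\Psi_{k;k'}$, so that the line-bundle factors $\cN\wt\io^{(i)}_{k;k-1}$ are identified compatibly across strata; the compatibility condition~\eref{NCSCDregul_e2} and~\eref{NCDcons_e2} are exactly what make the Chern–Weil forms $\tau_{\rho_{k;i};\na^{(k;i)};2\ve_k}$ agree on overlaps, which is already encoded in Lemma~\ref{PDVk_lmm}. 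Since $c(\na)$ is multilinear in the connection only through its curvature and convex combinations of connections are connections, a convexity/partition-of-unity argument shows that if~\eref{cNabla_e} holds for each local model then it holds for the global convex combination, because the factor $1+\tau_1+\tau_2+\cdots$ is a \emph{fixed} global form independent of~$k$. The main obstacle I expect is bookkeeping the $\bS_k$-equivariance together with the stratumwise compatibility: one must verify that the locally defined connections and the cutoff-modified Thom forms really are $\bS_k$-invariant and really do restrict correctly under $\fD\Psi_{k;k'}$ on the nested domains $\cN^{k'}_{k;0}\io$, so that everything descends to~$X$ and glues; the differential-geometric content beyond that is the routine Chern–Weil computation on the split model $\pi_{k;0}^*T\wt{V}^{(k)}_\io\oplus\bigoplus_i\cN\wt\io^{(i)}_{k;k-1}$.
```
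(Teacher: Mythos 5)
Your local analysis is in the right spirit and overlaps substantially with the paper's argument (cutoff-modified connections on the normal line bundles, the identification of the log bundle with $\pi_{k;0}^{\,*}T\wt{V}^{(k)}_{\io}\oplus\C^{[k]}$, and the use of Lemma~\ref{PDVk_lmm} to convert $\wt\tau_{k;k'}$ into $\Psi_k^*\tau_{k'}$). But there are two genuine gaps. The decisive one is your globalization step: you patch the local connections $\na_k$, $\na_k'$ by a partition of unity and claim that, since $c(\na)$ depends on $\na$ only through its curvature and the factor $1+\tau_1+\tau_2+\cdots$ is a fixed global form, the identity~\eref{cNabla_e} survives the convex combination. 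This fails at the level of forms: the curvature of $\sum_k\chi_k\na_k$ is not $\sum_k\chi_kF^{\na_k}$ (it contains terms $\nd\chi_k\!\w\!(\na_k\!-\!\na_{k'})$ and quadratic terms in the differences), and $\det_\C\big(I+\tfrac{\fI}{2\pi}F^{\na}\big)$ is not affine in $F^{\na}$, so a form-level identity that holds for each $\na_k$ does not pass to the convex combination. Averaging would only salvage the statement in de Rham cohomology, whereas the Proposition asserts an exact equality of differential forms, which is what the rest of Section~\ref{ChNC_sec} uses. The paper avoids this by never averaging: it builds a \emph{single} connection $\na$ on $TX$ inductively, starting from the deepest stratum and shrinking the domains of the $\Psi_k$, so that $\{\nd\Psi_k\}^*\na$ is \emph{equal} to the local model $\wt\na^{(k)}$ on $\Dom(\Psi_k)$ for every $k$; the compatibility of these models on overlaps is what makes this possible, and it hinges on the correction term $\fI(\be_{\rho_{k;i};2\ve_k}\!-\!\be_{\rho_{k;i};\ve_k})\al_{\na^{(k;i)}}$ added to $\pi_{k;0}^{\,*}\na^{(k;i)}$, which makes the connection on the $i$-th normal summand of $TX$ agree, in the annular region, with the trivial connection in the radial trivialization — exactly matching the trivial $\C$-summand of the log bundle under $\io_{\cR}$ when one passes from a depth-$k$ chart to a shallower one. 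Your proposal contains no mechanism forcing the local models to agree on the nose, so the partition of unity cannot be dispensed with, and with it the form-level identity is lost.

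The second, smaller gap is that your local identity on $U_k^\circ$ is asserted rather than proved: you say the discrepancy is "an exact correction" that the cutoffs "are designed to absorb," and that one should choose the connections so that $F^{\na^{(k;i)}}$ "matches the curvature represented in" $\tau_{\rho_{k;i};\na^{(k;i)};2\ve_k}$. One cannot choose a connection on $\cN\wt\io^{(i)}_{k;k-1}$ whose curvature equals a vertically compactly supported Thom form; the actual mechanism is to put on the \emph{trivial} line summand of the log bundle the interpolated connection $\na'^{(k;i)}=\be_{k;i}\nd+(1\!-\!\be_{k;i})\Phi_{k;i}^{\,*}\pi^*\na^{(k;i)}$, whose curvature is computed (via Remarks~\ref{RadVecFld_rmk} and~\ref{angform_rmk}) to be $\pi^*F^{\na^{(k;i)}}\!+\!2\pi\fI\tau_{\rho_{k;i};\na^{(k;i)};\ve_k}$ as in~\eref{CurvatureRelation_e}, and then to check that the product formula for $c(\na)$ holds exactly because the cross term $\nd\big((1\!-\!\be_{\rho_{k;i};\ve_k})\al_{\na^{(k;i)}}\big)\!\w\!\nd\big(\be_{\rho_{k;i};2\ve_k}\al_{\na^{(k;i)}}\big)$ vanishes, $\be_{\rho_{k;i};\ve_k}$ being identically $1$ on $\supp\be_{\rho_{k;i};2\ve_k}$. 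This two-scale cutoff computation, leading to~\eref{TXcurv_e}, is the analytic heart of the local step and is missing from your outline.
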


\begin{proof} We construct $\na$ and $\na'$ using the global perspective of 
Section~\ref{ConGNC_subs}.
For $k\!\in\!\Z^{\ge0}$ and $i\!\in\![k]$, let 
$$\be_{k;i}\!\equiv\!\be_{\rho_{k;i};\ve_k}\!:\cN\wt\io_{k;k-1}^{(i)}\lra\R^{\ge0}$$
be a smooth function as in~\eref{berhovedfn_e} and 
$$\Phi_{k;i}\!: \big(\cN\wt\io_{k;k-1}^{(i)}\!-\!\wt{V}_{\io}^{(k)}\big)\!\times\!\C
\lra \pi^*\cN\wt\io_{k;k-1}^{(i)}, \qquad \Phi_{k;i}(v,c)=cv,$$
where $\pi\!:\cN\wt\io_{k;k-1}^{(i)}\!\lra\!\wt{V}_{\io}^{(k)}$ is the bundle projection.
We define a connection~$\na'^{(k;i)}$ in the trivial complex line bundle 
$\cN\wt\io_{k;k-1}^{(i)}\!\times\!\C$ over~$\cN\wt\io_{k;k-1}^{(i)}$ by
$$\na'^{(k;i)}=\be_{k;i}\nd\!+\!(1\!-\!\be_{k;i})\Phi_{k;i}^{\,*}\pi^*\na^{(k;i)}\,;$$
the last summand above is well-defined because $\be_{k;i}\!=\!1$ 
in a neighborhood of the zero section $\wt{V}_{\io}^{(k)}\!\subset\!\cN\wt\io_{k;k-1}^{(i)}$.
We note~that
\BE{CurvatureRelation_e}\begin{split}
F^{\na^{'(k;i)}}
&=\be_{k;i}F^{\nd}\!+\!(1\!-\!\be_{k;i})F^{\Phi_{k;i}^{\,*}\pi^*\na^{(k;i)}}
\!+\!\nd(1\!-\!\be_{k;i})\!\w\!\big(\Phi_{k;i}^{\,*}\pi^*\na^{(k;i)}\!-\!\nd\big)\\
&=0\!+\!(1\!-\!\be_{k;i})\pi^*F^{\na^{(k;i)}}\!-\!\fI\,\nd\be_{k;i}\!\w\!\al_{\na^{(k;i)}}
=\pi^*F^{\na^{(k;i)}}\!+\!2\pi\fI\tau_{\rho_{k;i};\na^{(k;i)};\ve_k};
\end{split}\EE
the last two equalities follow from Remarks~\ref{RadVecFld_rmk} and~\ref{angform_rmk}.\\

\noindent
For each $k\!\in\!\Z^{\ge0}$, the connections~$\na^{(k;i)}$ on
the complex line bundles $\cN\wt\io_{k;k-1}^{(i)}$ determine a splitting
\BE{TcNdecomp_e}T\big(\cN\io_k\big)=
\pi_{k;0}^{\,*}T\wt{V}_{\io}^{(k)}\!\oplus\!
\bigoplus_{i=1}^k\pi_{k;0}^{\,*}\cN\wt\io_{k;k-1}^{(i)}\,.\EE
Let $r\!\in\!\Z^+$ be such that $V_{\io}^{(r+1)}\!=\!\eset$.
By Definition~\ref{NCSCDregul_dfn},
the vector bundle isomorphisms~\eref{fDPsikk_e} are product Hermitian isomorphisms 
for all $k'\!\le\!k$.
Furthermore, \hbox{$\ve_k\!=\!\ve_{k'}\!\circ\!\Psi_{k;k'}$}.
Starting with a $J$-compatible connection
on~$TV_{\io}^{(r)}$ and possibly shrinking the domains of the regularizations~$\Psi_k$, 
we can thus inductively construct a connection~$\na$ on~$TX$ so that $\{\nd\Psi_k\}^*\na$
agrees with the restriction of the connection 
$$\wt\na^{(k)}\equiv\pi_{k;0}^{\,*}\na^{T\wt{V}^{(k)}_{\io}}\!\oplus\! 
\bigoplus_{i=1}^k\!\big(\pi_{k;0}^{\,*}\na^{(k;i)}\!+\!
\fI(\be_{\rho_{k;i};2\ve_k}\!-\!\be_{\rho_{k;i};\ve_k})\al_{\na^{(k;i)}}\!\big)$$
on~\eref{TcNdecomp_e} to~$\Dom(\Psi_k)$ for every $k\!\in\!\Z^+$.
By Remark~\ref{angform_rmk},
the curvature of the last summand~$\wt\na^{(k;i)}$ above satisfies
\begin{equation*}\begin{split}
1\!+\!\frac{\fI}{2\pi}F^{\wt\na^{(k;i)}}&=
\Big(1\!+\!\frac{\fI}{2\pi}\pi_{k;0}^{\,*}F^{\na^{(k;i)}}
\!-\!\tau_{\rho_{k;i};\na^{(k;i)};\ve_k}\Big)\!
\big(1\!+\!\pi_{k;i}^{\,*}\tau_{\rho_{k;i};\na^{(k;i)};2\ve_k}\big)\\
&\hspace{1in}
-\frac1{4\pi}\nd\big(\!(1\!-\!\be_{\rho_{k;i};\ve_k})\al_{\na^{(k;i)}}\big)
\!\w\!\nd\big(\be_{\rho_{k;i};2\ve_k}\al_{\na^{(k;i)}}\big);
\end{split}\end{equation*}
the term on the last line above vanishes because $\be_{\rho_{k;i};\ve_k}\!\equiv\!1$
on~$\supp\,\be_{\rho_{k;i};2\ve_k}$.
Along with~\eref{CurvatureRelation_e}, this~gives
\BE{TXcurv_e}\begin{split}
\Psi_k^*c(\na)\big|_{\cN_{k;0}^{\circ}\io}
&=\pi_{k;0}^{\,*}c\big(\na^{T\wt{V}^{(k)}_{\io}}\big)\!\big|_{\cN_{k;0}^{\circ}\io}
\prod_{i=1}^k\!\!\big(\pi_{k;0}^{\,*}c(\na'^{(k;i)})
\big(1\!+\!\pi_{k;i}^{\,*}\tau_{\rho_{k;i};\na^{(k;i)};2\ve_k}
\big)\!\big)\!\big|_{\cN_{k;0}^{\circ}\io}\\
&=\pi_{k;0}^{\,*}c\big(\na^{T\wt{V}^{(k)}_{\io}}\big)\big|_{\cN_{k;0}^{\circ}\io}
\!\bigg(\prod_{i=1}^k\!\pi_{k;0}^{\,*}c(\na'^{(k;i)})\!\!\bigg)
\Psi_k^*\big(1\!+\!\tau_1\!+\!\tau_2\!+\!\ldots\big)\!\big|_{\cN_{k;0}^{\circ}\io};
\end{split}\EE
the last equality follows from the second statement of Lemma~\ref{PDVk_lmm}.\\

\noindent
By~\eref{NCSCDregul_e2}, the connection
$$\wt\na'^{(k)}\equiv\pi_{k;0}^{\,*}\na^{T\wt{V}^{(k)}_{\io}}\big|_{\cN_{k;0}^{\circ}\io}
\!\oplus\! 
\bigoplus_{i=1}^k\pi_{k;0}^{\,*}\wt\na'^{(k;i)}\big|_{\cN_{k;0}^{\circ}\io}$$
on the complex vector bundle~\eref{cLk-tag} is $\bS_k$-equivariant.
Since $\ve_k\!=\!\ve_{k'}\!\circ\!\Psi_{k;k'}$ and $\fD\Psi_{k;k'}$ is a product Hermitian
isomorphisms, 
the bundle isomorphisms~\eref{NCSCDregul_e2} intertwine these connections.
Thus, they determine a connection~$\na'$ on the complex vector bundle~$T_{\cR}X(-\log\io)$.
Since
$$\Psi_k^*c(\na')\big|_{\cN_{k;0}^{\circ}\io}
=c\big(\wt\na'^{(k)}\big)
=\pi_{k;0}^{\,*}c\big(\na^{T\wt{V}^{(k)}_{\io}}\big)\big|_{\cN_{k;0}^{\circ}\io}
\prod_{i=1}^k\!\pi_{k;0}^{\,*}c(\na'^{(k;i)}),$$
the identity~\eref{cNabla_e} follows from~\eref{TXcurv_e}.
\end{proof}

\begin{lmm}\label{Psikcov_lmm}
Let $k\!\in\!\Z^+$.
Every point $x\!\in\!X$ admits a neighborhood~$U$ as in~\eref{PsikUdecomp_e}.
\end{lmm}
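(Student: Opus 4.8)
The plan is to reduce everything to the finiteness, for the given $x\!\in\!X$, of the set
$\Psi_k^{-1}(x)\cap\ov{\cN\io_k(\ve_k)}$. This finiteness is in fact forced by the conclusion: if $\Psi_k^{-1}(U)=\wt U_0\sqcup\wt U_1\sqcup\cdots\sqcup\wt U_\ell$ is a decomposition as in \eref{PsikUdecomp_e}, then $\wt U_0$ is disjoint from $\ov{\cN\io_k(\ve_k)}$ and each $\Psi_k|_{\wt U_i}$ is injective, so $\Psi_k^{-1}(x)\cap\ov{\cN\io_k(\ve_k)}$ meets each $\wt U_i$ in at most one point and hence has at most $\ell$ elements. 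Conversely, once this set is known to be a finite set $\{v_1,\dots,v_\ell\}$, the neighbourhood $U$ will be produced by separating these finitely many preimage points, the only subtlety being to shrink $U$ enough that nothing in $\Psi_k^{-1}(U)\cap\ov{\cN\io_k(\ve_k)}$ escapes the chosen separating neighbourhoods.

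To prove the finiteness I would first record that $\io_k\!:\wt{V}_{\io}^{(k)}\!\lra\!X$ is a proper map: it is a closed immersion, and its fibres are finite because $V^{(r+1)}_\io\!=\!\eset$ forces $|\io^{-1}(y)|\!\le\!r$, hence $|\io_k^{-1}(y)|\!<\!\i$, for every $y\!\in\!X$. Fix a complete Riemannian metric on $X$ with distance $d$. I would then sharpen the choice of the function $\ve$ in \eref{veGlobal_e} — which the shrinking procedure of \cite[Lemma~5.8]{SympDivConf} allows — so that, in addition to the properties already demanded there, $\Psi_k$ is a local diffeomorphism on $\ov{\cN\io_k(2^{k-1}\ve\!\circ\!\io_k)}$ and satisfies $d\big(\Psi_k(v),\io_k(\pi_{k;0}(v))\big)\!<\!1$ on this closed ball subbundle. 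The displacement estimate is available because along the radial segment from the zero section to $v$ the point $\io_k(\pi_{k;0}(\cdot))$ is constant, so the displacement is bounded by $|v|$ times a supremum of $|\nd\Psi_k|$ which, by properness of $\io_k$, pushes forward to a locally bounded function on $X$; shrinking $\ve$ pointwise brings it below $1$. It follows that $\Psi_k$ restricted to $\ov{\cN\io_k(2^{k-1}\ve\!\circ\!\io_k)}$ is proper: for compact $C\!\subset\!X$, the set $\ov{N_1(C)}\!\equiv\!\{y\!\in\!X:d(y,C)\!\le\!1\}$ is compact (completeness), so $\io_k^{-1}(\ov{N_1(C)})$ is compact, and every $v$ in the preimage of $C$ projects into it; thus $\Psi_k^{-1}(C)\cap\ov{\cN\io_k(2^{k-1}\ve\!\circ\!\io_k)}$ is a closed subset of the compact ball subbundle over $\io_k^{-1}(\ov{N_1(C)})$, hence compact. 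Taking $C\!=\!\{x\}$, using $\ov{\cN\io_k(\ve_k)}\!\subset\!\ov{\cN\io_k(2^{k-1}\ve\!\circ\!\io_k)}$ and the local-diffeomorphism property, $\Psi_k^{-1}(x)\cap\ov{\cN\io_k(\ve_k)}$ is compact and discrete, hence equal to a finite set $\{v_1,\dots,v_\ell\}$.

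With this finite set in hand I would choose pairwise disjoint open neighbourhoods $W_i\!\ni\!v_i$ in $\Dom(\Psi_k)$ with $\ov{W_i}$ compact, $\Psi_k|_{\ov{W_i}}$ injective and $\Psi_k(W_i)$ open, which is possible since $\Psi_k$ is a local diffeomorphism near each $v_i$. Because $x\!\notin\!\Psi_k(\partial W_i)$ and each $\Psi_k(\partial W_i)$ is compact, I can pick an open $U\!\ni\!x$ with $U\!\subset\!\bigcap_i\Psi_k(W_i)$, $U\cap\bigcup_i\Psi_k(\partial W_i)\!=\!\eset$, and $U$ small enough that $\Psi_k^{-1}(U)\cap\ov{\cN\io_k(\ve_k)}\!\subset\!\bigsqcup_iW_i$; the last condition follows from a subsequence argument using the properness just established (if it failed, points $v_n'\!\in\!\Psi_k^{-1}(U_n)\cap\ov{\cN\io_k(\ve_k)}\!-\!\bigsqcup_iW_i$ with $U_n\!\downarrow\!\{x\}$ would subconverge to some $v_\infty\!\in\!\Psi_k^{-1}(x)\cap\ov{\cN\io_k(\ve_k)}$, i.e.\ to some $v_i$, contradicting $v_n'\!\notin\!W_i$). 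Then $\wt U_i\!\equiv\!W_i\cap\Psi_k^{-1}(U)\!=\!\ov{W_i}\cap\Psi_k^{-1}(U)$ is both open and closed in $\Psi_k^{-1}(U)$ and $\Psi_k|_{\wt U_i}\!:\wt U_i\!\lra\!U$ is a diffeomorphism, so $\wt U_0\!\equiv\!\Psi_k^{-1}(U)-\bigsqcup_{i\in[\ell]}\wt U_i$ is open in $\cN\io_k$; finally $\wt U_0$ is disjoint from $\ov{\cN\io_k(\ve_k)}$ since that set's intersection with $\Psi_k^{-1}(U)$ lies in $\bigsqcup_i\wt U_i$. This produces the decomposition in \eref{PsikUdecomp_e}.

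The main obstacle is the finiteness assertion in the second paragraph, equivalently the properness of $\Psi_k$ on a closed ball subbundle: the fibre directions transverse to $\wt{V}_{\io}^{(k)}$ are controlled by the ball radius, but the directions along $\wt{V}_{\io}^{(k)}$ are not controlled a priori, and pinning them down is exactly where the properness of $\io_k$ and the (further) smallness of the regularization domains have to be invoked; the rest of the argument is formal point‑set topology.
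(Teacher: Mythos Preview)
Your argument is correct, but it takes a noticeably different route from the paper's proof. The paper exploits two pieces of the existing framework that you rebuild by hand. First, rather than introducing a Riemannian metric and a displacement bound to prove properness of $\Psi_k$ on a closed ball subbundle, the paper simply invokes Remark~\ref{NCSCDregul_rmk}: after shrinking, $\Psi_k$ is a closed map, so for any finite collection of base neighborhoods $U_1,\ldots,U_{\ell}\subset\wt{V}^{(k)}_{\io}$ the set
\[
W\equiv X-\Psi_k\big(\ov{\cN\io_k(\ve_k)}\big|_{\wt{V}^{(k)}_{\io}-U_1\cup\ldots\cup U_{\ell}}\big)
\]
is open. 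Second, rather than total-space neighborhoods $W_i\ni v_i$ on which $\Psi_k$ is injective, the paper uses the neighborhoods $U_i\subset\wt{V}^{(k)}_{\io}$ of $\pi_{k;0}(\wt{w}_i)$ guaranteed by Definition~\ref{NCsmreg_dfn}, over each of which $\Psi_k|_{\Dom(\Psi_k)|_{U_i}}$ is already a diffeomorphism onto its image; then $U=W\cap\bigcap_i\Psi_k(\Dom(\Psi_k)|_{U_i})$ works, with no subsequence argument needed. The paper's proof is thus a few lines, while yours is a self-contained properness argument; your approach has the virtue of making the finiteness of the relevant fiber completely explicit, whereas the paper takes $\Psi_k^{-1}(x)$ finite as given.

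Two small points. Your justification that $\io_k$ is proper (``closed immersion with finite fibres'') is not quite the right mechanism: a closed immersion need not be a closed map. What actually gives properness is that $\io_k^{-1}(C)\subset C\times(\io^{-1}(C))^k$ and $\io$ is proper (clear from the local SC charts). Also, your appeal to $V^{(r+1)}_{\io}=\eset$ is not needed for finiteness of the fibers of $\io$ (that already follows from the local SC structure), though the assumption does appear later in the proof of Proposition~\ref{DFChern_prp}, so invoking it here is harmless in context.
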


\begin{proof}
Let $\Psi_k^{-1}(x)\!=\!\{\wt{w}_1,\ldots,\wt{w}_{\ell}\}$.
For each $i\!\in\![\ell]$, let $U_i\!\subset\!\wt{V}_{\io}^{(k)}$ be a neighborhood 
of~$\pi_{k;0}(\wt{w}_i)$ so that the restriction 
$$\Psi_k\!:\Dom(\Psi_k)\big|_{U_i}\lra X$$
is a diffeomorphism onto its image; see Definition~\ref{NCsmreg_dfn}.
By Remark~\ref{NCSCDregul_rmk}, 
$$W\equiv X\!-\!\Psi_k\big(\ov{\cN\io_k(\ve_k)}
\big|_{\wt{V}_{\io}^{(k)}-U_1\cup\ldots\cup U_{\ell}}\big)$$
is an open subspace of~$X$.
The open neighborhood 
$$U\equiv W\cap\bigcap_{i=1}^{\ell}\!\Psi_k\big(\Dom(\Psi_k)\big|_{U_i}\big)$$
of $x\!\in\!X$ then satisfies the condition  in~\eref{PsikUdecomp_e}.
\end{proof}

\begin{proof}[{\bf{\emph{Proof of Lemma~\ref{PDVk_lmm}}}}] 
Let $\mu$ be a closed form on~$X$.
We show~that 
\BE{PD1_e}\int_{V^{(k)}_\io}\al = \int_X \tau_k\!\w\!\mu.\EE
Since $\io_k\!:\wt{V}^{(k)}\!\lra\!V^{(k)}$ is a $k!$-covering map outside of 
a codimension~2 subspace, 
$$\int_{V^{(k)}_{\io}}\al =\frac{1}{k!}\int_{\wt{V}^{(k)}}\io_k^*\al\,.$$
Combining this with~\eref{pushtau_e}, we obtain~\eref{PD1_e} and establish the
first statement in~\eref{PullBackTau_e}.\\

\noindent
If $k'\!>\!k$, the left-hand side of the second identity in~\eref{PullBackTau_e} vanishes
because
$$\supp\,\tau_{k'}\subset\Psi_{k'}\big(\ov{\cN_{k;0}(\ve_k/2)}\big) \qquad\hbox{and}\qquad
\Psi_{k'}\big(\ov{\cN_{k;0}(\ve_k/2)}\big)\!\cap\!\Psi_k(\cN_{k;0}^{\circ})=\eset.$$
The right-hand side of the second identity in~\eref{PullBackTau_e} vanishes by definition
in this~case.\\

\noindent
Suppose $k'\!\le\!k$.
Let $S_{k;k'}\!\subset\!\bS_k$ be a collection of representatives for
the right cosets of $\bS_{k;k'}\!\times\!\bS_{k;k'}^c$ in~$\bS_k$
preserving the order of the first $k'\!$~elements.
Since \eref{Psikrestr_e} is an $\bS_k$-covering map, every point of~$U_k^{\circ}$
has a neighborhood~$U$ so~that
$$\Psi_k^{-1}(U)=\bigsqcup_{\si\in\bS_k}\!\si(W)\subset \cN_{k;0}^{\circ}\io$$
for some open subset $W\!\subset\!\cN_{k;0}^{\circ}\io$.
Since $\Psi_k\!=\!\Psi_{k'}\!\circ\!\fD\Psi_{k;k'}$ and $\fD\Psi_{k;k'}$ is
$\bS_{k;k'}^c$-invariant, 
\eref{NCPsikk_e3} implies~that
$$\Psi_{k'}^{-1}(U)=\bigcup_{\vp\in\bS_{k;k'}}
\bigsqcup_{\si\in S_{k;k'}}\!\!\!\!\!\Psi_{k;k'}\big(\vp\si(W)\!\big).$$
Since $\fD\Psi_{k;k'}$ is $\bS_{k;k'}$-equivariant and $\wt\tau_{k';k'}$ is $\bS_{k'}$-invariant,
$$\Psi_k^*\tau_{k'}=\frac{1}{k!}
\sum_{\si\in S_{k;k'}}\sum_{\vp\in\bS_{k'}}\!
\{D\si\}^{\!*}\{\fD\Psi_{k;k'}\}^{\!*}\{D\vp\}^*\wt\tau_{k';k'}
=\sum_{\si\in S_{k;k'}}\!\!\!\{D\si\}^{\!*}\{\fD\Psi_{k;k'}\}^{\!*}\wt\tau_{k';k'}\,.$$
Since $\ep_k\!=\!\ep_{k'}\!\circ\!\Psi_{k;k'}$ and $\fD\Psi_{k;k'}$ is a product Hermitian
isomorphism, it follows~that 
$$\Psi_k^*\tau_{k'}=\sum_{\si\in S_{k;k'}}\!\!\!\{D\si\}^{\!*}
\bigg(\underset{i\in[k']}{\La}\pi_{k;i}^{\,*}\tau_{\rho_{k;i};\na^{(k;i)};2\ve_k}\!\!\bigg).$$
Since $D\si$ is also a product Hermitian isomorphism,
the last expression equals~$\wt\tau_{k;k'}$.
\end{proof}

\begin{rmk}\label{PDVk_rmk}
If $X$ and $V^{(k)}_{\io}$ are not compact, 
the above proof of the first identity in~\eref{PullBackTau_e} goes through 
if $\mu$ is compactly supported.
If $X$ is not compact, but $V^{(k)}_{\io}$ is compact,
then this identity holds in the compactly de Rham cohomology
of~$X$, as well as in the usual de Rham cohomology of~$X$.
\end{rmk}

\section{On the sharpness of~\eref{cTXV_e} and~\eref{blowupchern_e}}
\label{ChernQvsZ_sec}

\noindent
We conclude by establishing the remaining statement of 
Corollary~\ref{blowupchern_crl} and showing that~\eref{blowupchern_e} does not need 
to hold in~$H^*(\wt{X};\Z)$ for arbitrary NC divisors.
The latter implies that~\eref{cTXV_e} does not hold in~$H^*(X;\Z)$
for arbitrary NC divisors either.\\

\noindent
Continuing with the notation and setup as in Lemma~\ref{blowuplog_lmm},
we denote~by
$$\wt\io\!:\bE\lra\wt{X} \qquad\hbox{and}\qquad 
\wt\pi_0\!:\ga\lra\bE$$
the inclusion map and the tautological line bundle, respectively.
For $\mu\!\in\!H_*(V^{(r)};R)$, we denote by 
\hbox{$\bE|_{\mu}\!\in\!H_*(\bE;R)$}
the fiber product of~$\mu$ with~$\pi|_{\bE}$.

\begin{lmm}\label{blowuphom_lmm} If $R$ is a commutative ring with unity,
\begin{gather}\label{BlHom_e5}
\ker\!\big(\pi_*\!: H_*(\wt{X};R)\!\lra\!H_*(X;R)\!\big)=
\wt\io_*\!\big(\!\ker\!\big(\pi_*\!:H_*(\bE;R)\!\lra\!H_*(V^{(r)};R)\!\big)\!\big)
\quad\hbox{and}\\
\label{BlHom_e5b}
\ker\!\big(\pi_*\!:H_*(\bE;R)\!\lra\!H_*(V^{(r)};R)\!\big)
=\bigoplus_{i=1}^{r-1}\!
\big\{\!c_1(\ga)^{r-1-i}\!\cap\!\big(\bE|_{\mu_i}\big)\!:
\mu_i\!\in\!H_{*-2i}(V^{(r)};R)\!\big\}.
\end{gather}
\end{lmm}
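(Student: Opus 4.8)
The plan is to establish the two displayed identities separately: \eqref{BlHom_e5b} is a direct consequence of the projective bundle formula, while \eqref{BlHom_e5} follows from comparing the long exact sequences of the pairs $(\wt{X},\bE)$ and $(X,V^{(r)})$. For \eqref{BlHom_e5b}, recall that $\pi|_{\bE}\colon\bE\!=\!\P(\cN_XV^{(r)})\!\lra\!V^{(r)}$ is a $\P^{r-1}$-bundle and that $c_1(\ga)\!\in\!H^2(\bE;R)$ restricts on each fiber to $c_1(\cO(-1))$, a generator of $H^2(\P^{r-1};R)$. By Leray--Hirsch, valid over any commutative ring~$R$ since the fiber cohomology is free, $H_*(\bE;R)$ is a free $H_*(V^{(r)};R)$-module and the map
\[\bigoplus_{i=0}^{r-1}H_{*-2i}(V^{(r)};R)\lra H_*(\bE;R),\qquad
(\mu_i)_i\longmapsto\sum_{i=0}^{r-1}c_1(\ga)^{r-1-i}\!\cap\!\big(\bE|_{\mu_i}\big),\]
is an isomorphism. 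Pushing forward by $\pi|_{\bE}$ and using the projection formula together with the facts that $\int_{\P^{r-1}}c_1(\ga)^j\!=\!0$ for $0\!\le\!j\!<\!r\!-\!1$ while $u\!\equiv\!\int_{\P^{r-1}}c_1(\ga)^{r-1}$ is a unit in~$R$, one obtains $\pi_*\big(c_1(\ga)^{r-1-i}\!\cap\!(\bE|_{\mu_i})\big)\!=\!0$ for $i\!\ge\!1$ and $\pi_*\big(c_1(\ga)^{r-1}\!\cap\!(\bE|_{\mu_0})\big)\!=\!u\,\mu_0$. Hence an element of $H_*(\bE;R)$ lies in $\ker\pi_*$ if and only if its $i\!=\!0$ component vanishes, which is exactly \eqref{BlHom_e5b}; the sum there is direct because it is the image of the summands with $i\!\ge\!1$ under the isomorphism above.

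For \eqref{BlHom_e5}, the first step is to show that $\pi$ induces an isomorphism $\pi_*\colon H_*(\wt{X},\bE;R)\!\xlra{~\cong~}\!H_*(X,V^{(r)};R)$. Choosing open tubular neighborhoods $N$ of $\bE$ in $\wt{X}$ and $N'$ of $V^{(r)}$ in $X$ with $\pi(N)\!=\!N'$ (for instance $N'\!=\!\Psi_r(\cN_r')$ and $N\!=\!\wt\cN_0'$ as in Section~\ref{SmothCblowup_subs}), the blowdown restricts to a diffeomorphism $\wt{X}\!-\!\bE\!\lra\!X\!-\!V^{(r)}$ carrying $N\!-\!\bE$ onto $N'\!-\!V^{(r)}$; chaining the homotopy-equivalence and excision isomorphisms $H_*(\wt{X},\bE)\!\cong\!H_*(\wt{X},N)\!\cong\!H_*(\wt{X}\!-\!\bE,N\!-\!\bE)$ with their counterparts for $X$ and with this diffeomorphism produces the desired isomorphism, naturally with respect to the connecting maps. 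Applying $\pi$ to the long exact sequences of the two pairs now yields a ladder in which the maps $H_*(\bE)\!\lra\!H_*(V^{(r)})$ and $H_*(\wt{X})\!\lra\!H_*(X)$ are $\pi|_{\bE}$ and the blowdown, while the relative maps are isomorphisms. The inclusion $\wt\io_*\big(\ker(\pi_*|_{\bE})\big)\!\subset\!\ker\pi_*$ is immediate from $\pi_*\!\circ\!\wt\io_*\!=\!\iota_*\!\circ\!(\pi_*|_{\bE})$, where $\iota\colon V^{(r)}\!\hookrightarrow\!X$. For the reverse inclusion, given $\beta\!\in\!H_n(\wt{X};R)$ with $\pi_*\beta\!=\!0$, the image of $\beta$ in $H_n(\wt{X},\bE)$ maps to $0$ in $H_n(X,V^{(r)})$ and hence vanishes, so $\beta\!=\!\wt\io_*\alpha$ for some $\alpha$; then $\iota_*(\pi_*\alpha)\!=\!\pi_*\beta\!=\!0$, so $\pi_*\alpha\!=\!\partial\gamma$ for some $\gamma\!\in\!H_{n+1}(X,V^{(r)})$, and lifting $\gamma$ to $\delta\!\in\!H_{n+1}(\wt{X},\bE)$ through the relative isomorphism, the class $\alpha'\!\equiv\!\alpha\!-\!\partial\delta$ satisfies $\wt\io_*\alpha'\!=\!\beta$ and $\pi_*\alpha'\!=\!0$, which yields \eqref{BlHom_e5}.

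The routine points I would suppress are the fiber-integration identities for $c_1(\ga)^j$ and the verification that the excision and homotopy isomorphisms commute with the connecting homomorphisms. The one step genuinely requiring care is the construction of the relative isomorphism $H_*(\wt{X},\bE;R)\!\cong\!H_*(X,V^{(r)};R)$ and the alignment of the two long exact sequences it sits in; once this is in place, the rest of \eqref{BlHom_e5} is a formal diagram chase. Since Leray--Hirsch, excision, and the long exact sequence of a pair are all valid with arbitrary $R$-coefficients, no separate treatment of torsion is needed here.
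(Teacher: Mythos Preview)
Your proof is correct. For \eqref{BlHom_e5b} you and the paper both invoke the Leray--Hirsch/projective bundle description of $H_*(\bE;R)$, so the arguments are essentially identical. For \eqref{BlHom_e5} there is a minor difference in packaging: the paper compares the Mayer--Vietoris sequences for the decompositions $\wt{X}=(\wt{X}\!-\!\bE)\cup\wt{U}$ and $X=(X\!-\!V^{(r)})\cup U$, with $U$ a tubular neighborhood of $V^{(r)}$ and $\wt{U}=\pi^{-1}(U)$, whereas you compare the long exact sequences of the pairs $(\wt{X},\bE)$ and $(X,V^{(r)})$ after establishing the relative isomorphism via excision. The two routes are formally equivalent---both rest on the diffeomorphism $\wt{X}\!-\!\bE\cong X\!-\!V^{(r)}$ and reduce to the same diagram chase---and neither is appreciably shorter or more general. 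Your explicit correction step replacing $\alpha$ by $\alpha'=\alpha-\partial\delta$ is exactly what is implicit in the paper's terse ``This gives~\eqref{BlHom_e5}.''
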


\begin{proof} We omit the coefficient ring~$R$ below.
Let $U\!\subset\!X$ be an open neighborhood of~$Y\!\equiv\!V^{(r)}$
which deformation retracts onto~$Y$ and $\wt{U}\!\subset\!\pi^{-1}(U)$.
Since $\bE$ is a deformation retract of~$\wt{U}$, the Mayer-Vietoris sequences
for $\wt{X}\!=\!(\wt{X}\!-\!\bE)\!\cup\!\wt{U}$ and $X\!=\!(X\!-\!Y)\!\cup\!U$
induce a commutative diagram 
$$\xymatrix{\ldots\ar[r]& H_*(\wt{U}\!-\!\bE)\ar[r]\ar[d]^{\id} &
H_*(\wt{X}\!-\!\bE)\!\oplus\!H_*(\bE) \ar[r]\ar[d]|{\id\oplus\pi_*}&
H_*(\wt{X}) \ar[r]\ar[d]^{\pi_*}&  H_{*-1}(\wt{U}\!-\!\bE)\ar[d]^{\id}\ar[r]&\ldots\\
\ldots\ar[r]& H_*(U\!-\!Y)\ar[r]& H_*(X\!-\!Y)\!\oplus\!H_*(Y) \ar[r]&
H_*(X) \ar[r]&  H_{*-1}(U\!-\!Y)\ar[r]&\ldots}$$
of exact sequences of $R$-modules.
This gives~\eref{BlHom_e5}.\\

\noindent
For every $y\!\in\!Y$, the collection
$\{1|_{\bE_y},c_1(\ga)|_{\bE_y},\ldots,c_1(\ga)^{r-1}|_{\bE_y}\}$
is a basis for~$H^*(\bE_y)$.
By \cite[Theorem~5.7.9]{Spanier}, the homomorphism
$$H_*(\bE)\lra \bigoplus_{i=0}^{r-1}\!H_{*-2i}(Y),
\qquad
\wt\mu\lra\big(\pi_*\big(\!c_1(\ga)^i\!\cap\!\wt\mu\big)_{i=0,\ldots,r-1}\big),$$
is thus an isomorphism.
This implies~\eref{BlHom_e5b}.
\end{proof}

\noindent
For each $k\!\in\![r\!-\!1]$, let
$$\wt\eta_k=\pi^*\big(\PD_X([V^{(k)}]_X)\!\big)\!-\!\PD_{\wt{X}}\big([\ov{V}^{(k)}]_{\wt{X}}\big)
\in H^{2k}(\wt{X};R).$$
By Lemma~\ref{blowuphom_lmm}, there exist 
\hbox{$\eta_{k;i}^c\!\in\!H^{2(k-i)}(V^{(r)};R)$}
with $i\!\in\![k]$ so~that
\BE{PTdiff_e} 
\wt\eta_k\!\cap\!\big[\wt{X}\big]
=\sum_{i=1}^k
\wt\io_*\!\Big(\!c_1(\ga)^{i-1}\!\cap\!\big(\bE|_{\eta_{k;i}^c\cap[V^{(r)}]}\big)\!\!\Big)
\in H_*(\wt{X};R).\EE
For example,
\BE{PTdiff_e1} 
\big(\pi^*\big(\PD_X([V^{(1)}]_X)\!\big)\!\big)\!\cap\!\big[\wt{X}\big]
=[\ov{V}^{(1)}]_{\wt{X}}\!+\!r[\bE]_{\wt{X}} \qquad\hbox{if}~r\!\ge\!2;\EE
the coefficient $\eta_{1;1}^c\!=\!r$ above is obtained by intersecting both sides with 
$\wt\io_*(c_1(\ga)^{r-2}\!\cap\!\bE_y)$.\\

\noindent
We set 
\begin{equation*}\begin{split}
\cPD(V)&=1\!+\!\PD_X\big([V^{(1)}]_X\big)\!+\!\PD_X\big([V^{(2)}]_X\big)\!+\!\ldots
\in\bigoplus_{i=0}^{\i}H^{2i}(X;R),\\
\eta_i^c&=\eta_{i;i}^c\!+\!\ldots\!+\!\eta_{r-1;i}^c
\in\bigoplus_{k=0}^{\i}H^{2k}(V^{(r)};R)\quad\forall\,i\!\in\![r\!-\!1],
\qquad \eta_r^c=1.
\end{split}\end{equation*}
The identity~\eref{blowupchern_e} is equivalent to
\BE{blowupchern_e2}\begin{split}
1\!-\!\frac{c(T\wt{X})}{\pi^*c(TX)(1\!+\!\PD_{\wt{X}}([\bE]_{\wt{X}})\!)}
=&\sum_{i=1}^{r-1}
\PD_{\wt{X}}\Big(\wt\io_*\!\Big(\!
c_1(\ga)^{i-1}\!\cap\!\big(\bE|_{\eta_i^c\cap\cPD(V)^{-1}\cap[V^{(r)}]}\big)\!\!\Big)\!\!\Big)\\
&\hspace{.5in}
+\PD_{\wt{X}}\Big(\pi^*\PD_X\big(
\io_*\big(\cPD(V)^{-1}\!\cap\![V^{(r)}]\big)\!\big)\!\Big),
\end{split}\EE
where $\io\!:V^{(r)}\!\lra\!X$ is the inclusion.
Via~\eref{PTdiff_e1}, \eref{blowupchern_e2} in particular gives
\begin{gather}\label{blowupchern_e5a}
c_1(T\wt{X})=\pi^*c_1(TX)\!-\!(r\!-\!1)\PD_{\wt{X}}\big([\bE]_{\wt{X}}\big) 
\hspace{2.5in}\hbox{if}~r\!\ge\!2;\\
\label{blowupchern_e5b}\begin{split}
c_2(T\wt{X})=\pi^*c_2(TX)\!-\!\PD_{\wt{X}}\bigg(
\wt\io_*\!\Big(\!\!\big(\pi^*c_1(TX)\!+\!2c_1(\ga)\!\big)\!\cap\![\bE]\!\Big)
\!+\!\pi^*\big(\PD_X[V^{(2)}]_X\big)&\\
\hspace{1.6in}-
2\bE\big|_{\PD_X([V^{(1)}]_X)\cap[V^{(2)}]}&\bigg)
\quad\hbox{if}~r\!=\!2.
\end{split}\end{gather}

\vspace{.15in}

\noindent
With the dimension of~$X$ and the codimension of the blowup locus~$V^{(r)}$ fixed,
the left-hand side of~\eref{blowupchern_e2} must be of the~form 
\begin{equation*}\begin{split}
&\sum_{i=1}^{r-1}
\PD_{\wt{X}}\Big(\wt\io_*\!\Big(\!
c_1(\ga)^{i-1}\!\cap\!
\big(\bE|_{P_i(c_1(\cN_XV^{(r)}),\ldots,c_r(\cN_XV^{(r)}))\cap[V^{(r)}]}\big)\!\!\Big)\!\!\Big)\\
&\hspace{1in}
+\PD_{\wt{X}}\Big(\pi^*\PD_X\big(
\io_*\big(P_r\big(c_1(\cN_XV^{(r)}),\ldots,c_r(\cN_XV^{(r)})\!\big)
\!\cap\![V^{(r)}]\big)\!\big)\!\Big)
\end{split}\end{equation*}
for some universal polynomials $P_1,\ldots,P_r\!\in\!\Z[c_1,\ldots,c_r]$.
Since~\eref{blowupchern_e} holds with $\Z$-coefficients in the SC case, 
the right-hand side of~\eref{blowupchern_e2} reduces to the above form in this case.
For example,
\BE{intervsc_e}  
\PD_X\big([V^{(k)}]_X\big)\!\cap\![V^{(r)}]=
c_k\big(\cN_XV^{(r)}\big)\!\cap\![V^{(r)}]\in H_*\big(V^{(r)};\Z)\EE
in the SC case.
This also occurs if the branches of~$V$ at~$V^{(r)}$ can be distinguished globally, 
i.e.~$\cN_XV^{(r)}$ splits into~$r$ subbundles which restrict to the subbundles
$\cN_{V_{y;I-i}}V_{y;I}$ with $i\!\in\!I$ for every chart 
$(U_y,\{V_{y;i}\}_{i\in S_y})$ as in Definition~\ref{NCD_dfn} and 
every $I\!\subset\!S_y$ with $|S_y|\!=\!r$.\\

\noindent
Since~\eref{blowupchern_e} holds with $\Q$-coefficients in general, 
the differences
$$\eta_i^c\!\cup\!\cPD(V)^{-1}\!-\!
P_i\big(c_1(\cN_XV^{(r)}),\ldots,c_r(\cN_XV^{(r)})\!\big)\in H_*\big(V^{(r)};\Z)$$
are torsion.
If the torsion in $H_*(\bE;\Z)$ lies in the kernel~$\wt\io_*$, then
the torsion in $H_*(V^{(r)};\Z)$ lies in the kernel~$\io_*$.
In such a case,
\begin{equation*}\begin{split}
\wt\io_*\Big(\bE_{\eta_i^c\cap\cPD(V)^{-1}\cap[V^{(r)}]}\Big)
&=\wt\io_*\Big(\bE_{P_i(c_1(\cN_XV^{(r)}),\ldots,c_r(\cN_XV^{(r)}))\cap[V^{(r)}]}\Big)
\in H_*(\wt{X};\Z),\\
\io_*\big(\cPD(V)^{-1}\!\cap\![V^{(r)}]\big)
&=\io_*\big(P_r\big(c_1(\cN_XV^{(r)}),\ldots,c_r(\cN_XV^{(r)})\!\big)\!\cap\![V^{(r)}]\big)
\in H_*(X;\Z).
\end{split}\end{equation*}
This implies that \eref{blowupchern_e} holds in $H^*(\wt{X};\Z)$ if 
the torsion in $H_*(V^{(r)};\Z)$ lies in the kernel~$\io_*$.\\

\noindent
We now give an example in which  \eref{blowupchern_e} does not hold in $H^*(\wt{X};\Z)$.
Let $\Si$ be a compact connected Riemann surface and $z^*\!\in\!\Si$.
Let $\wt{S}$ be a K3 surface and $\psi\!\in\!\Aut(\wt{S})$ be a fixed-point-free involution 
so~that \hbox{$S\!\equiv\!\wt{S}/\psi$} is an Enriques surface.
Define
\begin{gather*}
\wt\psi\!:\wt{X}\!\equiv\!\wt{S}\!\times\!\Si^2\lra\wt{S}, \quad
\wt\psi(y,z_1,z_2)=\big(\psi(y),z_2,z_1\big), \\
X=\wt{X}/\wt\psi, \qquad
\wt{V}=\wt{S}\!\times\!\Si\!\times\!\{z^*\}\subset\wt{X}.
\end{gather*}
The image $V\!\subset\!X$ of $\wt{V}$ under the quotient map $q\!:\wt{X}\!\lra\!X$
is an NC divisor in~$X$.
Its 2-fold locus $V^{(2)}\!\approx\!S$ is the image of \hbox{$\wt{S}\!\times\!\{z^*\}^2$} 
under~$q$.
Since the 3-fold locus of~$V$ is empty, $r\!=\!2$ in this case.
We show below~that
\BE{blowupchernEG_e1}
\io_*\big(\PD_X\big([V^{(1)}]_X\big)\!\cap\!\big[V^{(2)}\big]\!\big)=0\in H_2(X;\Z),
\quad c_1(\cN_XV^{(2)})\neq0\in H^2(V^{(2)};\Z),\EE
and the torsion of $H_2(V^{(2)};\Z)$ does not vanish under~$\io_*$.
This implies that the last term in~\eref{blowupchern_e5b} is not determined by
the Chern class of~$\cN_XV^{(2)}$.
Thus, \eref{intervsc_e}, \eref{blowupchern_e5b}, \eref{blowupchern_e}, and~\eref{cTXV_e} 
do not hold with $\Z$ coefficients in this case.\\

\noindent
The image of \hbox{$\wt{S}\!\times\!\{z\}^2$} under~$q$ 
is homologous to~$V^{(2)}$ for any $z\!\in\!\Si$.
Since it is disjoint from~$V$ for $z\!\neq\!z^*$,
$$\io_*\big(\PD_X\big([V^{(1)}]_X\big)\!\cap\!\big[V^{(2)}\big]\!\big)=
\PD_X\big([V^{(1)}]_X\big)\!\cap\!\big[V^{(2)}\big]_X=0.$$
This confirms the first statement in~\eref{blowupchernEG_e1}.\\

\noindent
The normal bundle $\cN_XV^{(2)}$ of $V^{(2)}$ in~$X$ is the quotient of the trivial bundle
$\wt{S}\!\times\!\C^2$ by the involution
$$\wt{S}\!\times\!\C^2\lra\wt{S}\!\times\!\C^2, \qquad
(y,c_1,c_2)\lra \big(\psi(y),c_2,c_1\big).$$
Thus, $\cN_XV^{(2)}\!\approx\!L_+\!\oplus\!L_-$, where
$$L_{\pm}=\big\{[y,c,\pm c]\!:y\!\in\!\wt{S},\,c\!\in\!\C\big\}\subset \cN_XV^{(2)}\,.$$
The complex line bundle~$L_+$ over~$S$ is isomorphic to~$S\!\times\!\C$.
The flat complex line bundle~$L_-$ corresponds to the non-trivial homomorphism
$$\pi_1(S)\!=\!\Z_2\lra S^1.$$
This confirms the second statement in~\eref{blowupchernEG_e1}.\\

\noindent
Let \hbox{$f_+\!:\Si_+\!\lra\!S$} be a smooth map from a compact Riemann surface 
representing the unique torsion element of~$H_2(S;\Z)$.
Let \hbox{$f_-\!:\Si_-\!\lra\!S$} be a smooth map from a compact (unorientable) surface 
representing a class in $H_2(S;\Z_2)$ so~that 
$$[f_+]_{\Z_2}\!\cdot_S\![f_-]_{\Z_2}\neq0\in\Z_2,$$
where $\cdot_S$ denotes the $\Z_2$-intersection product on~$S$.
The involution~$\wt\psi$ pulls back to a smooth involution
$$\wt\psi_-\!:\wt{Z}_-\!\equiv\!
\big\{\!(x,y)\!\in\!\Si_-\!\times\!\wt{S}\!:f_-(x)\!=\!q(y)\!\big\}\!\times\!\Si^2
\lra\wt{Z}_-,
\quad \wt\psi_-(x,y,z_1,z_2)=\big(x,\psi(y),z_2,z_1\big).$$
The~map
$$F_-\!:Z_-\!\equiv\!\wt{Z}_-/\wt\psi_-\lra X, \qquad
F_-\big([x,y,z_1,z_2]\big)=\big[y,z_1,z_2\big],$$
is smooth and determines an element of~$H_6(X;\Z)$ so that 
$$\io_*\big([f_+]_{\Z_2}\big)\!\cdot_X\![F_-]_{\Z_2}
=[f_+]_{\Z_2}\!\cdot_S\![f_-]_{\Z_2}\neq0\in\Z_2.$$
This confirms the claim just after~\eref{blowupchernEG_e1}.\\

\vspace{.2in}

\noindent
{\it The University of Iowa, MacLean Hall, Iowa City, IA 52241\\
mohammad-tehrani@uiowa.edu}\\

\noindent
{\it Department of Mathematics, Stony Brook University, Stony Brook, NY 11794\\
markmclean@math.stonybrook.edu, azinger@math.stonybrook.edu}\\

\end{document}